\documentclass[dvipsnames]{amsart}
\usepackage[utf8]{inputenc}
\usepackage{graphicx}
\usepackage{amssymb}
\usepackage{amsmath,tensor}
\usepackage{float}
\usepackage{xcolor}
\usepackage[
style=alphabetic,
maxnames=5
]{biblatex}
\usepackage{mathscinet}
\usepackage{tikz,tikz-cd}

\usepackage{hyperref}
\usepackage{cleveref}
\usepackage{comment}
\usepackage[all]{xy}
\usepackage[dvips]{epsfig}
\usepackage{pinlabel}
\newcommand{\igeps}[2]{\vcenter{\xy (0,0)*{\includegraphics[scale=#1]{fig/#2}} \endxy}}
\newcommand{\igs}[1]{\igeps{.6}{#1}}
\newcommand{\igm}[1]{\igeps{.9}{#1}}
\newcommand{\igb}[1]{\igeps{1.2}{#1}}
\newcommand{\igbb}[1]{\igeps{1.6}{#1}}

\addbibresource{sll.bib}

\newtheorem{thm}{Theorem}[section] 
\newtheorem{prop}[thm]{Proposition}
\newtheorem{lem}[thm]{Lemma}
\newtheorem{cor}[thm]{Corollary}

\theoremstyle{definition}
\newtheorem{defn}[thm]{Definition}
\newtheorem{notation}[thm]{Notation}

\newtheorem{algorithm}[thm]{Algorithm}
\newtheorem{ex}[thm]{Example}
\newtheorem{exercise}[thm]{Exercise}

\theoremstyle{remark}
\newtheorem{rem}[thm]{Remark}


\newcommand{\uw}{\underline{w}}
\newcommand{\ux}{\underline{x}}

\newcommand{\eb}{\mathbf{e}}

\newcommand{\mg}{\mathfrak{m}}

\def\R{\mathbb{R}}
\def\Z{\mathbb{Z}}
\newcommand{\gdim}{\mathrm{gd}}


\newcommand{\De}{\Delta}


\def\hat{\widehat}
\newcommand{\ot}{\otimes}
\newcommand{\co}{\colon}
\newcommand{\pa}{\partial}
\DeclareMathOperator{\grank}{grk}
\DeclareMathOperator{\Hom}{Hom}
\DeclareMathOperator{\End}{End}

\DeclareMathOperator{\id}{id}
\newcommand{\namedto}[1]{\stackrel{#1}{\longrightarrow}}
\newcommand{\sumset}{\stackrel{\oplus}{\subset}}
\DeclareMathOperator{\Id}{Id}
\newcommand{\im}{\operatorname{Im}}
\newcommand{\inv}{^{-1}}

\newcommand{\dbl}{f_i}
\newcommand{\dbr}{g_i}

\newcommand{\onetensor}{1^\otimes}

\newcommand{\mi}{\underline}
\newcommand{\ma}{\overline}
\newcommand{\expr}{\leftrightharpoons}

\newcommand{\Bim}{\mathrm{Bim}}

\newcommand{\SSBim}{\mathcal{S}\mathbb{S}\textrm{Bim}}

\newcommand{\HC}{\mathbf{H}}
\newcommand{\SHC}{\mathbf{SH}}
\newcommand{\HA}{\mathfrak{H}}

\newcommand{\mt}{\emptyset}
\newcommand{\Sym}{\operatorname{Sym}}
\newcommand{\Frob}{\mathbf{Frob}}

\newcommand{\BSBim}{\mathbb{BS}\textrm{Bim}}
\newcommand{\SBSBim}{\mathcal{S}\mathbb{BS}\textrm{Bim}}
\newcommand{\duality}{\mathcal{D}}

\newcommand{\LL}{\mathbb{LL}}
\newcommand{\SLL}{\mathbb{SLL}}
\newcommand{\ELL}{\mathbb{ELL}}
\newcommand{\DELL}{\duality\mathbb{ELL}}
\newcommand{\LEL}{\rotatebox[origin=c]{180}{$\mathbb{E}$}\mathbb{LL}}
\newcommand{\LLP}{\mathbb{LLP}}
\newcommand{\ELLP}{\mathbb{ELLP}}
\newcommand{\SSLLP}{\mathbb{SSLLP}}
\newcommand{\SSLL}{\mathbb{SSLL}}

\newcommand{\DLL}{\mathbb{DLL}}

\newcommand{\BB}{\mathbb{B}}

\DeclareMathOperator{\rex}{rex}
\DeclareMathOperator{\BS}{BS}

\DeclareMathOperator{\leftdes}{LD}
\DeclareMathOperator{\rightdes}{RD}
\DeclareMathOperator{\leftred}{LR}
\DeclareMathOperator{\rightred}{RR}
\DeclareMathOperator{\rightq}{RQ}
\DeclareMathOperator{\leftq}{LQ}
\DeclareMathOperator{\rightt}{RT}
\DeclareMathOperator{\leftt}{LT}
\DeclareMathOperator{\core}{core}

\newcommand{\evaluation}{\mathcal{F}}
\newcommand{\genha}{b}
\DeclareMathOperator{\habs}{bs}
\DeclareMathOperator{\defect}{def}
\DeclareMathOperator{\poly}{poly}
\DeclareMathOperator{\term}{term}

\newcommand{\lls}{\scr{LL}(p,I_\bullet)}
\newcommand{\llsprime}{\scr{LL}'(p,I_\bullet)}
\DeclareMathOperator{\last}{last}

\newcommand{\leftup}[1]{{}^{#1}\hspace{-0.03cm}}

\newcommand{\restrict}[3]{\tensor*[_{#1}]{{#2}}{_{#3}}}


\newcommand{\sberry}{\textcolor{WildStrawberry}{s}}
\newcommand{\teal}{\textcolor{teal}{t}}
\newcommand{\urial}{\textcolor{Brown}{u}}

\newcommand{\capcw}{{\color{red} \curvearrowright}}

\title{Singular Light Leaves}
\date{\today}

\author[]{Ben Elias}
\address{Department of Mathematics, Fenton Hall, University of Oregon,
Eugene, OR, 97403-1222, USA}
\email{belias@uoregon.edu}

\author[]{Hankyung Ko}
\address{Department of Mathematics, Uppsala University,
Box. 480,
SE-75106, Uppsala, Sweden}
\email{hankyung.ko@math.uu.se}

\author[]{Nicolas Libedinsky}
\address{Departamento de Matemáticas, Facultad de Ciencias, Universidad de Chile}
\email{nlibedinsky@gmail.com}

\author[]{Leonardo Patimo}
\address{Mathematics Institute, Albert-Ludwigs-Universität, Freiburg im Breisgau, Germany}
\email{leonardo.patimo@math.uni-freiburg.de}

\begin{document}
\maketitle

\begin{abstract} 
For any Coxeter system we introduce the concept of \emph{singular light leaves}, answering a question of Williamson raised in 2008. They 
provide a combinatorial basis for Hom spaces between singular Soergel bimodules. 
\end{abstract}


\tableofcontents

\section{Introduction}
 \subsection{Subject of the paper and motivation}

The \emph{Hecke category} $\HC(W)$  is a graded additive monoidal category whose Grothendieck group is isomorphic to the Hecke algebra $H(W)$ of a Coxeter group $W$. When $W$ is a Weyl group, $\HC(W)$ acts monoidally on the regular block of the Bernstein--Gelfand--Gelfand category $\mathcal{O}$, on the category of perverse sheaves on the flag variety, and on related categories in modular representation theory. In this way $\HC(W)$ serves as a link between representation theory and
geometry, and plays a central role in geometric and modular representation theory.  A more sophisticated but no less ubiquitous object, the \emph{singular Hecke (2-)category} $\SHC(W)$ is a graded additive 2-category whose objects are the finite parabolic subgroups of $W$. It acts on singular blocks of category $\mathcal{O}$, and on perverse sheaves on partial flag varieties. In affine type it is the setting for the geometric side of the geometric Satake equivalence.

Though $\SHC$ is defined similarly to $\HC$, and even contains $\HC$ (as the endomorphism category of the trivial parabolic subgroup), it feels quite different in practice. The viewpoint of $\SHC$ is more ``zoomed in'' than the ordinary Hecke category, with objects and morphisms constructed at the atomic rather than the molecular level.

The Hecke category was given an algebraic construction using bimodules over polynomial rings by Soergel \cite{Soe92}, \cite{Soe07}. Then it was given a diagrammatic presentation (via generators and relations) by Elias--Williamson \cite{Soergelcalculus} following earlier work in special cases by Elias, Khovanov, and Libedinsky \cite{Bendihedral}, \cite{KhovanovElias}, \cite{Libgr}. Just as for categorified quantum groups, diagrammatics has paved the way for a number of advances in the field, both computational 
 and abstract. Examples include \cite{EWhodge,Geordiesc,GeordieSimon,LusWil}. It is a natural goal to extend this diagrammatic technology to $\SHC$.

The singular Hecke category was given an algebraic construction by Williamson \cite{SingSb}, and its objects are known as singular Soergel bimodules. The technology to frame a diagrammatic presentation was produced in \cite{ESW}, defining a $2$-category by generators and relations and an evaluation $2$-functor $\evaluation$ to singular Soergel bimodules.
As discussed in \cite{ESW}, the $2$-functor $\evaluation$ is not faithful, and the category from \cite{ESW} is missing many relations.  A complete presentation is still lacking outside of special cases\footnote{For existing presentations in special types, see: dihedral type \cite{Bendihedral}, universal type \cite{ELi17}, affine $A_2$ \cite[Appendix]{ElQS}. For a partial presentation in type $A$ building on unfinished work of Elias--Williamson, see \cite[Section 2.4]{EL}. For further discussion on the status of singular diagrammatics see \cite[Chapter 24]{GBM}. For a presentation of a category that is a quotient of type $A$ singular Soergel bimodules (but in a different language) see \cite{QR}.}. Already there have been numerous applications of this partial diagrammatic presentation, see e.g. \cite{EL, ElQS}.

Let us note that these diagrammatic $2$-categories\footnote{Because we often want to compare $\HC$ and $\SHC$ in the same sentence, we refer to both as $2$-categories, viewing the monoidal category $\HC$ as a $2$-category with one object. Otherwise, the grammar of comparing a category to a $2$-category becomes too difficult. We also do not distinguish in the introduction between $2$-categories, where composition of $1$-morphisms is associative, and bicategories, where composition is only associative up to natural isomorphism.
} technically only form a full sub-$2$-category of the (singular) Hecke category. They encode morphisms between certain bimodules in $\HC$ or $\SHC$ called (singular) Bott--Samelson bimodules. One obtains the entire (singular) Hecke category by taking the idempotent completion.

A crucial tool for studying the Hecke category is the \emph{double leaves basis},  introduced by Libedinsky \cite{LiLi}, \cite{LLL} in the context of algebraic Soergel bimodules. Libedinsky proved that double leaves give a basis for Hom spaces between Bott--Samelson bimodules (when Soergel's Hom formula holds, see Section~\ref{llhom}).  In \cite{Soergelcalculus}, a diagrammatic construction of the double leaves morphisms was given, followed by a convoluted proof that double leaves always span the diagrammatic category. Combining these facts, \cite{Soergelcalculus} deduces that the functor from diagrammatics to bimodules is an equivalence (when Soergel's Hom formula holds).

Let us summarize once more the ingredients used in the Elias--Williamson construction of, and proof of correctness of, the diagrammatic presentation of $\HC$.  \begin{enumerate}
\item Diagrammatics for generating morphisms and some basic relations (e.g. Frobenius relations).
\item An evaluation functor to bimodules.
\item The double leaves basis for morphisms between Bott--Samelson bimodules.
\item A diagrammatic description of double leaves morphisms (they are linearly independent in the diagrammatic category because the evaluation functor sends them to a basis).
\item Additional relations (e.g. Jones--Wenzl relations, Zamolodchikov relations) for the diagrammatic presentation, robust enough to achieve the next point.
\item A diagrammatic proof that double leaves span all diagrams.
\end{enumerate}
As discussed above, the first two ingredients are provided for $\SHC$ in \cite{ESW}. 

The goal of this paper is to diagrammatically produce \emph{singular double leaves}, and prove that (after applying the evaluation functor) they provide a basis for Hom spaces between singular Bott--Samelson bimodules in $\SHC$ (when Williamson's Hom formula holds).\footnote{A diagrammatic description of a bases between indecomposable objects in the special case of Grassmannian has been provided in \cite{PatGrass}, albeit using a different parametrization from the one in the present paper.}  These are the third and fourth ingredients above. We hope to find the last two ingredients in future work.

One of the major motivations for the double leaves basis of $\HC$ is that it is a cellular basis, equipping the Hecke category with the structure of an \emph{(object-adapted) cellular category}, see \cite{ELauda}. For the reader new to cellular bases, one can think of them as bases consisting of morphisms which factor in a nice way through distinguished objects; \S\ref{llhom} will explain what they feel like, if not the precise details. The double leaves basis allows for (relatively) efficient computations of \emph{local intersection forms}. These bilinear forms, which agree with the cellular forms, control decompositions \cite[Cor. 11.75]{GBM}  of objects in both the Hecke category and in the interesting categories on which the Hecke category acts. Computations of local intersection forms allowed Williamson to disprove the expected bounds in Lusztig's conjecture \cite{Geordiesc}. 

The singular double leaves basis of $\SHC$ is not quite a cellular basis, but it is something very close, a \emph{fibered cellular basis} \cite[Def. 2.17]{ELauda}. It is equally useful for computing local intersection forms. Preliminary computations in affine type $A$ suggest that our singular double leaves basis is related to the double ladders basis of \cite{clasp} after applying the geometric Satake equivalence, thus identifying certain local intersection forms in $\SHC$ with those appearing in the representation theory of quantum groups.  A formula for these local intersection forms was conjectured in \cite[Conjecture 1.20]{clasp}, and has been proven in \cite{MartinSpencer}.

 \subsection{Light leaves and the Hom formula}\label{llhom}
 
The construction of the double leaves basis is motivated by Soergel's Hom formula, as we now explain. In the next section, we discuss the singular setting, but for now, we stick to the ordinary Hecke category $\HC$, i.e. Soergel bimodules. 

Suppose $B$ and $B'$ are two Soergel bimodules, which categorify elements $b$ and $b'$ in the Hecke algebra $H$. \emph{Soergel's Hom Formula}  states that the graded rank of $\Hom(B,B')$ as a free left (or right) $R$-module is equal to the standard pairing $(b,b')$ in $H$ (see Definition 3.13 in \cite{GBM}). There is a trick to compute this pairing when $b$ or $b'$ is self-dual under the Kazhdan--Lusztig involution: we can pretend that the standard basis of $H$ is orthonormal, see \cite[Lemma 3.19]{GBM}. Letting $\{h_x\}_{x \in W}$ denote this standard basis, we have
\begin{equation}\label{Homformula} b = \sum c_x h_x, \quad b' = \sum c'_x h_x, \qquad \grank \Hom(B,B') = \sum c_x c'_x. \end{equation}

This suggests the following route for constructing an $R$-basis of $\Hom(B,B')$. For each $x \in W$ we define a $c_x$-dimensional space of maps from $B$ to some special object $X_x$, and a $c'_x$-dimensional space of maps from $X_x$ to $B'$. By composition, we get a subspace of $\Hom(B,B')$, hopefully with dimension $c_x c'_x$. Taking the union over all $x$, we may get a basis for $\Hom(B,B')$. Note that $c_x, c'_x$ are elements of $\mathbb{Z}[v,v^{-1}]$, representing graded dimensions rather than ordinary dimensions.

Let $\HC_{\BS} \subset \HC$ denote the full subcategory of Bott--Samelson objects. Every object in $\HC$ is a direct summand of (a direct sum of grading shifts of) Bott--Samelson objects.  By describing all morphism spaces in $\HC_{\BS}$, we can use the Karoubi envelope construction to describe morphism spaces in $\HC$. So we focus on \eqref{Homformula} as it applies to Bott--Samelson objects $\BS(\uw)$, which are associated to expressions $\uw$. An expression is a sequence of simple reflections, and its \emph{terminus} is the product of those reflections, an element of $W$. If $\uw = (s_1, \ldots, s_d)$ then \[ \BS(\uw) = \BS(s_1) \ot \cdots \ot \BS(s_d).\]

The Deodhar defect formula gives a concrete formulation for how $[\BS(\uw)]$ can be expressed in the standard basis:
\begin{equation} [\BS(\uw)] = \sum_{\eb \subset \uw} v^{\defect(\eb)} h_{\term(\eb)}. \end{equation}
This is a sum over all subexpressions $\eb$ of $\uw$. Each subexpression contributes a power of $v$
times $h_x$, where $x$ is the terminus  of the subexpression. The exponent of $v$ is the defect of the subexpression. See \cite[Section 3.3.4]{GBM} for details.

Thus one expects to construct a basis for morphisms between Bott--Samelson objects by constructing a special morphism $\BS(\uw) \to X_x$ for each subexpression of $\uw$ with terminus $x$, whose degree matches the defect of the subexpression. This morphism is called a \emph{light leaf}. Applying a duality functor to a light leaf $\BS(\uw') \to X_x$, we obtain a map $X_x \to \BS(\uw')$ called an \emph{upside-down light leaf}. By composing light leaves with upside-down light leaves, one obtains the double leaves basis for $\Hom(\BS(\uw), \BS(\uw'))$.

Now the Bruhat order enters the story. For any set of objects in a category, the set of all linear combinations of morphisms factoring through those objects form an ideal. Our objects $\{X_x\}_{x \in W}$ are chosen so that the ideal $\Hom_{< x}$ of morphisms which factor through $\{X_y\}_{y < x}$ agrees with the span of double leaves associated to $y$ for $y < x$. Moreover, $\End(X_x)/\End_{< x}(X_x)$ is spanned by the identity map. This behavior is what makes the category an (object-adapted) cellular category (see \cite[Lemma 2.8]{ELauda}).  Note that $\Hom_{< x}$ also has an independent definition in terms of the support of bimodules and bimodule maps (see \cite[Prop 3.25]{EKLP2}).

The object $X_x$ which is the target of a light leaf can be taken to be $\BS(\ux)$, associated to a reduced expression $\ux$ for $x$. There are many such reduced expressions, though all are related by the braid relations. For each braid relation $\ux \to \ux'$, $\HC$ has a morphism $\BS(\ux) \to \BS(\ux')$, called an \emph{elementary rex move}. An example is the $2m$-valent vertex from \cite{Soergelcalculus}. A \emph{rex move} is a composition of elementary rex moves along a sequence of braid relations. Rex moves descend to isomorphisms modulo the ideal $\Hom_{< x}$. This permits us the flexibility to choose a different reduced expression $\ux$ for each subexpression with terminus $x$. This flexibility is crucial to the algorithm which constructs light leaves. When composing light leaves with upside-down light leaves, the sources and targets may not match, but we use rex moves to go between them. Changing the choices of reduced expressions or the choices of rex moves will change the light leaves and the double leaves basis, but in a well-understood upper-triangular fashion. In this way, the theory of reduced expressions and braid relations enters the story.

 Let us explain the construction of light leaves. Deodhar's definition of the defect analyzes expressions and subexpressions one index at a time, viewing a subexpression as instructions for a stroll around the Bruhat graph. Correspondingly, light leaves are constructed inductively, one index at a time.

Let $\uw=(s_1,\ldots, s_m)$ be an expression in $S$, and $\eb = (e_1, \ldots, e_m) \in \{0,1\}^m$ be a subexpression (or rather, the function which records which elements $s_i$ are included in the subexpression). Let $(x_1, \ldots, x_m)$ be the sequence of elements in $W$ followed by the Bruhat stroll of $\eb$, i.e. $x_i:=s_1^{e_1}\cdots s_i^{e_i}.\ $
  After $k$ steps, we have constructed a map 
\[ \LL_k \co \BS(s_1, \ldots, s_k) \to X_{x_k}. \]
To construct the next step, we only need to produce a \emph{single-step light leaf}, 
a map
\[ \SLL_k \co X_{x_k} \ot \BS(s_{k+1}) \to X_{x_{k+1}} \]
depending on $e_{k+1}$, whose degree matches Deodhar's formula for the contribution of this step to the overall defect. Then we set
\[ \LL_{k+1} = \SLL_k \circ (\LL_k \ot \id_{\BS(s_{k+1})}).\]

The possible single-step light leaves one must construct are parametrized by the triple $(x_k, s_{k+1}, e_{k+1})$ in the large set $W \times S \times \{0,1\}$. An incredible amount of efficiency is gained by the next observation, which reduces to the case when $W$ has type $A_1$. In type $A_1$ there are only four possibilities, called \emph{elementary light leaves}. The observation is trivial but we state it anyway.  For $a,b,c \in W$ write $a.b = c$ if one obtains a reduced expression for $c$ by concatenating reduced expressions for $a$ and $b$.

\begin{lem} \label{lem:reducetoA1} Suppose that $x = ys$ for $x, y \in W$ and $s \in S$. Then there exist $z \in W$ with $z<zs$ in the Bruhat order and $\{u,v\} = \{1,s\}$ such that $x=z.u$ and $y=z.v$. 
\end{lem}

 Now we apply the observation with $x = x_k$ and $s = s_{k+1}$. Let $z$ and $u$ be as in the lemma.
Then set
\[ \SLL_k = \id_{X_z} \ot f,\]
where $f$ is the elementary light leaf associated to the triple  $(u,s_{k+1}, e_{k+1})$ in type $A_1$ (the parabolic subgroup generated by $s_{k+1})$.

Again, this reduction to type $A_1$ is possible because we can choose reduced expressions at will. In particular, we can choose reduced expressions for $x_k$ and $x_{k+1}$ starting with $z$. At the $(k+2)$-nd step this algorithm may necessitate a different reduced expression for $x_{k+1}$, and we go between them using rex moves.


 \subsection{Singular light leaves}

All the moving parts from \ref{llhom} have analogs for the 2-category of singular Soergel bimodules $\SHC$, with the combinatorics of elements of $W$ replaced by the combinatorics of double cosets. However,
there are many additional surprises. We give the construction of singular light leaves in Section \ref{sec:LightLeavesConstruction}.  Singular light leaves are constructed, as in the regular case, by categorifying the singular Deodhar formula given in Corollary \ref{cor:singDeodhar}.

The theory of reduced expressions for double cosets was developed greatly by the first two authors \cite{EKo}. They introduced the \emph{singular braid relations} between reduced expressions, and proved the \emph{singular Matsumoto theorem}, stating that any two reduced expressions for the same double coset are related by braid relations.  In this paper, we construct a morphism between singular Bott--Samelson bimodules for each braid relation, called a \emph{(singular) elementary rex move}. By applying rex moves, we can go between any two Bott-Samelson bimodules for different reduced expressions for the same double coset.

As before, flexibility in the choice of reduced expressions allows one to reduce the construction of single-step light leaves to a more manageable collection of elementary light leaves. In Theorem \ref{thmB} we prove an analogue of Lemma \ref{lem:reducetoA1}, thanks to which it suffices to construct elementary light leaves for pairs of cosets called \emph{Grassmannian pairs} (see Definition \ref{notation:grassmannian}). Grassmannian pairs can be associated to each finite parabolic subgroup (not just those of rank $1$) and there can be many more than four of them. The elementary light leaves themselves are substantially more complicated than in the non-singular setting, e.g. \eqref{ELLA}, but still tractable.

A new phenomenon is that one needs to sprinkle polynomials inside the diagrams to define singular light leaves, accounting for the ``polynomial factors'' which appear in the singular Deodhar formula. In the ordinary setting, the endomorphism ring of $X_x$, modulo lower terms, is the polynomial ring $R$. All such endomorphisms can be realized using the left action of polynomials, whence double leaves form a basis over this left action. In the singular setting, the corresponding endomorphism ring (modulo lower terms) will vary as the double coset varies, and the left action of polynomials is not sufficient to generate the endomorphism ring. Polynomials within the diagram are thus unavoidable. To avoid further confusion, we chose to place polynomials in the middle of our double leaves as well. Our basis is a basis of morphisms as a vector space, not as a module over a polynomial ring.

These polynomial sprinkles have a different flavor than anything before, and bring algebra (joining combinatorics) back into the analysis of light leaves. For this purpose we developed the theory of  Demazure operators for double cosets \cite{EKLP1} and the theory of singular Bruhat order \cite{EKLP2} in previous work.

The main result of this paper is Theorem \ref{mainthm}. It says that an appropriate set of singular double leaves is a basis for each 2-morphism space in $\SHC$. We also prove (Theorem \ref{thm:COB}) that different choices of singular double leaves bases (e.g. for different choices of reduced expressions) are related by an upper-triangular change of basis matrix. Finally, we confirm (Theorem \ref{thm:fibered}) that our basis does equip $\SHC$ with the structure of a fibered cellular 2-category.

\subsection{Outline of the paper}
Part 1 of the paper is primarily a condensed recollection of background material from a variety of sources, but has some new results.

In \Cref{ss.doublecoset} we recall the theory of reduced expressions for double cosets from \cite{EKo}. The notation introduced here will be used throughout the paper. Starting with \Cref{ssec.reductiongrassmannian} we present new results, including the reduction to Grassmannian coset pairs.

In \Cref{HA} we recall the Hecke algebroid from Williamson's thesis \cite{SingSb}. We describe the singular Deodhar formula in \Cref{singdeodhar}, which contains some new material.

In \Cref{sec:SSB} we recall the theory of singular Soergel bimodules from Williamson's thesis \cite{SingSb}. In \Cref{sec:lowerterms} we recall results from \cite{EKLP2} about various ideals of lower terms.

In \Cref{sec:diagrammatics} we recall the diagrammatic calculus for singular Soergel bimodules, introduced in \cite{ESW}, and review the basics of Frobenius extensions.

Part 2 of the paper contains almost entirely new constructions and results, leading up to the construction of double leaves morphisms. 

In \Cref{ch.rexmove}, we define morphisms associated to the braid relations, and prove that certain of these morphisms are isomorphisms, and others are inclusions and projections for a direct summand.

In \Cref{sec:LightLeavesConstruction} we describe the algorithm to construct light leaves and double leaves morphisms, and we give many examples. The algorithm is complex enough that we split it into several stages and substages that introduce different aspects of the complexity. We state our main results in \Cref{ss.conclusion}.

Finally, \Cref{proofs} contains the proofs of all the results stated in \Cref{ch.rexmove} and \Cref{sec:LightLeavesConstruction}. A crucial technique in these proofs is the identification of certain elements within singular Bott-Samelson bimodules on which light leaf morphisms can be nicely evaluated. These elements are indexed by \emph{dual sequences}, a new combinatorial construction associated to paths of double cosets. Dual sequences are used to define a total order on light leaves and a partial order on double leaves. We develop the combinatorics of dual sequences in \Cref{ssec:dualsequences}, and it should have independent interest.
 
 \subsection{Acknowledgements.}

We would like to thank Geordie Williamson for many helpful discussions across the years, and for writing a helpful erratum to his article on singular Soergel bimodules. BE was partially supported by NSF grant DMS-2201387, and appreciates the support given to his research group by NSF grant DMS-2039316.  NL was partially supported by FONDECYT-ANID
grant 1230247.  HK was partially supported by Swedish Research Council.

{\bf New leaves sprouted during production:} Clara and Gael

\part{Background material and Grassmannian reduction}

\section{Double cosets, expressions, and Grassmannian pairs}\label{ss.doublecoset}

The first two sections recall concepts from \cite{EKo}, and we refer to that paper for examples. The rest is largely new.

\subsection{Expressions for double cosets}

We fix a Coxeter system $(W,S)$, and write $\ell$ for the length function. For $I\subset S$, we denote by $W_I$ the subgroup of
$W$ generated by $I$.  When $W_I$ is finite, we say that $I$ is \emph{finitary}, we write $w_I$ for the
longest element of $W_I$, and we set $\ell(I) := \ell(w_I)$.

For $I, J\subset S$, an \emph{$(I,J)$-coset} is an element $p$ in 
 $W_{I}\backslash W/W_{J}$. When we discuss $p$ we really mean the triple $(p,I,J)$. It might happen that  $(p,I,J)\neq (p',I',J')$, even though $p=p'$ as subsets of $W$, and we distinguish between $p$ and $p'$ in this case. If $p$ is an $(I,J)$-coset we denote by $\ma{p}\in W$ and $\mi{p}\in W$ the maximal and minimal elements in the Bruhat order in the set $p$. 
We define a length function on double cosets by the formula
\begin{equation} \ell(p) = 2 \ell(\ma{p}) - \ell(I) - \ell(J). \end{equation}
 
 A (singular) \emph{multistep  expression} is a sequence of finitary subsets of $S$ of the form 
\begin{equation}\label{ex}
 L_{\bullet}=[[ I_0\subset K_1\supset I_1\subset K_2 \supset \cdots \subset K_m\supset I_m]].
 \end{equation}
The \emph{length} of the multistep expression $L_\bullet$ is 
\begin{equation}\label{explength}\ell(L_\bullet):=- \ell(I_0) + 2\ell(K_1)-2\ell(I_1)+2\ell(K_2)-\cdots + 2 \ell(K_m) -\ell(I_{m}).
\end{equation} Equalities are permitted, e.g. $K_1 = I_1$ is allowed. When we write a multistep expression as $[[K_1 \supset I_1 \subset \cdots]]$ we mean that $I_0 = K_1$. Similarly, $[[\cdots \subset K_m]]$ means that $I_m = K_m$. We call $L_{\bullet}$ an \emph{$(I,J)$-expression} if $I_0 = I$ and $I_m = J$.

We say that $L_{\bullet}$ as above is a \emph{reduced expression} for an $(I_0, I_m)$-coset $p$ if 
\begin{equation} \ma{p} = w_{K_1} w_{I_1}^{-1} w_{K_2} w_{I_2}^{-1}\cdots w_{K_m} w_{I_m}^{-1} \ \  \mathrm{and} \  \ \ell(p) = \ell(L_{\bullet}). \end{equation}
We write $L_{\bullet} \expr p$ when $L_{\bullet}$ is a reduced expression for $p$. The notation $\expr$ is also used for non-reduced expressions; we will not have occasion to do so in this paper, nor will we discuss which cosets are expressed by non-reduced expressions.

\begin{notation} Let $p$ be a $(I,K)$ coset, $q$ be a $(I,J)$-coset, and $r$ be a $(J,K)$ coset. We write $p = q.r$ when a reduced expression for $q$ concatenates with a reduced expression for $r$ to yield a reduced expression for $p$. 

Similarly, if $x, y, z \in W$ we write $x = y.z$ if $x = yz$ and $\ell(x) = \ell(y) + \ell(z)$. This is equivalent to the expected statement about concatenations of reduced expressions. \end{notation}

For $L\subset S$ and $s\in S$ we use the notation $Ls$ to denote $L\cup \{s\}$ when $s\notin L$.
A (singular)  \emph{singlestep expression} $I_{\bullet}=[I_0,I_1,\ldots, I_d]$  is a sequence of finitary subsets of $S$ such that, for all $1\leq i\leq d$, either $I_i=I_{i-1}s$  or  $I_i=I_{i-1} \setminus s$ for some $s\in S.$  We may also record a singlestep expression as e.g.
$I_{\bullet} = [I_0 + s + u - t + t \ldots]$, which would indicate that $I_1 = I_0 s$, $I_2 = I_1 u$, $I_3 = I_2 \setminus t$, etcetera.

A singlestep expression is a particular kind of multistep expression. To each singlestep expression, one can also associate a coarser multistep  expression by remembering its local maxima and minima.

Throughout this paper, ordered lists could come in parentheses as in $(s_1, s_2, \ldots)$, or in brackets like $I_{\bullet}$, or in double brackets as in \eqref{ex}. The delimiter indicates the context; e.g., lists in single brackets are associated to a singlestep expression.

\begin{notation} \label{notation:cosetpair} Let $I, J \subset S$ and $s \in S$ with $I$ and $Js$ finitary. Let $q$ be an $(I,Js)$-coset and $p$ be an $(I,J)$-coset, with $p \subset q$. We call $\{p,q\}$ a \emph{coset pair for $(I,J,s)$}. We also refer to the sequences $[p,q]$ and $[q,p]$ as coset pairs, and disambiguate by writing $p \subset q$.
\end{notation}

\begin{defn}\label{defn:subordinatepath} Fixing $I_{\bullet}=[I_0,I_1,\ldots, I_d]$, suppose that $t_k$ is an $(I_0, I_k)$-coset for $0 \le k \le d$. We say that $t_\bullet=[t_0,\ldots,t_d]$ is a \emph{subordinate path} of $I_\bullet$, and we write $t_{\bullet} \subset I_{\bullet}$, if $t_0$ contains the identity element, and $[t_k,t_{k+1}]$ is a coset pair for $(I_0,J,s)$ for all $0 \le k \le d-1$. Here, $J$ (resp. $Js$) is either $I_k$ or $I_{k+1}$, whichever is smaller (resp. bigger).

The \emph{terminus} of the subordinate path is the final $(I_0,I_d)$-coset $\term(t_{\bullet}) = t_d$. Two subordinate paths $t_{\bullet}, u_{\bullet} \subset I_{\bullet}$ are called \emph{coterminal} if they have the same terminus, i.e. $t_d = u_d$.\end{defn}

Paths subordinate to an expression $I_{\bullet}$ are the coset analogue of subexpressions, and the sequence $t_{\bullet}$ is analogous to the Bruhat stroll of \cite[Section 2.4]{Soergelcalculus}.

\begin{defn} \label{def.bruhat}
Let $I,J\subset S$ be finitary and $p \in  W_I\backslash W/W_J$ and let $I_{\bullet}$ be an $(I,J)$-expression. We write $p \le I_{\bullet}$ if there is some path $t_{\bullet}$ subordinate to $I_{\bullet}$ with $p = \term(t_{\bullet})$. If $I_{\bullet}$ is an expression for $q$ (not necessarily reduced) and $p \le I_{\bullet}$ then we write $p \le q$. In \cite[Theorem 2.16]{EKLP2} it is proven that this definition is independent of the choice of expression for $q$, and defines a partial order on double cosets called the \emph{Bruhat order}. Moreover, it agrees with other definitions of the Bruhat order (cf. \cite[\S 2.1]{SingSb}), e.g. $p \le q$ if and only if $\mi{p} \le \mi{q}$. \end{defn}

\subsection{Redundancy}

Let $p$ be a $(I,J)$ coset. Let
\begin{equation} \label{eq:redundancydef} \leftred(p):=I\cap \mi{p}J\mi{p}^{-1}, \qquad \rightred(p):=J \cap \mi{p}^{-1}I\mi{p}=\mi{p}^{-1}\leftred(p)\mi{p}, \end{equation}
and call these the \emph{left} and \emph{right redundancy} of $p$. These redundancy sets and the subgroups they generate are of crucial importance in this paper. When no confusion is possible we will use the abbreviation $\leftred$ for $\leftred(p)$ and $\rightred$ for $\rightred(p)$.

The following gives a  good intuition for what redundancy is, and the proof is trivial. If $s \in I$ and $t \in J$ are simple reflections such that $s\mi{p} = \mi{p} t$, then $s \in \leftred(p)$ and $t \in \rightred(p)$.

\begin{ex} \label{ex:runningstart} In this example, $W$ is the symmetric group $S_{11}$, and $W_J \cong S_8 \times S_3$.
We set $W_I = S_2 \times S_5 \times S_1 \times S_2 \times S_1$, and let $m$ be the double coset with minimal element depicted below on the left.
\begin{equation} \igm{exampleGrasscosetA} \qquad \igm{exampleGrasscore} \end{equation}
The boxes represent $W_J$ (on the bottom) and $W_I$ (on the top). The red lines on the right indicate simple reflections that were removed from $I$ or $J$ to reach the redundancy. As can be seen, the redundancy keeps track of which strands are in the same box in both bottom and top. \end{ex}

We frequently use the following consequence of Howlett's theorem, see \cite[Lemma 2.12, (2.8)]{EKo}: for any $(I,J)$-coset $p$ 
we have
\begin{equation} \label{eq:mapmip}  \ma{p} = w_I . \mi{p} . (w_{\rightred}^{-1} w_J) = (w_I w_{\leftred}^{-1}) . \mi{p} . w_J. \end{equation}

\subsection{Reduction to Grassmannian pairs} \label{ssec.reductiongrassmannian}

In this section, we state the analog for double cosets of Lemma \ref{lem:reducetoA1} from the introduction.

\begin{thm}\label{thmB} Let $p \subset q$ be a coset pair for $(I,J,s)$. Let 
$\rightq = \rightred(q)$, and let $n$ be the $(\rightq,Js)$-coset whose underlying set is $W_{Js}$. Then there exists an $(I,\rightq)$-coset $z$ and a $(\rightq,J,s)$ coset pair $m \subset n$ such that 
\begin{equation} z . m = p, \qquad z . n = q. \end{equation}
More precisely, $m, n$ and $z$ are determined by 
\begin{equation} \label{mnzstuff} \mi{q}.\mi{m} =\mi{p}, \qquad \mi{n}=\mathrm{id}, \qquad \mi{z} = \mi{q}. \end{equation}
\end{thm}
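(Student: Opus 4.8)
The plan is to construct $z$, $m$, and $n$ directly from the prescription \eqref{mnzstuff}, and then verify the two claimed factorizations using the combinatorics of reduced expressions from \cite{EKo}. So I would first set $v := \mi{q}^{-1}\mi{p}$ and prove that $v \in W_{Js}$ and that $\mi{p} = \mi{q}.v$. Since $p \subseteq q$, the element $\mi{p}$ lies in $q = W_I\mi{q}W_{Js}$; writing $\mi{p} = a\mi{q}b$ with $a \in W_I$, $b \in W_{Js}$ and lengths adding (possible because $\mi{q}$ is the minimal element of the double coset), the factor $a$ must be trivial, for otherwise $a^{-1}\mi{p} = \mi{q}b$ would be a shorter element of the left coset $W_I\mi{p}$, contradicting that $\mi{p}$ is $(I,\emptyset)$-reduced. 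Hence $\mi{p} = \mi{q}b$ with $b \in W_{Js}$, and $\ell(\mi{p}) = \ell(\mi{q}) + \ell(b)$ since $\mi{q}$ is $(\emptyset,Js)$-reduced; I put $v := b$.

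Next I would take $z$ to be the $(I,\rightq)$-coset containing $\mi{q}$, take $n$ to be the $(\rightq,Js)$-coset with underlying set $W_{Js}$, and set $m := W_{\rightq}vW_J$. Then $\mi{n} = \mathrm{id}$ by construction, and $\mi{z} = \mi{q}$ because $\mi{q}$, being $(I,Js)$-reduced with $\rightq \subseteq Js$, is $(I,\rightq)$-reduced. To get $\mi{m} = v$ I need $v$ to be $(\rightq,J)$-reduced: if $vt < v$ for some $t \in J$ then $\mi{p}t = \mi{q}(vt)$ lies in $p$ and has length $\ell(\mi{p})-1$, contradicting minimality of $\mi{p}$ in $p$; and if $rv < v$ for some $r \in \rightq$, then by the definition $\rightq = Js \cap \mi{q}^{-1}I\mi{q}$ we may write $r = \mi{q}^{-1}s\mi{q}$ with $s \in I$, so that $s\mi{p} = \mi{q}(rv)$ has length $\ell(\mi{p})-1$, contradicting that $\mi{p}$ is $(I,\emptyset)$-reduced. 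This yields $\mi{q}.\mi{m} = \mi{q}v = \mi{p}$, and $m \subseteq n$ (since $\rightq,J \subseteq Js$ and $v \in W_{Js}$), so $\{m,n\}$ is indeed a coset pair for $(\rightq,J,s)$.

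It then remains to upgrade these equalities of minimal elements to the factorizations $z.m = p$ and $z.n = q$. For this I would invoke the structural results of \cite{EKo} on when a concatenation of reduced multistep expressions is again reduced: it suffices that the minimal elements multiply reducedly — checked above for $m$, and trivial for $n$ since $\mi{z}.\mi{n} = \mi{q}$ — and that the lengths add. The length identities come out of \eqref{eq:mapmip} and $\ell(p) = 2\ell(\ma{p})-\ell(I)-\ell(J)$: since $\rightq \subseteq \mi{q}^{-1}I\mi{q}$ we get $\rightred(z) = \rightq$, hence $\ma{z} = w_I.\mi{q}$; since $\mi{n} = \mathrm{id}$ and $\rightred(n) = \leftred(n) = \rightq$ we get $\ma{n} = w_{Js}$; and $\ma{q} = w_I.\mi{q}.(w_{\rightq}^{-1}w_{Js})$, from which $\ell(z)+\ell(n) = \ell(q)$ is immediate. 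For $\ell(z)+\ell(m) = \ell(p)$ one uses that $t \mapsto vtv^{-1}$ is a length-preserving bijection $\rightred(p) \to \leftred(m)$: indeed, for $t \in \rightred(p)$ one has $vtv^{-1} = \mi{q}^{-1}s\mi{q}$ for some $s \in I$, and comparing $\ell(\mi{q}\cdot vtv^{-1}) = \ell(\mi{q})+\ell(vtv^{-1})$ (as $vtv^{-1} \in W_{Js}$ and $\mi{q}$ is $(\emptyset,Js)$-reduced) with $\mi{q}\cdot vtv^{-1} = s\mi{q}$, of length $\ell(\mi{q})+1$, forces $\ell(vtv^{-1})=1$; hence $\ell(\leftred(m)) = \ell(\rightred(p)) = \ell(\leftred(p))$ and the identity follows. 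Uniqueness of $z$, $m$, $n$ is then clear, since a double coset with prescribed ambient parabolics is determined by its minimal element, so \eqref{mnzstuff} pins them down.

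The step I expect to be the main obstacle is the appeal to \cite{EKo} in the third paragraph, i.e.\ turning "minimal elements multiply reducedly and lengths add" into "the reduced multistep expressions concatenate to a reduced one"; everything else is manipulation of Howlett's theorem and \eqref{eq:mapmip}. Within that bookkeeping the delicate point is the identification of $\leftred(m)$ with a conjugate of $\rightred(p)$, which is precisely what makes the two length identities balance.
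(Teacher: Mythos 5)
Your construction and verification of $z$, $m$, $n$ from their minimal elements is correct and matches the paper's up to reordering (the paper cites \cite[Lemma 2.15]{EKo} for the decomposition $\mi{p}=\mi{q}.y$ that you re-derive in your first paragraph, and checks $\mi{m}=v$, $\mi{z}=\mi{q}$ in essentially the same way). Your third paragraph, however, rests on a criterion that you attribute vaguely to ``structural results of \cite{EKo}'': that a concatenation of reduced multistep expressions is reduced as soon as the minimal elements multiply reducedly and the coset lengths add. That is not what the paper uses: \cite[Prop.\,4.3]{EKo} is a \emph{maximal}-element criterion, namely $\ma{p} = \ma{z}.(w_{\rightq}^{-1}\ma{m})$, and I do not see your version stated in \cite{EKo}. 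Given $\mi{p}=\mi{z}.\mi{m}$, the length identity $\ell(p)=\ell(z)+\ell(m)$ unwinds via \eqref{eq:mapmip} to $\ell(\leftred(z))+\ell(\leftred(m))=\ell(\rightq)+\ell(\leftred(p))$, whereas the maximal-element criterion unwinds to the set identity $\rightred(m)=\rightred(p)$. These coincide here only because $\rightred(z)=\rightq$ forces the containment $\rightred(m)\subseteq\rightred(p)$ (since $\leftred(m)\subseteq\rightq=\rightred(z)$), whereupon equality of lengths of longest elements of nested parabolics forces equality of sets. In general your criterion would fail without such a containment, so as written this step is a gap: you should either locate the precise statement in \cite{EKo}, or supply the containment and reduce to the paper's criterion.

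The good news is that the missing ingredient is already present in your own argument, and your route to it is genuinely different from the paper's. Where the paper invokes Kilmoyer's theorem ($\rightq = W_{Js}\cap\mi{q}^{-1}I\mi{q}$) to show $\rightred(m)=\rightred(p)$, you instead prove directly that $t\mapsto vtv^{-1}$ sends $\rightred(p)$ into a set of \emph{simple} reflections, by the clever observation that $\mi{q}\cdot vtv^{-1}=s\mi{q}$ with $s\in I$, together with $\mi{q}$ being $(\emptyset,Js)$-reduced and $(I,\emptyset)$-reduced, forces $\ell(vtv^{-1})=1$. This lands in $\leftred(m)=v\,\rightred(m)\,v^{-1}$, so your bijection is exactly the statement $\rightred(m)=\rightred(p)$ in disguise. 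If you make that explicit and then verify $\ma{p}=\ma{z}.(w_{\rightq}^{-1}\ma{m})$ (a short computation using \eqref{eq:mapmip}, $\ma{z}=w_I.\mi{q}$, and $\rightred(m)=\rightred(p)$) so that \cite[Prop.\,4.3]{EKo} applies, the proof is complete — and, because the conjugation argument bypasses Kilmoyer's theorem, it is a pleasant self-contained variant of the paper's proof.
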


We give more properties of $m,n$, and $z$ in Proposition \ref{thmBconti}. First, we discuss what makes the pair $m \subset n$ manageable: it is a Grassmannian pair.

\begin{notation} \label{notation:grassmannian} Let $m \subset n$ be an $(I,J,s)$ coset pair. We call it a \emph{(right) Grassmannian pair} if $I \subset Js$ and $n$ is the $(I,Js)$-coset containing the identity. 
\end{notation} 

To elaborate, for a Grassmannian pair, the underlying set of $n$ is the entire parabolic group $W_{Js}$, and $m \subset W_{Js}$. By \cite[Lemma 5.21]{EKo}, any reduced expression for $m$ or $n$ only involves subsets of $Js$. For all practical purposes, we can assume that $S = Js$ (which is finitary) for the purpose of studying $m \subset n$. In this case, $J \subset Js$ is a maximal parabolic subgroup. When $W_{Js}$ is the symmetric group, any minimal representative for a coset $W_{Js} / W_J$ is a Grassmannian permutation. In particular, $\mi{m}$ is a Grassmannian permutation, which motivated our choice of name.

\begin{ex} \label{ex:runningpq}  This is an example within the symmetric group $S_{21}$, where
\[ W_I = S_3 \times S_7 \times S_1 \times (S_2)^{\times 4} \times S_1 \times S_1, \qquad W_J = S_4 \times S_8 \times S_3 \times (S_1)^{\times 6}. \]
The simple reflection $s = s_{12}$ merges $S_8 \times S_3$ into $S_{11}$. The $(I,J,s)$ coset pair $p \subset q$ is below.
\begin{equation} p = \igm{examplecosetp} \; \subset \; q = \igm{examplecosetq} \end{equation}

Below we factor these cosets as $p = z.m$ and $q = z.n$. The parabolic subgroup in the middle is $W_{\rightq}$ for $\rightq = \rightred(q)$.
\begin{equation} \igm{examplecosetpfactor} \; \subset \; \igm{examplecosetqfactor} \end{equation}
In both pictures, the coset above the middle is $z$, while the coset below is $m$ on the left, and $n$ on the right. Note that $\mi{n} = \id$. Meanwhile, $\mi{m}$ is not the identity, but it is the identity away from the most interesting $11$ strands.

In \cite[Section 4.10]{EKo}, techniques are given for discussing double cosets in products of Coxeter systems. One can view both $m$ and $n$ as an ``external product'' of double cosets $m_C \subset n_C$ for each connected component $C$ of $Js$. For only one of these connected components is there a difference between $m_C$ and $n_C$, namely the component $C_s$ containing $s$. On this component, the coset $m_{C_s}$ agrees with the coset in Example \ref{ex:runningstart}. It is $m_{C_s}$ which is a Grassmannian permutation in the usual sense of the name.

For other connected components, the cosets $m_C = n_C$ contain the identity but need not be trivial (i.e. length zero). For example, if $C$ is the leftmost connected component generating $S_4$ then $m_C=n_C$ has a reduced expression $[\{s_2\}+s_1+s_3]$.
Thus both $m$ and $n$ have a reduced expression beginning with $[\rightq + s_1 + s_3]$, merging the initial $S_1 \times S_2 \times S_1$ into $S_4$.
\end{ex}


The additional facts given in the next proposition are needed in the construction of the double leaves basis \S\ref{sec:LightLeavesConstruction}.

\begin{prop} \label{thmBconti} Let us continue with the terminology of  Theorem \ref{thmB}. 
The coset $n$ has a reduced expression $[[\rightq \subset Js]]$, and $\ma{n} = w_{Js}$. The coset $z$ satisfies $\ma{z} = w_I . \mi{z}$.
We have
\begin{equation} \label{eq:pqmnredundancies} \leftred(n) = \rightred(n) = \rightred(z) = \rightred(q) = \rightq, \qquad \rightred(m) = \rightred(p). \end{equation}
The left redundancy of $p$ lives inside $I$, while the left redundancy of $m$ lives inside $\rightq$; instead of being equal, these redundancies are conjugate:
\begin{equation} \label{eq:leftredn}\leftred(m) = \mi{q}^{-1} \leftred(p) \mi{q}. \end{equation}
\end{prop}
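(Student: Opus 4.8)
The plan is to obtain the four redundancy sets equal to $\rightq$, together with $\ma{n}$, $\ma{z}$ and the reduced expression for $n$, by direct computation, and then to deduce the two ``conjugation'' identities ($\rightred(m)=\rightred(p)$ and \eqref{eq:leftredn}) from a single claim resting on Howlett's theorem.

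\emph{Direct part.} By definition $n$ has underlying set $W_{Js}$, so $\ma{n}=w_{Js}$; since moreover $\ell(n)=\ell(Js)-\ell(\rightq)=\ell([[\rightq\subset Js]])$, comparing maximal elements and lengths shows $[[\rightq\subset Js]]\expr n$. From $\mi{n}=\id$ (see \eqref{mnzstuff}) one reads off $\leftred(n)=\rightq\cap\mi{n}(Js)\mi{n}^{-1}=\rightq\cap Js=\rightq$ and $\rightred(n)=Js\cap\mi{n}^{-1}\rightq\,\mi{n}=\rightq$, while $\rightred(q)=\rightq$ is the definition of $\rightq$. Since $\mi{z}=\mi{q}$ by \eqref{mnzstuff}, we get $\rightred(z)=\rightq\cap\mi{q}^{-1}I\mi{q}$, which is all of $\rightq$ because $\rightq=\rightred(q)=Js\cap\mi{q}^{-1}I\mi{q}\subseteq\mi{q}^{-1}I\mi{q}$. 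Feeding $\rightred(z)=\rightq$ into \eqref{eq:mapmip} for the $(I,\rightq)$-coset $z$ makes the correction term $w_{\rightred(z)}^{-1}w_{\rightq}=w_{\rightq}^{-1}w_{\rightq}$ trivial, so $\ma{z}=w_{I}.\mi{z}$.

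\emph{The claim and its consequences.} I claim: whenever $s\in I$ and $t\in J$ satisfy $s\mi{p}=\mi{p}t$, the element $r:=\mi{q}^{-1}s\mi{q}$ — which equals $\mi{m}t\mi{m}^{-1}$ because $\mi{p}=\mi{q}.\mi{m}$ by \eqref{mnzstuff} — lies in $\rightq$. Both remaining identities then follow by unwinding definitions. For $\rightred(m)=\rightred(p)$: the inclusion $\rightred(m)\subseteq\rightred(p)$ uses only the definition of $\rightq$ (if $\mi{m}t\mi{m}^{-1}\in\rightq$ then $\mi{p}t\mi{p}^{-1}=\mi{q}(\mi{m}t\mi{m}^{-1})\mi{q}^{-1}\in I$), and applying the claim to $s=\mi{p}t\mi{p}^{-1}$ gives the reverse inclusion. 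For \eqref{eq:leftredn}: the inclusion $\leftred(m)\subseteq\mi{q}^{-1}\leftred(p)\mi{q}$ again uses only the definition of $\rightq$, while applying the claim to $t=\mi{p}^{-1}s\mi{p}$ (and noting $\mi{m}^{-1}(\mi{q}^{-1}s\mi{q})\mi{m}=t\in J$) gives $\mi{q}^{-1}\leftred(p)\mi{q}\subseteq\leftred(m)$.

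\emph{Proof of the claim, and the main obstacle.} First $r=\mi{m}t\mi{m}^{-1}$ lies in $W_{Js}$, since $\mi{m}\in W_{Js}$ (as $m\subseteq n$ and the underlying set of $n$ is $W_{Js}$) and $t\in J\subseteq Js$. At the same time $r=\mi{q}^{-1}s\mi{q}\in\mi{q}^{-1}W_I\mi{q}$, so Howlett's theorem applied to the $(I,Js)$-coset $q$ (cf.\ \cite[(2.8)]{EKo}) gives $r\in W_{Js}\cap\mi{q}^{-1}W_I\mi{q}=W_{\rightred(q)}=W_{\rightq}$. Now $\mi{m}$ is the minimal-length element of the $(\rightq,J)$-double coset $m$, hence left $\rightq$-reduced, so $\ell(r\mi{m})=\ell(r)+\ell(\mi{m})$; but $r\mi{m}=\mi{m}t$ with $\ell(\mi{m}t)=\ell(\mi{m})+1$ (as $\mi{m}$ is right $J$-reduced), so $\ell(r)=1$. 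Thus $r$ is a simple reflection lying in $W_{Js}$, hence $r\in Js$, and together with $\mi{q}r\mi{q}^{-1}=s\in I$ this yields $r\in Js\cap\mi{q}^{-1}I\mi{q}=\rightred(q)=\rightq$. I expect this claim to be the only genuine obstacle: the crucial point is that Howlett's theorem is exactly what promotes ``$r\in W_{Js}$'' to ``$r\in W_{\rightq}$'', after which the length count and the definition-chasing for $\rightred(m)=\rightred(p)$ and \eqref{eq:leftredn} are routine.
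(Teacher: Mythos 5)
Your proof is correct, and the ``direct part'' (the identity for $\ma{n}$, the reduced expression for $n$, $\ma{z}=w_I.\mi{z}$, and the four equalities with $\rightq$) is essentially the same as the paper's. For the two conjugation identities your route is genuinely different. The paper quotes the version of Kilmoyer's theorem that asserts $\rightq = Js \cap \mi{q}^{-1}I\mi{q} = W_{Js}\cap \mi{q}^{-1}I\mi{q}$ (i.e.\ intersecting the \emph{set} $\mi{q}^{-1}I\mi{q}$ with either $Js$ or with the whole parabolic $W_{Js}$ gives the same thing), and then produces $\rightred(m)=\rightred(p)$ as a one-line chain of set equalities followed by conjugation by $\mi{m}$; it then derives \eqref{eq:leftredn} from $\rightred(m)=\rightred(p)$ in one step using $\leftred=\mi{p}\,\rightred\,\mi{p}^{-1}$. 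You instead isolate a clean ``transfer claim'' — whenever $s\mi{p}=\mi{p}t$ with $s\in I$, $t\in J$, the common conjugate $r$ lands in $\rightq$ — and deduce both identities from it by inclusion-chasing. Your proof of the claim invokes only the parabolic-subgroup form of Kilmoyer ($W_{Js}\cap\mi{q}^{-1}W_I\mi{q}=W_{\rightq}$) together with a length count ($\mi{m}$ is $(\rightq,J)$-minimal, hence $\ell(r\mi{m})=\ell(r)+\ell(\mi{m})=\ell(\mi{m})+1$, forcing $\ell(r)=1$), which amounts to re-deriving the stronger $W_{Js}\cap\mi{q}^{-1}I\mi{q}=\rightq$ that the paper simply cites. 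So your argument is more self-contained and makes visible \emph{why} the conjugate stays simple, at the cost of length. Two small remarks: you attribute $W_{Js}\cap\mi{q}^{-1}W_I\mi{q}=W_{\rightq}$ to Howlett via \cite[(2.8)]{EKo}, but that equation is the max/min factorization \eqref{eq:mapmip}; the correct citation is Kilmoyer's theorem, \cite[Lemma 2.14]{EKo}. Also, once you know $r\in W_{\rightq}$ and $\ell(r)=1$, you already have $r\in\rightq$ (a length-one element of a standard parabolic is one of its simple generators), so the final detour through $r\in Js$ and the definition of $\rightred(q)$ is unnecessary, though harmless.
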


\subsection{Proof of reduction to Grassmannian pairs}\label{ssec:proofGrass}

Several times in the proof we tacitly use that $x \in W$ is minimal in its $(I,J)$-coset if and only if its left descent set $\leftdes(x)$ intersects $I$ in the empty set, and its right descent set $\rightdes(x)$ intersects $J$ in the empty set (see \cite[Lemma 2.12(5)]{EKo}).

\begin{proof}[Proof of Theorem \ref{thmB} and Proposition \ref{thmBconti}] That $\ma{n} = w_{Js}$ and that $[[\rightq \subset Js]]$ is a reduced expression for $n$ follow immediately from the definitions.  Since $\mi{n}$ is the identity and $\rightq \subset Js$, it is obvious that $\leftred(n) = \rightred(n) = \rightq$.

Let $\leftq = \leftred(q)$. Let $z$ be the $(I,\rightq)$-coset containing $\mi{q}$. Then $\mi{q}$ is minimal in $z$, because its left descent does not intersect $I$, and its right descent does not intersect $Js$, and $\rightq \subset Js$. So $\mi{q} = \mi{z}$. Therefore we have
\begin{equation} \rightred(z) = \rightq \cap \mi{z}^{-1} I \mi{z}= (\rightq \cap Js) \cap \mi{q}^{-1} I \mi{q} = \rightq \cap (Js \cap \mi{q}^{-1} I \mi{q}) = \rightq \cap \rightq = \rightq. \end{equation}
By \eqref{eq:mapmip} we deduce that
\begin{equation} \label{eq:mazIs} \ma{z} = w_I.\mi{z}.(w_{\rightq}^{-1} w_{\rightq}^{\phantom{-1}}) = w_I . \mi{z}. \end{equation}

Let $n$ be the $(\rightq,Js)$-coset containing $\id$, with underlying set $W_{Js}$. Using \eqref{eq:mapmip} and $\mi{z} = \mi{q}$ and \eqref{eq:mazIs} we have
\begin{equation} \ma{q} = w_I . \mi{q} . (w_{\rightq}^{-1} w_{Js}) = (w_I . \mi{z}) . (w_{\rightq}^{-1} w_{Js}) = \ma{z} . (w_{\rightq}^{-1} w_{Js}).\end{equation} 
By \cite[Proposition 4.3] {EKo}
, this implies $q = z.n$.

By \cite[Lemma 2.15]{EKo}, there exists $y \in W_{Js}$ such that 
\begin{equation} \mi{p} = \mi{q} . y. \end{equation}
We claim that $y$ is minimal in its $(\rightq,J)$-coset. 
Note that $\rightdes(y) \subset \rightdes(\mi{p})$, see \cite[Lemmas 2.3 and 2.5]{EKo}. But $\rightdes(\mi{p}) \cap J = \mt$, and thus the same is true for $y$. (In particular, $\rightdes(y) \subset \{s\}$.) Now we show that $\leftdes(y) \cap \rightq = \mt$.
If $t \in \leftdes(y) \cap \rightq$ then 
\begin{equation} \mi{q} t \mi{q}^{-1} \in \leftdes(\mi{q}. y) \cap \mi{q} \rightq \mi{q}^{-1} = \leftdes(\mi{p}) \cap \leftq. \end{equation}
But $\leftdes(\mi{p}) \cap \leftq$ is empty, since $\leftq \subset I$ and $\mi{p}$ is minimal in its $(I,J)$-coset.

Let $m$ be the $(\rightq,J)$-coset containing $y$. Since $y \in W_{Js}$ and $\rightq, J \subset Js$, we have $m \subset W_{Js} = n$. By the previous paragraph, we have $\mi{m} = y$. We claim that $z.m = p$. By \cite[Proposition 4.3] {EKo}, it suffices to prove the equality
\begin{equation} \label{eq:desiredmap} \ma{p} = \ma{z} . (w_{\rightq}^{-1} \ma{m}). \end{equation}
Using \eqref{eq:mazIs} to rewrite $\ma{z}$ and \eqref{eq:mapmip} to rewrite $\ma{p}$ and $w_{\rightq}^{-1} \ma{m}$, \eqref{eq:desiredmap} is equivalent to
\begin{equation} w_I . \mi{p} . (w_{\rightred(p)}^{-1} w_J) = w_I . \mi{q} . \mi{m} . (w_{\rightred(m)}^{-1} w_J). \end{equation}
Since $\mi{p} = \mi{q}.\mi{m}$, it remains to prove that
\begin{equation} w_{\rightred(p)}^{-1} w_J = w_{\rightred(m)}^{-1} w_J. \end{equation}
This is equivalent to $\rightred(m) = \rightred(p)$, which states that
\begin{equation} \label{thisoneisthat} J \cap \mi{m}^{-1} \rightq \mi{m} = J \cap (\mi{m}^{-1} \mi{q}^{-1} I \mi{q} \mi{m}). \end{equation}

Kilmoyer's theorem (see \cite[Lemma 2.14]{EKo}) states that \begin{equation} \rightq = Js \cap \mi{q}^{-1} I \mi{q} = W_{Js} \cap \mi{q}^{-1} I \mi{q}. \end{equation}
Thus 
\begin{equation} \label{thisoneimpliesthat} \mi{m} J \mi{m}^{-1} \cap \rightq \;\; = \;\; \mi{m} J \mi{m}^{-1} \cap W_{Js} \cap \mi{q}^{-1} I \mi{q} \;\; = \;\; \mi{m} J \mi{m}^{-1} \cap \mi{q}^{-1} I \mi{q}. \end{equation}
The last equality holds since $\mi{m} J \mi{m}^{-1}$ is already contained in $W_{Js}$. Conjugating by $\mi{m}$, \eqref{thisoneimpliesthat} implies \eqref{thisoneisthat}.

It remains to prove \eqref{eq:leftredn}. Above we proved that $\rightred(p) = \rightred(m)$. Thus,
\begin{equation} \leftred(p) = \mi{p} \rightred(p) \mi{p}^{-1} = \mi{q} \mi{m} \rightred(m) \mi{m}^{-1} \mi{q}^{-1} = \mi{q} \leftred(m) \mi{q}^{-1}, \end{equation}
as desired.
\end{proof}


\subsection{The core of a double coset}

To place Theorem \ref{thmB} in context, we recall the notion of the core. Let $p$ be an $(I,J)$-coset with left redundancy $\leftred$ and right redundancy $\rightred$. In \cite[Proposition 4.28]{EKo} it is proven that 
\begin{equation} p = [[I \supset \leftred]] \; . \;p^{\core} \; . \; [[\rightred \subset J]] \end{equation}
for some $(\leftred,\rightred)$-coset called the \emph{core} of $p$. An illustrative example is below.

Said another way, $p$ has a singlestep reduced expression
\begin{equation} \label{rexthrucoreintro} [I, \ldots, \leftred, \ldots, \rightred, \ldots, J] \end{equation}
which begins by removing the elements of $I \setminus \leftred$ one by one and ends by adding the elements of $J \setminus \rightred$. The internal subword $[\leftred, \ldots, \rightred]$ is a reduced expression for $p^{\core}$. 
By \cite[Def. 4.26 and Prop. 4.28]{EKo} we have that $\leftred(p^{\core}) = \leftred$, $\rightred(p^{\core}) = \rightred$, and $\mi{p} = \mi{p^{\core}}$.

One also has
\begin{equation} \label{maxqcore} \ma{p^{\core}} = w_{\leftred} . \mi{p^{\core}}  = \mi{p^{\core}}. w_{\rightred}, \end{equation}
\begin{equation}  \ma{p} = (w_I w_{\leftred}^{-1}) . \ma{p^{\core}} . (w_{\rightred}^{-1} w_J). \end{equation}
The equalities above are not found in \cite{EKo} but can be proven quickly with \eqref{eq:mapmip}.

Returning to the setup of Theorem \ref{thmB}, where $n$ has reduced expression $[[\rightq \subset Js]]$, it is easy to deduce that
\begin{equation} q = [[I \supset \leftq]] \; . \; q^{\core} \; . \; n, \quad z = [[I \supset \leftq]] \; . \; q^{\core}. \end{equation}
To conclude, the factorization of the coset pair $p \subset q$ into $z$ times a Grassmannian coset pair $m \subset n$ is compatible with the reduced expression of $q$ which factors through its core. The existence of reduced expressions factoring through the core will also play a crucial role in the construction of light leaves in \S\ref{sec:LightLeavesConstruction}.

\begin{ex} \label{ex:coreA} We continue \Cref{ex:runningstart}.  Recall that $m$ is the double coset with minimal element depicted below on the left.
\begin{equation} \igm{exampleGrasscosetA} \qquad \igm{exampleGrasscore} \end{equation}
The picture on the right represents $m^{\core}$.

A reduced expression for $m^{\core}$ is (see Remark \ref{teaser})
\begin{equation} \label{expressionforpcoretypeA} m^{\core} \expr [\leftred(p) + s_2 - s_4 + s_7 - s_6 + s_5 - s_5 + s_8 - s_8 + s_6 - s_7 + s_{10} - s_9 + s_8 - s_8]. \end{equation}
Thus a reduced expression for $m$ is
\begin{align} \nonumber m \expr & [I - s_1 - s_5] \circ \\
\nonumber  & [+ s_2 - s_4 + s_7 - s_6 + s_5 - s_5 + s_8 - s_8 + s_6 - s_7 + s_{10} - s_9 + s_8 - s_8] \circ  \\ & [+ s_1 + s_4 + s_5 + s_7 + s_9]. \end{align}
\end{ex}

\begin{rem}\label{teaser} In a subsequent paper \cite{EKLP3} we explain how to find reduced expressions for core cosets in type $A$ (see also \cite[Section 3]{paper6}). Here is a teaser. For the coset $m^{\core}$ from Example \ref{ex:coreA}, we can express it as a reduced composition of other cosets as follows (colored for emphasis).
\begin{equation} \igm{exampleGrasscoreColor} \qquad \igm{exampleGrasscorerex} \end{equation}
Each crossing of two groups of strands becomes an instance of $[+ s_i - s_j]$ in \eqref{expressionforpcoretypeA}, where $i$ locates the top of the crossing and $j$ locates the bottom.
\end{rem}

 \section{The Hecke algebroid}\label{HA}

Most results in this chapter come directly from Geordie Williamson's thesis \cite{Wthesis}, which was adapted to the article \cite{SingSb}. The thesis contains some additional exposition and useful details which are omitted in the later article. We refer to \cite{SingSb} whenever possible. Starting with Definition \ref{defn:singulardefect}, the material is new.

 \subsection{Basics and bases}\label{Basics}

Let $H$ be the Hecke algebra of the Coxeter system $(W,S)$ with standard basis $\{h_w\, \vert \, w\in W\}$ and Kazhdan--Lusztig basis $\{b_w\, \vert \, w\in W\}$ over the ring $\Z[v,v^{-1}]$. For finitary $I\subseteq S$ recall that $w_I$ is the longest element of the parabolic subgroup $W_I.$ We define two polynomials in $\Z[v,v^{-1}]$ associated to $I$.

\begin{equation}\label{Poincpoly}\pi^+(I):=\sum_{w\in W_I}v^{2 \ell(w)},\qquad \pi(I):=v^{-\ell(w_I)}\pi^+(I).\end{equation}
If $I\subset K$ we have  \cite[Eq 2.9]{SingSb} \begin{equation} \label{bIsquared} b_{w_I}b_{w_K}=\pi(I)b_{w_K}.\end{equation}

Define the $\Z[v,v^{-1}]$-module $$\leftup{I}H^J:=b_{w_I}H\cap Hb_{w_J}.$$ If $p$ is an $(I,J)$-coset, define
$$h_p = \leftup{I}h_p^J:=\sum_{x\in p}v^{\ell(\overline{p})-\ell(x)}h_x.$$
Then $h_p \in \leftup{I}H^J$. Remember that the notation $p$ is shorthand for the triple $(p, I, J)$, and henceforth we only write the superscripts $I$ and $J$ for emphasis. If $p$ is an $(I,J)$-coset with maximal element $\ma{p}$, then the ordinary Kazhdan--Lusztig basis element $b_{\ma{p}}$ is also in $\leftup{I}H^J$. We denote it as
$$ b_p = \leftup{I}b_p^J := b_{\ma{p}}.$$

The \emph{Hecke algebroid} $\HA$ is the $\mathbb{Z}[v,v^{-1}]-$linear category defined as follows. The objects are finitary subsets $I\subseteq S$. The set of morphisms from $J$ to $I$, denoted $\HA(J,I)$, is the module $\leftup{I}H^J$. Composition $\leftup{I}H^J\times \leftup{J}H^K\rightarrow \leftup{I}H^K$  sends the pair $(h_1,h_2)$ to 
\begin{equation} h_1*_Jh_2:=\frac{1}{\pi(J)}h_1h_2.\end{equation}
Note that $b_{w_I}$ is the identity map in $\HA(I,I)$, by \eqref{bIsquared}.

The $\mathbb{Z}[v,v^{-1}]-$module $\leftup{I}H^J$ also has two bases. 
The \emph{standard basis} of $\leftup{I}H^J$ is the set $\{h_p \}$ and the \emph{Kazhdan--Lusztig basis} is the set $\{b_p\}$, as $p$ ranges amongst all $(I,J)$-cosets. 


\begin{rem} Unlike the standard basis of the ordinary Hecke algebra, standard basis elements $h_p$ are not invertible. As yet, there is no reasonable notion of a ``singular braid group.'' \end{rem}

\subsection{Generators of the Hecke algebroid}


Suppose that $I \subset J$. If $p_{\id}$ is the $(I,J)$-coset containing the identity then $\ma{p_{\id}} = w_J$ and $h_{p_{\id}} = b_{p_{\id}}.$ The same statement holds if $p_{\id}$ is the $(J,I)$-coset containing the identity. We define
\begin{equation} \leftup{I}\genha^J  :=\leftup{I}h_{p_{\id}}^J, \qquad \leftup{J}\genha^I :=\leftup{J}h_{p_{\id}}^I.\end{equation}

The elements $\leftup{I}\genha^{Is}$ and $\leftup{Is}\genha^I$ are generators of the Hecke algebroid, similar to the Kazhdan--Lusztig generators $b_s$ of the Hecke algebra. Here is notation for iterated products of these generators, which matches the fact that they will be categorified by singular Bott--Samelson bimodules.

\begin{notation} Let $I_{\bullet} = [I_0, \ldots, I_d]$ be a singlestep expression. Then
\begin{equation} \habs(I_{\bullet}) = \leftup{I_0}\genha^{I_1} *_{I_1} \leftup{I_1} \genha^{I_2} *_{I_2} \cdots *_{I_{d-1}} \leftup{I_{d-1}}\genha^{I_d}. \end{equation}
Similarly, when $L_{\bullet} = [[I_0 \subset K_1 \supset \cdots \subset K_m \supset I_m]]$ is a multistep expression, then
\begin{equation} \label{habsLbullet1} \habs(L_{\bullet}) = \leftup{I_0}\genha^{K_1} *_{K_1} \leftup{K_1}\genha^{I_1} *_{I_1} \cdots *_{K_m} \leftup{K_m}\genha^{I_m}.\end{equation}
\end{notation}

\begin{lem} When  $L_{\bullet} = [[I_0 \subset K_1 \supset \cdots \subset K_m \supset I_m]]$ is a multistep expression, we have
\begin{equation} \label{habsLbullet2} \habs(L_{\bullet}) = b_{K_1} *_{I_1} b_{K_2} *_{I_2} \cdots *_{I_{m-1}} b_{K_m}, \end{equation}
where we write $b_K$ for $b_{w_K}$.
\end{lem}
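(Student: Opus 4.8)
The plan is to start from the definition \eqref{habsLbullet1} and collapse the singlestep generators within each ``hill'' $[[I_{k-1} \subset K_k \supset I_k]]$ into a single Kazhdan--Lusztig element $b_{K_k}$, keeping track of the $\pi$-normalizations. First I would treat the basic case of a single ascent followed by a single descent: for $I \subset K \supset I'$, I claim $\leftup{I}\genha^{K} *_{K} \leftup{K}\genha^{I'} = b_{w_K}$, now regarded as an element of $\leftup{I}H^{I'}$. Indeed $\leftup{I}\genha^{K} = b_{p_{\id}} = b_{w_K}$ as an element of $\leftup{I}H^{K}$ and likewise $\leftup{K}\genha^{I'} = b_{w_K}$ in $\leftup{K}H^{I'}$, so by definition of $*_K$ this product is $\tfrac{1}{\pi(K)} b_{w_K} b_{w_K}$. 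Applying \eqref{bIsquared} with $I = K$ (so that $b_{w_K} b_{w_K} = \pi(K) b_{w_K}$) gives exactly $b_{w_K}$.

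Next I would chain these together. Write $L_\bullet = [[I_0 \subset K_1 \supset I_1 \subset K_2 \supset \cdots \subset K_m \supset I_m]]$ and group the factors in \eqref{habsLbullet1} as
\begin{equation}
\habs(L_{\bullet}) = \bigl(\leftup{I_0}\genha^{K_1} *_{K_1} \leftup{K_1}\genha^{I_1}\bigr) *_{I_1} \bigl(\leftup{I_1}\genha^{K_2} *_{K_2} \leftup{K_2}\genha^{I_2}\bigr) *_{I_2} \cdots *_{I_{m-1}} \bigl(\leftup{I_{m-1}}\genha^{K_m} *_{K_m} \leftup{K_m}\genha^{I_m}\bigr),
\end{equation}
which is legitimate since $*$ is associative (composition in the category $\HA$). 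By the basic case each parenthesized block equals $b_{K_k}$, viewed in $\leftup{I_{k-1}}H^{I_k}$, and substituting yields
\begin{equation}
\habs(L_{\bullet}) = b_{K_1} *_{I_1} b_{K_2} *_{I_2} \cdots *_{I_{m-1}} b_{K_m},
\end{equation}
which is \eqref{habsLbullet2}. I should double-check the edge conventions: when $I_0 = K_1$ the first generator $\leftup{I_0}\genha^{K_1}$ is $\leftup{K_1}\genha^{K_1} = b_{w_{K_1}}$, the identity of $\HA(K_1,K_1)$, so the block still collapses correctly to $b_{K_1}$, and similarly at the right end when $I_m = K_m$; these degenerate hills do not affect the computation.

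The main obstacle is essentially bookkeeping rather than any deep point: one must be careful that $\leftup{I}\genha^{K}$ really does equal $b_{w_K}$ as an element of $\leftup{I}H^{K}$ (this is the remark in the text that $\ma{p_{\id}} = w_J$ and $h_{p_{\id}} = b_{p_{\id}}$ when $I \subset J$, applied with $J = K$), and that the identification is compatible with the two different ambient modules $\leftup{I}H^{K}$ and $\leftup{K}H^{I'}$ into which $b_{w_K}$ is being placed — both are just $b_{w_I'}H \cap H b_{w_I''}$-type submodules of $H$ containing $b_{w_K}$, so the underlying element of $H$ is unchanged and the product $b_{w_K} b_{w_K}$ is computed inside $H$. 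Once that is pinned down, the only arithmetic is the single application of \eqref{bIsquared}, and the result follows by induction on $m$ (or simply by the associative regrouping above).
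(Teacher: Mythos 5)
Your proposal is correct and follows essentially the same argument as the paper: identify each factor $\leftup{I_{i-1}}\genha^{K_i}$ and $\leftup{K_i}\genha^{I_i}$ with $b_{K_i}$, collapse each ``hill'' to $b_{K_i}$ via \eqref{bIsquared}, and regroup using associativity. The paper's proof is just a more terse version of yours (it leaves the $\tfrac{1}{\pi(K)} b_{w_K} b_{w_K} = b_{w_K}$ step and the edge cases implicit), so your extra detail is a faithful unpacking rather than a different route.
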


\begin{proof} For any $1 \le i \le m$ we have $\leftup{I_{i-1}}\genha^{K_i} = \leftup{K_i}\genha^{I_i} = b_{K_i}$. Thus 
$$ \leftup{I_{i-1}}\genha^{K_i} *_{K_i} \leftup{K_i}\genha^{I_i} = b_{K_i}.$$
Applying this relation many times within \eqref{habsLbullet1} we obtain \eqref{habsLbullet2}.
\end{proof} 

\subsection{The singular Deodhar formula}\label{singdeodhar}

Now we enunciate the key equations in the Hecke algebroid that lead to the singular Deodhar formula, and which will be categorified by the graded degrees of single-step singular light leaves. The following is \cite[Prop. 2.8]{SingSb}.

\begin{prop}\label{singDeodhar}
Let $I,J,K\subseteq S$ be finitary with $J \subset K$.
\begin{enumerate}
    \item When right-multiplying by $\leftup{K}\genha^{J}$ we have
\begin{equation} \leftup{I}h_q^K*_K \leftup{K}\genha^J=\sum_{p \subset q} v^{\ell(\overline{q})-\ell(\overline{p})}\ \leftup{I}h_p^J.\end{equation}
The sum is over all $(I,J)$-cosets $p$ contained in $q$.
    \item When right-multiplying by $\leftup{J}\genha^{K}$ we have
\begin{equation} \label{rightmultgoup} \leftup{I}h_p^J*_J \leftup{J}\genha^K= v^{\ell(\underline{q})-\ell(\underline{p})}\frac{\pi(\leftred(q))}{\pi(\leftred(p))}\ \leftup{I}h_q^K, \end{equation} where $q$ is the $(I,K)$-coset containing $p$.
\end{enumerate}
\end{prop}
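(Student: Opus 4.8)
The plan is to prove Proposition \ref{singDeodhar} by direct computation in the Hecke algebra, expanding everything in the standard basis $\{h_x\}$ and comparing coefficients. The two parts are somewhat independent, so I would treat them separately, though both rely on understanding how the element $h_{w_J}$ (which is $b_{w_J}$, and represents $\leftup{K}\genha^J$ or $\leftup{J}\genha^K$ up to the normalization by $\pi(J)$) interacts with the coset sums $h_p$. The key combinatorial input is the decomposition of cosets: every $(I,K)$-coset $q$ with $J \subset K$ breaks up as a disjoint union $q = \bigsqcup_{p \subset q} p$ of $(I,J)$-cosets, and each such $p$ has its own maximal and minimal elements, redundancies, etc. I would want the length identities from \eqref{eq:mapmip} and the formula $\ell(p) = 2\ell(\ma{p}) - \ell(I) - \ell(J)$ close at hand throughout.

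For part (1), I would start from $\leftup{I}h_q^K *_K \leftup{K}\genha^J = \frac{1}{\pi(K)} h_q \cdot b_{w_J}$. Here I use that $\leftup{K}\genha^J = b_{w_J}$ (as noted just before the proposition, since the $(K,J)$-coset containing the identity has $h = b$), and that the composition in $\HA$ divides by $\pi(K)$. Wait — more carefully, $h_q \in \leftup{I}H^K$, and right-multiplication should land in $\leftup{I}H^J$. Since $h_q b_{w_K} = $ (something proportional to $h_q$, as $h_q$ already lies in $H b_{w_K}$), the real content is computing $h_q b_{w_J}$. I would expand $b_{w_J} = \sum_{y \in W_J} v^{\ell(w_J) - \ell(y)} h_y$ (this uses that $W_J$ is finitary so $b_{w_J}$ has this clean standard-basis expansion — from Kazhdan–Lusztig theory for finite parabolic quotients, or cite it from \cite{SingSb} or \cite{EKo}) and then $h_q = \sum_{x \in q} v^{\ell(\ma q) - \ell(x)} h_x$. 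The product $h_x h_y$ for $x$ in a coset and $y \in W_J$: since each $x \in q$ can be written uniquely as $x = \mi{x_p} . y'$ relative to its sub-$(I,J)$-coset structure, multiplication by elements of $W_J$ permutes things within each $p \subset q$, and the point is that $\sum_{x \in p} v^{\ell(\ma p) - \ell(x)} h_x$ times $b_{w_J}$ reproduces (a scalar times) $h_p$. The scalar bookkeeping should collapse to $v^{\ell(\ma q) - \ell(\ma p)}$ after dividing by $\pi(K)$ and noting $\pi(K)$ also appears from $b_{w_J} b_{w_K}$-type relations — this is the step I would be most careful about.

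For part (2), I would similarly write $\leftup{I}h_p^J *_J \leftup{J}\genha^K = \frac{1}{\pi(J)} h_p b_{w_K}$, and the target is a single term $\leftup{I}h_q^K$ where $q \supset p$ is the unique $(I,K)$-coset containing $p$. The new feature is the ratio $\pi(\leftred(q))/\pi(\leftred(p))$ and the length shift $v^{\ell(\mi q) - \ell(\mi p)}$. I would expand $b_{w_K}$ in the standard basis over $W_K$, multiply $h_p$ on the right, and regroup over the cosets $p' \subset q$ — by part (1)-type reasoning the product is a linear combination of $h_{p'}$, but since we are multiplying by $b_{w_K}$ rather than by a coset sum, all coefficients should be equal and the result is a scalar multiple of $h_q = \sum_{p' \subset q} v^{\ell(\ma q) - \ell(\ma{p'})} h_{p'}$; matching the coefficient of the top term $h_{\ma q}$ or of $h_{\mi q}$ pins down the scalar. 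To extract the exact scalar I would use Howlett's/Deodhar's length formulas: $\ell(\ma p) = \ell(w_I) + \ell(\mi p) + \ell(w_J) - \ell(w_{\rightred(p)})$ from \eqref{eq:mapmip}, and the analogous formula for $q$, so that $\ell(\ma q) - \ell(\ma p) = \ell(w_K) - \ell(w_J) + (\ell(\mi q) - \ell(\mi p)) + \ell(w_{\rightred(p)}) - \ell(w_{\rightred(q)})$, and then convert $\rightred$ to $\leftred$ via conjugation by minimal elements (which preserves length of the longest element of the redundancy subgroup). Combining this with the definition $\pi(\leftred) = v^{-\ell(w_{\leftred})} \pi^+(\leftred)$ and a count of how many $y \in W_K$ contribute to each standard basis term should yield exactly $v^{\ell(\mi q) - \ell(\mi p)} \pi(\leftred(q))/\pi(\leftred(p))$.

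The main obstacle, in both parts but especially part (2), is the precise scalar bookkeeping: tracking the interplay between the $v$-powers coming from $\ell(\ma{\cdot})$ normalizations in the $h_p$'s, the $v$-powers in the standard-basis expansion of $b_{w_J}$ or $b_{w_K}$, the division by $\pi(J)$ or $\pi(K)$, and the Poincaré polynomials of the redundancy subgroups that enter because multiplication by $W_K$ on a coset $p$ has fibers of size controlled by $|W_{\leftred(p)}|$ (this is the source of the $\pi(\leftred(q))/\pi(\leftred(p))$ factor). Since this is \cite[Prop. 2.8]{SingSb}, I would expect to either reproduce Williamson's argument or, more efficiently, reduce to it: prove the generating-function identity on standard basis coefficients by a clean double-coset counting lemma, perhaps isolating the statement "$h_p *_J \leftup{J}\genha^K$ is supported on $q$ with all $h_{p'}$-coefficients proportional, and the constant of proportionality is $v^{\ell(\mi q) - \ell(\mi p)} \pi(\leftred(q))/\pi(\leftred(p))$" and checking it on a single convenient standard basis element.
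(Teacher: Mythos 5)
The paper does not prove this proposition: it is stated and used with the citation ``\cite[Prop. 2.8]{SingSb},'' so there is no in-paper proof to compare against. Your proposal is therefore a proof attempt from scratch, and it contains a concrete error that would derail it.

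You write ``$\leftup{K}\genha^J = b_{w_J}$,'' but this is false; the correct identity is $\leftup{K}\genha^J = b_{w_K}$. For $J \subset K$, \emph{both} $\leftup{K}\genha^J$ and $\leftup{J}\genha^K$ are the standard basis element $h_{p_{\id}}$ for the double coset $p_{\id}$ whose underlying set is the \emph{larger} parabolic $W_K$, and $\ma{p_{\id}} = w_K$, so each equals $b_{w_K}$; this is exactly what the lemma immediately following the proposition records ($\leftup{I_{i-1}}\genha^{K_i} = \leftup{K_i}\genha^{I_i} = b_{K_i}$). You got the second one right and the first one wrong, apparently reasoning that ``the smaller side determines the $b$.'' Once this is fixed, part (1) becomes nearly trivial, and your elaborate plan (expand $b_{w_J}$ in the standard basis, track how Hecke-multiplication permutes elements within each sub-coset) is both unnecessary and, as sketched, would not close: since $h_q \in H b_{w_K}$, one has $h_q \, b_{w_K} = \pi(K) h_q$ by \eqref{bIsquared}, so $\leftup{I}h_q^K *_K \leftup{K}\genha^J = h_q$ on the nose, and the claimed right-hand side $\sum_{p\subset q} v^{\ell(\ma q) - \ell(\ma p)} h_p$ telescopes to $h_q$ directly from the definition of $h_p$ as a $v$-weighted sum over the elements of $p$, because $q = \bigsqcup_{p\subset q} p$. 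Had you carried through $b_{w_J}$ instead, you would have gotten the spurious factor $\pi(J)/\pi(K)$.

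For part (2), your identification $\leftup{J}\genha^K = b_{w_K}$ is correct and the outline (show the product is a scalar multiple of $h_q$, pin the scalar by matching a coefficient, convert right-redundancy to left-redundancy via conjugation to match the Poincar\'e ratio) is the right shape. But be warned that ``multiplication by elements of $W_J$ permutes things within each $p$'' conflates Hecke-algebra multiplication with group multiplication: $h_x h_y$ is $h_{xy}$ only when lengths add, and otherwise the quadratic relation introduces corrections, so ``all coefficients should be equal'' requires a real argument (this is essentially Deodhar's/Williamson's computation). You acknowledge you would likely reduce to Williamson here, which is reasonable, but it does mean the proposal as written is a plan rather than a proof — and it needs the $b_{w_K}$ correction to be salvageable at all in part (1).
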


Note  that \eqref{rightmultgoup} can be rewritten as
\begin{equation} \leftup{I}h_p^J*_J \leftup{J}\genha^K=
v^{\ell(\underline{q})-\ell(\underline{p})- \ell(\leftred(q)) + \ell(\leftred(p))} \frac{\pi^+(\leftred(q))}{\pi^+(\leftred(p))}\ \leftup{I}h_q^K. \end{equation}

Let us elaborate. An $(I,K)$-coset $q$ splits into a disjoint union of smaller $(I,J)$-cosets $p$. Multiplying by $\leftup{K}\genha^J$ sends $h_q$ to a sum over $h_p$, with each $h_p$ appearing once with a power of $v$ as its coefficient. When right multiplying to make the parabolic bigger, an $(I,J)$-coset $p$ is contained in a single $(I,K)$-coset $q$. Multiplication by $\leftup{J}\genha^K$ sends $h_p$ to a multiple of $h_q$. This time the coefficient of $h_q$ is not a monomial but a rescaled ratio of Poincar\'{e} polynomials. This ratio will end up being the rank of $R^{\leftred(p)}$ as a free module over $R^{\leftred(q)}$, these being two invariant subrings of a polynomial ring $R$, see \Cref{sec:realization}.

\begin{defn} \label{defn:singulardefect} Let $p \subset q$ be a coset pair (see Notation \ref{notation:cosetpair}) for $(I,J,s)$. 
We define the \emph{defect} and the \emph{polynomial factor} of the pairs $[p,q]$ and $[q,p]$ as follows.
\begin{alignat*}{3}
    &\defect([p,q])=\ell(\mi{q})-\ell(\mi{p})-\ell(\leftred(q))+\ell(\leftred(p)),\quad && \poly([p,q])=\displaystyle \frac{\pi^+(\leftred(q))}{\pi^+(\leftred(p))},\\
    &\defect([q,p])=\ell(\ma{q})-\ell(\ma{p}),&& \poly([q,p])=1.
\end{alignat*}
\end{defn}

Proposition \ref{singDeodhar} explains what happens at each step in an expression. Just as in the Deodhar formula, we can extrapolate this into a formula for $\habs(I_{\bullet})$ in terms of the standard basis.

\begin{defn}
Given a path $t_{\bullet}$ subordinate to $I_{\bullet}$ (see Definition \ref{defn:subordinatepath}), we define its \emph{defect} and \emph{polynomial factor} as  
\begin{equation} \defect(t_{\bullet}):=\sum_k \defect([t_k,t_{k+1}]), \qquad  \poly(t_{\bullet}):=\prod_k \poly([t_k,t_{k+1}]). \end{equation}
\end{defn}

\begin{cor} \label{cor:singDeodhar} Let $I_{\bullet}$ be a singlestep expression. Then
\begin{equation} \habs(I_{\bullet}) = \sum_{t_{\bullet} \subset I_{\bullet}} \poly(t_{\bullet}) v^{\defect(t_{\bullet})} h_{\term(t_{\bullet})}. \end{equation}
\end{cor}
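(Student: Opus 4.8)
The plan is to prove Corollary~\ref{cor:singDeodhar} by induction on the length $d$ of the singlestep expression $I_{\bullet} = [I_0, \ldots, I_d]$, using Proposition~\ref{singDeodhar} to handle each individual step. The base case $d = 0$ is immediate: $\habs([I_0]) = b_{w_{I_0}}$ is the identity of $\HA(I_0,I_0)$, equal to $h_{t_0}$ where $t_0$ is the trivial $(I_0,I_0)$-coset, which is the unique subordinate path, with defect $0$ and polynomial factor $1$.

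For the inductive step, write $I_{\bullet}' = [I_0, \ldots, I_{d-1}]$, so that $\habs(I_{\bullet}) = \habs(I_{\bullet}') *_{I_{d-1}} \leftup{I_{d-1}}\genha^{I_d}$, where the last factor is either $\leftup{I_{d-1}}\genha^{I_{d-1}s}$ (if $I_d = I_{d-1}s$ is bigger) or $\leftup{I_{d-1}}\genha^{I_{d-1}\setminus s}$ (if $I_d$ is smaller). By the inductive hypothesis, $\habs(I_{\bullet}') = \sum_{t_{\bullet}' \subset I_{\bullet}'} \poly(t_{\bullet}')\, v^{\defect(t_{\bullet}')}\, h_{\term(t_{\bullet}')}$. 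I would then apply the appropriate part of Proposition~\ref{singDeodhar} term by term: in the ``going up'' case ($J = I_{d-1}$, $K = I_d$), part (1) expands each $h_{t_{d-1}}$ (where $t_{d-1} = \term(t_{\bullet}')$, an $(I_0, I_{d-1})$-coset) as a sum over the $(I_0, I_d)$-cosets $t_d$ it contains, with coefficient $v^{\ell(\ma{t_d}) - \ell(\ma{t_{d-1}})}$; wait — I must be careful about which parabolic is larger. Concretely: if $I_d \supset I_{d-1}$ then each step $[t_{d-1}, t_d]$ is of the form $[p, q]$ with $p \subset q$, the coset $t_d = q$ is the unique $(I_0, I_d)$-coset containing $t_{d-1} = p$, and the relevant formula is part (2), contributing the factor $\poly([p,q]) v^{\defect([p,q])}$ after rewriting via Definition~\ref{defn:singulardefect}; if $I_d \subset I_{d-1}$ then each step is $[q,p]$ with the unique old coset $t_{d-1} = q$ splitting into many $t_d = p \subset q$, governed by part (1), contributing $\poly([q,p]) v^{\defect([q,p])} = v^{\ell(\ma{q}) - \ell(\ma{p})}$. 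In both cases, appending $t_d$ to $t_{\bullet}'$ produces exactly a subordinate path $t_{\bullet} \subset I_{\bullet}$ (by Definition~\ref{defn:subordinatepath}, since $[t_{d-1}, t_d]$ is a coset pair), and every subordinate path of $I_{\bullet}$ arises uniquely this way by truncating its last entry. The coefficient accumulates multiplicatively in $\poly$ and additively in the exponent of $v$, matching the definitions $\poly(t_{\bullet}) = \prod_k \poly([t_k, t_{k+1}])$ and $\defect(t_{\bullet}) = \sum_k \defect([t_k, t_{k+1}])$.

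The one genuine bookkeeping point — and the place I'd be most careful — is matching the powers of $v$ from Proposition~\ref{singDeodhar} with the quantities $\defect([p,q])$ and $\defect([q,p])$ of Definition~\ref{defn:singulardefect}. Part (1) of the proposition gives the exponent $\ell(\ma{q}) - \ell(\ma{p})$, which is by definition $\defect([q,p])$, so that case is immediate. Part (2) gives the exponent $\ell(\mi{q}) - \ell(\mi{p})$ together with the polynomial ratio $\pi(\leftred(q))/\pi(\leftred(p))$; using $\pi(I) = v^{-\ell(w_I)}\pi^+(I)$ this rewrites as $v^{\ell(\mi{q}) - \ell(\mi{p}) - \ell(\leftred(q)) + \ell(\leftred(p))} \cdot \pi^+(\leftred(q))/\pi^+(\leftred(p))$, whose exponent is $\defect([p,q])$ and whose polynomial factor is $\poly([p,q])$ — this is precisely the rewriting already recorded in the displayed equation following Proposition~\ref{singDeodhar}. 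So the verification is essentially transcription. The remaining subtlety is purely organizational: confirming the bijection between $\{\text{subordinate paths of } I_{\bullet}\}$ and $\bigsqcup_{t_{\bullet}' \subset I_{\bullet}'} \{\text{coset pairs extending } t_{\bullet}'\}$, which is immediate from Definition~\ref{defn:subordinatepath}. No single step is hard; the only risk is an index or sign slip in the defect formulas, so I would state the two cases explicitly and invoke the pre-computed rewriting of \eqref{rightmultgoup} rather than re-deriving it.
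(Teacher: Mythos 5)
Your proposal is correct and is precisely the "elaboration on the iteration of Proposition \ref{singDeodhar}" that the paper's one-line proof alludes to: induction on $d$, peeling off the last step, applying part (1) or part (2) depending on whether $I_d \subset I_{d-1}$ or $I_d \supset I_{d-1}$, and matching the resulting monomial/polynomial factor with $\defect$ and $\poly$ via Definition~\ref{defn:singulardefect} and the displayed rewriting of \eqref{rightmultgoup}. Your case analysis and bookkeeping of the $v$-exponents are accurate, so this is the same argument the authors have in mind, just written out in full.
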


\begin{proof} This is no more than an elaboration on the iteration of Proposition \ref{singDeodhar}. \end{proof}

We make two further observations on defects.

\begin{lem} Let $p \subset q$ be a coset pair for $(I,J,s)$.  Then
\begin{equation}
    \label{eq:defqpdefpq}
\defect([q,p])=\defect([p,q])+\ell(Js)-\ell(J).
\end{equation}
\end{lem}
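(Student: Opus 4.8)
The plan is to unwind both defects into their defining length expressions and reconcile them using the key structural identity \eqref{eq:mapmip}, which relates $\ma{p}$, $\mi{p}$, and the redundancies. Write $D_{pq} = \defect([p,q]) = \ell(\mi{q})-\ell(\mi{p})-\ell(\leftred(q))+\ell(\leftred(p))$ and $D_{qp} = \defect([q,p]) = \ell(\ma{q})-\ell(\ma{p})$. The goal is to show $D_{qp} - D_{pq} = \ell(Js) - \ell(J)$.

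First I would use \eqref{eq:mapmip} for both $p$ and $q$. For the $(I,J)$-coset $p$ we have $\ma{p} = (w_I w_{\leftred(p)}^{-1}).\mi{p}.w_J$, which gives $\ell(\ma{p}) = \ell(I) - \ell(\leftred(p)) + \ell(\mi{p}) + \ell(J)$ since all the products are reduced (length-additive). For the $(I,Js)$-coset $q$, the analogous identity gives $\ell(\ma{q}) = \ell(I) - \ell(\leftred(q)) + \ell(\mi{q}) + \ell(Js)$. Subtracting, the $\ell(I)$ terms cancel and I get
\begin{equation*} \ell(\ma{q}) - \ell(\ma{p}) = \bigl(\ell(\mi{q}) - \ell(\mi{p})\bigr) + \bigl(\ell(\leftred(p)) - \ell(\leftred(q))\bigr) + \bigl(\ell(Js) - \ell(J)\bigr). \end{equation*}
The first two bracketed groups on the right are exactly $D_{pq}$, so $D_{qp} = D_{pq} + \ell(Js) - \ell(J)$, which is the claim. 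I should double-check that $\leftred$ in \eqref{eq:mapmip} is taken with respect to the correct parabolic subgroups in each case — $\leftred(p)$ lives in $I$ and $\leftred(q)$ also lives in $I$ (since $I$ is the left parabolic for both cosets), so there is no mismatch and the bookkeeping is clean.

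The only genuine subtlety — and the step I would be most careful about — is confirming that \eqref{eq:mapmip} applies verbatim to both $p$ and $q$, i.e. that the pair $p \subset q$ for $(I,J,s)$ really does make $p$ an $(I,J)$-coset and $q$ an $(I,Js)$-coset with both $I$, $J$, $Js$ finitary (this is built into Notation \ref{notation:cosetpair}), and that the displayed factorizations in \eqref{eq:mapmip} are length-additive so that passing to lengths is legitimate; this is precisely the content of the cited Howlett-theorem consequence \cite[Lemma 2.12, (2.8)]{EKo}, so no new work is needed. There is no real obstacle here: the identity is a two-line consequence of \eqref{eq:mapmip}, and the proof in the paper is presumably equally short.
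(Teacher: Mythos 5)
Your proof is correct and follows exactly the same route as the paper: apply \eqref{eq:mapmip} to each of $p$ and $q$, read off the length equations $\ell(\ma{p}) = \ell(\mi{p}) + \ell(I) + \ell(J) - \ell(\leftred(p))$ and $\ell(\ma{q}) = \ell(\mi{q}) + \ell(I) + \ell(Js) - \ell(\leftred(q))$, and subtract. Your extra care about $\leftred(p),\leftred(q) \subset I$ and the applicability of \eqref{eq:mapmip} is sound but not a point of concern in the paper either.
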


\begin{proof}
By \eqref{eq:mapmip} we have
\[ \ell(\ma{p}) = \ell(\mi{p}) + \ell(I) + \ell(J) - \ell(\leftred(p)), \quad \ell(\ma{q}) = \ell(\mi{q}) + \ell(I) + \ell(Js) - \ell(\leftred(q)). \]
From this it is easy to deduce that $\defect([q,p]) - \defect([p,q]) = \ell(Js) - \ell(J)$. \end{proof}

\begin{lem}\label{degmnvspq}
Let $p \subset q$ be an $(I,J,s)$ coset pair, and $m \subset n$ be the associated Grassmannian coset pair from Theorem \ref{thmB}. Then $\defect([p,q]) = \defect([m,n])$ and $\defect([q,p]) = \defect([n,m])$. Also, $\poly([p,q]) = \poly([m,n])$ and $\poly([q,p]) = \poly([n,m])$. 
\end{lem}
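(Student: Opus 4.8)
The plan is to reduce everything to the explicit formulas in Definition~\ref{defn:singulardefect} and then invoke the relations among minimal/maximal elements and redundancies established in Theorem~\ref{thmB} and Proposition~\ref{thmBconti}. First I would treat the polynomial factors, since these are the easy half. We have $\poly([q,p]) = 1 = \poly([n,m])$ by definition, so that equality is immediate. For the other, $\poly([p,q]) = \pi^+(\leftred(q))/\pi^+(\leftred(p))$ and $\poly([m,n]) = \pi^+(\leftred(n))/\pi^+(\leftred(m))$. By \eqref{eq:pqmnredundancies} we have $\leftred(n) = \rightred(q)$, and by \eqref{eq:leftredn} we have $\leftred(m) = \mi{q}^{-1}\leftred(p)\mi{q}$. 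So I would argue: $\leftred(q)$ and $\rightred(q) = \leftred(n)$ are conjugate (by $\mi{q}$, via \eqref{eq:redundancydef}), hence generate parabolic subgroups of the same order, so $\pi^+(\leftred(q)) = \pi^+(\rightred(q)) = \pi^+(\leftred(n))$; and similarly $\leftred(p)$ and $\leftred(m)$ are conjugate, so $\pi^+(\leftred(p)) = \pi^+(\leftred(m))$. (Here I use that $\pi^+(L)$ depends only on the isomorphism type, equivalently the order, of $W_L$.) The two ratios therefore agree.

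Next I would handle the defects, again splitting into the two cases. For $\defect([q,p]) = \ell(\ma{q}) - \ell(\ma{p})$ versus $\defect([n,m]) = \ell(\ma{n}) - \ell(\ma{m})$: from Theorem~\ref{thmB} we have $p = z.m$ and $q = z.n$, so $\ma{p} = \ma{z}.(w_{\rightq}^{-1}\ma{m})$ and $\ma{q} = \ma{z}.(w_{\rightq}^{-1}\ma{n})$ — the first from \eqref{eq:desiredmap} in the proof of Theorem~\ref{thmB}, the second established just before it. Taking lengths of these dotted factorizations, $\ell(\ma{q}) - \ell(\ma{p}) = \ell(w_{\rightq}^{-1}\ma{n}) - \ell(w_{\rightq}^{-1}\ma{m})$. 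Since $\ma{n}, \ma{m} \geq w_{\rightq}$ in Bruhat order with $\rightq = \leftred(n) = \leftred(m)$, \eqref{eq:mapmip} applied to the cores gives $\ell(w_{\rightq}^{-1}\ma{n}) = \ell(\ma{n}) - \ell(\rightq)$ and likewise for $m$; hence the difference equals $\ell(\ma{n}) - \ell(\ma{m}) = \defect([n,m])$.

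For $\defect([p,q])$ versus $\defect([m,n])$ I would proceed directly from the definition: $\defect([p,q]) = \ell(\mi{q}) - \ell(\mi{p}) - \ell(\leftred(q)) + \ell(\leftred(p))$ and $\defect([m,n]) = \ell(\mi{n}) - \ell(\mi{m}) - \ell(\leftred(n)) + \ell(\leftred(m))$. Using \eqref{mnzstuff}: $\mi{n} = \id$, $\mi{q}.\mi{m} = \mi{p}$, so $\ell(\mi{n}) - \ell(\mi{m}) = -\ell(\mi{m}) = \ell(\mi{q}) - \ell(\mi{p})$. Then $\leftred(n) = \rightred(q)$ is conjugate to $\leftred(q)$, so $\ell(\leftred(n)) = \ell(\leftred(q))$; and $\leftred(m)$ is conjugate to $\leftred(p)$ by \eqref{eq:leftredn}, so $\ell(\leftred(m)) = \ell(\leftred(p))$. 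Substituting, $\defect([m,n]) = \ell(\mi{q}) - \ell(\mi{p}) - \ell(\leftred(q)) + \ell(\leftred(p)) = \defect([p,q])$, as desired. (As a consistency check, one could also derive $\defect([q,p]) = \defect([n,m])$ from $\defect([p,q]) = \defect([m,n])$ together with \eqref{eq:defqpdefpq}, noting that the ``$\ell(Js) - \ell(J)$'' correction term is the same for the pair $p \subset q$ and for the Grassmannian pair $m \subset n$ since in both cases one enlarges by the single reflection $s$; this gives a second route to the first defect identity and I would mention it.)

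I do not expect any serious obstacle here: the whole lemma is bookkeeping with the formulas of Theorem~\ref{thmB} and Proposition~\ref{thmBconti}. The one point that needs a word of justification rather than a bare citation is that $\pi^+$ and $\ell$ of a redundancy set are conjugation-invariant — i.e. that conjugate subsets of $S$ generate parabolic subgroups of the same order and with the same longest-element length — which is standard but worth stating explicitly so the reader sees why \eqref{eq:leftredn} and the conjugacy $\rightred(q) = \mi{q}^{-1}\leftred(q)\mi{q}$ suffice. The mildest care is needed in the $\defect([q,p])$ argument to make sure the dotted factorizations $\ma{p} = \ma{z}.(w_\rightq^{-1}\ma{m})$ and $\ma{q} = \ma{z}.(w_\rightq^{-1}\ma{n})$ are being used with the correct parenthesization, but these are exactly the equalities proven in \S\ref{ssec:proofGrass}.
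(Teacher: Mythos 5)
Your argument is essentially sound and overlaps heavily with the paper's: the $\poly$ equalities via conjugation-invariance of $\pi^+$, and the $\defect([p,q])=\defect([m,n])$ computation via $\mi{q}.\mi{m}=\mi{p}$, $\mi{n}=\id$, and the conjugacies of redundancies, are exactly the paper's steps. The one genuinely different move is your direct derivation of $\defect([q,p])=\defect([n,m])$ from the dotted factorizations $\ma{p}=\ma{z}.(w_{\rightq}^{-1}\ma{m})$ and $\ma{q}=\ma{z}.(w_{\rightq}^{-1}\ma{n})$; the paper instead derives it for free from $\defect([p,q])=\defect([m,n])$ via \eqref{eq:defqpdefpq} (the route you call a ``consistency check''). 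Your direct route works but requires digging \eqref{eq:desiredmap} out of the interior of the proof of Theorem~\ref{thmB}, which the paper avoids.

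One small slip in justification: you write ``$\rightq=\leftred(n)=\leftred(m)$.'' The first equality is Proposition~\ref{thmBconti}, but the second is false in general: $\leftred(m)\subset\rightq$ is typically a proper inclusion (indeed $\leftred(m)$ is conjugate to $\leftred(p)\subset I$, not equal to $\rightq$). The correct reason that $\ell(w_{\rightq}^{-1}\ma{m})=\ell(\ma{m})-\ell(\rightq)$ is simply that $m$ is an $(\rightq,J)$-coset, so \eqref{eq:mapmip} factors $\ma{m}=w_{\rightq}.\mi{m}.(w_{\rightred(m)}^{-1}w_J)$, where the first dot already gives length-additivity when stripping $w_{\rightq}$ off the front; no claim about $\leftred(m)$ is needed (and likewise for $n$). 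With that correction your argument goes through.
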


\begin{proof}
From \Cref{thmBconti} we get that $\leftred(n) = \rightred(q)$, which is conjugate to $\leftred(q)$, and we get that $\leftred(m)$ is conjugate to $\leftred(p)$. Thus $\ell(\leftred(n)) = \ell(\leftred(q))$ and $\pi^+(\leftred(n)) = \pi^+(\leftred(q))$, and similarly for $m$ and $p$. Now one can easily verify the $\poly$ equalities.
From \eqref{mnzstuff} we also know that $\ell(\mi{q})+\ell(\mi{m})=\ell(\mi{p})+\ell(\mi{n})$. Now it is easy to verify that $\defect([m,n]) = \defect([p,q])$. Applying \eqref{eq:defqpdefpq} to both $p \subset q$ and $m \subset n$ we deduce that $\defect([q,p]) = \defect([n,m])$.
\end{proof}

\section{Singular Soergel bimodules} \label{sec:SSB}

Most definitions and results in this chapter come from Geordie Williamson's article \cite{SingSb}. There was an error in Williamson's definition of duality, which is now fixed in an erratum. We also needed some results on the ideal of lower terms which could not be found in Williamson's work. We developed these results in \cite{EKLP2}, and recall them in \Cref{sec:lowerterms}.

\subsection{Assumptions and notation}\label{sec:realization} Let $\Bbbk$ be a field. We fix a realization $V$ of $W$ over $\Bbbk$ (see \cite[Definition 3.1]{Soergelcalculus}). As in \cite[\S 3.1]{EKLP1}, we make some technical assumptions on our realizations. We require that $V$ is \emph{faithful}, \emph{balanced} and that \emph{generalized Demazure surjectivity} holds. We further require that the realization is \emph{reflection faithful}, so that the category of singular Soergel bimodules is well behaved (see \cite[\S 4.1]{SingSb}). 
We can always find a realization with $\Bbbk = \R$ satisfying all these properties (cf. \cite[Prop. 2.1]{Soe07}). 

\begin{rem} If the group $W$ is infinite, such a realization may not exist over a field of characteristic $p$. When the realization is not faithful or reflection faithful, singular Soergel bimodules need not be well-behaved. The diagrammatic version of the Hecke category should still behave well in characteristic $p$ (or even over $\Z$ if the group $W$ is crystallographic), once it is properly defined. This is one of the main motivations why such a diagrammatic description, of which the present work is propaedeutic, is desirable.  \end{rem}

\begin{notation} Let $R = \Sym(V)$ be the symmetric algebra of $V$, graded so that $\deg V = 2$. For each $I \subset S$ finitary, one can consider the subring $R^I$ of $W_I$-invariant polynomials. 
For any element $w\in W$, the corresponding Demazure operator $\pa_w$ is a $\Bbbk$-linear map of degree $-2\ell(w)$.  \end{notation}

\begin{lem}\label{lem:grankIJ}
    Let $I\subset J$ be finitary subsets of $S$. Then $R^I$ is a free $R^J$-module of graded rank $\frac{\pi^+(J)}{\pi^+(I)}$.
\end{lem}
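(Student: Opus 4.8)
The plan is to reduce the statement to the classical fact about Demazure operators and the structure of invariant subrings under a parabolic subgroup. First I would recall that for any finitary $K \subset S$, the ring $R = \Sym(V)$ is a free $R^K$-module of graded rank $\pi^+(K)$. This is the standard statement that $R$ is free over $R^{W_K}$ with basis indexed by $W_K$, where a basis element corresponding to $w \in W_K$ has degree $2\ell(w)$ (e.g. a ``Schubert-type'' basis built from iterated Demazure operators, or from the realization analogue of the coinvariant algebra); summing $v^{2\ell(w)}$ over $w \in W_K$ gives exactly $\pi^+(K)$. The key input here is generalized Demazure surjectivity, which guarantees $R$ is free of the expected rank over $R^s = R^{\{s\}}$ for each simple reflection $s$, and hence (by iterating along a reduced expression for $w_K$, at each stage passing from $R^{K'}$ to $R^{K's}$ for the appropriate chain) over $R^K$.

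Next I would apply this twice: once to $K = J$ to get that $R$ is free over $R^J$ of graded rank $\pi^+(J)$, and once to $K = I$ (noting $I$ is finitary since $I \subset J$ and $J$ is finitary) to get that $R$ is free over $R^I$ of graded rank $\pi^+(I)$. Since $R^I \supset R^J$ (more invariance means a smaller ring, and $I \subset J$ gives $R^J \subset R^I$), I would then invoke the standard ``tower'' argument: if $R$ is free over $R^J$ of graded rank $a$, free over $R^I$ of graded rank $b$, and $R^I$ is free over $R^J$ of some graded rank $c$, then multiplicativity of graded ranks in a tower of free modules forces $a = bc$, so $c = a/b = \pi^+(J)/\pi^+(I)$. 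The only thing left to justify is that $R^I$ is indeed a free $R^J$-module (not merely projective or finitely generated); this follows because $R^J$ is a graded connected $\Bbbk$-algebra (polynomial, hence in particular a graded local ring with graded Nakayama available) over which every finitely generated graded module that is flat is automatically free, and $R^I$ is flat over $R^J$ since $R$ is faithfully flat over $R^I$ and flat over $R^J$.

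The main obstacle is the freeness of $R^I$ over $R^J$ itself — establishing that the quotient of graded ranks is realized by an actual graded basis rather than just an abstract rank identity. I would handle this by the Nakayama/graded-local argument just sketched: pick a homogeneous lift to $R^I$ of a $\Bbbk$-basis of $R^I \otimes_{R^J} \Bbbk$, show these lifts generate $R^I$ over $R^J$ by graded Nakayama, and show they are independent by a rank count using that $R$ is free over both rings (any nontrivial relation among them, tensored up to $R$, would contradict freeness of $R$ over $R^J$). Alternatively, and perhaps more cleanly in keeping with the paper's conventions, one can build an explicit $R^J$-basis of $R^I$ from minimal coset representatives for $W_J/W_I$ via Demazure operators $\pa_w$ indexed by such $w$, which simultaneously exhibits freeness and computes the graded rank as $\sum_{w} v^{2\ell(w)} = \pi^+(J)/\pi^+(I)$ by the length-generating-function identity for parabolic quotients. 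Once freeness is in hand, the graded rank computation is the routine division of Poincaré polynomials and requires no further comment.
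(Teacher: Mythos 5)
Your proof is correct, but it takes a genuinely different route from the paper's. The paper's proof is extremely short: it cites \cite[Theorem 4.3]{EKLP2} to assert that $R^J \subset R^I$ is a Frobenius extension (which gives freeness immediately, since graded Frobenius extensions of graded-connected rings are free), and then cites \cite[Corollary 2.1.4]{Wthesis} for the graded rank computation. You instead build the result from scratch: first establish that $R$ is free over $R^K$ of graded rank $\pi^+(K)$ for each finitary $K$ (via iterated Demazure operators and generalized Demazure surjectivity), then obtain the rank $\pi^+(J)/\pi^+(I)$ by multiplicativity of graded ranks in the tower $R^J \subset R^I \subset R$, and separately establish freeness of $R^I$ over $R^J$ by a flatness-descent-plus-graded-Nakayama argument (or, in your alternative, by exhibiting an explicit basis indexed by minimal coset representatives of $W_J/W_I$). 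Your route is more self-contained and elementary — it does not invoke the Frobenius machinery at all — at the cost of being longer and needing the slightly delicate faithfully-flat-descent step to pass from ``$R$ free over both rings'' to ``$R^I$ flat over $R^J$.'' The paper's route is shorter because the Frobenius structure (used pervasively throughout the paper anyway) packages freeness for free. Both arguments are sound; one small caution in yours is that the descent step works as stated (flatness of $R^I$ over $R^J$ follows since $R$ is faithfully flat over $R^I$ and flat over $R^J$), but one should also note that $R^J$ is Noetherian (e.g.\ by Eakin--Nagata, since $R$ is Noetherian and finite free over $R^J$) so that finite generation of $R^I$ over $R^J$ can be extracted as a direct summand of $R$, which you implicitly use when invoking ``finitely generated flat graded $\Rightarrow$ free.''
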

\begin{proof}
Our assumptions ensure that the extension $R^J\subset R^I$ is Frobenius (see \cite[Theorem 4.3]{EKLP2}). The graded rank of the extension is computed in \cite[Corollary 2.1.4]{Wthesis}.
\end{proof}


\begin{notation} For $n \in \mathbb{Z}$, $(n)$ denotes the grading shift of a graded vector space by $n$. If $V$ is concentrated in degree $0$, then $V(n)$ is concentrated in degree $-n$.
\end{notation} 

\begin{notation}
 Given a Laurent polynomial with positive coefficients $p=\sum a_iv^i\in \mathbb{N}[v,v^{-1}]$ 
and a $\mathbb{Z}$-graded module $M$, we define\footnote{We remark that this convention is opposite to \cite{SingSb}.} 
\[p\cdot M:=\sum M(-i)^{\oplus a_i}.\]
\end{notation}

If $V$ is a $\Z$-graded vector space, we denote by $\gdim(V)$ its graded dimension. This is the Laurent polynomial defined by
\[ \gdim(V):= \sum \dim(V_i) v^i \in \mathbb{N}[v,v^{-1}].\]
If $p$ is a Laurent polynomial, we have
\begin{equation}\label{eq:gdimp} \gdim(p \cdot V)=p\cdot  \gdim(V).\end{equation}

\subsection{Definitions}

\begin{defn} Let $(W,S)$ be a Coxeter system and $R$ the polynomial ring of a realization. Given a singular multistep expression
\[ L_{\bullet}=[[ I_0\subset K_1\supset I_1\subset K_2 \supset \cdots \subset K_m\supset I_m]], \]
the corresponding \emph{(singular) Bott--Samelson bimodule} is the graded $(R^{I_0}, R^{I_m})$-bimodule
\begin{equation} \BS(L_{\bullet}) := R^{I_0} \ot_{R^{K_1}} R^{I_1} \ot_{R^{K_2}} \cdots \ot_{R^{K_m}} R^{I_m}(\sigma). \end{equation}
where
\begin{equation} \sigma=\ell(K_1)+\ldots-\ell(I_{m-1})+\ell(K_m) -\ell(I_m)=\frac{1}{2}(\ell(L_\bullet)+\ell(I_0)-\ell(I_m)). \end{equation}
\end{defn}

The length of an expression $\ell(L_\bullet)$ was defined in   \eqref{explength}.

\begin{notation}\label{restB}
Let $B$ be a graded $(R^K,R^L)$-bimodule. Let $K\subset I$ and $ L \subset J$. We denote by 
\[\restrict{R^I}{B}{R^J}:=R^I\otimes_{R^K}B\otimes_{R^L}{R^J}\]
the restriction of $B$ to a graded $(R^J,R^I)$-bimodule.
\end{notation}

The Bott--Samelson bimodule $\BS(L_{\bullet})$ is an iterated tensor product of two kinds of bimodules: the \emph{induction bimodule}
\[ \BS([[I \subset K]]) =
\restrict{R^I}{R^I}{R^K} \]
and the \emph{(shifted) restriction bimodule}
\[ \BS([[K \supset I]]) = \restrict{R^K}{R^I}{R^I}(\ell(K) - \ell(I)). \]
For $I \subset J \subset K$, the tensor product of two induction bimodules (resp. restriction bimodules) is naturally isomorphic to another induction bimodule (resp. restriction bimodule). Because of this, the Bott--Samelson bimodule $\BS(I_{\bullet})$ associated to a singlestep expression is naturally isomorphic to the Bott--Samelson bimodule for the associated multistep expression (obtained by remembering the maxima and minima).

Given a multistep expression $L_\bullet$, we denote by $\onetensor_{L_\bullet}$ (or simply by $\onetensor$) the element 
\begin{equation}\label{def:cbot}
\onetensor_{L_\bullet} :=1\otimes 1 \otimes \ldots \otimes 1\in \BS(L_\bullet).    
\end{equation}
Note that  $\deg(\onetensor_{L_\bullet})=-\frac12\left(\ell(L_\bullet)+\ell(I_0)-\ell(I_m)\right)$. It is an element of lowest degree in $\BS(L_\bullet)$, and spans that degree of $\BS(L_{\bullet})$ as a one-dimensional vector space.

\begin{defn}
    We denote by $\Bim$ the bicategory defined as follows. The objects in $\Bim$ are the finitary subsets $I\subset S$, identified with the graded algebras $R^I$. The category $\Bim(J,I)$ is the category of graded $(R^I,R^J)$-bimodules. The composition of $1$-morphisms $\Bim(J,I)\times \Bim(K,J)\to \Bim(K,I)$ is given by the tensor product over $R^J$.

    Given $M,N\in\Bim(J,I)$, the morphism space is the graded $(R^I,R^J)$-bimodule
\begin{equation}\label{eq.Hom}
    \Hom(M,N):=\bigoplus_{i\in \mathbb Z}\Hom^i(M,N), \qquad \Hom^i(M,N):=\Hom^0(M,N(i)).   
    \end{equation}
    Here $\Hom^0(M,N(i))$ denotes the space of degree zero $(R^I,R^J)$-bimodule maps from $M$ to $N(i)$. 
     The $(R^I,R^J)$-bimodule structure on $\Hom(M,N)$ is the obvious one.
\end{defn}

If $I_{\bullet} = [I_0, \ldots, I_d]$ is an expression, then $\BS(I_{\bullet})$ is an $(R^{I_0},R^{I_d})$-bimodule and hence an object of $\Bim(I_d,I_0)$.

\begin{defn}
The \emph{bicategory of singular Bott--Samelson bimodules}
$\SBSBim$ (which in the introduction we call $\SHC_{\BS}$\footnote{Up to equivalence. See the next footnote.}) is the subbicategory of $\Bim$ with the same objects, whose 1-morphisms have the form $\BS(I_{\bullet})$ for various expressions $I_{\bullet}$. Equivalently, $\SBSBim$ is the 2-full subbicategory in $\Bim$ generated by the 1-morphisms $\BS([I,Is])$ and $\BS([Is,I])$, for $Is\subset S$ finitary.

\end{defn}

\begin{defn}
    The \emph{bicategory  of singular Soergel bimodules} $\SSBim$ (which in the introduction we called $\SHC$\footnote{To be precise, what we denote by $\SHC$ in the introduction is a 2-category (bi)equivalent to $\SSBim$. 
    Note that the bicategory $\SSBim$, as well as the bicategory $\SBSBim$, is not a 2-category because the tensor product of bimodules does not satisfy associativity strictly but only up to natural isomorphism. However, the difference between bicategory and 2-category is not important, as the coherence theorem says that every bicategory is equivalent to a 2-category. In our case, such a 2-category $\SHC_{\BS}$ for $\SBSBim$ can be defined as a quotient of $\Frob$, by identifying the 2-morphisms mapped to isomorphic bimodule morphisms under the evaluation functor $\evaluation:\Frob\to \SBSBim$ (see Sections \ref{ss.Frob} and \ref{ss.FfromFrob}). 
    Then the additive Karoubi envelope of $\SHC_{\BS}$, which we denote by $\SHC$, is a 2-category equivalent to $\SSBim$.}) 
    is the additive closure of $\BSBim$ in $\Bim$, that is, $\SSBim$ has objects finitary subsets $I\subset S$ and the category $\SSBim(J,I)$ is the  category of graded $(R^I, R^J)$-bimodules consisting of the direct sums of shifts of direct summands of Bott--Samelson bimodules.
\end{defn}


\subsection{Classification of singular Soergel bimodules}

Singular Soergel bimodules provide a categorification of the Hecke algebroid. That is, there is an isomorphism between the split Grothendieck group of $\SSBim(J,I)$ and $\leftup{I}H^J$; for a singular Soergel bimodule $B$, we let $[B]$ denote the corresponding element of the Hecke algebroid. If $B\in \Hom(J,I)$ and $B'\in \Hom(K,J)$, we have 
\begin{equation}\label{tensorchar}
     [B\otimes_{R^J} B'] = [B]\ast_J [B'] \in {}^IH^K.
\end{equation}
Moreover,
\begin{equation}\label{BSel}
    \left[\BS([[I\subset K]])\right]={}^Ib^K\quad\text{ and }\quad
 [\BS([[K\supset I]])] ={}^Kb^I.
\end{equation} 
It immediately follows that
\begin{equation}\label{chBS}
    [\BS(I_\bullet)]=\habs(I_\bullet)
\end{equation} for every singlestep expression $I_\bullet$.

\begin{rem} The isomorphism between the Grothendieck group and the Hecke algebroid is embodied in the \emph{character map} defined by Williamson. 
For the definition of the character map and more details on the above we refer to \cite[\S 3.4]{EKLP2} and \cite[\S 6]{SingSb}.
\end{rem}

If $M$ is a $(R^I,R^J)$-bimodule, 
then its dual
$\duality M :=\Hom_{R^I}(M,R^I)$, the space of morphisms as left $R^I$-modules, is an $(R^I,R^J)$-bimodule. This defines the \emph{duality functor}
\[\duality:= \Hom_{R^I}(-,R^I):\Bim(J,I)\to\Bim(J,I).\]
The functor $\duality$ preserves Bott--Samelson bimodules by \cite[Prop. 6.15] {SingSb} and, as a consequence, the category of singular Soergel bimodules $\SSBim(J,I)$. Moreover, 
\[ [\duality B] = \overline{[B]},\]
where the overline represents the bar involution on the Hecke algebroid.

\begin{rem} \label{rmk:intersectionform}
Singular Bott--Samelson bimodules are isomorphic to their duals, but it is often important to fix this isomorphism. This is equivalent to constructing a non-degenerate bilinear form on $\BS(I_{\bullet})$. For ordinary Bott--Samelson bimodules, the \emph{intersection form} defined in \cite[\S 3.6]{EWhodge} is such a bilinear form. An intersection form for one-sided singular Soergel bimodules (with $I=\emptyset$) in \cite[Appendix A.1]{PatGrass}. The technology developed in this paper makes it possible to define an intersection form for singular Soergel bimodules in general, and we hope to address this topic in future work. \end{rem} 


Moreover, on $\SSBim(J,I)$ the functor $\duality$ is a anti-involution, i.e. for any $B,B'\in \SSBim(J,I)$  we have  $\duality^2 B \cong B$ and $\Hom(B,B')\cong \Hom(\duality B',\duality B)$. 


\begin{thm}
\label{thm::indec}
For every $(I,J)$-coset $p$, there exists a unique self-dual indecomposable singular Soergel bimodule $B_p$ such that 
\begin{equation}\label{[B_p]} [B_p] \in  h_p + \sum_{p'<p}\mathbb{N}[v^\pm] h_{p'},\end{equation}
where the sum is indexed by $(I,J)$-cosets $p'$ which are smaller in the Bruhat order.

Every indecomposable Soergel bimodule in $\SSBim(J,I)$ is isomorphic to $B_p(m)$, for some $(I,J)$-coset $p$ and some $m\in \Z$.

Moreover, for any reduced expression $I_{\bullet}$ for $p$ and for any decomposition of $\BS(I_{\bullet})$ into indecomposable Soergel bimodules, there is a unique summand containing $\onetensor_{I_\bullet}$, and this summand is isomorphic to $B_p$.
\label{thm:classification}\end{thm}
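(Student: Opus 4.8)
The plan is to deduce Theorem~\ref{thm:classification} from the analogous statement for ordinary Soergel bimodules together with the parabolic-induction/restriction formalism. The existence and uniqueness of a self-dual indecomposable $B_p$ with character in $h_p + \sum_{p'<p}\mathbb{N}[v^{\pm}]h_{p'}$ is essentially a formal consequence of the classification of indecomposables in a Krull--Schmidt category equipped with a duality and a "standard basis" satisfying unitriangularity: one shows that $\SSBim(J,I)$ is Krull--Schmidt (bimodules over Noetherian graded rings, finitely generated, with the usual local-endomorphism-ring argument), that $\duality$ is a contravariant involution fixing objects up to isomorphism, and that $\habs(I_\bullet) = [\BS(I_\bullet)]$ expands unitriangularly in the standard basis $\{h_p\}$ by Corollary~\ref{cor:singDeodhar}. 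Then the standard argument (as in the non-singular case, cf.\ Soergel's original classification, or Williamson~\cite{SingSb}) produces for each reduced expression $I_\bullet \expr p$ a unique indecomposable summand $B_p$ of $\BS(I_\bullet)$ whose character is $h_p$ plus lower terms, and self-duality of $B_p$ follows because $\duality\BS(I_\bullet)\cong\BS(I_\bullet)$ preserves the summand of maximal support.

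The key steps, in order, are: (1) establish Krull--Schmidt for $\SSBim(J,I)$ and that $\duality$ acts as above; (2) using Corollary~\ref{cor:singDeodhar} and~\eqref{chBS}, record that for a reduced expression $I_\bullet$ for $p$ one has $[\BS(I_\bullet)] = h_p + \sum_{p'<p} c_{p'} h_{p'}$ with $c_{p'}\in\mathbb{N}[v^{\pm}]$, the coefficient of $h_p$ being $1$ because the only subordinate path of $I_\bullet$ with terminus $p$ is the "top" path, which has defect $0$ and trivial polynomial factor (this is where one invokes that $I_\bullet$ is \emph{reduced}); (3) deduce that $\BS(I_\bullet)$ has a summand with this character, which must be indecomposable and self-dual, and that this summand is the unique one containing $\onetensor_{I_\bullet}$ --- the element $\onetensor_{I_\bullet}$ generates $\BS(I_\bullet)$ in lowest degree, so the projection to any summand not containing it kills $\onetensor$, and a dimension/support count forces the summand containing $\onetensor$ to be the big one; (4) independence of the choice of reduced expression, which follows because any two such summands have equal character $h_p + (\text{lower})$ and are indecomposable self-dual, hence isomorphic by the uniqueness half of the classification; (5) surjectivity --- every indecomposable in $\SSBim(J,I)$ is some $B_p(m)$ --- by inducting on the Bruhat order: a summand of $\BS(I_\bullet)$ for arbitrary (not-necessarily-reduced) $I_\bullet$ has character supported on cosets $\le I_\bullet$, split off the $B_p$'s corresponding to maximal terms and recurse.

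I would reduce the character-theoretic bookkeeping in steps (2)--(5) to what is already available: the categorification statements~\eqref{tensorchar},~\eqref{BSel},~\eqref{chBS}, the Bruhat-order description of Definition~\ref{def.bruhat}, and Theorem~\ref{thmB} only insofar as it is needed to interpret the polynomial factors (as remarked after Proposition~\ref{singDeodhar}, $\poly([p,q])$ is the graded rank of $R^{\leftred(p)}$ over $R^{\leftred(q)}$, matching Lemma~\ref{lem:grankIJ}). The genuinely non-formal input is that the split Grothendieck group of $\SSBim(J,I)$ really is $\leftup{I}H^J$ with the standard basis detecting isomorphism classes appropriately --- i.e.\ that the character map is an isomorphism --- which is Williamson's theorem~\cite[\S6]{SingSb} and which I would cite rather than reprove.

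\textbf{Main obstacle.} The hard part is the final clause: showing that the unique summand of $\BS(I_\bullet)$ containing $\onetensor_{I_\bullet}$ is $B_p$, for \emph{every} reduced expression, not just one. The subtlety is that while a character argument shows some summand is isomorphic to $B_p$, identifying \emph{which} summand requires knowing that $\onetensor_{I_\bullet}$ cannot lie in a "smaller" summand --- equivalently, that no indecomposable summand $B_{p'}(m)$ with $p'<p$ or with $p'=p$ but $m>0$ can contain an element of the lowest degree $-\frac12(\ell(I_\bullet)+\ell(I_0)-\ell(I_m))$. This comes down to a degree estimate: $B_{p'}$ for $p'<p$ (or a positive shift of $B_p$) has its lowest-degree part strictly above the lowest degree of $\BS(I_\bullet)$, which follows from the unitriangular character formula together with the fact (provable by induction, or extractable from the lower-terms technology of~\cite{EKLP2} cited in \S\ref{sec:lowerterms}) that $\onetensor_{I_\bullet}$ spans the lowest degree of $\BS(I_\bullet)$ as a one-dimensional space --- which is exactly what was asserted right after~\eqref{def:cbot}. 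So in fact this obstacle is disarmed by a statement already in hand; the remaining care is purely in assembling the pieces so that "the summand containing $\onetensor$" is well-defined (any two indecomposable summands both containing $\onetensor$ would give a rank-two lowest-degree space, contradiction) and stable under $\duality$.
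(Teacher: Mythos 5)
The paper's proof of this theorem consists entirely of two citations: the first two claims are \cite[Theorem~7.10]{SingSb}, and the third is \cite[Prop.~3.3]{EKLP2}. You have instead sketched a proof from scratch, so your route is genuinely different; but most of the non-formal content you need to fill in is precisely what those two citations supply, so the rederivation does not save very much.

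Two substantive remarks. First, ``essentially a formal consequence of Krull--Schmidt plus unitriangularity'' oversells what is formal about the first two claims. Krull--Schmidt and the unitriangular expansion of $\habs(I_\bullet)$ in the $\{h_{p'}\}$ basis get you a distinguished summand, but pinning down that its character is \emph{exactly} $h_p$ plus lower terms with coefficients in $\mathbb{N}[v^\pm]$ uses the Soergel--Williamson Hom formula and the standard filtration machinery; this is precisely what \cite[Theorem~7.10]{SingSb} proves and it is more than ``the character map is an isomorphism.'' Second, and more seriously, the degree argument in your ``main obstacle'' paragraph has a real gap. You correctly observe that for $p' < p$ the lowest degree of $B_{p'}$ is strictly above $d_0 := -\tfrac12(\ell(p)+\ell(I)-\ell(J))$ (it is a summand of some $\BS(I'_\bullet)$ for $I'_\bullet$ a reduced expression for $p'$, whose one-tensor sits in strictly higher degree). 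But this only rules out $B_{p'}(m)$ for $m \le 0$. With the paper's convention $V(n)_k = V_{n+k}$, a shift $B_{p'}(m)$ with $m>0$ has strictly \emph{lower} degrees available, and you give no a~priori upper bound on $m$. The character constraint $c_p = 1$ does rule out nontrivial shifts of $B_p$ itself, but it does not rule out a $B_{p'}(m)$ for $p'<p$ having a nonzero component in degree $d_0$. Closing this gap is the content of \cite[Prop.~3.3]{EKLP2}, which relies on the lower-terms technology (of the sort recalled here in Proposition~\ref{prop::kill1tensor}), not just a degree count. So the claim that the obstacle is ``disarmed by a statement already in hand'' is too optimistic: what you need is a substantive proposition from a companion paper, not just the remark after \eqref{def:cbot} that $\onetensor$ spans the lowest degree.
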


\begin{proof}
    The first two claims follow by {\cite[Theorem 7.10]{SingSb}}, while the last claim is proved in \cite[Prop. 3.3]{EKLP2}.
\end{proof}

\begin{cor}\label{cor:indecBS}
Let $I_\bullet$ be a singlestep expression, and $p$ be an $(I_0, I_d)$-coset. If $B_p(k)$ is a direct summand of $\BS(I_{\bullet})$ for some $k \in \Z$, then $p$ is the terminus of some path subordinate to $I_\bullet$.
\end{cor}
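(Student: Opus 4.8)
The plan is to pass to the Grothendieck group and compare characters. Suppose $B_p(k)$ is a direct summand of $\BS(I_\bullet)$. Taking classes in the split Grothendieck group, $[B_p(k)] = v^{-k}[B_p]$ appears with a nonzero (in fact positive, by self-duality and the classification) coefficient in $[\BS(I_\bullet)]$ when everything is expanded in the Kazhdan--Lusztig basis $\{b_{p'}\}$ of ${}^{I_0}H^{I_d}$. By \eqref{chBS} we have $[\BS(I_\bullet)] = \habs(I_\bullet)$, and by Corollary \ref{cor:singDeodhar} this equals $\sum_{t_\bullet \subset I_\bullet} \poly(t_\bullet) v^{\defect(t_\bullet)} h_{\term(t_\bullet)}$ in the \emph{standard} basis. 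So the first step is to argue that the set of cosets $q$ with $[B_q]$ appearing in $\habs(I_\bullet)$ (equivalently, with $h_q$ appearing, since the change of basis between $\{h_{q'}\}$ and $\{b_{q'}\}$ is unitriangular with respect to the Bruhat order by \eqref{[B_p]}) is contained in the set of termini of subordinate paths.

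The key point is the following combinatorial closure property: if $q \le I_\bullet$ in the sense of Definition \ref{def.bruhat} — i.e.\ $q$ is the terminus of some subordinate path — then every coset $q' \le q$ is \emph{also} the terminus of some subordinate path. This is exactly what is needed to pass from ``$h_q$ appears'' to ``$b_q$ appears only for $q$ a terminus'': expanding each $h_{\term(t_\bullet)}$ into KL basis elements $b_{q'}$ with $q' \le \term(t_\bullet)$, every $q'$ that shows up is itself a terminus. I would prove this closure property by induction on the length $d$ of $I_\bullet$, using Proposition \ref{singDeodhar}: at each step $I_{k} \rightsquigarrow I_{k+1}$, multiplication by the relevant generator ${}^{K}\genha^{J}$ or ${}^{J}\genha^{K}$ sends $h_p$ either to a sum $\sum_{p' \subset p} v^{\bullet} h_{p'}$ over all sub-cosets (the ``going down'' case, which visibly produces all Bruhat-smaller-in-this-step cosets) or to a monomial multiple of $h_q$ for the unique $q \supset p$ (the ``going up'' case). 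Combined with the characterization $p \le q \iff \mi{p} \le \mi{q}$ from Definition \ref{def.bruhat}, one checks that the termini of subordinate paths of $I_\bullet$ form a Bruhat order ideal — alternatively this is already essentially recorded in Definition \ref{def.bruhat} together with \cite{EKLP2}, so I would simply cite it if available in that form.

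The main obstacle is making the ``$b_q$ appears $\Rightarrow$ $q$ is a terminus'' implication airtight: a priori cancellation could occur when re-expanding the standard-basis expression of $\habs(I_\bullet)$ into the KL basis, so one cannot naively read off which $b_q$ survive. The clean way around this is to avoid cancellation entirely by working only with the \emph{leading} terms: if $q$ is maximal in the Bruhat order among cosets with $B_q(k)$ a summand, then $h_q$ must appear in $\sum_{t_\bullet} \poly(t_\bullet)v^{\defect(t_\bullet)}h_{\term(t_\bullet)}$ with no cancellation possible at that maximal $q$ (everything else contributing to $b_q$ is strictly smaller), hence $q$ is a terminus; then one runs a downward induction on the Bruhat order, subtracting off $[B_q]$ and repeating, using the closure property above to ensure that at each stage all newly-relevant cosets are still termini. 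I expect the induction and the bookkeeping of grading shifts to be routine once the Bruhat-ideal property of $\{\term(t_\bullet)\}$ is in hand; that property is the real content and is where I would spend the effort (or locate the precise citation in \cite{EKLP2}).
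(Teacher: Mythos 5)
Your proposal reaches the right conclusion, but it takes a substantially longer route than the paper and, in the process, works around an obstacle that isn't actually there. The paper's proof is two lines long, and the key observation you're missing is the \emph{positivity} built into \eqref{[B_p]}: the class $[B_q]$ lies in $h_q + \sum_{q'<q}\mathbb{N}[v^{\pm}]h_{q'}$, with coefficients in $\mathbb{N}[v^{\pm}]$, not merely $\mathbb{Z}[v^{\pm}]$. Once $\BS(I_\bullet)$ is decomposed into indecomposables, $[\BS(I_\bullet)]$ is a sum over all summands with multiplicities in $\mathbb{N}[v^{\pm}]$, and each $[B_q]$ contributes only nonnegative coefficients to the $\{h_{q'}\}$-basis. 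So the coefficient of $h_p$ in $[\BS(I_\bullet)]$ is a sum of elements of $\mathbb{N}[v^{\pm}]$; if $B_p(k)$ is a summand, one of those contributions is nonzero, hence the total is nonzero. No cancellation can occur. Then Corollary~\ref{cor:singDeodhar} together with \eqref{chBS} gives the conclusion immediately, since only termini contribute to the $\{h_{q'}\}$-expansion of $\habs(I_\bullet)$.

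Your anxiety about cancellation, and the resulting machinery (establishing that the termini form a Bruhat ideal, extracting a maximal summand, running a downward induction), is what happens if one forgets the nonnegativity in \eqref{[B_p]}. It works, but it buys you nothing that positivity doesn't give for free. Two further remarks. First, your statement ``(equivalently, with $h_q$ appearing, since the change of basis ... is unitriangular)'' is not justified by unitriangularity alone; the equivalence in the direction you need is precisely a positivity fact, not a unitriangularity fact. Second, you repeatedly refer to the Kazhdan--Lusztig basis $\{b_{p'}\}$ where you mean the basis of indecomposables $\{[B_{p'}]\}$; these agree only under Soergel's conjecture, which the paper explicitly does not assume. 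The argument should be phrased entirely in terms of $\{[B_{p'}]\}$ and the standard basis $\{h_{p'}\}$; the $\{b_{p'}\}$ basis plays no role here.
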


\begin{proof}
If $B_p(k)$ is a direct summand of $B$, then $v^{-k} h_p$ appears in $B$ with positive coefficient, by \eqref{[B_p]}. By 
Corollary \ref{cor:singDeodhar} and \eqref{chBS}, if $h_p$ appears with nonzero coefficient in $[\BS(I_{\bullet})]$ then $p$ is the terminus of a path subordinate to $I_{\bullet}$.
\end{proof}

\subsection{Standard bimodules and the Soergel--Williamson Hom formula}

\begin{defn} [{\cite[Definition 4.4]{SingSb}}]
Let $p$ be a $(I,J)$-coset. The \emph{(singular) standard bimodule} $R_p$ is the graded $(R^I, R^J)$-bimodule defined as follows. Let $K = \leftred(p) \subset J$ and $L = \rightred(p) \subset I$. As a left $R^I$-module, $R_p$ is $R^K$. The right action of $f \in R^J$ on $m \in R_p$ is given by
\begin{equation}\label{mf=pfm} m \cdot f = (\mi{p} f) m.
\end{equation}
In words, we say that the right action is twisted by $\mi{p}$. We also view $R_p$ as a ring via its identification with $R^K$.
\end{defn}

By \cite[Cor. 4.13]{SingSb} we have $\End(R_p)\cong R_p$ as both an $(R^I, R^J)$-bimodule and a ring.

We rephrase now the Soergel--Williamson Hom formula \cite[Theorem 7.9]{SingSb} in more convenient terms for our purposes.

\begin{thm}\label{SWhom} (Soergel--Williamson Hom Formula)
Let $B,B'\in \SSBim(J,I)$ and assume that $B'$ is self-dual. If 
\[[B] = \sum c_p h_p, \quad \mathrm{and} \quad [B'] = \sum c'_p h_p,\]
 then the graded dimension of their morphism space is given by the formula
\begin{equation}\label{singhom}  \gdim(\Hom(B,B')) = \sum c_p c'_p \cdot \gdim(R_p). \end{equation}
\end{thm}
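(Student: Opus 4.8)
The plan is to reduce the statement to the classical Soergel--Williamson Hom formula \cite[Theorem 7.9]{SingSb}, whose content is precisely that $\Hom(B,B')$ has a filtration whose subquotients, as graded $(R^I,R^J)$-bimodules, are the $R_p$ with multiplicities recorded by the ``standard characters'' of $B$ and $B'$. So the first step is to recall the precise form of Williamson's theorem: for $B$ self-dual (or, more generally, using the duality pairing), the space $\Hom(B,B')$ is a free graded module over the appropriate ring with graded rank given by a pairing of the classes $[B], [B']$ in the Hecke algebroid against the ``standard basis'' $\{h_p\}$, using the intersection-form-type pairing for which $\{h_p\}$ is dual to itself up to the $\gdim(R_p)$ factor. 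In other words, Williamson's formula already says $\gdim\Hom(B,B') = \sum_p c_p c'_p \gdim(R_p)$ once one tracks the conventions; the bulk of the work is a \emph{translation of conventions}, not new mathematics.

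Concretely, I would proceed as follows. First, observe that by additivity of $\gdim(\Hom(-,-))$ in each variable and additivity of the character map $[-]$, it suffices to prove the formula when $B = B_p(a)$ and $B' = B_q(b)$ are shifts of indecomposables, and in fact by Theorem \ref{thm::indec} it suffices to take $B' = B_q$ self-dual and track shifts at the end; this is where the self-duality hypothesis on $B'$ is used. Second, invoke Williamson's Hom formula in its original form, which computes $\Hom$ between a singular Soergel bimodule and a self-dual one in terms of the standard filtration: $\Hom(B, B_q)$ is built from copies of $\Hom(R_{p}, R_{q})$, and these vanish unless $p = q$ while $\Hom(R_p,R_p) \cong R_p$ by \cite[Cor.\ 4.13]{SingSb} (quoted in the excerpt). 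Third, combine: the multiplicity with which $R_p$ (with its grading shifts) contributes to $\Hom(B,B')$ is exactly the coefficient extracted by pairing the $h_p$-coefficient of $[B]$ with that of $[B']$, i.e.\ $c_p c'_p$. Using \eqref{eq:gdimp} to pull the Laurent-polynomial coefficients out of $\gdim$, one gets $\gdim\Hom(B,B') = \sum_p c_p c'_p \cdot \gdim(R_p)$, as claimed. Finally, one checks that grading shifts are handled consistently: a shift $B_p(a)$ multiplies $c_p$ by $v^{-a}$ (with the excerpt's sign convention for $(n)$), and correspondingly shifts the $\gdim$, so the formula is shift-equivariant and the reduction to self-dual indecomposables is legitimate.

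The main obstacle I anticipate is entirely bookkeeping: matching Williamson's grading and duality conventions (the erratum to \cite{SingSb} mentioned in \S\ref{sec:SSB}, and the footnoted sign convention ``$p \cdot M$'' which is opposite to Williamson's) with the conventions fixed in this paper, so that the bilinear pairing on the Hecke algebroid under which $\{h_p\}$ is ``orthonormal with weights $\gdim R_p$'' is the correct one. A secondary subtlety is that the standard filtration argument requires knowing $\Hom(R_p, R_q) = 0$ for $p \neq q$ and $= R_p$ for $p = q$ as \emph{graded} bimodules in exactly the right degree; this is genuinely in \cite{SingSb} but needs to be cited carefully. Once the dictionary is set up, the proof is a one-paragraph formal manipulation; there is no new geometric or algebraic input required, which is why I expect the ``hard part'' to be purely expository precision rather than a mathematical difficulty.
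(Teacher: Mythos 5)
Your high-level plan — invoke Williamson's Hom formula \cite[Theorem 7.9]{SingSb} and translate conventions — is indeed what the paper does, but your proposed execution has two problems, one of them a genuine gap.

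First, the reduction to shifts of indecomposables is both unnecessary and unsound as stated. Williamson's theorem gives an isomorphism of the form $\Hom(B,B')(-\ell(J)) \cong \langle [B], \overline{[B']} \rangle \cdot R^I$, with a \emph{bar} on the second argument. Self-duality of $B'$ is used precisely to erase this bar and write $\langle [B], [B'] \rangle$; it is not used (as you claim) to justify a reduction to unshifted indecomposables. Your proposal to allow $B' = B_q(b)$ and then ``track shifts at the end'' runs into exactly the subtlety you are trying to defer: $\overline{[B_q(b)]} = v^b [B_q]$ while a degree shift on the target multiplies $\gdim(\Hom(B,-))$ by $v^{-b}$, so the formula \eqref{singhom} is \emph{not} shift-equivariant in the second variable. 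A self-dual $B'$ need not decompose into self-dual indecomposable summands (only into bar-symmetric pairs $B_q(b)\oplus B_q(-b)$), so the reduction requires more care than ``take $B' = B_q$.'' The paper avoids all of this by never decomposing $B$ or $B'$ at all; it works directly with the classes $[B], [B']$ in the Hecke algebroid, which is strictly simpler and uses self-duality at the single point where it is needed.

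Second, and more importantly, the ``bookkeeping'' you set aside is the entire content of the proof. The paper's argument, after citing Theorem 7.9, consists of: expanding the pairing via \cite[Lemma 2.13]{SingSb} to obtain $\langle [B],[B'] \rangle = \sum_p c_p c'_p\, v^{\ell(\ma{p})-\ell(\mi{p})-\ell(J)}\pi(p)/\pi(J)$; rewriting the exponent and the Poincar\'e ratio in terms of the left redundancy $K=\leftred(p)$ using \cite[(2.1), (2.3)]{SingSb}; and then identifying the resulting factor $v^{\ell(I)-\ell(K)}\pi(I)/\pi(K)\cdot R^I$ with $R_p = R^K$ via \Cref{lem:grankIJ}. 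You gesture at this but do not do it, and it is not a one-line translation of conventions: it requires the specific combinatorial identity relating $\pi(p)/\pi(J)$ to $\pi(I)/\pi(K)$ and the Frobenius graded-rank computation. Your alternative ``standard filtration'' phrasing ($\Hom(R_p,R_q)=0$ for $p\neq q$, $\Hom(R_p,R_p)\cong R_p$) is really a sketch of how Williamson \emph{proves} Theorem 7.9, not of how to \emph{use} it; it would land you in the same computation but with more machinery to set up. To complete your proof you would need to drop the reduction to indecomposables, invoke Theorem 7.9 directly, use self-duality to remove the bar, and then actually carry out the pairing expansion and the $\gdim(R_p)$ identification.
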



\begin{proof}


For $p$ an $(I,J)$-coset, we define \[\pi(p):=v^{\ell(\ma{p})+\ell(\mi{p})}\sum_{x\in p}v^{-2\ell(x)}.\]
By \cite[Theorem 7.9]{SingSb} we have an isomorphism 
\begin{equation}\label{hom1}
\Hom(B,B')(-\ell(J))\cong \left\langle [B],\overline{[B']} \right\rangle \cdot R^I\cong  \langle [B],[B'] \rangle\cdot R^I
\end{equation}
of graded left $R^I$-modules, where the pairing $\langle-,-\rangle$ is defined in \cite{SingSb},  after Remark 2.2.
We can use \cite[Lemma 2.13]{SingSb} to expand the pairing,  and obtain
\begin{equation}\label{Bpairing}\langle [B],[B'] \rangle=\sum_p c_p c'_p v^{\ell(\ma{p})-\ell(\mi{p})-\ell(J)}\frac{\pi(p)}{\pi(J)}.
\end{equation}
Let $K$ be the left redundancy of $p$.
By \cite[(2.1) and (2.3)]{SingSb} we have
\begin{equation}\label{redlength}\ell(\mi{p})-\ell(\ma{p})+\ell(J)=\ell(K)-\ell(I) \quad \ \text{and}\quad \frac{\pi(p)}{\pi(J)}=\frac{\pi(I)}{\pi(K)}.\end{equation}
Moreover, by  \Cref{lem:grankIJ} we also have an isomorphism 
\begin{equation}\label{Rpdec}
R_p\cong R^K\cong v^{\ell(I)-\ell(K)}\frac{\pi(I)}{\pi(K)}\cdot R^I
\end{equation}
of graded left $R^I$-modules.
So, we conclude that
\begin{equation}\label{hom3}  \Hom(B,B')\cong \sum_p c_p c'_pv^{\ell(I)-\ell(K)}\frac{\pi(I)}{\pi(K)}\cdot R^I \cong \sum_p c_pc'_p\cdot R_p
\end{equation}
and the claim follows.
\end{proof}

\begin{cor}\label{BShom} 
Let $I_{\bullet}=[I_0,\ldots, I_d],I'_{\bullet}=[I'_0,\ldots, I'_r]$ be singlestep expressions with $I=I_0=I'_0$ and $J=I_d=I'_r$. Then, the morphism space $\Hom(\BS(I_\bullet),\BS(I'_\bullet))$ has graded dimension given by the formula
\begin{equation}\label{gradeddimhom}   \sum_{p\in W_I\backslash W/W_J
}  \sum_{
\substack{t_{\bullet}\subset I_{\bullet}, 
t'_{\bullet}\subset I'_{\bullet}\\
\mathrm{term}(t_{\bullet}) =\mathrm{term}(t'_{\bullet})=p
 }
} \mathrm{poly}(t_{\bullet})\mathrm{poly}(t'_{\bullet})v^{\mathrm{def}(t_{\bullet})+\mathrm{def}(t'_{\bullet})}\cdot \gdim\left(R_p \right). \end{equation}
\end{cor}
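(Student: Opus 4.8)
The plan is to combine the Soergel--Williamson Hom formula (\Cref{SWhom}) with the singular Deodhar formula (\Cref{cor:singDeodhar}) in a direct fashion. First I would recall that $\BS(I_\bullet)$ is self-dual, so \Cref{SWhom} applies with $B = \BS(I_\bullet)$ and $B' = \BS(I'_\bullet)$. Thus I need the expansions of $[\BS(I_\bullet)]$ and $[\BS(I'_\bullet)]$ in the standard basis $\{h_p\}$. By \eqref{chBS} these are $\habs(I_\bullet)$ and $\habs(I'_\bullet)$ respectively, and \Cref{cor:singDeodhar} gives
\begin{equation}
 [\BS(I_\bullet)] = \sum_{p} \Biggl( \sum_{\substack{t_\bullet \subset I_\bullet \\ \term(t_\bullet) = p}} \poly(t_\bullet)\, v^{\defect(t_\bullet)} \Biggr) h_p,
\end{equation}
so that the coefficient $c_p$ of $h_p$ in $[\BS(I_\bullet)]$ is exactly $\sum_{t_\bullet \subset I_\bullet,\ \term(t_\bullet) = p} \poly(t_\bullet)\, v^{\defect(t_\bullet)}$, and likewise $c'_p = \sum_{t'_\bullet \subset I'_\bullet,\ \term(t'_\bullet) = p} \poly(t'_\bullet)\, v^{\defect(t'_\bullet)}$.

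Next I would plug these into \eqref{singhom}, obtaining
\begin{equation}
 \gdim\bigl(\Hom(\BS(I_\bullet),\BS(I'_\bullet))\bigr) = \sum_p c_p\, c'_p \cdot \gdim(R_p).
\end{equation}
The final step is purely formal: expand the product $c_p c'_p$ as a double sum over pairs $(t_\bullet, t'_\bullet)$ of paths subordinate to $I_\bullet$ and $I'_\bullet$ respectively with $\term(t_\bullet) = \term(t'_\bullet) = p$, using that $\poly$ has positive coefficients (indeed lies in $\mathbb{N}[v^\pm]$, being a ratio of Poincar\'e polynomials $\pi^+$, which is why the ``$p \cdot M$'' notation makes sense). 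This yields precisely \eqref{gradeddimhom} after re-indexing the outer sum over all $(I,J)$-cosets $p$ and the inner sum over coterminal pairs of subordinate paths.

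There is essentially no serious obstacle here; the statement is a bookkeeping consequence of two results already established. The only point requiring a word of care is the self-duality of $\BS(I'_\bullet)$, which is needed to invoke \Cref{SWhom} in the stated form --- this follows because $\duality$ preserves Bott--Samelson bimodules (by \cite[Prop. 6.15]{SingSb}) and $\habs(I'_\bullet) = [\BS(I'_\bullet)]$ is bar-invariant, since each generator $\leftup{I}\genha^{Is} = \leftup{I}b^{Is}$ and $\leftup{Is}\genha^{I} = \leftup{Is}b^{I}$ is bar-invariant and the $\ast$-products of bar-invariant elements are bar-invariant; hence $\duality\BS(I'_\bullet) \cong \BS(I'_\bullet)$. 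The rest is substitution and relabeling. If one wished to avoid even this, one could instead cite \eqref{chBS} together with the fact that $\BS(I'_\bullet)$ is visibly built from self-dual induction and restriction bimodules, but invoking bar-invariance of the class in the Hecke algebroid is the cleanest route.
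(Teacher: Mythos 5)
Your proposal is correct and is the natural deduction from \Cref{SWhom}, \eqref{chBS}, and \Cref{cor:singDeodhar}; the paper provides no written proof for \Cref{BShom}, treating it as an immediate consequence of those results. Your bar-invariance argument for the self-duality of $\BS(I'_\bullet)$ is sound, though the paper already records this fact directly (see Remark~\ref{rmk:intersectionform}), so citing it outright would be slightly shorter.
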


\begin{rem} It follows from \cite[Theorem 1.4]{SingSb} (thanks to the fact that Soergel's conjecture is now proven \cite{EWhodge}) 
 that for the reflection realization of $W$ over $\R$, Soergel's conjecture holds. That is, we have
\[ [B_p] ={}^Ib_p^J\]
for any $(I,J)$-coset $p$. In this case, it follows from Theorem \ref{SWhom} that the only endomorphisms of $B_p$ in degree $0$ are multiplication by scalars. We will not need this result in this paper. \end{rem}

\subsection{Lower terms}\label{sec:lowerterms}

Let $B$ and $B'$ be two singular Soergel bimodules. For a double coset $p$ we denote by $\Hom_{<p}(B,B')$ the ideal of  morphisms which factor through a direct sum of shifts of bimodules $B_q$, for $q<p$. Let $\Hom_{\not <p}(B,B')$ denote the quotient $\Hom(B,B')/\Hom_{<p}(B,B')$.

We recall two propositions from \cite{EKLP2} that characterize lower terms. 


\begin{prop}[{\cite[Prop 3.33]{EKLP2}}]\label{prop::kill1tensor}
Let $I_\bullet$ and $I'_\bullet$ be reduced expressions for a $(I,J)$-coset $p$. Then 
  \[\Hom^0_{<p}(\BS(I_\bullet),\BS(I'_\bullet))=\{ f\in  \Hom^0(\BS(I_\bullet),\BS(I'_\bullet))\mid f(\onetensor_{I_\bullet})=0\}\]
  where $\onetensor_{I_\bullet}$ is as in \eqref{def:cbot}.
\end{prop}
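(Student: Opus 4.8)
The plan is to prove the equality as a double inclusion, using the Soergel--Williamson Hom formula (Theorem \ref{SWhom}) together with the classification results for indecomposables (Theorem \ref{thm::indec}) and the running characterization of $\habs(I_\bullet)$ in terms of subordinate paths (Corollary \ref{cor:singDeodhar}, \eqref{chBS}). The inclusion $\Hom^0_{<p}(\BS(I_\bullet),\BS(I'_\bullet)) \subset \{f \mid f(\onetensor_{I_\bullet}) = 0\}$ should be the easy direction: if $f$ factors as $\BS(I_\bullet) \namedto{g} \bigoplus_q B_q(k) \namedto{h} \BS(I'_\bullet)$ with every $q < p$, then I would argue that $g$ must kill $\onetensor_{I_\bullet}$ for degree reasons. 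Indeed, by Theorem \ref{thm::indec} the summand of $\BS(I_\bullet)$ containing $\onetensor_{I_\bullet}$ is $B_p$ itself (since $I_\bullet$ is reduced for $p$), and $\onetensor_{I_\bullet}$ is the unique element (up to scalar) in its degree, which is the minimal degree of $\BS(I_\bullet)$. Any degree-zero map into $\bigoplus_q B_q(k)$ lands in bimodules labeled by $q<p$; but $\Hom(B_p, B_q(k))$ in the relevant degree must vanish, because by Theorem \ref{SWhom} the graded dimension of $\Hom(B_p,B_q)$ is controlled by the coefficients $c_p$ of $[B_p]$ and $[B_q]$, and $[B_q]$ has no $h_{p'}$ term for $p' \ge p$ while $[B_p] \in h_p + (\text{lower})$, so there is no contribution in the degree where $\onetensor_{I_\bullet}$ would have to map nontrivially. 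Hence $g(\onetensor_{I_\bullet}) = 0$, so $f(\onetensor_{I_\bullet}) = 0$.

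For the reverse inclusion, which I expect to be the main obstacle, I would like to show that if $f(\onetensor_{I_\bullet}) = 0$ then $f$ factors through $\bigoplus_{q<p} B_q(k)$. The strategy is to decompose $\BS(I_\bullet) \cong B_p \oplus \bigoplus_i B_{q_i}(k_i)$ (Theorem \ref{thm::indec}, with $B_p$ the unique summand containing $\onetensor_{I_\bullet}$) and similarly $\BS(I'_\bullet) \cong B_p \oplus \bigoplus_j B_{q'_j}(k'_j)$. Write $f$ as a matrix of maps between these summands. The condition $f(\onetensor_{I_\bullet}) = 0$ forces the $B_p \to B_p$ component of $f$ to kill $\onetensor_{I_\bullet}$ (after projecting); I would then need that \emph{any} degree-zero endomorphism of $B_p$ killing $\onetensor_{I_\bullet}$ is itself lower order — equivalently factors through $\bigoplus_{q<p} B_q$ — and similarly that the off-diagonal $B_p \to B_{q'_j}(k'_j)$ and $B_{q_i}(k_i) \to B_p$ and $B_{q_i}(k_i)\to B_{q'_j}(k'_j)$ components all automatically factor through lower terms (the first two because $q'_j, q_i < p$ and $B_p$ is indecomposable self-dual, the last trivially).

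The crux is therefore the claim: \emph{a degree-zero endomorphism $\varphi$ of $B_p$ with $\varphi(\onetensor_{I_\bullet}) = 0$ factors through $\bigoplus_{q<p}B_q$.} Here I would invoke the Hom formula once more: $\gdim \Hom^0(B_p,B_p)$ in degree $0$ is $\sum_p c_p^2 \cdot \gdim(R_p)$ evaluated in degree $0$, and since $[B_p] \in h_p + \sum_{p'<p}\mathbb N[v^\pm]h_{p'}$ with the coefficient of $h_p$ equal to $1$, the degree-zero part of $\Hom^0(B_p,B_p)$ is one-dimensional, spanned by the identity, \emph{plus} possibly contributions from the lower $p'$. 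The endomorphism $\varphi$ with $\varphi(\onetensor) = 0$ cannot be a nonzero multiple of the identity (which sends $\onetensor$ to $\onetensor \ne 0$), so $\varphi$ lies in the span of the lower-order contributions, i.e.\ $\varphi \in \Hom^0_{<p}(B_p,B_p)$, which is exactly the statement that $\varphi$ factors through $\bigoplus_{q<p}B_q$. I would need to be a little careful that ``the degree-zero Hom space modulo the lower-term ideal is spanned by the identity'' is precisely what the Hom formula plus the leading term $h_p$ of $[B_p]$ give — this is essentially the observation that $\gdim(R_p)$ has constant term $1$ and that any $h_{q}$ with $q \not< p$ and $q \ne p$ contributes $0$ since it does not appear in $[B_p]$. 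Assembling the matrix components, every entry of $f$ other than a scalar multiple of the identity on the $B_p$-block factors through $\bigoplus_{q<p}B_q$, and the scalar is forced to be $0$ by $f(\onetensor_{I_\bullet}) = 0$; hence $f \in \Hom^0_{<p}$, completing the proof.
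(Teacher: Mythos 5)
Since the paper merely cites this from \cite{EKLP2} and gives no proof, I can only assess your blind attempt on its own terms. Your high-level plan — decompose both sides into indecomposable summands, observe that the off-diagonal components automatically factor through lower terms, and reduce to analyzing the $B_p \to B_p$ block — is sound and is the right reduction. But there are two genuine gaps, and they interact.

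First, the ``easy'' direction is not actually easy, and your degree argument does not close it. You claim that $g$ must kill $\onetensor_{I_\bullet}$ because $\Hom(B_p,B_q(k))$ ``in the relevant degree must vanish''. But the Hom formula (Theorem \ref{SWhom}) only controls graded dimensions; it does not tell you where a morphism sends a particular vector, and it certainly does not imply $\Hom^0(B_p,B_q(k))=0$ for every shift $k$ that can appear in the factorization of an element of $\Hom^0_{<p}$. Indeed $f$ only factors through an abstract direct sum $M=\bigoplus B_q(k)$, not necessarily a summand of $\BS(I_\bullet)$ or $\BS(I'_\bullet)$, so the shifts $k$ are not constrained the way you suggest. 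Worse: the kind of vanishing you want would essentially require that the coefficients $c_{p'}$ of $[B_p]$ lie in $v\mathbb N[v]$ for $p'<p$, which is a Soergel-conjecture-type positivity statement that the paper explicitly avoids relying on (see the remark after Corollary \ref{BShom}). The correct mechanism for this direction is Proposition \ref{prop:lowertermsR}: after projecting to the $B_p$-summand of $\BS(I'_\bullet)$ one may take $I'_\bullet = Y_p$, and then any $f\in\Hom_{<p}$ has $\im f \cap R^{\leftred(p)}\cdot \onetensor_{Y_p}=0$; since $f(\onetensor_{I_\bullet})$ lies in the one-dimensional degree $\deg\onetensor$ piece and hence is a scalar multiple of $\onetensor_{Y_p}$, this forces $f(\onetensor_{I_\bullet})=0$. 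This is the genuine content, not a formality.

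Second, your crux claim for the other inclusion has a logical slip and, once corrected, turns out to require the first inclusion. After using Proposition \ref{gdimlowerterms} to get $\End^0(B_p)=\Bbbk\cdot\id\,\oplus\,\End^0_{<p}(B_p)$, you conclude from ``$\varphi$ is not a nonzero multiple of the identity'' that ``$\varphi$ lies in the span of the lower-order contributions''. That deduction is not valid in a direct sum: $\varphi=\lambda\id+\psi$ with $\psi\in\End^0_{<p}$ can fail to lie in $\Bbbk\id$ while also failing to lie in $\End^0_{<p}$ (whenever $\lambda\ne 0$ and $\psi\ne 0$). To conclude $\lambda=0$ from $\varphi(\onetensor)=0$ you need $\psi(\onetensor)=0$ — i.e.\ exactly the first inclusion applied to $\psi$. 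So the two directions are not independent; the second relies on the first.

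A tighter route you should aim for: prove the inclusion $\Hom^0_{<p}\subseteq\{f : f(\onetensor_{I_\bullet})=0\}$ by the argument via Proposition \ref{prop:lowertermsR} above, then observe that both $\Hom^0_{<p}(\BS(I_\bullet),\BS(I'_\bullet))$ and $\ker\bigl(f\mapsto f(\onetensor_{I_\bullet})\bigr)$ have codimension exactly one inside $\Hom^0(\BS(I_\bullet),\BS(I'_\bullet))$ — the former because $\Hom^0_{\not<p}\cong (R_p)^0=\Bbbk$ by Proposition \ref{gdimlowerterms} (both expressions are reduced for $p$, so $c_p=1$), the latter because any rex move $\BS(I_\bullet)\to\BS(I'_\bullet)$ sends $\onetensor\mapsto\onetensor$ (Theorem \ref{thm:braidmovespreservecbot}), so the evaluation functional is nonzero. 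Equality then follows by dimension counting.
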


\begin{prop}[{\cite[Prop 3.34]{EKLP2}}]\label{prop:lowertermsR}
Let $I_\bullet$ and $I'_\bullet$ be reduced expressions for a $(I,J)$-coset $p$. Assume further that $I'_\bullet$ is of the form
\[I'_\bullet =[[I\supset \leftred(p)]]\circ K_\bullet\] and consider the action of $R^{\leftred(p)}$ on $\BS(I'_\bullet)$ restricted from that on $\BS(K_\bullet)$. 
If $f\in \Hom_{<p}(\BS(I_\bullet),\BS(I'_\bullet))$, then  $\im f\cap (R^{\leftred(p)} \cdot \onetensor_{I'_\bullet})= 0$. 
\end{prop}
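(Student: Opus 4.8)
The plan is to prove Proposition~\ref{prop:lowertermsR} by combining Proposition~\ref{prop::kill1tensor} with a careful analysis of how $R^{\leftred(p)}$ acts on $\onetensor_{I'_\bullet}$ through the prescribed factorization $I'_\bullet = [[I \supset \leftred(p)]] \circ K_\bullet$. First I would set $L = \leftred(p)$ and note that $\BS([[I \supset L]]) = \restrict{R^I}{R^L}{R^L}(\ell(I)-\ell(L))$, so that $\BS(I'_\bullet) = R^I \otimes_{R^I} R^L \otimes_{R^L} \BS(K_\bullet)$ simplifies as a left $R^L$-module with the $R^L$-action on $\onetensor_{I'_\bullet}$ coming entirely from the $\BS(K_\bullet)$ factor; crucially, $\onetensor_{I'_\bullet}$ corresponds to $1 \otimes \onetensor_{K_\bullet}$, and $R^L \cdot \onetensor_{I'_\bullet}$ is a free $R^L$-submodule of rank one (this uses that $\onetensor_{K_\bullet}$ spans the lowest graded piece and the standard freeness of Bott--Samelson bimodules over the appropriate invariant subrings, cf.\ Lemma~\ref{lem:grankIJ} and Theorem~\ref{thm::indec}).

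Next I would take $f \in \Hom_{<p}(\BS(I_\bullet), \BS(I'_\bullet))$ and decompose it according to the grading; since $\Hom_{<p}$ is spanned by homogeneous elements it suffices to treat homogeneous $f$. The key reduction is that any $f$ factoring through $\bigoplus B_q(k)$ with $q < p$ has the property that its restriction to the one-tensor vanishes modulo lower terms in a strong sense. Concretely, I would argue: suppose for contradiction that $\im f \cap (R^L \cdot \onetensor_{I'_\bullet}) \ne 0$, so there is some homogeneous $g \in R^L$ and some $x \in \BS(I_\bullet)$ with $f(x) = g \cdot \onetensor_{I'_\bullet} \ne 0$. Write $x$ in terms of a homogeneous basis; because $\onetensor_{I_\bullet}$ is the unique lowest-degree element up to scalar and $R^I \cdot \onetensor_{I_\bullet}$ generates $\BS(I_\bullet)$ as a bimodule (for a \emph{reduced} expression, $\BS(I_\bullet)$ is generated over $R^I \otimes R^J$ by $\onetensor_{I_\bullet}$ — this is where reducedness of $I_\bullet$ enters), one can move the appropriate invariant polynomial across and reduce to understanding $f(\onetensor_{I_\bullet})$.

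The heart of the argument is then: $f(\onetensor_{I_\bullet}) = 0$ by Proposition~\ref{prop::kill1tensor} (applied to $f \in \Hom_{<p}$, noting $\onetensor_{I_\bullet}$ lives in degree equal to minus half the expression length and $\Hom_{<p}$ is a graded ideal so we may work degree by degree). But then, since $\BS(I_\bullet)$ is generated as an $(R^I,R^J)$-bimodule by $\onetensor_{I_\bullet}$, and $f$ is a bimodule map, $f$ kills everything in the image of $R^I \cdot \onetensor_{I_\bullet} \cdot R^J$ — wait, that would force $f = 0$. So the correct statement to extract from Proposition~\ref{prop::kill1tensor} is subtler: it is not that $f = 0$ but that $f(\onetensor_{I_\bullet}) = 0$, and I must leverage that $g \cdot \onetensor_{I'_\bullet}$, being a left-$R^L$-multiple of the one-tensor, can only be hit from the part of $\BS(I_\bullet)$ that maps into $R^L \cdot \onetensor_{I'_\bullet}$; I would show this forces the relevant preimage to be a polynomial multiple of $\onetensor_{I_\bullet}$ (using that $R^L \cdot \onetensor_{I'_\bullet}$ sits in a graded range where $\BS(I_\bullet)$ is spanned by $R^I$-multiples of $\onetensor_{I_\bullet}$, again by the lowest-degree characterization and reducedness), and hence $g \cdot \onetensor_{I'_\bullet} = f(h \cdot \onetensor_{I_\bullet}) = h \cdot f(\onetensor_{I_\bullet}) = 0$ for the appropriate $h \in R^I$, a contradiction.

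The main obstacle I anticipate is the graded bookkeeping in the last paragraph: pinning down exactly why any element of $\BS(I_\bullet)$ whose image under $f$ lands in the rank-one free module $R^L \cdot \onetensor_{I'_\bullet}$ must itself be an $R^I$-multiple of $\onetensor_{I_\bullet}$. This requires knowing that $\BS(I_\bullet)$, for a reduced expression, has its lowest-degree line spanned by $\onetensor_{I_\bullet}$ and that left-multiplication structure is controlled — essentially a reduced-expression refinement of Theorem~\ref{thm::indec} (the summand containing $\onetensor_{I_\bullet}$ is $B_p$, and $B_p$ contributes the ``diagonal'' $R_p \cong R^L$ piece). I would likely need to invoke the degree-$0$ part of the Soergel--Williamson Hom formula (Theorem~\ref{SWhom}, or rather Corollary~\ref{BShom}) to see that morphisms into the one-tensor line are governed precisely by $\gdim(R_p) = \gdim(R^L)$, forcing the image intersection with $R^L \cdot \onetensor_{I'_\bullet}$ to come from the $B_p$-summand, which by definition of $\Hom_{<p}$ is exactly the part $f$ avoids.
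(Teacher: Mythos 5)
The crux of your argument---showing that any $x$ with $f(x) \in R^{\leftred(p)} \cdot \onetensor_{I'_\bullet}$ must be an $R^I$-multiple of $\onetensor_{I_\bullet}$---does not go through, and both devices you propose to close the gap are flawed. First, Bott--Samelson bimodules are not cyclic over $R^I \otimes R^J$ even for reduced expressions (already $R \otimes_{R^s} R \otimes_{R^t} R$ contains $1 \otimes \alpha_t \otimes 1$, which is not a bimodule multiple of the one-tensor), so you cannot ``move the polynomial across'' and reduce to $f(\onetensor_{I_\bullet})$. Second, and more fundamentally, the final sentence of your proposal conflates factoring through lower terms with landing in the complement of the $B_p$ summand: if $f$ factors as $\BS(I_\bullet) \to \bigoplus B_{q_i}(k_i) \to \BS(I'_\bullet)$ with $q_i < p$, the second map may perfectly well have a nonzero component into the $B_p$ summand, since $\Hom(B_{q}, B_p) \ne 0$ in general for $q < p$. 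So $\im f$ can hit the $B_p$ summand, and the per-degree dimension count from Corollary~\ref{BShom} only controls the single degree $\deg \onetensor$; it says nothing about $R^{\leftred(p)} \cdot \onetensor_{I'_\bullet}$ in higher degrees, where $\BS(I_\bullet)$ is no longer spanned by $R^I \cdot \onetensor_{I_\bullet}$.

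What the argument actually needs is a single map $\pi \colon \BS(I'_\bullet) \to R_p(\ell(\ma{p})-\ell(J))$ that is injective on $R^{\leftred(p)} \cdot \onetensor_{I'_\bullet}$ and annihilates $\Hom_{<p}$. By the Soergel--Williamson formula and the singular Deodhar formula, $\Hom(\BS(I'_\bullet), R_p(\ell(\ma{p})-\ell(J))) \cong \langle \habs(I'_\bullet), h_p\rangle \cdot R_p \cong R_p$ is one-dimensional in degree zero; one verifies the degree-zero map does not kill $\onetensor_{I'_\bullet}$ (compare \cite[Lemma 3.32]{EKLP2} and Proposition~\ref{prop::kill1tensor}), and freeness of $R_p \cong R^{\leftred(p)}$ then gives injectivity on $R^{\leftred(p)} \cdot \onetensor_{I'_\bullet}$. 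On the other hand, for $q < p$ the class $[B_q]$ is supported on $h_r$ with $r \le q$, so $\langle [B_q], h_p\rangle = 0$ and hence $\Hom(B_q, R_p(k)) = 0$ for all $k$; therefore $\pi \circ f = 0$ for every $f \in \Hom_{<p}$. Then $f(x) = g \cdot \onetensor_{I'_\bullet}$ forces $0 = \pi(f(x)) = g\,\pi(\onetensor_{I'_\bullet})$, hence $g = 0$. The map $\pi$ is emphatically not the projection onto the $B_p$ direct summand (that projection does \emph{not} annihilate $\Hom_{<p}$); it is the projection onto the top standard piece, which is the tool your sketch is missing.
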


We conclude this section by using the results in \cite{EKLP2} to determine the size of the space of morphisms modulo lower terms  to a reduced Bott--Samelson bimodule. 

\begin{prop}\label{gdimlowerterms}
 Let $B\in \SSBim$ with $[B] = \sum c_p h_p$. Let $I_\bullet$ be a reduced expression for a $(I,J)$-coset $p$.
 Then we have
\[\Hom_{\not< p}(B,\BS(I_\bullet))\cong \Hom_{\not <p}(B,B_p)\cong c_p\cdot R_p\]
\end{prop}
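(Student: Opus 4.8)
The plan is to prove the two isomorphisms in stages, using the structural results on lower terms together with the Soergel--Williamson Hom formula. First I would address the second isomorphism $\Hom_{\not<p}(B,\BS(I_\bullet))\cong \Hom_{\not<p}(B,B_p)$. Since $I_\bullet$ is a \emph{reduced} expression for $p$, Theorem \ref{thm:classification} says that $\BS(I_\bullet)\cong B_p\oplus (\text{lower terms})$, where ``lower terms'' means a direct sum of shifts of $B_q$ with $q<p$ (here one uses that any $B_q$ appearing as a summand corresponds to a $q$ that is the terminus of a subordinate path, hence $q\le p$, and the summand $q=p$ occurs with multiplicity one by the last sentence of Theorem \ref{thm:classification}). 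Composing with the projection to and inclusion of this lower-order part shows $\Hom_{<p}(B,\BS(I_\bullet))$ maps onto $\Hom(B,\text{lower terms})$, and conversely any map into a shift of $B_q$ with $q<p$ is by definition in $\Hom_{<p}$; so quotienting by $\Hom_{<p}$ kills exactly the complementary summands, leaving $\Hom_{\not<p}(B,B_p)$.

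Next I would compute $\Hom_{\not<p}(B,B_p)$ and identify it with $c_p\cdot R_p$. The key input is that $B_p$ is self-dual, so Theorem \ref{SWhom} gives $\gdim \Hom(B,B_p)=\sum_q c_q c'_q\cdot\gdim(R_q)$ where $[B_p]\in h_p+\sum_{q<p}\mathbb{N}[v^\pm]h_q$, i.e. $c'_p=1$ and $c'_q$ supported on $q\le p$. The term $q=p$ contributes exactly $c_p\cdot\gdim(R_p)$, and all other contributions come from $q<p$. I would then argue that the $\Hom_{<p}$ part accounts precisely for the $q<p$ contributions: a map $B\to B_p$ factoring through $B_q$ with $q<p$ lands in $\Hom_{<p}$, and conversely, by the ideal structure, $\Hom_{<p}(B,B_p)$ has graded dimension $\sum_{q<p}c_q c'_q\gdim(R_q)$ (this can be seen by applying Theorem \ref{SWhom} to the summands of a bimodule $B''$ with $[B'']=\sum_{q<p}c'_q[B_q]$ through which these morphisms factor, or more directly by a downward induction on the Bruhat order). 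Subtracting, $\gdim\Hom_{\not<p}(B,B_p)=c_p\cdot\gdim(R_p)$.

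To upgrade the graded-dimension equality to an honest bimodule (at least left $R^I$-module) isomorphism $\Hom_{\not<p}(B,B_p)\cong c_p\cdot R_p$, I would use the module-level statement hidden in the proof of Theorem \ref{SWhom}, namely the isomorphism $\Hom(B,B_p)\cong\sum_q c_q c'_q\cdot R_q$ of graded left $R^I$-modules (equation \eqref{hom3}), and check that the sub-$R^I$-module $\Hom_{<p}(B,B_p)$ corresponds under this isomorphism to $\sum_{q<p}c_q c'_q\cdot R_q$. One clean way: pick a reduced expression $I'_\bullet=[[I\supset\leftred(p)]]\circ K_\bullet$ for $p$ as in Proposition \ref{prop:lowertermsR}; then $\Hom_{\not<p}(B,\BS(I'_\bullet))$ is detected on $R^{\leftred(p)}\cdot\onetensor_{I'_\bullet}$ because Proposition \ref{prop:lowertermsR} says morphisms in $\Hom_{<p}$ have image meeting $R^{\leftred(p)}\cdot\onetensor_{I'_\bullet}$ trivially, while Proposition \ref{prop::kill1tensor} identifies $\Hom^0_{<p}$ with the maps killing $\onetensor_{I'_\bullet}$; evaluation at $\onetensor_{I'_\bullet}$ then gives an injection $\Hom_{\not<p}(B,\BS(I'_\bullet))\hookrightarrow R^{\leftred(p)}\cdot\onetensor_{I'_\bullet}\cong R_p$ of left $R^I$-modules, and the graded-dimension count $c_p\cdot\gdim R_p$ forces this to be the inclusion of a free $R^{\leftred(p)}$-submodule isomorphic to $c_p\cdot R_p$.

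The main obstacle I anticipate is the bookkeeping that $\Hom_{<p}(B,B_p)$ (or $\Hom_{<p}(B,\BS(I_\bullet))$) is \emph{exactly} the $q<p$ part of the Soergel--Williamson dimension count, rather than merely being contained in it: one direction (morphisms factoring through lower $B_q$ are ``small'') is formal, but the reverse --- that every ``small'' morphism genuinely factors through lower terms --- requires the results of \cite{EKLP2} recalled above (Propositions \ref{prop::kill1tensor} and \ref{prop:lowertermsR}), and carrying the argument out simultaneously as a left $R^I$-module statement (not just a graded-dimension statement) is the delicate point. Everything else is a matter of assembling the self-duality of $B_p$, the decomposition $\BS(I_\bullet)\cong B_p\oplus(\text{lower})$ from Theorem \ref{thm:classification}, and the module-refined form \eqref{hom3} of the Hom formula.
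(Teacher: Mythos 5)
Your argument for the first isomorphism $\Hom_{\not<p}(B,\BS(I_\bullet))\cong\Hom_{\not<p}(B,B_p)$ is correct and agrees with the paper's: decompose $\BS(I_\bullet)\cong B_p\oplus M$ with $M$ a sum of shifts of $B_q$ ($q<p$), note $\Hom(B,M)\subset\Hom_{<p}$, and quotient.

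For the second isomorphism, however, you are attempting to rebuild a structural result that the paper simply cites, and the rebuild has a gap. The paper invokes \cite[Lemma 3.32]{EKLP2}, which says directly that $\Hom_{\not<p}(B,B_p)\cong\Hom(B,R_p(\ell(\ma{p})-\ell(J)))$ for \emph{any} $B\in\SSBim$, and then applies \cite[Theorem 7.9]{SingSb} once to compute the right side as $c_p\cdot R_p$. You instead try to derive the identification from Propositions \ref{prop::kill1tensor} and \ref{prop:lowertermsR}, but both of those are stated only in the case where \emph{both} source and target are Bott--Samelson bimodules of reduced expressions for the \emph{same} coset $p$. Your $B$ is an arbitrary singular Soergel bimodule (or, after decomposing, a summand $B_q$ for various $q$), so neither proposition applies as stated, and your claim that ``morphisms in $\Hom_{<p}(B,\BS(I'_\bullet))$ have image meeting $R^{\leftred(p)}\cdot\onetensor_{I'_\bullet}$ trivially'' is not justified by the tools you cite. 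The genuinely hard content here is precisely an extension of Proposition \ref{prop:lowertermsR} to arbitrary sources; that is what \cite[Lemma 3.32]{EKLP2} provides, and it cannot be recovered from the special case by formal manipulations alone.

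Your alternative route via graded dimension counting has the same problem: the identity $\gdim\Hom_{<p}(B,B_p)=\sum_{q<p}c_qc'_q\gdim(R_q)$ is exactly equivalent to the statement being proved, and your proposed inductions or factorizations through an auxiliary $B''$ would need the same unproved ingredient. You correctly flag this as ``the delicate point,'' but flagging it does not close it. To repair the argument without importing Lemma 3.32, you would need to prove the general-source version of the $\onetensor$-detection statement independently, which amounts to reproving a result of \cite{EKLP2} rather than citing it.
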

\begin{proof}
We know that all summands in $\BS(I_\bullet)$ other than $B_p$ are smaller than $p$, so they vanish modulo lower terms. This shows the first isomorphism.

    By \cite[Lemma 3.32]{EKLP2}  we have  $\Hom_{\not <p}(B,B_p)\cong\Hom(B,R_p(\ell(\ma{p})-\ell(J)))$. Finally, by \cite[Theorem 7.9]{SingSb}, we have
\[\Hom(B,R_p(\ell(\ma{p})-\ell(J)))\cong \langle [B_p],h_p\rangle \cdot R_p =c_p \cdot R_p.\qedhere\]
\end{proof}

\section{Singular diagrammatics and Frobenius extensions} \label{sec:diagrammatics}

This chapter contains no new material. We recall the diagrammatic calculus introduced in \cite{ESW}, primarily for the purpose of efficiently describing morphisms between singular Soergel bimodules. We also discuss Demazure operators and Frobenius extensions, which play a role for both the diagrammatics and the proofs. Experts may wish to review the assumptions from \Cref{sec:realization}.

\subsection{Demazure operators} \label{subsec:demazure}

We assume familiarity with the Demazure operators $\pa_w \co R \to R$ associated with $w \in W$.  See \cite[\S 3.2]{EKLP1} for a helpful review.

Let $p$ be an $(I,J)$-coset. As in \cite[Definition 3.8]{EKLP1} we set
\begin{equation} \label{eq:pap} \pa_p := \pa_{\ma{p} w_J^{-1}}. \end{equation}
Though a priori this is a map $R \to R$, \cite[Lemma 3.9]{EKLP1} proves that, when restricted to the subring $R^J$, it has image contained in $R^I$. By convention, $\pa_p$ is always considered as a map $R^J \to R^I$.

A special case is when $J \subset I$, and $p$ has underlying set $W_I$. The operator $\pa_{p} = \pa_{w_I w_J^{-1}} \co R^J \to R^I$ is denoted $\pa^J_I$. We call it a \emph{trace map}.

A second special case is when $I \subset J$ and $p$ has underlying set $W_J$. The operator $\pa_p$ is the inclusion map $\iota^I_J \co R^J \hookrightarrow R^I$.

In \cite[Corollary 3.19]{EKLP1} it is proven  that (for cosets $p, q, r$ which are appropriately composable)
\begin{equation} p.q = r \implies \pa_p \circ \pa_q = \pa_r, \end{equation}
In particular, given a reduced expression $L_{\bullet}$ for $p$ as in \eqref{ex}, one can view the operator $\pa_p$ as an iterated composition of inclusion maps $\iota^{K_i}_{I_{i-1}}$ and trace maps $\pa^{I_i}_{K_i}$.

For any double coset expression $L_{\bullet}$ one can consider the corresponding iterated composition of inclusion and trace maps, and denote it by $\pa_{L_{\bullet}}$. See \cite[Lemma 3.16]{EKLP1} for more details. It is proven in \cite[Proposition 3.17]{EKLP1} that $\pa_{L_{\bullet}} = \pa_p$ if $L_{\bullet}$ is any reduced expression for $p$, and $\pa_{L_{\bullet}} = 0$ if $L_{\bullet}$ is not a reduced expression.

We note one other special case of \eqref{eq:pap}. If $p^{\core}$ is the core of $p$, then
\begin{equation} \label{eq:papcore} \pa_{p^{\core}} = \pa_{\mi{p}} \end{equation}
as a map from $R^{\rightred(p)}$ to $R^{\leftred(p)}$. This is an easy consequence of \eqref{eq:mapmip}.

\subsection{Cubes of Frobenius extensions} \label{subsec:cubes}

\begin{defn} Let $S$ be a finite set. Let $P(S)$ be the power set of $S$, the set of all subsets of $S$. Then $P(S)$ is a poset under inclusion. Let $F \subset P(S)$ be a subset which is downward-closed: if $I \in F$ and $J \subset I$ then $J \in F$. Such an $F$ is called a \emph{partial cube}. \end{defn}

\begin{defn} A \emph{(partial) cube of (commutative) graded Frobenius extensions} indexed by a partial cube $F$ is the data of: \begin{itemize} \item for each $I \in F$, a graded commutative ring $R^I$, and an integer $\ell(I)$, \item for each $I \subset J \in F$, an injective ring homomorphism $\iota^I_J : R^J \rightarrow R^I$, and a homogeneous $R^J$-linear map $\pa^I_J \co R^I \to R^J$. 
\end{itemize} This data satisfies: \begin{enumerate} \item For $I \subset J \in F$, $\pa^I_J$ is a Frobenius trace map, making $R^J \subset R^I$ a Frobenius extension. \item For $I \subset J \subset K \in F$, $\iota^I_K = \iota^I_J \circ \iota^J_K$ and $\pa^I_K = \pa^J_K \circ \pa^I_J$. \end{enumerate}
\end{defn}

We recall Frobenius extensions below, but first we give the main example.

\begin{defn} \label{defn:Soergelcube} Fix a Coxeter system $(W,S)$.
Let $\Bbbk$ and $V$ and $\{\alpha_s, \alpha_s^\vee\}$ and $R$ be as in \Cref{sec:realization}.
Let $F \subset P(S)$ be the set of all finitary subsets of $S$. The \emph{Soergel cube} is the cube of graded Frobenius extensions where \begin{itemize}
\item For $I \subset S$ finitary, $R^I$ is the invariant subring, and $\ell(I)$ is the length of the longest element.
\item For $J \subset I \subset S$ finitary, $\iota^J_I \co R^I \to R^J$ is the inclusion map, and $\pa^J_I=\pa_{w_Iw_J^{-1}} \co R^J \to R^I$ is the Frobenius trace map. \end{itemize}
\end{defn}

That the Soergel cube satisfies the requirements for a cube of Frobenius extensions follows from \cite[\S 4.1]{EKLP1}.

Let $I \subset J \in F$. That $R^J \subset R^I$ is a Frobenius extension 
with trace $\pa^I_J$, by definition, says that 
there exist \emph{dual bases} $\{c_i\}$ and $\{d_i\}$ for $R^I$ as a free $R^J$-module, such that
\begin{equation} \pa^I_J(c_i \cdot d_j) = \delta_{ij}. \end{equation}
The \emph{coproduct element}
\begin{equation} \label{dedef} \De^I_J := \sum_i c_i \ot d_i \in R^I \ot_{R^J} R^I \end{equation}
is independent of the choice of dual bases, so we sometimes write this element in Sweedler notation as
\[ \De^I_J = \De^I_{J(1)} \ot \De^I_{J(2)}. \]
Because it satisfies the property
\begin{equation} f \cdot \De^I_J = \De^I_J \cdot f \end{equation}
for any $f \in R^I$, the map
\begin{equation} \label{coproddef} \De \co R^I \to R^I \ot_{R^J} R^I, \qquad \De(f) = f \cdot \De^I_J \end{equation}
is a morphism of $R^I$-bimodules.

The element
\begin{equation} \mu^I_J = \sum_i c_i d_i \in R^I \end{equation}
is called the product-coproduct element for the extension. For the Soergel cube it has a closed formula:
\begin{equation} \mu^I_J = \prod_{\alpha \in \Phi^+_J \setminus \Phi^+_I} \alpha. \end{equation}
Here, $\Phi^+_I$ represents the set of positive roots of the form $w(\alpha_s)$ for $w \in W_I$ and $s \in I$. 
That is, $\mu^I_J$ is the product of all the positive roots for $J$ which are not positive roots for $I$. We write $\mu_I := \mu^{\emptyset}_{I}$.

In the Soergel context, for $I \subset J$ finitary, note that
\begin{subequations}
\begin{equation} \BS([[J \supset I \subset J]]) =  R^I(\ell(J) - \ell(I)) \text{ as an $(R^J, R^J)$ bimodule,} \end{equation}
\begin{equation} \BS([[I \subset J \supset I]]) =  R^I \ot_{R^J} R^I(\ell(J) - \ell(I)) \text{ as an $(R^I, R^I)$ bimodule.} \end{equation}
\end{subequations}

The data of a graded Frobenius extension gives rise to four homogeneous bimodule maps for each $I \subset J \in F$, which are independent of the choice of dual bases. Two of these are $(R^J, R^J)$-bimodule maps:
\begin{subequations} \label{fourstructuremaps}
\begin{equation} \iota^I_J \co R^J \to \BS([[J \supset I \subset J]]), \qquad \pa^I_J \co \BS([[J \supset I \subset J]]) \to R^J. \end{equation}
The grading shifts above are chosen so that these maps are both homogeneous of (negative) degree $\ell(I) - \ell(J)$. 

The other two are $(R^I, R^I)$-bimodule maps:
\begin{equation} m : \BS([[I \subset J \supset I]]) \to R^I, \qquad \Delta : R^I \to \BS([[I \subset J \supset I]]). \end{equation}
\end{subequations}
These are both homogeneous of (positive) degree $\ell(J) - \ell(I)$. Above, $m$ is the multiplication map $m(f \ot g) = fg$, and $\Delta$ was defined in \eqref{coproddef}.

\begin{rem} The functors of induction, i.e. tensoring with $R^I = \BS([[I \subset J]])$ as an $(R^I, R^J)$-bimodule, and shifted restriction, i.e. tensoring with $R^I(\ell(J) - \ell(I)) = \BS([[J \supset I]])$ as an $(R^J,R^I)$-bimodule, are biadjoint up to shift. The four maps above are the units and counits of this biadjunction.  \end{rem}

Later in the paper we will need the following concept.

\begin{defn} \label{defn:almostdual} Let $I \subset J \in F$. Homogeneous bases $\{a_i\}$ and $\{b_i\}$ for $R^I$ as a free $R^J$-module are called \emph{almost dual bases} if
\begin{equation} \pa^I_J(a_i \cdot b_j) \equiv  \delta_{ij} \text{ modulo } \mg, \end{equation}
where $\mg$ represents the homogeneous maximal ideal of $R^J$. \end{defn}

Given almost dual bases $\{a_i\}$ and $\{b_i\}$, one can construct dual bases $\{a_i\}$ and $\{b'_i\}$, where $\{b_i\}$ and $\{b'_i\}$ are related by unitriangular change of basis (with respect to an ordering by degree), see \cite[Lemma 4.7]{EKLP1}. 

\subsection{Diagrammatics for the Soergel cube: remarks}

In \cite{ESW} they describe a 2-category $\Frob$ by generators and relations, associated to any cube of Frobenius extensions. It comes equipped with a 2-functor $\evaluation$ to singular Bott--Samelson bimodules (in the case of the Soergel cube).

In general, there is no guarantee that $\evaluation$ should be full. However, all the singular double leaves we construct in \S\ref{sec:LightLeavesConstruction} will be in the image of $\evaluation$. We prove that double leaves form a basis under reasonable assumptions (e.g. for faithful realizations over a field), and thus $\evaluation$ is full in this case.

The functor $\evaluation$ is rarely expected to be faithful; there are more relations one must impose upon $\Frob$ to obtain a presentation for $\SBSBim$. These relations are unknown in general. See \cite[Chapter 24]{GBM} for remarks on the status of singular Soergel diagrammatics. We will not need any of the additional relations in this paper.


\subsection{Diagrammatics for the Soergel cube}\label{ss.Frob}

We now define the graded 2-category $\Frob$ following \cite{ESW}.

The objects of $\Frob$ are finitary subsets of $S$; therefore, finitary subsets will label the regions of our string diagrams. The $1$-morphisms are singlestep expressions $I_{\bullet}$. We encode (the identity morphism of) an object with a sequence of oriented strands, colored by simple reflections. An $\sberry$-colored strand separates $I$ and $I\sberry$ for some $I \subset S$, and the orientation is such that $I\sberry$ is to the right of the strand.

\begin{ex} Let $I_{\bullet} = [J\sberry \urial,J\urial, J\teal\urial, J\sberry\teal\urial, J\sberry\teal]$. It is drawn as 
\begin{equation} \id_{I_{\bullet}} = \quad {
\labellist
\tiny\hair 2pt
 \pinlabel {$J\sberry\urial$} [ ] at 0 31
\endlabellist
\centering
\igm{downs}
}{
\labellist
\tiny\hair 2pt
 \pinlabel {$J\urial$} [ ] at 0 31
\endlabellist
\centering
\igm{upt}
}{
\labellist
\tiny\hair 2pt
 \pinlabel {$J\teal\urial$} [ ] at 0 31
\endlabellist
\centering
\igm{ups}
}{
\labellist
\tiny\hair 2pt
 \pinlabel {$J\sberry\teal\urial$} [ ] at 0 31
 \pinlabel {$J\sberry\teal$} [ ] at 25 31
\endlabellist
\centering
\igm{downu}
} \end{equation}

We often use $\pm$ notation for singlestep expressions, as in \cite[Notation 5.1]{EKo}. Continuing the above example, we have
\begin{equation} [(J \sberry\urial) - \sberry + \teal + \sberry - \urial] := [J\sberry \urial,J\urial, J\teal\urial, J\sberry\teal\urial, J\sberry\teal]. \end{equation}
The notation $[J\sberry \urial,J\urial, \ldots]$ records the region labels, while the notation $[(J \sberry\urial) - \sberry + \ldots]$ emphasizes the sequence of oriented colored strands. \end{ex}

\begin{rem} As a helpful mnemonic for the color-sighted, $\sberry$ is for ``strawberry'' and $\teal$ is for ``teal.'' \end{rem}

Note that an upward $\sberry$-colored strand means different things in different contexts: it could be a $1$-morphism from $I$ to $I\sberry$, or a $1$-morphism from $I'$ to $I'\sberry$, for $I, I'$ distinct. This is disambiguated by the region labels. Note that only one region requires a label, as the labels on other regions are determined by the coloring of the strands. Even the orientation on the strands is redundant information given one region label, though it helps one's visual understanding of a picture.


The $2$-category $\Frob$ will be cyclic, see \cite[p.131]{GBM}. This means that the category possesses oriented cups and caps, denoted as follows. Under the evaluation functor, they will go to the units and counits of adjunction with the same name from \eqref{fourstructuremaps}. By convention, the bottom boundary of a diagram is the source of the morphism, and the top boundary is the target.
\begin{equation} \label{frobcupcap} m = {
\labellist
\tiny\hair 2pt
 \pinlabel {$I$} [ ] at 31 47
 \pinlabel {$I\sberry$} [ ] at 31 17
\endlabellist
\centering
\igs{cwcap}
}, \qquad \Delta = {
\labellist
\tiny\hair 2pt
 \pinlabel {$I\sberry$} [ ] at 31 47
 \pinlabel {$I$} [ ] at 31 17
\endlabellist
\centering
\igs{cwcup}
}, \qquad \pa = {
\labellist
\tiny\hair 2pt
 \pinlabel {$I\sberry$} [ ] at 31 47
 \pinlabel {$I$} [ ] at 31 17
\endlabellist
\centering
\igs{ccwcap}
}, \qquad \iota = {
\labellist
\tiny\hair 2pt
 \pinlabel {$I$} [ ] at 31 47
 \pinlabel {$I\sberry$} [ ] at 31 17
\endlabellist
\centering
\igs{ccwcup}
}. \end{equation}
The degree of clockwise cups and caps is $\ell(Is) - \ell(I) > 0$, and the degree of counterclockwise cups and caps is $\ell(I) - \ell(Is) < 0$.

All $2$-morphisms will be cyclic with respect to these cups and caps, so that isotopic diagrams represent the same $2$-morphism. With this assumption, we now give an isotopy presentation (see \cite[Proposition 7.18]{GBM}) of the category.

The $2$-morphisms will be monoidally generated by two kinds of maps (in addition to cups and caps). For each $\sberry, \teal \in S$ and $I \subset S$ such that $I\sberry\teal$ is finitary, there is the \emph{(up-facing) crossing map} from $[I,I\teal,I\sberry\teal]$ to $[I,I\sberry,I\sberry\teal]$.
\begin{equation} \label{upcross} {
\labellist
\tiny\hair 2pt
 \pinlabel {$I\sberry\teal$} [ ] at 55 32
 \pinlabel {$I\teal$} [ ] at 31 15
 \pinlabel {$I\sberry$} [ ] at 31 45
 \pinlabel {$I$} [ ] at 7 32
\endlabellist
\centering
\igs{upcross}
} \end{equation}
The degree of the crossing is zero. For each $f \in R^I$ homogeneous with $I \subset S$ finitary, there is the \emph{polynomial map}, whose degree is the degree of $f$.
\begin{equation} {
\labellist
\tiny\hair 2pt
 \pinlabel {\small $f$} [ ] at 12 33
 \pinlabel {$I$} [ ] at 20 17
\endlabellist
\centering
\igs{space}
} \end{equation}

By cyclicity, one can define the left-facing crossing $[I\sberry,I,I\teal] \to [I\sberry,I\sberry\teal,I\teal]$:
\begin{equation} \label{defsideways} \igs{leftcross} := \igs{leftcrossviatwist}. \end{equation}
Right-facing (sideways) and down-facing crossings are defined similarly. Down-facing crossings also have degree zero. The degree of a sideways crossing (right- or left-facing) is $\ell(I) + \ell(I\sberry\teal) - \ell(I\sberry) - \ell(I\teal)$, which one can remember as ``big plus small minus mediums.''

\begin{rem} \label{rmk:samecolor} The advantage of using the same picture (ignoring region labels) for both the $1$-morphisms $I \to I\sberry$ and $I\teal \to I\sberry\teal$ is that a crossing looks like the transverse union of two differently-colored $1$-manifolds (with boundary). Indeed, any diagram in $\Frob$ is a tranverse union of colored $1$-manifolds, decorated with polynomials and region labels. \end{rem}

The relations between $2$-morphisms in $\Frob$ are the following.
First, there are relations stating that polynomials add and multiply as expected.
Next is the polynomial sliding relation, which says for $f \in R^{I\sberry} \subset R^I$ that
\begin{equation} \label{polyslide} {
\labellist
\tiny\hair 2pt
 \pinlabel {\small $f$} [ ] at 12 33
 \pinlabel {$I$} [ ] at 20 17
\endlabellist
\centering
\igs{space}
}\igs{ups}{
\labellist
\tiny\hair 2pt
 \pinlabel {$I\sberry$} [ ] at 20 17
\endlabellist
\centering
\igs{space}
} = {
\labellist
\tiny\hair 2pt
 \pinlabel {$I$} [ ] at 20 17
\endlabellist
\centering
\igs{space}
}\igs{ups}{
\labellist
\tiny\hair 2pt
 \pinlabel {\small $f$} [ ] at 12 33
 \pinlabel {$I\sberry$} [ ] at 20 17
\endlabellist
\centering
\igs{space}
}. \end{equation}
As a consequence, polynomials slide from region to region within $\End(I_{\bullet})$ precisely as they slide between tensor factors in $\BS(I_{\bullet})$.

Associated to the Frobenius extension for $I \subset I\sberry$ we have the circle evaluation relations
\begin{equation} \label{circrelns} {
\labellist
\tiny\hair 2pt
 \pinlabel {$Is$} [ ] at 36 26
 \pinlabel {$I$}  [ ] at 55 18
\endlabellist
\centering
\igs{cwcirc}
} = {
\labellist
\small\hair 2pt
 \pinlabel {$\mu^I_{Is}$} [ ] at 12 33
\endlabellist
\centering
\igs{space}
}, \qquad {
\labellist
\tiny\hair 2pt
 \pinlabel {$I$} [ ] at 38 25
 \pinlabel {\small $f$} [ ] at 31 33
 \pinlabel {$Is$}  [ ] at 55 18
\endlabellist
\centering
\igs{ccwcirc}
} = \igs{space} {
\labellist
\small\hair 2pt
 \pinlabel {$\pa^I_{Is}(f)$} [ ] at 5 33
\endlabellist
\centering
\igs{space}
}\;\;. \end{equation}
for any $f \in R^I$, and the idempotent decomposition relation
\begin{equation} \label{idempdecomp} \igs{downs} \igs{ups} = {
\labellist
\tiny\hair 2pt
 \pinlabel {$\Delta_{(1)}$} [ ] at 32 16
 \pinlabel {$\Delta_{(2)}$} [ ] at 32 47
\endlabellist
\centering
\igs{idempdecomp}
}. \end{equation}
Here, $\Delta = \Delta^I_{Is} \in R^I \ot_{R^{Is}} R^I$ is the coproduct element as in \eqref{dedef}, and Sweedler notation means that the right-hand side is a sum of diagrams with polynomials. It makes sense to put elements of this tensor product in the diagram because of the polynomial sliding relation, see the discussion around \eqref{polyslide}.

Associated to a square of Frobenius extensions $I \subset I\sberry\teal$ we have the easy Reidemeister II relation
\begin{equation} \label{R2easy} \igs{R2easy} = \igs{upt}\igs{ups} \end{equation}
and the hard Reidemeister II relations
\begin{equation} \label{R2hard} \igs{R2hard2} = \igs{downt} {
\labellist
\tiny\hair 2pt
 \pinlabel {\small $\mu^I_{I\sberry\teal}$} [ ] at 12 33
\endlabellist
\centering
\igs{space}
} \igs{ups}, \qquad 
{
\labellist
\small\hair 2pt
 \pinlabel {$g$} [ ] at 31 32
\endlabellist
\centering
\igs{R2hard1}
} = \pa\Delta^I_{I\sberry\teal}(g)_{(1)} \igs{upt} \igs{downs} \pa\Delta^I_{I\sberry\teal}(g)_{(2)}. \end{equation}
Above we have
\begin{equation} \mu^I_{I\sberry\teal} = \frac{\mu_I \mu_{I\sberry\teal}}{\mu_{I\sberry}\mu_{I\teal}} =  \prod_{\alpha \in \Phi^+_{I\sberry\teal} \setminus (\Phi^+_{I\sberry} \cup \Phi^+_{I\teal})} \alpha, \end{equation}
while  the element $\pa\De^I_{I\sberry\teal}(g) \in R^{I\sberry} \ot_{R^{I\sberry\teal}} R^{I\teal}$ can be described by one of the following four equivalent formulas (see \cite[page 11]{ESW}).
\begin{eqnarray} \label{eq:leftcrossingnotsym}\pa\De^I_{I\sberry\teal}(g) & = &  \De^{I\sberry}_{I\sberry\teal(1)} \ot \pa^I_{I\teal}(g \cdot \De^{I\sberry}_{I\sberry\teal(2)}) \\ & =  & \pa^I_{I\sberry}(g \cdot \De^{I\teal}_{I\sberry\teal(1)}) \ot \De^{I\teal}_{I\sberry\teal(2)} \nonumber \\ & = &  \pa^I_{I\sberry}(\De^I_{I\sberry\teal(1)}) \ot \pa^I_{I\teal}(g \cdot \De^I_{I\sberry\teal(2)}) \nonumber\\ & =  & \pa^I_{I\sberry}(g \cdot \De^I_{I\sberry\teal(1)}) \ot \pa^I_{I\teal}(\De^I_{I\sberry\teal(2)}).\label{eq:leftcrossing} 
\end{eqnarray}

Associated to a 3D cube $I \subset I\sberry\teal\urial$ of Frobenius extensions we have the Reidemeister III relations (easy and hard)
\begin{equation} \label{eq:R3} \igs{R3easyRHS} = \igs{R3easyLHS}, \qquad  \igs{R3hardRHS} = {
\labellist
\small\hair 2pt
 \pinlabel {$*$} [ ] at 22 33
\endlabellist
\centering
\igs{R3hardLHS}
}. \end{equation}
Above, $*$ represents the polynomial $\mu^I_{I\sberry\teal\urial} \in R^I$, where
\begin{equation} \mu^I_{I\sberry\teal\urial} = \prod_{\alpha \in \Phi^+_{I\sberry\teal\urial} \setminus (\Phi^+_{I\sberry\teal} \cup \Phi^+_{I\sberry\urial} \cup \Phi^+_{I\teal\urial})} \alpha. \end{equation}

This ends the definition of $\Frob$.

\subsection{Evaluation}\label{ss.FfromFrob}

\begin{defn} Let $\evaluation$ denote the graded 2-functor $\Frob \to \SBSBim$ defined below. On objects, it sends $I$ to $I$, for $I \subset S$ finitary. On $1$-morphisms, it sends $I_{\bullet} \mapsto \BS(I_{\bullet})$. Cups and caps are sent to the corresponding structure maps for the Frobenius extension, see \eqref{fourstructuremaps} and \eqref{frobcupcap}. Finally, the crossing from \eqref{upcross} is sent to the natural isomorphism
\begin{equation} R^I \ot_{R^{Is}} R^{Is}_{R^{Ist}} \to R^I_{R^{Ist}} \to R^I \ot_{R^{It}} R^{It}_{R^{Ist}}, \qquad 1 \ot 1 \mapsto 1 \mapsto 1 \ot 1. \end{equation}
\end{defn}

It helps to know what happens to the other crossings under $\evaluation$. The down-facing crossing also goes to a natural isomorphism which preserves the 1-tensor. The right-facing sideways crossing goes to a map $R^{Is} \ot_{R^{Ist}} R^{It} \to R^I$, which is just multiplication.
\begin{equation} \evaluation \co \igs{rightcross} \mapsto \left( f \ot g \mapsto fg \right). \end{equation}
The left-facing sideways crossing goes to the map $R^I \to R^{Is} \ot_{R^{Ist}} R^{It}$ from \eqref{eq:leftcrossing}:
\begin{equation} \evaluation \co \igs{leftcross} \mapsto \left( f \mapsto \pa\De^I_{Ist}(f) \right). \end{equation}

\subsection{Duality} \label{subsec:duality}

\begin{defn} \label{defn:duality} The \emph{duality functor} $\duality$ is a contravariant, monoidally-covariant autoequivalence of $\Frob$. It fixes all objects and $1$-morphisms. It flips all $2$-morphisms upside-down, and then reverses the orientation. Polynomials are fixed by $\duality$. \end{defn}

\begin{lem} The duality functor is well-defined, and preserves degree. \end{lem}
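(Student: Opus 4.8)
The plan is to verify the two defining conditions of a contravariant, monoidally-covariant autoequivalence directly on generators and relations, using the isotopy presentation of $\Frob$ recalled above. Since $\Frob$ is presented by generating $2$-morphisms (cups, caps, up-facing crossings, and polynomial boxes) subject to the relations \eqref{polyslide}--\eqref{eq:R3}, together with cyclicity, it suffices to check that the assignment of \Cref{defn:duality} (flip upside-down, reverse all orientations, fix polynomials) carries relations to relations and is compatible with cyclicity, and then to check degrees on generators.

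First I would record what $\duality$ does to each generator. Flipping a cup upside-down turns it into a cap, and reversing orientation exchanges clockwise with counterclockwise; concretely, comparing with \eqref{frobcupcap}, the map $m$ (a clockwise cap) is sent to $\iota$ (a counterclockwise cup), $\Delta$ to $\pa$, $\pa$ to $\Delta$, and $\iota$ to $m$. An up-facing crossing \eqref{upcross} flips to a down-facing crossing; since both have degree zero and the down-facing crossing is already a defined generator (via \eqref{defsideways} and its analogues), this is unproblematic. A polynomial box $f$ in region $I$ flips to a polynomial box $f$ in region $I$ — here one uses that flipping a $1$-morphism strand upside-down reverses its orientation, so the region labels are preserved — and $\duality$ fixes $f$ by fiat. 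This already makes the degree claim easy: cups/caps come in pairs of opposite degree $\pm(\ell(Is)-\ell(I))$ which $\duality$ swaps, the crossing has degree zero and maps to a degree-zero crossing, and polynomial boxes keep their degree; hence $\duality$ preserves degree on generators, so preserves it everywhere.

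Next I would check that $\duality$ is compatible with cyclicity, i.e.\ that applying $\duality$ to a diagram built by isotopy still yields a well-defined $2$-morphism. This is essentially automatic: a $180^\circ$ rotation commutes with planar isotopy, and the orientation-reversal is a global involution on the set of colored oriented $1$-manifolds, so the image of an isotopy class is again an isotopy class. The main content is then relation-checking. For the polynomial relations and the sliding relation \eqref{polyslide}, invariance is immediate because $\duality$ fixes polynomials and respects the strand structure. For the circle relations \eqref{circrelns} and the idempotent decomposition \eqref{idempdecomp}, flipping upside-down and reversing orientation exchanges the clockwise circle with the counterclockwise circle and exchanges $\mu^I_{Is}$ with the trace — but these relations are themselves a matched pair, so each is carried to the other (or to itself after using $\pa^I_{Is}(\mu^I_{Is})$-type identities); similarly \eqref{idempdecomp} is self-dual up to relabeling the Sweedler legs of $\Delta^I_{Is}$, which is symmetric. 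The Reidemeister relations \eqref{R2easy}, \eqref{R2hard}, \eqref{eq:R3} are likewise invariant under a $180^\circ$ flip composed with orientation reversal, because the defining polynomials $\mu^I_{I\sberry\teal}$, $\mu^I_{I\sberry\teal\urial}$ and the element $\pa\De^I_{I\sberry\teal}(g)$ are intrinsic to the cube of Frobenius extensions and hence fixed; one does need to observe that the four equivalent formulas \eqref{eq:leftcrossingnotsym}--\eqref{eq:leftcrossing} for $\pa\De$ are permuted among themselves under the relevant symmetry.

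The step I expect to be the main obstacle is the bookkeeping around region labels and orientations when flipping: one must confirm that, for instance, the hard Reidemeister II relation \eqref{R2hard} — which is not left-right symmetric and involves the asymmetric element $\pa\De^I_{I\sberry\teal}(g)$ — really does map to a valid instance of the same family of relations after the flip, rather than to something one would have to derive. Concretely, the flip sends a left-facing sideways crossing to a right-facing one, so the two equations in \eqref{R2hard} get swapped, and checking this swap is consistent requires using the equivalence of the formulas in \eqref{eq:leftcrossingnotsym}--\eqref{eq:leftcrossing}. Once that is in place, well-definedness of $\duality$ as a $2$-functor follows from the isotopy presentation, contravariance (reversing composition of $2$-morphisms) is built into the upside-down flip, monoidal-covariance (preserving horizontal composition) is built into the fact that the flip acts region-by-region without reflecting left-to-right, and $\duality^2 = \id$ on generators shows it is an autoequivalence. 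This completes the proof.
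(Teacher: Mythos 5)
Your general strategy (check $\duality$ on generators, verify relations, bootstrap the degree claim from generators) is the same as the paper's, but there is a concrete error in your computation of $\duality$ on the cups and caps that makes your degree argument break down.

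You claim $\duality(m) = \iota$, $\duality(\Delta) = \pa$, etc. This is wrong, and one can see it without drawing a single picture: $\duality$ fixes all objects and $1$-morphisms, so it must send $m \co \BS([[I\subset I\sberry\supset I]]) \to R^I$ to a morphism $R^I \to \BS([[I\subset I\sberry\supset I]])$, and the only generator with those endpoints is $\Delta$, not $\iota$ (which has source $R^{I\sberry}$ and target $\BS([[I\sberry\supset I\subset I\sberry]])$ — different region labels). The correct assignment is $\duality(m) = \Delta$, $\duality(\Delta) = m$, $\duality(\pa) = \iota$, $\duality(\iota) = \pa$. Your error comes from treating the vertical flip as orientation-preserving: flipping a clockwise cap already gives a counterclockwise cup, and the subsequent orientation reversal brings it back to a clockwise cup, i.e.\ $\Delta$. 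Moreover, even taking your claimed assignment at face value, it does not support your conclusion: since $\deg(m) = \ell(I\sberry)-\ell(I) > 0$ and $\deg(\iota) = \ell(I)-\ell(I\sberry) < 0$, the map you describe would be degree-\emph{reversing}, directly contradicting the statement you are trying to prove. With the corrected assignment the degree claim is immediate, because $m$ and $\Delta$ have the same (positive) degree and $\pa$ and $\iota$ have the same (negative) degree.

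On the relations, your approach matches the paper's. You correctly flag the idempotent decomposition \eqref{idempdecomp} and attribute its invariance to the symmetry of the coproduct element $\Delta^I_{Is}$ under swapping its Sweedler legs; this is precisely the one point the paper highlights as non-obvious. Your extra concern about the hard Reidemeister II relation \eqref{R2hard} and the permutation of the formulas \eqref{eq:leftcrossingnotsym}--\eqref{eq:leftcrossing} is reasonable bookkeeping but is treated as routine in the paper.
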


\begin{proof} We need only confirm that $\duality$ preserves each of the relations above. This is actually a trivial exercise. The only relation which is not obviously fixed by $\duality$ is \eqref{idempdecomp}, where $\De_{(1)}$ and $\De_{(2)}$ are swapped. However, the element $\De \in R^I \ot_{R^{Is}} R^I$ is independent of the choice of dual bases, and by swapping the two bases one swaps these two Sweedler symbols. \end{proof}

\begin{rem} One would like a result stating that duality on $\Frob$ intertwines with duality on $\SSBim$, via the evaluation functor $\evaluation$. However, the objects of $\Frob$ are self-dual by construction, whereas singular Bott--Samelson bimodules are only isomorphic to their duals. To construct a natural isomorphism $\duality \circ \evaluation \to \evaluation \circ \duality$, one would need to fix isomorphisms between singular Bott--Samelson bimodules and their duals, see Remark \ref{rmk:intersectionform}. Again, we hope to address this in future work. \end{rem}

\part{Construction of double leaves}

\section{Rex moves}\label{ch.rexmove}


As in \cite{EKo}, we write $I_{\bullet} \expr I'_{\bullet}$ whenever these are both reduced expressions for the same double coset $p$. It was proven in \cite[Theorem 5.30]{EKo} that any two such expressions are related by a sequence of \emph{double coset braid relations}.

In this chapter we recall the double coset braid relations and construct degree zero morphisms in $\Frob$ associated to each. Compositions of these we call \emph{rex moves}. We also prove various properties of rex moves.


\subsection{Elementary rex moves}\label{sec:rexmoves}

There are three kinds of braid relation found in \cite{EKo}: the up-up relations, the down-down relations, and the switchback relations. Let us construct a diagram for each relation.

Whenever $I\sberry \teal$ is finitary, the \emph{up-up relation} states that 
\[[I+\sberry+\teal]\expr [I+\teal+\sberry].\]
To this relation we associate the pair of morphisms
\begin{equation} {
\labellist
\tiny\hair 2pt
 \pinlabel {$I$} [ ] at 7 32
 \pinlabel {$Ist$} [ ] at 55 32
\endlabellist
\centering
\igs{upcrossflip}
} \qquad {
\labellist
\tiny\hair 2pt
 \pinlabel {$I$} [ ] at 7 32
 \pinlabel {$Ist$} [ ] at 55 32
\endlabellist
\centering
\igs{upcross}
}.\end{equation} The first is a morphism from the left-hand side to the right-hand side, and the other is vice versa. Note that these are inverse isomorphisms by \eqref{R2easy}.

Whenever $I\sberry \teal$ is finitary, the \emph{down-down relation} states that
\[[I\sberry \teal-\sberry-\teal] \expr [I\sberry \teal-\teal-\sberry].\]
Similarly, we associate to this the pair of inverse isomorphisms 
\begin{equation} \igs{downcross} \qquad \igs{downcrossflip}.\end{equation}




A switchback relation has the form 
\begin{equation}\label{eq:ss}
[I +{\color{red} u_0} -{\color{teal} u_d}]  \expr [I-{\color{Brown} u_1}+{\color{red} u_0} -u_2 +{\color{Brown} u_1} -u_3+u_2 \cdots -u_{d-1}+u_{d-2} -{\color{teal} u_d}+u_{d-1}],    
\end{equation}
for some sequence $(u_0, \ldots, u_d)$ called a \emph{rotation sequence}. There is one such relation for each finitary subset $L=I {\color{red} u_0}$ and each pair ${\color{red} u_0}, {\color{teal} u_d} \in L$ with ${\color{teal} u_d} \ne w_L {\color{red} u_0} w_L$. For details, see \cite[\S 5, \S 6]{EKo}. Note that $\{u_i\}$ need not be distinct, and possibly $u_0 = u_d$ which will force us to change our color scheme.

To this relation we associate a pair of degree zero morphisms, whose construction is clear from the following examples.
To put it in words anyway, the strands
$+ {\color{red} u_0}$ and $- {\color{teal} u_d}$
appear in both the source and target, and correspond to vertical lines in the diagram. The remaining inputs or outputs are paired with each other in counterclockwise caps or cups. Unlike the previous relations, these are \textbf{not} inverse isomorphisms.

\begin{ex} (see \cite[Example 5.20]{EKo}) Let $W = S_{10}$ and $I = S \setminus s_3$. Set ${\color{red} u_0} =  s_3$ and ${\color{teal} u_d} = s_6$. Then $d=2$ and ${\color{Brown} u_1} = s_9$. The switchback relation is
\begin{equation} [I + {\color{red} s_3} - {\color{teal} s_6}] \expr [I - {\color{Brown} s_9} + {\color{red} s_3} - {\color{teal} s_6} + {\color{Brown} s_9}], \end{equation}
with corresponding elementary rex moves
\begin{equation} \label{rexmoveA} \igm{rexmoveAopp} \qquad \qquad \igm{rexmoveA}. \end{equation}
\end{ex}

\begin{ex} (see \cite[\S 6.2]{EKo}) Let $W$ be the dihedral group $I_2(5)$ with simple reflections $S = \{\sberry, \teal\}$. Let $u_0 = u_d = \sberry$ (so $u_d$ is not teal in this example), and $I = \{\teal\}$. Then $d = 4$ and the switchback relation is
\begin{equation} [I + \sberry - \sberry] \expr [I - \teal + \sberry - \sberry + \teal - \teal + \sberry - \sberry + \teal], \end{equation}
with corresponding elementary rex moves
\begin{equation} \label{rexmoveI} \igm{rexmoveIopp} \qquad \qquad \igm{rexmoveI}. \end{equation}
\end{ex}

\begin{ex} \label{ex:E7} (see \cite[Example 6.4]{EKo}) Let $W$ have type $E_7$. We number the simple reflections $S$ as in \cite[\S 6.5.2]{EKo}. Let $I = S \setminus s_2$, with ${\color{red} u_0} = s_2$ and ${\color{red} u_d} = s_3$. Then $d = 4$ and the switchback relation is
\begin{equation} [I + {\color{red} s_2} - {\color{teal} s_3}] \expr [I - {\color{Brown} s_6} + {\color{red} s_2} - {\color{green} s_5} + {\color{Brown} s_6} - {\color{purple} s_7} + {\color{green} s_5} - {\color{teal} s_3} + {\color{purple} s_7}], \end{equation}
with corresponding elementary rex moves
\begin{equation} \label{rexmoveE} \igm{rexmoveEopp} \qquad \qquad \igm{rexmoveE}. \end{equation}
\end{ex}

The following lemma should not be obvious to the reader, and is proven as part of Theorem \ref{thm:braidmovespreservecbot}.

\begin{lem}\label{lem:rexmovesdegzero} Elementary rex moves have degree zero. \end{lem}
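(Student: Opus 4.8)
The plan is to verify that each of the three families of elementary rex moves (up-up, down-down, switchback) is homogeneous of degree zero, by computing the degree of the constituent generators using the degree conventions established in \S\ref{ss.Frob}. The up-up and down-down cases are immediate: an up-up rex move is a single up-facing crossing \eqref{upcross}, which has degree zero by fiat, and a down-down rex move is a single down-facing crossing, which also has degree zero. So the content is entirely in the switchback case.

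For a switchback relation \eqref{eq:ss} with rotation sequence $(u_0, \ldots, u_d)$, the associated elementary rex move is built from one sideways crossing for each pair $u_i \leftrightarrow$ its partner, plus the two vertical strands $+u_0$ and $-u_d$ which contribute degree zero (they are identity strands). More precisely, following the description of the morphism given before the examples, the $2d$ non-vertical inputs/outputs of the target expression are matched in pairs via counterclockwise (degree-lowering) cups and cups, and the diagram is assembled so that the total is a composite of sideways crossings together with these cups/caps. The key computation is therefore: (i) the degree of a left- or right-facing sideways crossing through a square $I \subset I\sberry\teal$ is $\ell(I) + \ell(I\sberry\teal) - \ell(I\sberry) - \ell(I\teal)$ (``big plus small minus mediums''), and (ii) the degree of a counterclockwise cup or cap for $I \subset I\sberry$ is $\ell(I) - \ell(I\sberry) < 0$. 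One then sums these contributions over the combinatorial data of the rotation sequence and checks the total telescopes to zero.

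The cleanest way to organize the bookkeeping is to use the fact, proved earlier, that degree is additive under composition and that the evaluation functor $\evaluation$ respects degree, together with the identity $\ell(L_\bullet) = \ell(p)$ for a reduced expression: since both $[I + u_0 - u_d]$ and the long side of \eqref{eq:ss} are reduced expressions for the \emph{same} double coset $p$, the 1-tensors $\onetensor$ on both sides sit in the same degree, namely $-\tfrac12(\ell(p) + \ell(I) - \ell(I_m))$. If the elementary rex move sends $\onetensor$ to a nonzero multiple of $\onetensor$ (this is exactly the content of Theorem \ref{thm:braidmovespreservecbot}, which the lemma is folded into), then — being a map between cyclic diagrams that is nonzero on the lowest-degree generator — it must be homogeneous of degree zero. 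So the real strategy is to defer to Theorem \ref{thm:braidmovespreservecbot}: establish there that the rex move preserves $\onetensor$ up to a unit, and deduce degree zero as a formal consequence.

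The main obstacle is thus not the degree count per se but rather pinning down that the switchback rex move does not kill the $1$-tensor and does not shift its degree — i.e., that the combinatorics of the rotation sequence $(u_0,\dots,u_d)$ in \cite[\S 5,\S 6]{EKo} is exactly engineered so that the telescoping sum of sideways-crossing degrees and counterclockwise-cap degrees vanishes. This requires unwinding the definition of the rotation sequence (the condition $u_d \ne w_L u_0 w_L$ and the recursive structure $u_{i+1}$ determined from $u_{i-1}, u_i$) and matching each crossing in the diagram to a step in that recursion, then invoking the length identities among the parabolic subgroups $W_{I u_i}$ that hold because consecutive subsets differ by one generator. Since the excerpt explicitly says this lemma "should not be obvious" and will be proven as part of Theorem \ref{thm:braidmovespreservecbot}, I would present the above reduction and leave the detailed verification to that theorem.
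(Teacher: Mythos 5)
Your third paragraph lands exactly on the paper's argument. The proof of the lemma is indeed deferred to Theorem~\ref{thm:braidmovespreservecbot}: the forward switchback move $\phi$ is built only from right-facing crossings and counterclockwise cups, both of which visibly preserve $\onetensor$ under $\evaluation$, so $\phi(\onetensor)=\onetensor$; since the source and target of $\phi$ are reduced expressions for the same double coset, they carry $\onetensor$ in the same degree, and $\phi$ is therefore homogeneous of degree zero. For the backward switchback $\psi$, the paper simply observes that duality preserves degree; you do not address this direction, though that is a minor omission.

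Your fourth paragraph, however, misdiagnoses the remaining difficulty. You propose to ``unwind the definition of the rotation sequence'' and match each crossing to a step in the recursion in order to verify a telescoping degree sum directly. None of this is needed, and none of it appears in the paper's proof. The preservation of $\onetensor$ is not a telescoping phenomenon: it is immediate from the fact that every generator occurring in the diagram (right-facing crossing, counterclockwise cup) preserves the $1$-tensor, independently of how the $u_i$ are determined. The only input from the rotation-sequence combinatorics is the already-cited fact from \cite{EKo} that both sides of \eqref{eq:ss} are reduced expressions for the same coset; with that granted, the degree-zero conclusion is formal, exactly as you explain in paragraph three. (The direct telescoping computation you sketch does exist --- the paper sets it up via sinister cap diagrams and leaves it as an exercise in \S\ref{ssec:degreeofdiagrams} --- but it is an alternative route, not the obstacle.) In short, your proposal contains both the correct argument and a needless detour: trust paragraph three, discard paragraph four, and add a line for $\psi$ via duality.
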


\subsection{Rex moves}

\begin{defn} \label{defn:rexmove} Let $I_{\bullet}$ be a singlestep expression. Apply a sequence of braid relations to $I_{\bullet}$ to obtain $I'_{\bullet}$. Composing the corresponding sequence of elementary rex moves, we get a morphism $I_{\bullet} \to I'_{\bullet}$ in $\Frob$ called a \emph{rex move}. Applying $\evaluation$, we get a morphism $\BS(I_{\bullet}) \to \BS(I'_{\bullet})$ which we also call a \emph{rex move}. \end{defn}

\begin{ex} The reduced expressions $[\mt,s,\mt,t,\mt,s,\mt]$ and $[\mt,t,\mt,s,\mt,t,\mt]$ can be related by a sequence of four braid relations, as in \cite[(1.8)]{EKo}. Here is the corresponding rex move in singular calculus. For pedagogical reasons we draw it twice, noting the second time how it is the composition of four elementary rex moves.
\begin{equation} \igm{6valent} \qquad \qquad \igm{6valentboxed} \end{equation}
This is the morphism which is encoded by the 6-valent vertex, an elementary rex  
move in ordinary Soergel calculus, see \cite[Theorem 24.46]{GBM}.
\end{ex}

\subsection{Rex moves and direct summands} \label{subsec:braidpiisone}

The elementary rex moves associated to switchback relations are not isomorphisms, but as we prove here, they are inclusions from and projections to a direct summand.

\begin{thm} \label{thm:braidpiisone} The rex move in $\Frob$ corresponding to the composition 
\begin{equation} \label{braidpiisone} [I +{\color{red} u_0} - {\color{teal} u_d}]  \to [I-{\color{Brown} u_1}+{\color{red} u_0} -u_2 +{\color{Brown} u_1} \cdots -{\color{teal} u_d} +u_{d-1}] \to [I +{\color{red} u_0} - {\color{teal} u_d}] \end{equation}
is the identity morphism. \end{thm}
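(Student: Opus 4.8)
The plan is to compute the composite $2$-morphism in \eqref{braidpiisone} directly, using the explicit diagrammatic description of the elementary rex moves attached to a switchback relation. Recall that the first map sends the source $[I+{\color{red}u_0}-{\color{teal}u_d}]$ into the long expression by keeping the two strands $+{\color{red}u_0}$ and $-{\color{teal}u_d}$ as through-strands and introducing the intermediate strands $\pm u_i$ ($1\le i\le d-1$) via counterclockwise cups (units of adjunction, i.e. $\iota$-type cups), and the second map collapses the long expression back by capping off the same intermediate strands with counterclockwise caps ($\pa$-type caps). When we stack the second on top of the first, every introduced strand is created by a counterclockwise cup and then immediately destroyed by a counterclockwise cap, forming a counterclockwise circle, while the through-strands $+{\color{red}u_0}$ and $-{\color{teal}u_d}$ pass straight through. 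So the composite equals the identity on $[I+{\color{red}u_0}-{\color{teal}u_d}]$ decorated by a product of counterclockwise circles, one for each intermediate index.

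First I would make precise, from the pictures in \eqref{rexmoveA}, \eqref{rexmoveI}, \eqref{rexmoveE}, which strands are cups/caps and which are through-strands, and check that after isotopy (legitimate since $\Frob$ is cyclic) the composite is literally $\id_{[I+{\color{red}u_0}-{\color{teal}u_d}]}$ tensored with a disjoint union of counterclockwise circles. Next I would evaluate each counterclockwise circle using the second circle relation in \eqref{circrelns}: a counterclockwise $u_i$-circle around a region labelled (say) $L_i$ evaluates to the polynomial $\pa^{L_i'}_{L_i}(1)\in R^{L_i}$, where $L_i'\subset L_i$ is the neighbouring region. Here the key point is that the relevant Frobenius trace applied to $1$ is a scalar: by Demazure surjectivity (part of our standing assumptions in \Cref{sec:realization}), or more simply because the degree of $\pa^{L_i'}_{L_i}$ is $\ell(L_i')-\ell(L_i)$, which must be $0$ if $\pa^{L_i'}_{L_i}(1)$ is to be a nonzero scalar — and it is precisely the combinatorics of the rotation sequence $(u_0,\dots,u_d)$ that guarantees each such step is between parabolic subgroups of equal length. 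Indeed, in a switchback relation the intermediate merges $+u_{i-1}$ and splits $-u_{i}$ are arranged so that $\ell(Iu_0\setminus u_i) $ type quantities match up; this is exactly the content that makes the elementary rex move have degree zero (Lemma \ref{lem:rexmovesdegzero}), which is itself asserted to be part of Theorem \ref{thm:braidmovespreservecbot}. Granting that, each circle evaluates to $\pa^{L_i'}_{L_i}(1)=1$ (the trace map of a length-zero-defect Frobenius extension sends $1$ to $1$, after normalizing dual bases, cf. \eqref{dedef}), so the decorated identity is just the identity.

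The main obstacle is the bookkeeping in the previous step: one must verify that \emph{every} intermediate strand in the switchback rex move is genuinely a cup on the way up and a cap on the way down (not, say, a crossing or a clockwise cup), and that the regions bordering each resulting circle differ by a parabolic of the \emph{same} length, so that the trace-of-$1$ is $1$ rather than $0$ or a nontrivial polynomial. This is really a statement about the structure of rotation sequences from \cite[\S 5, \S 6]{EKo}, and I would extract it from the length identities there (the same identities underlying \eqref{eq:ss} and the degree-zero claim). Once that structural fact is in hand, the computation is immediate and the theorem follows; in particular there is no ``hard'' Frobenius input — only the ``easy'' circle relation in \eqref{circrelns} and the normalization of dual bases are used, which is why this is genuinely an inclusion/projection of a direct summand rather than an isomorphism.
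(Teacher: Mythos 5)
The proposal has a genuine gap, and the claimed computation would in fact give $0$ rather than the identity. If the composite really were the identity of $[I+{\color{red}u_0}-{\color{teal}u_d}]$ tensored with a disjoint union of \emph{free} counterclockwise circles, each circle would evaluate, by the second relation in \eqref{circrelns}, to $\pa^{L'}_{L}(1)$ for some nontrivial Frobenius extension $R^{L}\subset R^{L'}$. But $\pa^{L'}_{L}$ has strictly negative degree $\ell(L')-\ell(L)$, so $\pa^{L'}_{L}(1)=0$. Your attempt to escape this by arguing that the rotation sequence forces each intermediate extension to have ``length-zero defect'' is not correct: a length-zero Frobenius extension is the trivial extension $L'=L$, and the extensions in a switchback relation are genuinely nontrivial (e.g.\ in the $S_{10}$ example around \eqref{rexmoveA}, the maximal parabolics $\hat{s_3},\hat{s_9},\hat{s_6}$ have longest elements of different lengths). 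The elementary rex move has degree zero because of a global cancellation across all cups, caps, and sideways crossings, not because any individual Frobenius step is trivial. Moreover, the intermediate $u_i$-strands do not form free circles in the composite: the through-strands $u_0$, $u_d$ cross them, and the composition of a right-facing crossing from $\phi$ with the corresponding left-facing crossing from $\psi$ is a hard R2 bigon, not an easy one.

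The paper's actual argument therefore uses precisely the hard Frobenius input you declare unnecessary. Theorem \ref{thm:braidpiisone} is deduced from the circle evaluation lemma, Lemma \ref{lem:circleresolution} (with $g=1$), whose proof begins by applying the hard R2 relation \eqref{R2hard} to introduce the coproduct element $\Delta^{\hat{u_d}}_{S}$. Only then does Algorithm \ref{circleresolutionalgorithm} apply easy R2 moves and circle evaluations, but it resolves the circles by squeezing them around a \emph{nontrivial} polynomial (starting from $\Delta_{(1)}$), composing Demazure operators along the way. The last two steps are also nontrivial: one uses that $L_\bullet$ is a reduced expression to identify $\pa_{L_\bullet}$ with $\pa^{\hat{u_d}}_{S}$ (via \cite[Proposition 3.17]{EKLP1}), and then one invokes the Frobenius identity $\pa^{\hat{u_d}}_{S}(\Delta^{\hat{u_d}}_{S,(1)})\otimes\Delta^{\hat{u_d}}_{S,(2)}=1\otimes 1$. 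Your proposal omits the hard R2, the coproduct, and the Frobenius identity, and does not yield the theorem.
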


We prove the theorem later in this section.

\begin{cor} Inside $\SSBim$, the bimodule $\BS([I +{\color{red} u_0} - {\color{teal} u_d}])$ is a direct summand of $\BS([I-{\color{Brown} u_1}+{\color{red} u_0} -u_2 +{\color{Brown} u_1} \cdots -{\color{teal} u_d} +u_{d-1}])$. \end{cor}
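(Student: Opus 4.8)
The plan is to deduce the corollary directly from Theorem \ref{thm:braidpiisone} together with a standard idempotent-splitting argument. First I would record that both elementary rex moves in \eqref{braidpiisone} are degree zero morphisms in $\Frob$ (Lemma \ref{lem:rexmovesdegzero}), so applying the evaluation functor $\evaluation$ gives degree zero bimodule maps
\[
\BS([I +{\color{red} u_0} - {\color{teal} u_d}]) \namedto{\iota} \BS([I-{\color{Brown} u_1}+{\color{red} u_0} -u_2 +{\color{Brown} u_1} \cdots -{\color{teal} u_d} +u_{d-1}]) \namedto{\pi} \BS([I +{\color{red} u_0} - {\color{teal} u_d}]),
\]
where I write $\iota$ and $\pi$ for the images under $\evaluation$ of the first and second rex moves in \eqref{braidpiisone}. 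By Theorem \ref{thm:braidpiisone}, the composite $\pi \circ \iota$ is the identity morphism in $\Frob$, and since $\evaluation$ is a 2-functor, $\evaluation(\pi \circ \iota) = \pi \circ \iota = \id_{\BS([I +{\color{red} u_0} - {\color{teal} u_d}])}$ in $\SBSBim \subset \Bim$.

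Next I would invoke the general principle that a retract in an additive (or idempotent-complete) category is a direct summand. Concretely, set $e := \iota \circ \pi$, an endomorphism of the middle bimodule $\BS([I-{\color{Brown} u_1}+{\color{red} u_0} -u_2 +{\color{Brown} u_1} \cdots -{\color{teal} u_d} +u_{d-1}])$. Then $e^2 = \iota \circ (\pi \circ \iota) \circ \pi = \iota \circ \pi = e$, so $e$ is an idempotent. The category $\SSBim(J,I)$ is idempotent-complete by construction (it is the additive closure, closed under direct summands, of $\SBSBim$ inside $\Bim$), so $e$ splits: there is a decomposition
\[
\BS([I-{\color{Brown} u_1}+{\color{red} u_0} -u_2 +{\color{Brown} u_1} \cdots -{\color{teal} u_d} +u_{d-1}]) \cong \im(e) \oplus \im(\id - e)
\]
in $\SSBim$. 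Finally, $\pi$ and $\iota$ restrict to mutually inverse isomorphisms between $\BS([I +{\color{red} u_0} - {\color{teal} u_d}])$ and $\im(e)$: indeed $\iota$ factors through $\im(e)$ since $e \circ \iota = \iota \circ \pi \circ \iota = \iota$, and $\pi$ restricted to $\im(e)$ is a two-sided inverse because $\pi \circ \iota = \id$ and $\iota \circ (\pi|_{\im e}) = e|_{\im e} = \id_{\im e}$. Hence $\BS([I +{\color{red} u_0} - {\color{teal} u_d}])$ is isomorphic to a direct summand of $\BS([I-{\color{Brown} u_1}+{\color{red} u_0} -u_2 +{\color{Brown} u_1} \cdots -{\color{teal} u_d} +u_{d-1}])$, as claimed.

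The only genuine input here is Theorem \ref{thm:braidpiisone}; everything else is the formal retract-splitting argument, which is routine given that $\SSBim$ is idempotent-complete. So there is no real obstacle at the level of this corollary — the content has already been absorbed into the theorem. If one wanted to be careful about a subtlety, it would be to note that the isomorphism is genuinely in $\SSBim$ (where idempotents split) rather than in $\SBSBim$ (where they need not), but since $\SBSBim \subset \SSBim$ this causes no difficulty. One could also phrase the whole argument purely 2-functorially: $\evaluation$ sends the splitting data in $\Frob$ (if such existed) forward, but since $\Frob$ need not be idempotent-complete it is cleaner to split $e$ on the bimodule side.
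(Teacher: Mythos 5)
Your proof is correct and takes essentially the same route as the paper: both deduce the corollary from Theorem \ref{thm:braidpiisone} via the standard fact that a retract in an idempotent-complete additive category is a direct summand. The paper simply cites that abstract fact in one sentence, while you spell out the idempotent $e = \iota \circ \pi$ and its splitting in detail; there is no substantive difference.
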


\begin{proof} In any additive idempotent-complete category with objects $M$ and $N$, the statement that $M$ is a summand of $N$ is equivalent to the existence of morphisms $i \co M \to N$ and $p \co N \to M$ such that $p \circ i = \id_M$. \end{proof}

Note that all finitary subsets of $S$ in sight are subsets of $I u_0$. For the purpose of proving Theorem~\ref{thm:braidpiisone}, we will assume that $S = Iu_0$.

\begin{notation} \label{circlenotation} Here is the abusive notation which we use for the rest of \S\ref{subsec:braidpiisone}. We let $S = I u_0$. We write $\hat{u_i}$ for $S \setminus u_i$, so that $I = \hat{u_0}$ and $J = \hat{u_d}$. \end{notation}

Using Example \ref{ex:E7} where $d=4$ to illustrate the point, Theorem \ref{thm:braidpiisone} states that
\begin{equation}\label{braidpiisonediagram}
{
\labellist
\tiny\hair 2pt
 \pinlabel {$\hat{u_0}$} [ ] at 6 16
 \pinlabel {\small $S$} [ ] at 62 8
 \pinlabel {$\hat{u_d}$} [ ] at 122 16
 \pinlabel {$\hat{u_1}$} [ ] at 32 29
 \pinlabel {$\hat{u_2}$} [ ] at 63 29
 \pinlabel {$\hat{u_3}$} [ ] at 94 29
\endlabellist
\centering
\igm{doublerexcircles}
}
=
{
\labellist
\tiny\hair 2pt
 \pinlabel {$\hat{u_0}$} [ ] at 6 21
 \pinlabel {$S$} [ ] at 23 21
 \pinlabel {$\hat{u_d}$} [ ] at 40 21
\endlabellist
\centering
\igm{doublerexidentity}
}. \end{equation}

Theorem \ref{thm:braidpiisone} is a consequence of a more general circle evaluation lemma.

\begin{lem} \label{lem:circleresolution} Let $S$ be finitary. Let $d \ge 2$ and $u_i \in S$ for $0 \le i \le d$, with $u_i$ distinct from $u_{i+1}$ for all $0 \le i \le d-1$. Let $I = S \setminus {\color{red} u_0}$, and $M = S \setminus \{u_{d-1},u_d\}$. Consider the expressions
\begin{equation} K_{\bullet} := [I +{\color{red} u_0} - {\color{teal} u_d}], \qquad L_{\bullet} := [I-{\color{Brown} u_1}+{\color{red} u_0} -u_2 +{\color{Brown} u_1} \cdots -{\color{teal} u_d} +u_{d-1}]. \end{equation}
Both are well-defined, and $L_{\bullet}$ is not necessarily reduced. Let $g$ be any polynomial in $R^M$. Then we have
\begin{equation} \label{eq:circleresformula} {
\labellist
\tiny\hair 2pt
 \pinlabel {$\hat{u_0}$} [ ] at 6 16
 \pinlabel {\small $S$} [ ] at 62 8
 \pinlabel {$\hat{u_d}$} [ ] at 122 16
 \pinlabel {$\hat{u_1}$} [ ] at 32 23
 \pinlabel {$\hat{u_{d-1}}$} [ ] at 94 23
\pinlabel {\small $g$} [ ] at 112 31
\endlabellist
\centering
\igm{doublerexcircles}
} = \quad \pa_{L'_{\bullet}}(g \Delta^{\hat{u_d}}_{S, (1)}) {
\labellist
\tiny\hair 2pt
 \pinlabel {$\hat{u_0}$} [ ] at 6 21
 \pinlabel {$S$} [ ] at 23 21
 \pinlabel {$\hat{u_d}$} [ ] at 40 21
\endlabellist
\centering
\igm{doublerexidentity}
} \Delta^{\hat{u_d}}_{S, (2)},
\end{equation}
as endomorphisms of $K_\bullet$, where the left-hand side factors through $L_\bullet$. On the right-hand side, Sweedler notation indicates the action of an element of $R^{\hat{u_0}} \ot_{R^S} R^{\hat{u_d}}$ which is a sum of pure tensors. The expression $L'_{\bullet}$ equals $[I-{\color{Brown} u_1}+{\color{red} u_0} -u_2 +{\color{Brown} u_1} \cdots -{\color{teal} u_d}]$, so that $L_{\bullet} = L'_{\bullet} \circ [+u_{d-1}]$. See \S\ref{subsec:demazure} for a reminder on what $\pa_{L'_{\bullet}}$ means. Note that $L'_{\bullet}$ is an $(I,M)$-expression, so the input to $\pa_{L'_{\bullet}}$ is an element of $R^M$. Implicitly, we have included $\Delta^{\hat{u_d}}_{S,(1)}$ from $R^{\hat{u_d}}$ into $R^M$.
\end{lem}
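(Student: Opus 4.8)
The plan is to prove \eqref{eq:circleresformula} by a diagrammatic reduction, most naturally organized as an induction on $d$ in which the $d-1$ circles of colors $u_1,\ldots,u_{d-1}$ in the left-hand diagram are peeled off one at a time. The only tools needed are the circle evaluation relations \eqref{circrelns}, the easy and hard Reidemeister~II relations \eqref{R2easy}–\eqref{R2hard}, the Reidemeister~III relations \eqref{eq:R3}, the idempotent decomposition \eqref{idempdecomp}, cyclicity (isotopy invariance), and the calculus of trace and inclusion maps from \S\ref{subsec:demazure}; throughout one tracks grading shifts and polynomial decorations. An alternative, which may in fact be cleaner, is to apply $\evaluation$ and compute the resulting bimodule endomorphism of $\BS(K_\bullet)=R^I\ot_{R^S}R^{\hat{u_d}}$ directly, since the left-hand diagram factors as (evaluation of the second switchback rex move, decorated by $g$) composed with (evaluation of the first switchback rex move), each of which is an explicit composite of structure maps and crossings.

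For the base case $d=2$ we have $L_\bullet=[I-u_1+u_0-u_2+u_1]$ with $u_2=u_d$, and $L'_\bullet=[I-u_1+u_0-u_2]$ is an $(I,M)$-expression of length $3$ with $M=S\setminus\{u_1,u_2\}$. The diagram carries a single $u_1$-colored circle, and $g$ decorates a region whose label forces $g\in R^M$. First I would apply the hard Reidemeister~II relation \eqref{R2hard} for the square $M\subset S$ to pull the two vertical strands $+u_0$ and $-u_d$ out from between the legs of the $u_1$-circle; this trades the entanglement for a single counterclockwise $u_1$-loop carrying $g$ together with one Sweedler leg $\Delta^{\hat{u_d}}_{S,(1)}$, sitting over $\id_{K_\bullet}$ carrying the other leg $\Delta^{\hat{u_d}}_{S,(2)}$. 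Evaluating the loop by the second relation in \eqref{circrelns} produces a Demazure operator applied to $g\,\Delta^{\hat{u_d}}_{S,(1)}$; collecting this with the neighbouring inclusion maps, according to the shape of $L'_\bullet$, yields exactly $\pa_{L'_\bullet}(g\,\Delta^{\hat{u_d}}_{S,(1)})$, as claimed.

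For the inductive step ($d\ge 3$) the point is that the leftmost circle interacts only with the left part of the diagram, while $g$ and all decorations produced later sit strictly to its right, so the same local manipulation as in the base case resolves it, leaving behind one trace map's worth of polynomial decoration and a diagram of the same shape for a shorter rotation sequence. Applying the inductive hypothesis and then composing with the extracted trace map — using $p.q=r\Rightarrow\pa_p\circ\pa_q=\pa_r$ from \S\ref{subsec:demazure} — reassembles $\pa_{L'_\bullet}$, while the factor $\Delta^{\hat{u_d}}_{S}$ is untouched since it only involves the colors $u_{d-1},u_d$. Finally, Theorem~\ref{thm:braidpiisone} is the case $g=1$: there one checks that $\pa_{L'_\bullet}(\Delta^{\hat{u_d}}_{S,(1)})\ot\Delta^{\hat{u_d}}_{S,(2)}=1\ot 1$ in $R^I\ot_{R^S}R^{\hat{u_d}}$, which comes down to identifying $L'_\bullet$ as a reduced expression whose Demazure operator restricts on $R^{\hat{u_d}}$ to the trace $\pa^{\hat{u_d}}_S$ followed by the inclusion into $R^I$, and then invoking the defining identity $\sum_i\pa^{\hat{u_d}}_S(c_i)d_i=1$ for dual bases.

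The main obstacle is the combinatorial bookkeeping, in two respects. First, one must verify that the circles genuinely chain together in the order dictated by the positions of the steps $-u_i$ and $+u_i$ in $L_\bullet$, so that the peeling makes sense and each circle being resolved is an honest loop; this is where the hypothesis that consecutive $u_i$ are distinct enters. Second, and more delicately, one must ensure that every circle evaluation is of counterclockwise ($\pa$-)type rather than clockwise ($\mu$-)type, so that no residual product-of-roots factor $\mu^{(-)}_{(-)}$ survives and the accumulated polynomial is exactly $\pa_{L'_\bullet}(g\,\Delta^{\hat{u_d}}_{S,(1)})$ — this is precisely where the structure of switchback relations from \cite{EKo} is used, and amounts to the statement that $L'_\bullet$ is reduced. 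In the degenerate case where $L'_\bullet$ fails to be reduced, the same computation produces a Demazure operator along a non-reduced expression, which vanishes by \S\ref{subsec:demazure}, matching the vanishing of the right-hand side.
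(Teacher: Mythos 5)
Your overall strategy --- apply one hard Reidemeister~II near $g$ to produce the Sweedler coproduct $\Delta^{\hat{u_d}}_S$, then resolve the remaining circles so that the accumulated traces and inclusions become $\pa_{L'_\bullet}$ --- coincides with the paper's, and your base case $d=2$ is essentially the paper's whole argument. The paper then finishes in a single pass rather than by induction: after the one hard R2 it invokes Algorithm~\ref{circleresolutionalgorithm} (set up already in the proof of Lemma~\ref{circleswithpoly}), carrying the polynomial $h := \pa^{M}_{\hat{u_{d-1}}}(g\,\Delta^{\hat{u_d}}_{S,(1)})$ from right to left, tightening each circle around $h$ by \emph{easy} R2 \eqref{R2easy} and then applying the circle evaluation \eqref{circrelns}. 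Only that one hard R2 is ever used; the point of the algorithm is that all subsequent steps contribute Demazure operators but introduce no further $\Delta$- or $\mu$-factors.

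The inductive step is where there is a genuine gap. You propose to peel the \emph{leftmost} circle, but $g$ sits on the far right, so nothing is in position to be gathered by the $u_1$-circle; resolving it cannot leave ``one trace map's worth of polynomial decoration.'' You would either be forced into a second hard R2, introducing a second Sweedler coproduct which does not recombine into $\Delta^{\hat{u_d}}_S$, or else evaluate a counterclockwise $u_1$-circle with empty interior, giving $\pa^{\hat{u_1}}_{S}(1)=0$, which would contradict Theorem~\ref{thm:braidpiisone}. Moreover, removing the $u_1$-circle does \emph{not} leave a diagram of the shape for a shorter rotation sequence: the boundary is still $[\hat u_0 + u_0 - u_d]$ (not $[\hat u_1 + u_1 - u_d]$), and the $u_0$-strand does not cross the $u_2$-circle as a diagram for $(u_0,u_2,\ldots,u_d)$ would require, so the inductive hypothesis has no diagram to apply to. An induction could presumably be made to work running from the right and transporting the accumulated polynomial, but that is precisely what Algorithm~\ref{circleresolutionalgorithm} packages up in one pass, and your sketch does not supply the needed bookkeeping.
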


We prove Lemma \ref{lem:circleresolution} in Section \ref{cel}.

\begin{proof}[Proof of Theorem \ref{thm:braidpiisone}] Let us apply Lemma \ref{lem:circleresolution} with $g = 1$. Let $K_{\bullet}$ and $L_{\bullet}$ be the two reduced expressions in the statement of the theorem.

While Lemma~\ref{lem:circleresolution} introduces the expression $L'_{\bullet}$, the operators $\pa_{L_{\bullet}}$ and $\pa_{L'_{\bullet}}$ both agree with the application of $\pa_w$ for the same $w \in W$. The only difference is that $\pa_{L_{\bullet}}$ begins with the inclusion map $R^{\hat{u_d}} \hookrightarrow R^M$. Since we have implicitly applied this inclusion map to $\Delta^{\hat{u_d}}_{S,(1)}$, we use $\pa_{L_{\bullet}}$ below instead of $\pa_{L'_{\bullet}}$.

So the right-hand side of \eqref{braidpiisonediagram} when $g=1$ is equal to
\begin{equation} \pa_{L_{\bullet}}(\Delta^{\hat{u_d}}_{S, (1)}) \ot \Delta^{\hat{u_d}}_{S, (2)}. \end{equation}
But $L_{\bullet}$ is a reduced expression for the $(\hat{u_1},\hat{u_d})$-coset containing $w_S$ (not surprising, since $K_{\bullet}$ is also a reduced expression for this coset). By \cite[Prop. 3.17]{EKLP1} we have $\pa_{L_{\bullet}} = \pa^{\hat{u_d}}_{S}$. Then by \cite[Equation (2.2) with $f=1$]{ESW}, we know that
\begin{equation} \pa^{\hat{u_d}}_S(\Delta^{\hat{u_d}}_{S, (1)}) \ot \Delta^{\hat{u_d}}_{S, (2)} = 1 \ot 1, \end{equation}
as desired.
\end{proof}

\subsection{Rex moves and 1-tensors} \label{ss.rexandonetensor}

Let us now discuss the images of the rex moves under the functor $\evaluation$. Recall the definition of the 1-tensor $\onetensor$ from \Cref{def:cbot}.

\begin{thm} \label{thm:braidmovespreservecbot} If $\phi$ is the image of a rex move under $\evaluation$, then $\phi(\onetensor) = \onetensor$. Moreover, rex moves have degree zero. \end{thm}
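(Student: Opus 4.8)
## Proof proposal for Theorem \ref{thm:braidmovespreservecbot}

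The plan is to reduce the statement to a check on the three families of elementary rex moves, since a general rex move is a composition of elementary ones and both $\onetensor$-preservation and degree-zero are closed under composition (the composite of two degree-zero maps has degree zero, and if $\phi(\onetensor) = \onetensor$ and $\psi(\onetensor) = \onetensor$ then $(\psi\circ\phi)(\onetensor) = \onetensor$; note that the target $1$-tensor of $\phi$ is the source $1$-tensor of $\psi$ because consecutive reduced expressions share the same pair of endpoint subsets). So it suffices to treat an up-up move, a down-down move, and a switchback move.

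For the up-up and down-down relations, the corresponding elementary rex move is (the $\evaluation$-image of) a single crossing, which by the definition of $\evaluation$ in \S\ref{ss.FfromFrob} is a natural isomorphism of Bott--Samelson bimodules; the up-facing and down-facing crossings literally send $1\otimes 1 \mapsto 1 \mapsto 1\otimes 1$, hence preserve $\onetensor$, and the crossing has degree zero by construction (stated after \eqref{upcross}). For the switchback relation \eqref{eq:ss}, I would use the structure already extracted in \S\ref{subsec:braidpiisone}: the elementary rex move is built from the vertical strands $+{\color{red}u_0}$ and $-{\color{teal}u_d}$ together with a collection of counterclockwise cups (or caps) pairing up the remaining strands. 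Under $\evaluation$, a counterclockwise cup is the unit $\iota^I_J\co R^J \hookrightarrow R^I$ and a counterclockwise cap is the trace $\pa^I_J$, and on $1$-tensors: $\iota$ sends $1 \mapsto 1$ and $\pa$ sends $1\otimes 1 \mapsto \pa^I_J(1)$. The subtlety is that $\pa^I_J(1) = 0$ whenever $\ell(I) > \ell(J)$, so if any cap actually contributed we would get $0$, not $\onetensor$. Here is where one must use that the switchback is a \emph{reduced} expression on both sides: by Theorem \ref{thm:braidpiisone} (with $g=1$), the composite of the two elementary switchback rex moves is the identity, and more directly, the degree bookkeeping forces the elementary rex move to have degree zero --- which is precisely \Cref{lem:rexmovesdegzero}, proven as part of this theorem. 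The cleanest way to organize this: compute the degree of the switchback rex move as a sum of the degrees of its cups/caps and the two vertical strands, and show it equals zero by comparing $\ell(L_\bullet)$ on the two sides of \eqref{eq:ss}; then argue that a nonzero map of this degree between Bott--Samelson bimodules that starts from $\onetensor$ must land on a nonzero multiple of the target $\onetensor$ (since $\onetensor$ is the unique lowest-degree element up to scalar, by the remark after \eqref{def:cbot}), and pin down the scalar to be $1$ using that the two-sided composite is the identity (Theorem \ref{thm:braidpiisone}) together with the analogous statement for the opposite elementary rex move.

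I expect the main obstacle to be the switchback case --- specifically, verifying that the elementary switchback rex move has degree zero directly from the combinatorics of \eqref{eq:ss} and \eqref{explength}. Once degree zero is in hand, $\onetensor$-preservation should follow formally from the lowest-degree characterization of $\onetensor$ plus Theorem \ref{thm:braidpiisone}. Concretely I would: (1) reduce to $S = Iu_0$ as in Notation \ref{circlenotation}; (2) write the elementary rex move as a tensor product of $\iota$'s, $\pa$'s, and two identity strands, read off its degree from the degrees of clockwise/counterclockwise cups and caps recorded after \eqref{frobcupcap}, and match it against $\ell(L_\bullet) - \ell(K_\bullet) = 0$; (3) deduce that the map sends $\onetensor_{K_\bullet}$ to a scalar multiple of $\onetensor_{L_\bullet}$; (4) apply the same argument to the reverse elementary rex move and compose, using Theorem \ref{thm:braidpiisone} to conclude both scalars are $1$. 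A parallel argument handles the $\evaluation$-image of the other switchback move (target-to-source). This simultaneously proves \Cref{lem:rexmovesdegzero} and \Cref{lem:rexmovesdegzero}'s consequences used elsewhere.
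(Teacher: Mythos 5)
Your reduction to the three elementary move types, and your treatment of the up-up and down-down crossings, match the paper exactly. The switchback case is where your argument diverges and where there is a genuine gap.

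You correctly observe that the short-to-long elementary rex move $\phi\colon K_\bullet\to L_\bullet$ is built only from counterclockwise cups (and, you should add, right-facing crossings), and that these generators send $\onetensor$ to $\onetensor$ under $\evaluation$. But you then abandon this observation and route everything through a degree computation plus a ``pin down the scalar'' step. This is where the proof breaks. Suppose, as you propose, you establish degree zero for both $\phi$ and $\psi\colon L_\bullet\to K_\bullet$ and deduce $\phi(\onetensor)=c_1\onetensor$, $\psi(\onetensor)=c_2\onetensor$ via the lowest-degree characterization of $\onetensor$. Theorem~\ref{thm:braidpiisone} gives $\psi\circ\phi=\id$, hence $c_1c_2=1$; but there is \emph{no} analogous statement for $\phi\circ\psi$ (the long side $L_\bullet$ has extra summands, so $\phi\circ\psi$ is a nontrivial idempotent, not the identity). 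So $c_1c_2=1$ is all you get, and that does not force $c_1=c_2=1$. Your phrase ``together with the analogous statement for the opposite elementary rex move'' does not refer to anything available.

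The fix is to use what you already noticed: since $\phi$ is a composite of $\onetensor$-preserving generators, $\phi(\onetensor)=\onetensor$ outright, i.e.\ $c_1=1$; then $c_2=1$ follows from $c_1c_2=1$. This is precisely the paper's argument. It also makes your degree computation unnecessary: once $\phi(\onetensor)=\onetensor$ is known, degree zero of $\phi$ is immediate because the source and target are both reduced expressions for the same double coset, so their $\onetensor$'s sit in the same degree; and degree zero of $\psi$ follows because $\psi=\duality(\phi)$ and $\duality$ preserves degree. Your explicit degree count over the cups and caps of \eqref{eq:ss} would work, but it reproves something that falls out for free from the $\onetensor$ observation, and it cannot by itself determine the scalars.
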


\begin{proof} For up- or down-facing crossings, this is immediate from the definition of $\evaluation$.

The diagram associated to the elementary rex move 
\[ \phi \co [I +{\color{red} u_0} - {\color{teal} u_d}]  \to [I-{\color{Brown} u_1}+{\color{red} u_0} -u_2 +{\color{Brown} u_1} \cdots -{\color{teal} u_d} +u_{d-1}]\]
is built entirely of right-facing crossings and counterclockwise cups, see e.g. the left-hand sides of \eqref{rexmoveA},\eqref{rexmoveI},\eqref{rexmoveE}. These generators preserve $1^{\ot}$, and thus so does $\phi$.  Note that the source and the target are reduced expressions for the same double coset, so they have the same length. Therefore the 1-tensors in source and target have the same degree, and $\phi$ must have degree zero.

Finally, consider the diagram associated to the elementary rex move (see e.g. the right-hand sides of \eqref{rexmoveA},\eqref{rexmoveI},\eqref{rexmoveE})
\[  \psi \co [I-{\color{Brown} u_1}+{\color{red} u_0} -u_2 +{\color{Brown} u_1} \cdots -{\color{teal} u_d} +u_{d-1}] \to [I +{\color{red} u_0} - {\color{teal} u_d}]. \]
 Since duality preserves degree, $\psi$ also has degree zero. For degree reasons it will send $1^{\ot} \mapsto \lambda 1^{\ot}$ for some scalar $\lambda$. By \eqref{braidpiisonediagram}, the composition $\psi \circ \phi$ is the identity map, sending $1^{\ot} \mapsto 1^{\ot}$. Therefore $\lambda = 1$, as desired. \end{proof}

\begin{cor}\label{cor:rex1tensor} Let $I_{\bullet}$ be a reduced expression for $p$, and consider any rex move $\phi \co I_{\bullet} \to I_{\bullet}$. Then $\phi \equiv \id$ modulo the ideal $\End_{<p}(\BS(I_\bullet))$. \end{cor}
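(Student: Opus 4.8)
The plan is to deduce this directly from Theorem \ref{thm:braidmovespreservecbot} together with Proposition \ref{prop::kill1tensor}, since a rex move $\phi \co I_\bullet \to I_\bullet$ differs from the identity by a morphism killing the $1$-tensor, and such morphisms are exactly the degree-zero part of $\Hom_{<p}$ when both sides are reduced expressions for $p$.

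First I would record that $\phi$ is a degree-zero morphism: by Theorem \ref{thm:braidmovespreservecbot}, every rex move has degree zero, and of course so does $\id_{\BS(I_\bullet)}$. Hence both $\phi$ and $\id$ lie in $\Hom^0(\BS(I_\bullet),\BS(I_\bullet))$, and therefore so does their difference $\phi - \id$. Next, again by Theorem \ref{thm:braidmovespreservecbot}, $\phi$ fixes the $1$-tensor, so
\begin{equation}
(\phi - \id)(\onetensor_{I_\bullet}) = \phi(\onetensor_{I_\bullet}) - \onetensor_{I_\bullet} = \onetensor_{I_\bullet} - \onetensor_{I_\bullet} = 0 .
\end{equation}

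Now I would invoke Proposition \ref{prop::kill1tensor}, applicable since $I_\bullet$ is a reduced expression for $p$ (here we take both $I_\bullet$ and $I'_\bullet$ to be this same reduced expression): the degree-zero morphisms $\BS(I_\bullet) \to \BS(I_\bullet)$ annihilating $\onetensor_{I_\bullet}$ are precisely $\Hom^0_{<p}(\BS(I_\bullet),\BS(I_\bullet))$. Therefore $\phi - \id \in \Hom^0_{<p}(\BS(I_\bullet),\BS(I_\bullet)) \subset \End_{<p}(\BS(I_\bullet))$, which is exactly the assertion that $\phi \equiv \id$ modulo $\End_{<p}(\BS(I_\bullet))$.

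There is no real obstacle here: the only point requiring care is that Proposition \ref{prop::kill1tensor} is stated for \emph{degree-zero} morphisms, so it is essential to first observe (via Theorem \ref{thm:braidmovespreservecbot}) that a rex move lands in $\Hom^0$. Once that is in hand, the corollary is immediate.
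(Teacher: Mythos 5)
Your proof is correct and is essentially identical to the paper's own argument: both combine Theorem \ref{thm:braidmovespreservecbot} (rex moves have degree zero and preserve $\onetensor$) with Proposition \ref{prop::kill1tensor} to conclude that $\phi - \id$ kills $\onetensor$ and hence lies in $\End_{<p}(\BS(I_\bullet))$.
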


\begin{proof}
By Proposition \ref{prop::kill1tensor}, the ideal $\End_{< p}(\BS(I_\bullet))$ intersects $\End^0(\BS(I_{\bullet}))$ in the set of morphisms which kill $\onetensor$. By Theorem \ref{thm:braidmovespreservecbot}, any rex move preserves $\onetensor$, and thus $\id - \phi$ kills $\onetensor$, and lives in $\End_{<p}(\BS(I_\bullet))$. \end{proof} 

By this corollary, rex moves in $\SSBim$ are isomorphisms modulo lower terms.

\begin{rem} In order for the diagrammatic category to behave correctly, one requires that rex move endomorphisms are equal to the identity modulo lower terms. This is not true in $\Frob$, but requires relations specific to the Soergel setting. For dihedral groups, it is a consequence of the Elias--Jones--Wenzl relation \cite[(6.8)]{Bendihedral}. For non-singular diagrammatics, one needs the Zamolodchikov relations for larger Coxeter groups. We do not address the question of what it takes to prove the analogous result for singular Soergel diagrammatics in this paper. \end{rem}

\section{Construction of light leaves and double leaves}\label{sec:LightLeavesConstruction}

In this chapter, we describe the algorithmic process of constructing light leaf morphisms and the double leaves basis. We will do it in five stages.  It is complicated enough that we prefer not to intersperse too much motivation and lose the flow. Let us just briefly motivate our starting point.

 Recall from  Definition \ref{defn:subordinatepath} the notion of a subordinate path $t_{\bullet} \subset I_{\bullet}$. At the $k$-th step either $[I_k,I_{k+1}] = [J,Js]$ or $[I_k,I_{k+1}] = [Js,J]$ for some $J$ and $s$. Moreover, either $[t_k,t_{k+1}] = [p,q]$ or $[t_k,t_{k+1}] = [q,p]$, for some $(I,J,s)$-coset pair $p \subset q$, where $I=I_0$. 
 Our singlestep light leaf will be some morphism
\[ \ELL([p,q]) \co X_p \ot [J,Js] \to X_q \qquad \text{ or } \qquad \ELL([q,p]) \co X_q \ot [Js,J] \to X_p, \]
where $X_p$ and $X_q$ are some reduced expressions for $p$ and $q$ respectively. Thanks to Theorem \ref{thmB} we need only construct this map when $p \subset q$ is a Grassmannian pair, for reasons the reader will soon see. This is where we begin.

\subsection{First stage: Elementary light leaves for \texorpdfstring{$[m,n]$}{[m,n]}}\label{ELL}

In this section \S\ref{ELL} the running assumption is that $I,Js\subset S$ are finitary subsets such that $I\subset Js$. We let $[m,n]$ be a Grassmannian pair for $(I,J,s)$. To recall, this means that $m$ is an $(I,J)$-coset, $n$ is the $(I,Js)$-coset containing the identity, and $m \subset n$. In particular, a reduced multistep expression for $n$ is $[[I \subset Js]]$.

By \cite[Proposition 4.28]{EKo}, the coset $m$ has a reduced expression which factors through a reduced expression for its core $m^{\core}$, see \S\ref{ssec:proofGrass} for more details.

\begin{notation} \label{notation:ELL} For the rest of section \S\ref{ELL} we fix a reduced expression $K_{\bullet}$ for the $(\leftred(m),\rightred(m))$-coset $m^{\core}$. We also fix enumerations
\begin{equation} I \setminus \leftred(m) = \{t_1, \ldots, t_l\}, \qquad J \setminus \rightred(m) = \{u_1, \ldots, u_r\}, \end{equation}
which will determine an enumeration
\begin{equation} Js \setminus I = \{v_1, \ldots, v_k\} \end{equation}
as in Lemma \ref{lem:enumeratev} below. Associated with the multistep reduced expressions 
\begin{equation} m \expr [[I \supset \leftred(m)]] \circ K_{\bullet} \circ [[\rightred(m) \subset J]], \qquad n \expr [[I \subset Js]], \end{equation} the enumerations above determine singlestep reduced expressions
\begin{equation} m \expr X_m := [I - t_1 - \ldots - t_l] \circ K_{\bullet} \circ [\rightred(m) + u_1 + \ldots + u_r], \;\; n \expr X_n := [I + v_1 + \ldots + v_k].
\end{equation}
Let $I_{\bullet}$ be the (possibly non-reduced) expression $X_m \circ [J,Js]$.
\end{notation}

\begin{lem} \label{lem:enumeratev} For each simple reflection $t \in S$, one of the following is true.
\begin{itemize} \item $t\in I$. Then $t$ appears an even number of times in $I_{\bullet}$, alternating as in $[I \ldots - t \ldots + t \ldots -t \ldots +t \ldots]$. In particular, $-t$ comes before $+t$.
\item $t \in Js \setminus I$. Then $t$ appears an odd number of times in $I_{\bullet}$, alternating as in $[I \ldots + t \ldots - t \ldots +t \ldots -t \ldots]$. In particular, $+t$ comes before $-t$. 
\item $t \notin Js$. Then $\pm t$ does not appear in $I_{\bullet}$.
\end{itemize}
There is a unique enumeration of $Js \setminus I$ as $\{v_1, \ldots, v_k\}$, so that $i < j$ implies that the first appearance of $+v_i$ in $I_{\bullet}$ comes to the left of the first appearance of $+v_j$.
\end{lem}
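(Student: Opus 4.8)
The plan is to analyze the expression $I_{\bullet} = X_m \circ [J,Js]$ step by step and track, for each simple reflection $t$, the pattern of additions and removals of $t$. Recall that $X_m = [I - t_1 - \ldots - t_l] \circ K_{\bullet} \circ [\rightred(m) + u_1 + \ldots + u_r]$ starts at $I$, removes the elements of $I \setminus \leftred(m)$ one by one, traverses a reduced expression $K_{\bullet}$ for the core $m^{\core}$ (which involves only subsets of $Js$ by \cite[Lemma 5.21]{EKo}, and whose boundary sets are $\leftred(m)$ and $\rightred(m)$), then adds the elements of $J \setminus \rightred(m)$, and finally $[J,Js]$ appends $+s$. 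So $I_{\bullet}$ begins at $I$ and ends at $Js$.

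The heart of the argument is a parity/ordering statement about singlestep expressions for a coset which factors through its core. First I would note that since $K_{\bullet}$ is a reduced expression for $m^{\core}$, a coset whose left and right redundancies are $\leftred(m)$ and $\rightred(m)$, the combinatorics of \eqref{rexthrucoreintro} tells us that inside $K_{\bullet}$ every $t \in \leftred(m) \cap \rightred(m)$ behaves in a controlled way, but more importantly I only need the \emph{global} count in all of $I_{\bullet}$. The key observation is: a simple reflection $t$ lies in the "starting region" $I$ or not, and it lies in the "ending region" $Js$ or not, and each occurrence of $\pm t$ toggles membership of $t$ in the current region. Hence the total number of occurrences of $\pm t$ in $I_{\bullet}$ has parity equal to $0$ if $t$ is in both $I$ and $Js$ or in neither, and parity $1$ if $t \in Js \setminus I$ (it cannot be in $I \setminus Js$ since $I \subset Js$). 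Moreover the signs must strictly alternate, because at each point the expression is a singlestep expression and you cannot remove $t$ when it is absent nor add it when it is present. This immediately gives the alternation patterns in the three bullets: if $t \in I$, the region starts containing $t$, so the first move on $t$ (if any) is $-t$, and since it also ends containing $t$ (as $t \in I \subset Js$), there are an even number of moves; if $t \in Js \setminus I$, the region starts without $t$ and ends with $t$, so the first move is $+t$ and there are an odd number; if $t \notin Js$ then $t$ is never in any region visited (all regions are subsets of $Js$ — here I would cite that $X_m$ only involves subsets of $J \subset Js$ using the core factorization together with \cite[Lemma 5.21]{EKo}, and $[J,Js]$ only involves $J$ and $Js$), so $\pm t$ never appears.

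For the final sentence, given the three patterns above, each $t \in Js \setminus I$ has a well-defined \emph{first} occurrence, which must be $+t$. Since these first-$+$ occurrences happen at distinct positions along the one-dimensional sequence $I_{\bullet}$ (no two moves occur at the same position), I can totally order the elements of $Js \setminus I$ by the position of their first $+$-occurrence, and define $v_1, \ldots, v_k$ accordingly. Uniqueness is then immediate, since any enumeration with the stated monotonicity property must agree with this one.

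The main obstacle I anticipate is not the parity bookkeeping, which is essentially formal, but making precise and justifying the claim that every region label appearing in $I_{\bullet}$ is a subset of $Js$ — in particular that $K_{\bullet}$, being a reduced expression for $m^{\core}$, involves only subsets of $Js$. This needs the fact that $m^{\core}$ (like $m$) lives inside $W_{Js}$, which follows because $m \subset n$ is a Grassmannian pair so $m \subset W_{Js}$, combined with \cite[Lemma 5.21]{EKo}; the core factorization $m \expr [[I \supset \leftred(m)]] \circ K_{\bullet} \circ [[\rightred(m) \subset J]]$ from \cite[Proposition 4.28]{EKo} then confines $K_{\bullet}$ between the subsets $\leftred(m), \rightred(m) \subset J \subset Js$. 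Once that containment is in hand, everything else reduces to the toggle/alternation argument.
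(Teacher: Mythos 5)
Your proof is correct and follows essentially the same route as the paper's: the toggle/alternation observation, the citation of \cite[Lemma 5.21]{EKo} to keep all region labels inside $Js$, and the first-$+$-occurrence argument for uniqueness. You supply considerably more bookkeeping detail (explicit parity cases, explicit justification via the core factorization), but the underlying argument is identical to what the paper gives more tersely.
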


The proof of this lemma is completely straightforward, but we postpone most proofs from this chapter to \Cref{proofs} to avoid distracting the flow.

\begin{defn} \label{defn:ELL} We now construct a diagram
\begin{equation} \ELL([m,n]) \co X_m \circ [J,Js] = I_{\bullet} \to X_n \end{equation}
in $\Frob$, built entirely from left-facing crossings and counterclockwise caps. For each $t \in Js$, the underlying $t$-colored 1-manifold in the diagram has the following form, depending on whether $t \in I$ or $t \in Js \setminus I$:
\begin{equation} \label{twooptions} t \in I: \;\; \igs{tinI}, \qquad t \in Js \setminus I: \;\; \igs{tnotinI}. \end{equation}
These are then superimposed upon each other transversely, so that no two strands cross more than once (and the rules of $\Frob$ are obeyed, e.g. no triple intersections).  The same rules apply to the case $t = s$ as to any other simple reflection $t \in Js$. \end{defn}

\begin{lem} \label{lem:noclock}
There exists a diagram (given by an explicit construction) that meets the criteria of Definition \ref{defn:ELL}. The corresponding morphism in $\Frob$ has degree equal to $\defect([m,n])$. \end{lem}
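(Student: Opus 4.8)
The plan is to give the explicit construction of the diagram and then compute its degree by bookkeeping the contributions of the generators involved. For the construction: using the enumeration of $Js \setminus I = \{v_1, \ldots, v_k\}$ from Lemma \ref{lem:enumeratev}, each $t$-colored $1$-manifold is prescribed up to isotopy by \eqref{twooptions}, so the only real content is checking that these prescribed $1$-manifolds can be superimposed transversely with no two strands crossing more than once and no triple points, compatibly with the region labels coming from $X_m \circ [J,Js]$ on the bottom and $X_n$ on the top. First I would read off, from the explicit singlestep expressions $X_m = [I - t_1 - \cdots - t_l] \circ K_{\bullet} \circ [\rightred(m) + u_1 + \cdots + u_r]$ and $X_n = [I + v_1 + \cdots + v_k]$, exactly which colored strand occupies which position on each boundary; the strand for $t \in I$ enters and exits on matching sides (it is ``vertical'' with some wiggles interacting with the core part $K_\bullet$), while the strand for $v_i \in Js \setminus I$ enters at the bottom (from its $+v_i$ in $I_\bullet$) and must be capped off. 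The key combinatorial point is that the left-to-right order in which the $+v_i$ appear in $I_\bullet$ (the defining property of the enumeration) lets us nest/order the counterclockwise caps without forcing a double crossing: one processes $v_1, \ldots, v_k$ in order, each time routing $v_i$ to its cap partner through a sequence of left-facing crossings, and the ordering guarantees that the cap for $v_i$ lies to the left of all not-yet-resolved strands. I would make this precise by induction on $k$, or equivalently by exhibiting the diagram as a composite of the elementary light leaves already appearing in Definition \ref{defn:ELL}.

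For the degree computation: the diagram is built only from left-facing crossings and counterclockwise caps, whose degrees are recorded in \S\ref{ss.Frob} --- a left-facing crossing at a square $A \subset A s t$ has degree $\ell(A) + \ell(Ast) - \ell(As) - \ell(At)$, and a counterclockwise cap at $A \subset As$ has degree $\ell(A) - \ell(As) < 0$. Summing these over the diagram gives a telescoping expression in the lengths $\ell(A)$ of the various regions. I would organize this sum by tracking, for each simple reflection, how the region labels change as one reads the diagram, and match the total against $\defect([m,n])$. The cleanest route is to avoid a direct sum over crossings entirely: instead observe that $\evaluation$ sends $\ELL([m,n])$ into $\Hom(\BS(I_\bullet), \BS(X_n))$, and that (as will be clear from the construction) this morphism sends $\onetensor_{I_\bullet}$ to a nonzero multiple of $\onetensor_{X_n}$ --- indeed left-facing crossings and counterclockwise cups/caps all preserve or are compatible with $1$-tensors up to scalar in the appropriate sense. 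Since $\deg \onetensor_{I_\bullet} = -\tfrac12(\ell(I_\bullet) + \ell(I_0) - \ell(I_d))$ with $\ell(I_\bullet) = \ell(X_m) + (\ell(Js) - \ell(J)) = \ell(m) + \ell(Js) - \ell(J)$, and $\deg \onetensor_{X_n} = -\tfrac12(\ell(n) + \ell(I) - \ell(Js))$, the degree of the morphism is the difference $\deg \onetensor_{X_n} - \deg \onetensor_{I_\bullet}$, which I would then simplify using $\ell(m) = 2\ell(\ma m) - \ell(I) - \ell(J)$, $\ell(n) = 2\ell(\ma n) - \ell(I) - \ell(Js)$, the identity \eqref{eq:mapmip} for $\ma m$ and $\ma n$ in terms of $\mi m$, $\mi n$ and the redundancies, and $\mi n = \id$, $\leftred(n) = \rightred(q) = \rightq$, to land exactly on $\defect([m,n]) = \ell(\mi n) - \ell(\mi m) - \ell(\leftred(n)) + \ell(\leftred(m))$ as in Definition \ref{defn:singulardefect} (via Lemma \ref{degmnvspq} if one prefers to phrase it in terms of $[p,q]$).

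The main obstacle I anticipate is the transversality/no-double-crossing verification in the construction: one must be sure that the isotopy classes prescribed by \eqref{twooptions} for the different colors are simultaneously realizable in a single planar diagram respecting the boundary data, and this is exactly the kind of statement that is ``obvious from a picture'' but requires care to pin down --- it is why the paper defers it. I would handle it by giving the construction inductively (peeling off one $v_i$-cap at a time in the order given by the enumeration, reducing $I_\bullet$ toward $X_n$), so that transversality at each stage only involves one new strand crossing a controlled collection of others, each exactly once; the global no-triple-point condition then follows since each elementary step introduces only simple crossings and caps. The degree computation, by contrast, should be routine once phrased via $1$-tensor degrees.
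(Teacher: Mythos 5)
Your construction sketch is in the right spirit and roughly matches the paper's: the paper abstracts this to the notion of a \emph{sinister cap diagram}, constructs one by a right-to-left scan of the bottom boundary maintaining the invariant that the currently ``dangling'' strands appear in end-order, and verifies this invariant is preserved at each step (Lemma \ref{sinisterconstruction}). Your proposed induction on $k$ (peeling off one $v_i$-cap at a time) is plausible but coarser; the paper's per-boundary-point algorithm makes the no-double-crossing check essentially automatic, whereas your version would need to track several strands simultaneously at each stage. You correctly identify this transversality verification as the real content.

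The degree argument, however, has a genuine gap: your preferred ``cleanest route'' via $1$-tensors is wrong, and in a way that matters. You claim $\evaluation(\ELL([m,n]))$ sends $\onetensor_{I_\bullet}$ to a nonzero multiple of $\onetensor_{X_n}$ because ``left-facing crossings and counterclockwise cups/caps all preserve or are compatible with $1$-tensors up to scalar.'' This is false for the generators actually used here. A counterclockwise \emph{cap} evaluates to the Frobenius trace $\pa^I_{Is}\co R^I \to R^{Is}$, which sends $1\mapsto \pa^I_{Is}(1)=0$ for degree reasons (since $\ell(I)<\ell(Is)$); it \emph{kills} the one-tensor rather than preserving it. Similarly the left-facing crossing sends $1$ to the coproduct element $\pa\De^I_{Ist}(1)$, a sum of pure tensors with polynomial entries, not a multiple of $1\otimes 1$. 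Indeed, one can see from the paper's circle-evaluation computation (the argument giving $\pa^{\leftred}_I\bigl(f_i\,\pa_{\mi{m}}(b_j)\bigr)$) that with $f_i=b_j=1$ the morphism sends $\onetensor_{I_\bullet}$ to $\pa^{\leftred}_I\bigl(\pa_{\mi{m}}(1)\bigr)=0$ whenever $\mi{m}\neq\mathrm{id}$, which is the generic case. Your degree formula $\deg(\ELL) = \deg\onetensor_{X_n} - \deg\onetensor_{I_\bullet}$ would then be vacuous, because the hypothesis (nonzero image of $\onetensor$) fails. (The $1$-tensor trick \emph{does} work for the dual direction $\ELL([n,m])$ and for rex moves, because those are built from counterclockwise cups, right-facing crossings, and clockwise caps, all of which genuinely preserve $\onetensor$; but $\ELL([m,n])$ is made of the wrong generators.)

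Your ``approach A'' --- summing the degrees of the individual caps and left-crossings and letting the region-lengths telescope --- is correct and is exactly what the paper does: it establishes that a sinister cap diagram with bottom boundary $L_\bullet$ satisfies $\ell(L_\bullet) + 2\deg(\LEL)=0$, and then derives $\defect([m,n])$ by unwinding $\ell(L_\bullet)$ for $L_\bullet = [[Js\supset I]]\circ X_m \circ [[J\subset Js]]$ via \eqref{eq:mapmip} and $\mi{n}=\mathrm{id}$. So the fix is simply to carry out approach A to completion and discard approach B.
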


See \S\ref{sec:sinister} for the proof. We now give several prototypical examples. In all of them, we assume $m \subset n$ is an $(I,J,\sberry)$ Grassmannian coset pair.

\begin{ex} \label{ex:dihedralsinister} In this example $I = \{\sberry\}$ and $J = \{\teal\}$, $J\sberry = \{\sberry, \teal\}$ generates a finite dihedral group with $m_{s,t} \ge 6$. Let $m$ be the $(\sberry,\teal)$-coset with maximal element $ststst$, and note that $\leftred(m) = \rightred(m) = \emptyset$. One can choose $X_m = [\sberry, \emptyset, \teal, \emptyset, \sberry, \emptyset, \teal, \emptyset, \sberry, \emptyset, \teal]$. We have
\begin{equation} \ELL([m,n]) = {
\labellist
\tiny\hair 2pt
 \pinlabel {$I$} [ ] at 6 38
 \pinlabel {$J$} [ ] at 128 17
 \pinlabel {$Js$} [ ] at 144 38
\endlabellist
\centering
\igs{ELLI}
}. \end{equation}
\end{ex}

\begin{ex} \label{ex:ELLA} We continue Example \ref{ex:coreA}. Recall that $W$ is the symmetric group $S_{11}$, and $W_J \cong S_8 \times S_3$, and $\sberry = s_8$, so that $W_{Js} = W$. We set $W_I = S_2 \times S_5 \times S_1 \times S_2 \times S_1$. Let $m$ be depicted on the left below, with $m^{\core}$ on the right.
\begin{equation} \igm{exampleGrasscosetA} \qquad \igm{exampleGrasscore} \end{equation}
A reduced expression for $m^{\core}$ was given in \eqref{expressionforpcoretypeA}, which we repeat here.
\begin{equation} m^{\core} \expr [\leftred(p) + s_2 - s_4 + s_7 - s_6 + s_5 - s_5 + s_8 - s_8 + s_6 - s_7 + s_{10} - s_9 + s_8 - s_8]. \end{equation}
Choosing the lexicographic order on $I \setminus \leftred(m)$ and $J \setminus \rightred(m)$, we get
\begin{align} \nonumber X_m = & [I - s_1 - s_5] \circ \\
\nonumber  & [+ s_2 - s_4 + s_7 - s_6 + s_5 - s_5 + s_8 - s_8 + s_6 - s_7 + s_{10} - s_9 + s_8 - s_8] \circ  \\ & [+ s_1 + s_4 + s_5 + s_7 + s_9] \end{align}
Note that $I_{\bullet} = X_m \circ [+ s_8]$. The corresponding elementary light leaf is
\begin{equation} \label{ELLA} \ELL([m,n]) = {
\labellist
\tiny\hair 2pt
 \pinlabel {$I$} [ ] at 4 36
 \pinlabel {$\leftred(m)$} [ ] at 37 15
 \pinlabel {$\rightred(m)$} [ ] at 223 15
 \pinlabel {$J$} [ ] at 301 15
 \pinlabel {$J\sberry$} [ ] at 313 36
\endlabellist
\centering
\igm{ELLA}
}. \end{equation}
\end{ex}

\begin{ex} \label{ex:typeAannoying} Consider the Grassmannian coset pair $m \subset n$ from Example \ref{ex:runningpq}. This is essentially the same as the previous example (which is $m_{C_{\sberry}} \subset n_{C_{\sberry}}$), except there is an exterior product with other cosets $m_C = n_C$ for which $\mi{m_C} = \id$. One can follow the same algorithm for $m$ as for $m_{C_{\sberry}}$, and the result will be extremely similar. In this particular example, there are two new simple reflections in $J \setminus \rightred(m)$ which were not in $(J \cap C_{\sberry}) \setminus \rightred(m_{C_{\sberry}})$, which we denote using the colors black and gray below. The picture below is $\ELL([m,n])$. The new $\sberry$-black and $\sberry$-gray crossings have degree zero, as black and gray live in a different connected component from $\sberry$.
\begin{equation} \label{ELLAannoying} \igm{ELLAannoying} \end{equation}

\end{ex}

\begin{ex}\label{ex.ELLred} Whenever $I \subset J$ and $m$ contains the identity element, the inclusion $[m,n]$ is reduced in the sense of \cite[Definition 2.29]{EKo}. In this case, $m^{\core}$ is the identity $(I,I)$-coset, and $X_m = [[I \subset J]]$. The elementary light leaf is the identity map.
\begin{equation} \ELL([m,n]) = {
\labellist
\tiny\hair 2pt
 \pinlabel {$I$} [ ] at 1 26
 \pinlabel {$J$} [ ] at 57 26
 \pinlabel {$Js$} [ ] at 82 26
\endlabellist
\centering
\igm{ELLidentity}
} \end{equation}
\end{ex}

\begin{ex} \label{ex:ELLidentitycap} Whenever $I \not\subset J$ then $\sberry \in I$, since $I \subset J \sberry$.  If $m$ is the $(I,J)$-coset containing the identity element, $\leftred(m) = I \cap J = I \setminus \sberry$. In this case, $m^{\core}$ is the identity $(I\cap J,I \cap J)$-coset, and $X_m = [[I \supset I \setminus s \subset J]]$. The elementary light leaf is as below.
\begin{equation} \ELL([m,n]) = {
\labellist
\tiny\hair 2pt
 \pinlabel {$I$} [ ] at 8 40
 \pinlabel {$I \setminus \sberry$} [ ] at 23 15
 \pinlabel {$J$} [ ] at 80 15
 \pinlabel {$J\sberry$} [ ] at 96 40
\endlabellist
\centering
\igm{ELLidentitycap}
} \end{equation}
\end{ex}

\begin{ex}\label{ex.ELLD}
Let $S = \{\sberry, \teal, \urial,c\}$ have type $D_4$, where $c$ is the hub of the Dynkin diagram. Let $J = S \setminus \sberry$, and $I = \{\sberry, c\}$. Let $n$ be the unique $(I,J\sberry)$-coset.

Then there are four $(I,J)$-cosets with minimal elements
\[\mi{m_1} = \id,\quad \mi{m_2} = \teal c\sberry,\quad \mi{m_2'} = \urial c\sberry,\quad \mi{m_3} = \urial\teal c\sberry. \]

The $(I,J)$-coset $m_1$ falls under the regime of \Cref{ex:ELLidentitycap}, so
\begin{equation} \ELL([m_1,n]) = \igm{ELLD1}. 
\end{equation}


A reduced expression for $m_2^{\core}$ and the corresponding reduced expression for $m$ are 
\[m_2^{\core} \expr [I +\teal - \sberry], \qquad X_m = [I + \teal - \sberry + \urial].\]
We have 
\begin{equation} \ELL([m_2,n])= \igm{ELLD2}. \end{equation}

The case of $m_2'$ is the same, swapping $\teal$ and $\urial$. Swapping these colors will change the reduced expression $X_n$ for $n$, as required by the enumeration in \Cref{lem:enumeratev}. The effect of changing $X_n$ is that the $\teal$ and $\urial$ strands need not cross.

The core of $m_3$ is an $(\sberry,c)$-coset, two reduced expressions for which are
\[m_3^{\core} \expr [\{\sberry\}+\teal - \teal +\urial -\urial +c -\sberry ]\expr [\{\sberry\}+\teal +\urial  -\urial - \teal +c -\sberry ].\]
To these one composes with $[I - c]$ on the left and $[\{c\} + \teal + \urial]$ on the right to obtain $X_m$.
The two choices above give rise to two different options for $\ELL([m_3,n])$, namely
\begin{equation} \label{eq:ELLD3} \ELL([m_3,n]) = \igm{ELLD3a} \quad \text{or} \quad \igm{ELLD3b}. \end{equation}
There are more reduced expressions than these, e.g. one can swap $\teal$ and $\urial$ (which would necessitate a different choice of $X_n$). 
\end{ex}

\begin{rem} We are being overly rigid in our construction of light leaves for the sake of algorithmic simplicity. In particular, consider the enumeration $\{v_1, \ldots, v_k\}$ specified in \Cref{lem:enumeratev}. In the type $D$ examples above, if one chooses $X_n = [I + \urial + \teal]$ instead of $[I + \teal + \urial]$, then one can draw very similar pictures except with an upward-facing crossing between $\teal$ and $\urial$ strands on top. This upward-facing crossing is an isomorphism in $\Frob$ between $[I + \urial + \teal]$ and $[I + \teal + \urial]$, and it is a rex move. For all practical purposes, we can add such upward crossings willy-nilly without affecting the essential properties of elementary light leaves. Indeed, later stages in the algorithm we pre- and post-compose elementary light leaves with arbitrarily chosen rex moves, which can have the effect of adding such crossings. We have chosen the rigid presentation of elementary light leaves above only for the sake of giving cleaner proofs. \end{rem}

\begin{rem}  There are other ways we are being unnecessarily rigid in our choices of reduced expression. In Example \ref{ex:typeAannoying}, one could choose a reduced expression for both $X_m$ and $X_n$ which starts by adding the black and gray strands. The resulting map $\ELL([m,n])$ would be \eqref{ELLA} tensored on the left with the identity map of the black and gray strands. This version of $\ELL([m,n])$ is related to the one in \eqref{ELLAannoying} by pre- and post-composition with rex moves which are isomorphisms.
\end{rem}

\begin{rem} The two choices of $\ELL([m_3, n])$ in \eqref{eq:ELLD3} are related by precomposition with rex moves which happen to be isomorphisms. More generally, when different rexes for $m$ differ by switchback relations which are not isomorphisms, then precomposition with a rex move will not transform one version of $\ELL([m,n])$ to another! The difference between two versions of $\ELL([m,n])$ (after pre- and post-composition with suitable rexes so they have the same source and target) is in the span of ``lower terms,'' in a sense which is difficult to explain until later in the paper. This comes out in the wash as part of Theorem \ref{thm:COB}. \end{rem}

\subsection{First stage continued: sprinkling polynomials}

\begin{defn} \label{defn:ELLP}
For $f \in R^{\leftred(m)}$ we define the \emph{elementary light leaf with polynomial
} $\ELLP([m,n],f)$ to be the morphism obtained from $\ELL([m,n])$ by adding $f$ to the leftmost $\leftred(m)$-region in the domain. 
\end{defn}

Polynomials are sprinkles flavoring the diagram. For example, one modifies \eqref{ELLA} to obtain
\begin{equation}
{
\labellist
\small\hair 2pt
 \pinlabel {$f$} [ ] at 37 15
\endlabellist
\centering
\igm{ELLA}
}.
\end{equation}

\subsection{First stage completed: elementary light leaves for \texorpdfstring{$[n,m]$}{[n,m]}}

Now we discuss how to construct a map $\ELL([n,m]) \co X_n \ot [Js,J] \to X_m$. By flipping $\ELL([m,n])$ upside-down and reversing orientation we already have a map $X_n \to X_m \ot [J,Js]$. The map we desire is related to this one by adjunction. Continuing Example \ref{ex:ELLA} we would have
\begin{equation}\ELL([n,m]) =
{
\labellist
\tiny\hair 2pt
 \pinlabel {$I$} [ ] at 7 32
 \pinlabel {$J\sberry$} [ ] at 266 14
 \pinlabel {$J$} [ ] at 303 32
 \pinlabel {$\leftred(m)$} [ ] at 36 52
 \pinlabel {$\rightred(m)$} [ ] at 226 52
\endlabellist
\centering
\igm{ELLAqp}
}. \end{equation}

Let us state this construction more formally.


\begin{notation} Let $\DELL([m,n]):= \duality(\ELL([m,n]))$, where $\duality$ is the duality functor of Definition \ref{defn:duality}. \end{notation}

\begin{defn} Define $\ELL([n,m]) \co X_n \ot [J\sberry,J] \to X_m$ as \begin{equation}  \ELL([n,m]) = (\Id_m\otimes \capcw)\circ (\DELL([m,n])\otimes \Id_{[-\sberry]}). \label{ELLqpdef}\end{equation} \end{defn}


When we draw schematic diagrams, we use a trapezoid to represent an elementary light leaf, whether $\ELL([m,n])$ or $\ELLP([m,n],f)$. 
\begin{equation*} {
\labellist
\small\hair 2pt
 \pinlabel {$\ELL([m,n])$} [ ] at 38 31
\endlabellist
\centering
\igm{trapezoid1}
} \qquad {
\labellist
\small\hair 2pt
 \pinlabel {$\ELL([n,m])$} [ ] at 38 31
\endlabellist
\centering
} \end{equation*}
Applying the duality functor one obtains an upside-down trapezoid.
\begin{equation*} {
\labellist
\tiny\hair 2pt
 \pinlabel {$\DELL([m,n])$} [ ] at 39 33
\endlabellist
\centering
\igm{trapezoid1flip}
} \qquad {
\labellist
\tiny\hair 2pt
 \pinlabel {$\DELL([n,m])$} [ ] at 39 33
\endlabellist
\centering
}\end{equation*}
We have defined
\begin{equation} {
\labellist
\small\hair 2pt
 \pinlabel {$\ELL([n,m])$} [ ] at 38 31
\endlabellist
\centering
\igm{trapezoid2}
} := {
\labellist
\tiny\hair 2pt
 \pinlabel {$\DELL([m,n])$} [ ] at 38 30
\endlabellist
\centering
\igm{ELLqpschematic}
}. \end{equation}

We will not need to place a polynomial within $\ELL([n,m])$. By convention, 
\begin{equation*}\ELLP([n,m],1) := \ELL([n,m]).\end{equation*}

\begin{lem}\label{degnm} We have $\deg(\ELL([n,m])) = \defect([n,m])$. \end{lem}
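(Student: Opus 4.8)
The plan is to unwind the definition \eqref{ELLqpdef} and compute the degree of $\ELL([n,m])$ as the sum of the degrees of its building blocks, using additivity of degree under horizontal and vertical composition in $\Frob$. Concretely, $\ELL([n,m]) = (\Id_m\otimes \capcw)\circ (\DELL([m,n])\otimes \Id_{[-\sberry]})$, and since the identity $2$-morphisms $\Id_m$ and $\Id_{[-\sberry]}$ have degree zero, we get $\deg(\ELL([n,m])) = \deg(\DELL([m,n])) + \deg(\capcw)$.

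Next I would evaluate the two summands. For the first, the duality functor $\duality$ on $\Frob$ preserves degree (the lemma following Definition \ref{defn:duality}), so $\deg(\DELL([m,n])) = \deg(\ELL([m,n]))$, and Lemma \ref{lem:noclock} identifies this with $\defect([m,n])$. For the second, $\capcw$ is the clockwise cap on the $\sberry$-strand separating the regions $J$ and $J\sberry$; its degree is $\ell(J\sberry)-\ell(J)$, as recorded after \eqref{fourstructuremaps} (it is the degree of the multiplication map for the Frobenius extension $R^{J\sberry}\subset R^J$). Hence $\deg(\ELL([n,m])) = \defect([m,n]) + \ell(J\sberry) - \ell(J)$.

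Finally I would invoke \eqref{eq:defqpdefpq}: since $m \subset n$ is in particular an $(I,J,\sberry)$-coset pair, that identity reads $\defect([n,m]) = \defect([m,n]) + \ell(J\sberry) - \ell(J)$. Comparing with the previous display yields $\deg(\ELL([n,m])) = \defect([n,m])$, as claimed. The only step needing a little care — and so the ``main obstacle,'' though a mild one — is pinning down the degree of the cap: one must check that \eqref{ELLqpdef} uses the clockwise (positive-degree) cap and not the counterclockwise one, and that the relevant extension is $R^{J\sberry}\subset R^J$, so that the contribution is $+(\ell(J\sberry)-\ell(J))$. Everything else is routine bookkeeping with the already-established Lemma \ref{lem:noclock} and equation \eqref{eq:defqpdefpq}.
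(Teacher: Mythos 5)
Your proof is correct and follows exactly the same route as the paper: use that $\duality$ preserves degree and that the clockwise cap contributes $\ell(Js)-\ell(J)$, then invoke Lemma \ref{lem:noclock} and \eqref{eq:defqpdefpq}. You have merely spelled out the bookkeeping in slightly more detail.
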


\begin{proof}
The duality functor $\mathcal{D}$ preserves degree, and the clockwise cap has degree $\ell(Js)-\ell(J)$ so 
\[\deg(\ELL([n,m]))=\deg(\ELL([m,n]))+\ell(Js)-\ell(J).\]
We conclude the proof by \Cref{lem:noclock} and \eqref{eq:defqpdefpq}.
\end{proof}

We will give more examples in \S\ref{ssec:llex}, but first we must discuss a very special case.

\begin{ex}\label{ex.ELLforward}
Let $m \subset n$ be the unique $(I,J,s)$ Grassmannian coset pair for which $\ma{m} = w_{Js}$. That is, $m$ is maximal in $n$, and the step $[n,m]$ is forward in the sense of \cite[Definition 2.21]{EKo}, and $\defect([n,m]) = 0$. Then $n$ has a reduced expression $[[I \subset Js]]$, while $m$ has a reduced expression $X'_m = [[I \subset Js \supset J]]$, which is also the source of $\ELL([n,m])$. It seems desirable to let $\ELL([n,m])$ be the identity map of $X'_m$ in this case, and sometimes it is. Indeed, we can allow this as an exception to our general algorithm, defining $\ELL([n,m])$ to be the identity in this case, and the corresponding light leaves will satisfy all the desired properties for our proofs in the next chapter to work verbatim, e.g. \Cref{ll1=1} holds.

However, our overly rigid construction above requires something different: it requires the target of $\ELL([n,m])$ to be a reduced expression $X_m$ for $m$ factoring through $m^{\core}$. We conjecture that $\ELL([n,m])$ is a rex move (see \Cref{rmk:unicity} for how one might prove this), though we will not need this statement. Regardless, Lemma \ref{rexELLisID} states that postcomposing $\ELL([n,m])$ with a rex move $X_m \to X_m'$ yields the identity map of $X_m'$ (at least after applying the evaluation functor). We give several subexamples below.

Later on we postcompose elementary light leaves with rex moves, so we lose nothing by assuming $\ELL([n,m])$ is the identity. We do not make this assumption (outside of \S\ref{ssec:fibered}). \end{ex}


\begin{ex} Let $I = J = \mt$, let $n = \{\id,s\}$ and $m = \{s\}$. Then we have $X_m = [\mt,s,\mt]$ and $X_n = [\mt,s]$ and
\begin{equation} \ELL([m,n]) = \igs{ELLyetanother1}. \end{equation}
Consequently we have
\begin{equation} \ELL([n,m]) = \igs{ELLyetanother2} = \igs{ELLyetanother3}. \end{equation}
\end{ex}

\begin{ex} We continue Example \ref{ex:dihedralsinister}, assuming that $m_{s,t} = 6$, so that $m$ contains the longest element. Then
\begin{equation} \ELL([n,m]) = \igs{ELLIanother} = \igs{ELLIanothe2}. \end{equation}
This is the elementary rex move associated to a switchback relation, and if we post-compose it with its upside-down flip, we get the identity map by Theorem \ref{thm:braidpiisone}.
\end{ex}

\begin{ex} In Example \ref{ex.ELLD}, the coset $m_3$ is maximal in $n$. As an exercise, the reader should take either version of $\ELL([m,n])$ from \eqref{eq:ELLD3}, use it to construct $\ELL([n,m])$, and then post-compose with rex moves until one obtains the identity map of $[I + \teal + \urial - \sberry]$. \end{ex}

\subsection{Second stage: singlestep light leaves}\label{ss.singlestepLL}

In this subsection we fix an $(I,J,s)$-coset pair $p \subset q$ which is not necessarily a Grassmannian coset pair. We construct light leaves for $[p,q]$ and $[q,p]$.

\begin{notation} \label{notation:SSELL} Theorem \ref{thmB} constructs an $(I,\rightred(q))$-coset $z$, and a $(\rightred(q),J,s)$-coset pair $m \subset n$ such that $z.m = p$ and $z.n = q$. We fix reduced expressions $X_m$ and $X_n$ as in Notation \ref{notation:ELL}. We fix any reduced expression $Z_{\bullet}$ for $z$, and set
\begin{equation} X_p := Z_{\bullet} \circ X_m, \qquad X_q := Z_{\bullet} \circ X_n. \end{equation}
\end{notation}

\begin{defn}\label{SSLL}
We define the \emph{single step light leaf} as
\begin{equation} \SSLL([p,q]) = \id_{Z_{\bullet}} \ot \ELL([m,n]), \qquad \SSLL([q,p]) = \id_{Z_{\bullet}} \ot \ELL([n,m]). \end{equation}
Moreover, we define the \emph{single step light leaf with polynomial} as
\begin{equation} \SSLLP([p,q],f) = \id_{Z_{\bullet}} \ot \ELLP([m,n],f). \end{equation}
Here, $f \in R^{\leftred(m)}$. By convention, $\SSLLP([q,p],1) = \SSLL([q,p])$.
\end{defn}

Schematically, we have
\begin{equation} {
\labellist
\small\hair 2pt
 \pinlabel {$\SSLL([p,q])$} [ ] at 51 31
 \pinlabel {$z$} [ ] at 8 51
 \pinlabel {$n$} [ ] at 48 51
 \pinlabel {$z$} [ ] at 7 14
 \pinlabel {$m$} [ ] at 47 14
\endlabellist
\centering
\igm{SSLL2}
} := {
\labellist
\small\hair 2pt
 \pinlabel {$z$} [ ] at 8 51
 \pinlabel {$n$} [ ] at 48 51
 \pinlabel {$z$} [ ] at 7 14
 \pinlabel {$m$} [ ] at 47 14
 \pinlabel {\tiny $\ELL([m,n])$} [ ] at 73 31
\endlabellist
\centering
\igm{SSLL}
}. \end{equation}

Recall from \eqref{eq:leftredn} that 
\begin{equation} \label{eq:leftrednredux} \leftred(m) = \mi{q}^{-1} \leftred(p) \mi{q}. \end{equation}

\begin{lem}\label{degSSLL}
    We have $\deg(\SSLL([p,q]))=\defect([p,q])$ and $\deg(\SSLL([q,p]))=\defect([q,p])$.
\end{lem}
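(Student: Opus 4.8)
The plan is to reduce the statement for $\SSLL$ directly to the already-established degree computations for elementary light leaves. Recall from Definition \ref{SSLL} that $\SSLL([p,q]) = \id_{Z_{\bullet}} \otimes \ELL([m,n])$ and $\SSLL([q,p]) = \id_{Z_{\bullet}} \otimes \ELL([n,m])$, where $m \subset n$ is the Grassmannian coset pair associated to $p \subset q$ by Theorem \ref{thmB}. Since $\id_{Z_{\bullet}}$ has degree zero, tensoring with it does not change the degree of a morphism, so $\deg(\SSLL([p,q])) = \deg(\ELL([m,n]))$ and $\deg(\SSLL([q,p])) = \deg(\ELL([n,m]))$.

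Next I would invoke the degree formulas for the Grassmannian elementary light leaves. By Lemma \ref{lem:noclock} we have $\deg(\ELL([m,n])) = \defect([m,n])$, and by Lemma \ref{degnm} we have $\deg(\ELL([n,m])) = \defect([n,m])$. Combining these with the previous paragraph gives $\deg(\SSLL([p,q])) = \defect([m,n])$ and $\deg(\SSLL([q,p])) = \defect([n,m])$.

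Finally I would translate these defects back to the defects of the original coset pair. This is exactly the content of Lemma \ref{degmnvspq}, which states that $\defect([p,q]) = \defect([m,n])$ and $\defect([q,p]) = \defect([n,m])$. Substituting, we obtain $\deg(\SSLL([p,q])) = \defect([p,q])$ and $\deg(\SSLL([q,p])) = \defect([q,p])$, as desired.

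There is essentially no obstacle here: the lemma is a bookkeeping corollary that packages together Lemma \ref{lem:noclock}, Lemma \ref{degnm}, and Lemma \ref{degmnvspq} via the observation that $\id_{Z_\bullet}$ is degree zero. The only point requiring minor care is making sure the tensor product $\id_{Z_\bullet} \otimes \ELL([m,n])$ is interpreted with the correct grading conventions so that degrees genuinely add (which holds because $\evaluation$ and the monoidal structure on $\Frob$ are grading-preserving, as set up in \S\ref{ss.Frob}), and that the polynomial-free case $\SSLL$ is the one being addressed — the polynomial version $\SSLLP$ would shift the degree by $\deg(f)$, but that is not claimed here.
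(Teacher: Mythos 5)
Your proof is correct and matches the paper's own argument step for step: reduce $\SSLL$ to $\ELL$ via the degree-zero factor $\id_{Z_\bullet}$, then invoke Lemma \ref{lem:noclock}, Lemma \ref{degnm}, and Lemma \ref{degmnvspq}. No differences worth noting.
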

\begin{proof}
    Clearly, we have 
\[ \deg(\SSLL([p,q]))=\deg(\ELL([m,n])), \quad \deg(\SSLL([q,p]))=\deg(\ELL([n,m])).\] In view of \Cref{lem:noclock} and \Cref{degnm}, the current lemma follows from \Cref{degmnvspq}.
\end{proof}

\begin{rem} It would be more consistent with Definition \ref{defn:ELLP} if we had defined $\SSLLP([p,q],f)$ so that $f \in R^{\leftred(p)}$, and $f$ is placed in the leftmost region of $X_p$ labeled by $\leftred(p)$. However, there need not be a region of $X_p$ labeled by $\leftred(p)$ at all! Instead, $R^{\leftred(p)}$ and $R^{\leftred(m)}$ are isomorphic via multiplication by $\mi{q}$. If desired, one can define an ``action" of $g \in R^{\leftred(p)}$ on $\SSLL([p,q])$ by placing $\mi{q}^{-1}(g)$ in the region labeled $\leftred(m)$; to whit
\begin{equation} ``\SSLLP([p,q],g)" := \SSLLP([p,q],\mi{q}^{-1}(g)).\end{equation}
Alternatively, $p$ has some other reduced expression which starts with $[[I \supset \leftred(p)]]$. One could place polynomials in the $\leftred(p)$ region in this expression, apply a rex move to $X_p$, and then apply $\SSLL([p,q])$. These two different ways for $R^{\leftred(p)}$ to act will ``agree modulo lower terms,'' a fact we do not discuss or need in this paper, but hope to address in future work. \end{rem}

\begin{rem} It is important to note that the reduced expression $X_p$ above (resp. $X_q$) does not depend only on $p$, but also on $q$ and some other choices. If a given double coset $p$ appears in multiple contexts, the meaning of $X_p$ may vary. \end{rem}



\subsection{Third stage: light leaves with polynomials}\label{ss.LL}

Let $I_{\bullet} = [I_0, \ldots, I_d]$ be any singlestep expression. We now inductively construct a light leaf with polynomials $\LLP(t_{\bullet},f_\bullet)$ for a subordinate path $t_\bullet\subset I_\bullet$ with an associated sequence of polynomials $f_{\bullet}$.

\begin{defn} Let $t_\bullet \subset I_\bullet$ be a subordinate path. We say that a sequence $f_{\bullet}=[f_1,\ldots, f_d]$ of polynomials in $R$ is \emph{adapted to $t_{\bullet}$} if for all $0 \le k \le d-1$ we have
\begin{equation}\label{fadapted} \begin{cases} f_{k+1} = 1 & \text{ whenever } t_k \supset t_{k+1}, \\ f_{k+1} \in R^{\leftt} & \text{ whenever } t_k \subset t_{k+1}, \text{ where } \leftt = \mi{t_{k+1}^{-1}}\leftred(t_k)\mi{t_{k+1}}. \end{cases} \end{equation} \end{defn}

Note the compatibility between $\mi{t_{k+1}^{-1}}\leftred(t_k)\mi{t_{k+1}}$ and \eqref{eq:leftrednredux}. 

\begin{defn} Let $t_{\bullet} \subset I_{\bullet}$, and $f_{\bullet}$ be adapted to $t_{\bullet}$. Let $\LLP(t_{\le 0}, f_{\le 0})$ denote the identity map of the identity 1-morphism of $I_0$. 
For each $0 \le k \le d$ let $I_{\le k} = [I_0, \ldots, I_k]$ and  $t_{\le k} = [t_0, \ldots, t_k]$. Suppose that we have already constructed the light leaf with polynomials map
\begin{equation} \LLP(t_{\le k},f_{\le k}) \co I_{\le k} \to X'_{t_k}. \end{equation}
Recall that the singlestep light leaf is a map
\begin{equation} \SSLLP([t_k,t_{k+1}],f_{k+1}) \co X_{t_k} \ot [I_k, I_{k+1}] \to X_{t_{k+1}}. \end{equation}
Above, $X_{t_k}$ and $X'_{t_k}$ are certain reduced expressions for $t_k$, but not necessarily the same reduced expression.
We define $\LLP(t_{\le k+1},f_{\le k+1})$ as the composition
\begin{center}
\begin{tikzpicture}
    \node (a) at (0,0) {$I_{\le k+1} = I_{\le k} \ot [I_k, I_{k+1}]$};
    \node (b) at (0,-1.5) {$X'_{t_k}\ot [I_k, I_{k+1}]$};
    \node (c) at (0,-3) {$ X_{t_k}\ot [I_k, I_{k+1}]$};
    \node (d) at (0,-4.5) {$X_{t_{k+1}}$};
    \draw[->] (a) to node[left] {$\LLP(t_{\le k},f_{\le k}) \ot \id$} (b);
    \draw[->] (b) to node[left] {$\rex \ot \id$} (c);
    \draw[->] (c) to node[left] {$\SSLLP([t_k,t_{k+1}],f_{k+1})$} (d);
    \draw[->] (a.east) to [out=-60,in =40] node[right] {$\LLP(t_{\leq k+1},f_{\leq k+1})$} (d.east);
\end{tikzpicture}
\end{center}
where $\rex$ denotes some chosen rex move, see Definition \ref{defn:rexmove}. Finally, we set $\LLP(t_{\bullet}, f_{\bullet}) = \LLP(t_{\le d},f_{\le d})$.
\end{defn}

In the schematic below, $\SSLLP_{k+1}$ represents $\SSLLP([t_k,t_{k+1}],f_{k+1})$.
\begin{equation} {
\labellist
\tiny\hair 2pt
 \pinlabel {\small $\LLP(t_{\le k+1}, f_{\le k+1})$} [ ] at 41 31
 \pinlabel {\small $I_0$} [ ] at 1 32
 \pinlabel {$I_{\le k}$} [ ] at 7 14
 \pinlabel {\small $I_k$} [ ] at 93 15
 \pinlabel {\small $I_{k+1}$} [ ] at 114 41
 \pinlabel {$X_{t_{k+1}}$} [ ] at 6 51
\endlabellist
\centering
\igb{LLinduct}
} := {
\labellist
\tiny\hair 2pt
 \pinlabel {\small $\LLP(t_{\le k},f_{\le k})$} [ ] at 38 32
 \pinlabel {\small $\rex$} [ ] at 31 62
 \pinlabel {\small $\SSLLP_{k+1}$} [ ] at 34 93
 \pinlabel {\small $I_0$} [ ] at 1 35
 \pinlabel {$I_{\le k}$} [ ] at 7 14
 \pinlabel {\small $I_k$} [ ] at 71 45
 \pinlabel {\small $I_{k+1}$} [ ] at 101 71
 \pinlabel {$X'_{t_k}$} [ ] at 7 52
 \pinlabel {$X_{t_k}$} [ ] at 7 76
 \pinlabel {$X_{t_{k+1}}$} [ ] at 6 111
\endlabellist
\centering
\igb{LLinduct2}
} \end{equation}

\subsection{Fourth stage: fixing the polynomials}

We now choose our polynomials wisely.

\begin{notation} \label{notation:PKL} Let $I_{\bullet} = [I_0, \ldots, I_d]$ be a singlestep expression. For $0 \le j < d$ we call $j$ an \emph{ascending index} if $I_j \subset I_{j+1}$. When a subordinate path $t_{\bullet}$ is understood, for each ascending index $j$ we set
\begin{equation} \leftt_j = \mi{t_{j+1}}^{-1}\leftred(t_j)\mi{t_{j+1}}, \qquad \rightt_{j+1} = \rightred(t_{j+1}). \end{equation}
\end{notation}

\begin{lem} We have $\leftt_j \subset \rightt_{j+1}$. \end{lem}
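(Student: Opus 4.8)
\textbf{Proof plan for the claim $\leftt_j \subset \rightt_{j+1}$.}

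The plan is to read this off Proposition~\ref{thmBconti}, which is exactly the statement that governs the Grassmannian reduction at the $j$-th step. Fix an ascending index $j$, so $I_j \subset I_{j+1}$, say $I_{j+1} = I_j s$. Then $[t_j, t_{j+1}]$ is an $(I_0, I_j, s)$-coset pair with $t_j \subset t_{j+1}$, and we have the factorizations $t_j = z.m$, $t_{j+1} = z.n$ furnished by Theorem~\ref{thmB}, where $m \subset n$ is the associated Grassmannian pair. First I would recall from \eqref{eq:pqmnredundancies} that $\rightred(m) = \rightred(t_j) = \leftt_j'$ in the relevant notation; more precisely, applying \eqref{eq:leftredn} and \eqref{eq:leftrednredux} we have $\leftred(m) = \mi{t_{j+1}}^{-1}\leftred(t_j)\mi{t_{j+1}} = \leftt_j$, since $\mi{z} = \mi{t_{j+1}}$ (from \eqref{mnzstuff}, $\mi{n} = \id$ forces $\mi{q}$ to play the role of $\mi{z}$) and $\mi{m}$ is what conjugates $\rightred(m)$ to $\leftred(m)$.

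The key point is then a direct inclusion: $\leftt_j = \leftred(m)$ is by definition a subset of the set $\rightred(q) = \rightred(t_{j+1})$, where here $q = t_{j+1}$ plays the role of the ambient coset in Theorem~\ref{thmB}. Indeed, in the notation of that theorem, $m \subset n$ is a $(\rightq, J, s)$-coset pair with $\rightq = \rightred(t_{j+1})$, so $\leftred(m) \subset \rightq$ simply because $\leftred(m)$ is a redundancy subset living inside the left-hand parabolic label of the coset $m$, which is $\rightq$. But $\rightq = \rightred(t_{j+1}) = \rightt_{j+1}$ by Notation~\ref{notation:PKL}. Chaining these identifications gives $\leftt_j = \leftred(m) \subset \rightq = \rightt_{j+1}$, which is the claim.

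The only thing requiring care — and the step I expect to be the main (minor) obstacle — is bookkeeping the conjugations and making sure the reduction of Theorem~\ref{thmB} is applied to the pair $[t_j, t_{j+1}]$ with the correct identification of $I$, $J$, $s$ and of $\mi{z}$ with $\mi{t_{j+1}}$. Concretely one must check that $\mi{t_{j+1}}^{-1}\leftred(t_j)\mi{t_{j+1}}$ is genuinely the same set as the $\leftred(m)$ produced by \eqref{eq:leftredn}, i.e. that $\mi{z} = \mi{t_{j+1}}$; this is immediate from \eqref{mnzstuff}. Once the dictionary is fixed, no computation is needed: the inclusion $\leftred(m) \subset \rightred(t_{j+1})$ is structural, being the statement that the left redundancy of a coset is a subset of its left parabolic label. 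I would present this as a two-line deduction citing \eqref{eq:pqmnredundancies}, \eqref{eq:leftredn}, and \eqref{mnzstuff}.
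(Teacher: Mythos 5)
Your argument is correct. The chain of identifications you set up, $\leftt_j = \leftred(m)$ (via \eqref{eq:leftredn} with $\mi{z} = \mi{q} = \mi{t_{j+1}}$) and $\rightt_{j+1} = \rightred(t_{j+1}) = \rightq$, matches what the paper records. The difference is in the final inclusion: the paper rewrites $\rightq$ as $\leftred(n)$ using \eqref{eq:pqmnredundancies} and then invokes the external result \cite[Lemma 3.1]{EKo} (that left redundancies are nested for nested cosets) to get $\leftred(m) \subset \leftred(n)$; you instead observe directly from the definition \eqref{eq:redundancydef} that the left redundancy of the $(\rightq, J)$-coset $m$ is $\rightq \cap \mi{m}J\mi{m}^{-1}$, hence automatically sits inside $\rightq$. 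Your route is more elementary and self-contained — it exploits the special feature of the Grassmannian reduction that $n$ contains the identity, so $\leftred(n)$ is the full left parabolic label $\rightq$, whereas the paper's citation works for arbitrary coset pairs $m \subset n$. Both are correct; yours saves a citation at the cost of using the Grassmannian structure more explicitly.
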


\begin{proof} Let $m \subset n$ be the Grassmannian coset pair associated to $t_j \subset t_{j+1}$. As we have noted earlier, $\leftt_j = \leftred(m)$ and $\rightt_{j+1} = \leftred(n)$. We have $\leftred(m) \subset \leftred(n)$ by \cite[Lemma 3.1]{EKo}. \end{proof}

\begin{notation} \label{notation:choosePL} For each finitary subset $L$ we choose a polynomial $P_L$ such that $\pa_L(P_L) = 1$. The existence of $P_L$ is guaranteed by our assumption of generalized Demazure surjectivity, see \cite[S. 3.2]{EKLP1}. \end{notation}

We will be interested in $P_{\rightt_{j+1}}$ for each ascending index $j$ (see Remark \ref{rmk:varyP}).

\begin{defn}
Let $I_\bullet$ be  a singlestep expression.
A \emph{sprinkled subordinate path} is a pair $(t_\bullet,y_\bullet) \subset I_\bullet$,	where $t_\bullet\subset I_\bullet$ is a subordinate path, and $y_\bullet=(y_j)$ is a sequence of elements of $W$, defined only for ascending indices $j$, such that
 $y_j\in W_{\rightt_{j+1}}$ is minimal in its right coset $W_{\leftt_j} y_j$.
\end{defn}

\begin{defn}\label{llty} 
 For a sprinkled path  $(t_\bullet,y_\bullet)$, we define
$\LL(t_\bullet,y_\bullet):=\LLP(t_\bullet, f_\bullet)$, where $f_{\bullet}$ is the adapted sequence of polynomials such that
\begin{equation} f_j:= \pa_{\leftt_j}\pa_{y_j}(P_{\rightt_{j+1}}) \qquad \text{whenever } I_j \subset I_{j+1}.  \end{equation}
\end{defn}

The reason to choose these particular polynomials is the following lemma.

\begin{lem} \label{lem:matchingpolytosprinkles}
Let $t_\bullet \subset I_\bullet$ be a subordinate path and assume that $j$ is an ascending index. The polynomials $f_j=\pa_{\leftt_j}\pa_{y_j}(P_{\rightt_{j+1}})$, ranging over  $y_j\in W_{\rightt_{j+1}}$ minimal in its right coset $W_{\leftt_j} y_j$, form a basis for a vector space of graded dimension $\poly([t_{j},t_{j+1}])$.

Consequently, if one fixes $t_{\bullet}$ and sums over all sprinkled subordinate paths of the form $(t_{\bullet}, y_{\bullet})$, one has
\begin{equation}\label{degpolynomialadapted} \sum_{(t_{\bullet},y_{\bullet})} v^{\deg(f_\bullet)}=\poly(t_\bullet).\end{equation}
\end{lem}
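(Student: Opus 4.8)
The plan is to reduce the statement to a known Frobenius-extension fact applied to the tower $R^{\rightt_{j+1}} \subset R^{\leftt_j}$. First I would recall that, by definition, $\pa_{\leftt_j} = \pa_{w_{\leftt_j} w_{\rightt_{j+1}}^{-1}}\co R^{\rightt_{j+1}} \to R^{\leftt_j}$ is precisely the Frobenius trace map $\pa^{\leftt_j}_{\rightt_{j+1}}$ for this extension (using that $\leftt_j \subset \rightt_{j+1}$, which is the content of the preceding lemma). The polynomial $P_{\rightt_{j+1}}$ satisfies $\pa_{\rightt_{j+1}}(P_{\rightt_{j+1}}) = 1$, i.e.\ it maps to $1$ under the trace $R^{\rightt_{j+1}} \to R = R^{\emptyset}$; equivalently $\pa^{\leftt_j}_{\rightt_{j+1}}(\pa^{\emptyset}_{\leftt_j}(P_{\rightt_{j+1}})) = 1$ after factoring the trace. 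The key point is that the elements $\{\pa_{y_j}(P_{\rightt_{j+1}})\}$, as $y_j$ ranges over minimal coset representatives for $W_{\leftt_j}\backslash W_{\rightt_{j+1}}$, form (up to unitriangular change of basis) a set of \emph{almost dual bases} for $R^{\leftt_j}$ as a free $R^{\rightt_{j+1}}$-module in the sense of Definition \ref{defn:almostdual}; this is a standard Demazure-operator computation (cf.\ \cite[\S 3.2]{EKLP1}), and then applying the trace $\pa_{\leftt_j}$ to them computes the relevant coefficients.

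The second and more bookkeeping-heavy step is the graded-rank count. By Lemma \ref{lem:grankIJ}, $R^{\leftt_j}$ is free over $R^{\rightt_{j+1}}$ of graded rank $\pi^+(\rightt_{j+1})/\pi^+(\leftt_j)$. On the other hand, by \Cref{thmBconti} (via \Cref{degmnvspq}) the redundancies of the Grassmannian pair $m \subset n$ attached to $t_j \subset t_{j+1}$ satisfy $\leftred(m) = \leftt_j$ and $\leftred(n) = \rightt_{j+1}$ up to conjugacy, so $\ell(\leftred(p)) = \ell(\leftt_j)$, $\ell(\leftred(q)) = \ell(\rightt_{j+1})$ and likewise for the $\pi^+$'s, where $p \subset q$ is the original coset pair lying over $t_j \subset t_{j+1}$. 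Plugging into Definition \ref{defn:singulardefect}, $\poly([t_j,t_{j+1}]) = \pi^+(\leftred(q))/\pi^+(\leftred(p)) = \pi^+(\rightt_{j+1})/\pi^+(\leftt_j)$, which matches the graded rank above. The set of minimal representatives $y_j \in W_{\rightt_{j+1}}$ for $W_{\leftt_j}\backslash W_{\rightt_{j+1}}$ has $\ell$-generating function equal to $\pi^+(\rightt_{j+1})/\pi^+(\leftt_j)$ as well (Poincar\'e polynomial of the parabolic quotient, rescaled), and $\deg(f_j) = \deg \pa_{\leftt_j}\pa_{y_j}(P_{\rightt_{j+1}}) = \deg(P_{\rightt_{j+1}}) - 2\ell(w_{\leftt_j} w_{\rightt_{j+1}}^{-1}) - 2\ell(y_j)$, so after recording $v^{\deg f_j}$ we recover exactly the polynomial $\poly([t_j,t_{j+1}])$, once we check $\deg(P_{\rightt_{j+1}}) = 2\ell(\rightt_{j+1})$ (which holds since $P_L$ is chosen homogeneous of the degree of $\mu_L$).

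For the final displayed equality \eqref{degpolynomialadapted}, I would note that $\deg(f_\bullet) = \sum_j \deg(f_j)$ ranges independently over each ascending index $j$, the choices $y_j$ at distinct indices are independent, and $\poly(t_\bullet) = \prod_j \poly([t_j,t_{j+1}])$ by definition; so the sum over sprinkled paths $(t_\bullet, y_\bullet)$ extending a fixed $t_\bullet$ factors as a product of the single-index sums, each computed in the previous step, giving $\prod_j \poly([t_j,t_{j+1}]) = \poly(t_\bullet)$. The main obstacle I anticipate is the first step: carefully verifying that $\{\pa_{y_j}(P_L)\}$ really is an (almost) basis for $R^{\leftt_j}/R^L$ rather than merely a spanning set of the right size — this is where one must invoke generalized Demazure surjectivity and the structure of Demazure operators on parabolic quotients, and where a sign or indexing slip is most likely. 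Everything downstream is a graded-dimension bookkeeping exercise.
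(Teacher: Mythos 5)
Your overall strategy matches the paper's: reduce the first claim to a Frobenius-extension fact for $R^{\rightt_{j+1}}\subset R^{\leftt_j}$, compute the graded rank via Lemma~\ref{lem:grankIJ}, use conjugation-invariance of $\pi^+$ to identify this with $\poly([t_j,t_{j+1}])$, and factor the sum over sprinkled paths as a product over ascending indices. That skeleton is right, and the last two steps are carried out correctly. But there are genuine errors in the first step that need fixing.

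The central problem is your identification of $\pa_{\leftt_j}$. In the paper's notation (see \S\ref{subsec:demazure} and Notation~\ref{notation:choosePL}), $\pa_{\leftt_j}$ means $\pa_{w_{\leftt_j}}$, a map $R\to R^{\leftt_j}$ of degree $-2\ell(\leftt_j)$. It is \emph{not} the Frobenius trace $\pa^{\leftt_j}_{\rightt_{j+1}}=\pa_{w_{\rightt_{j+1}}w_{\leftt_j}^{-1}}\co R^{\leftt_j}\to R^{\rightt_{j+1}}$ of degree $-2\bigl(\ell(\rightt_{j+1})-\ell(\leftt_j)\bigr)$, which is what you wrote (and with the factor $w_{\leftt_j}w_{\rightt_{j+1}}^{-1}$ in the wrong order, which isn't even an element of $W_{\rightt_{j+1}}$). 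This propagates to your degree formula: the correct computation is $\deg(f_j)=\deg(P_{\rightt_{j+1}})-2\ell(\leftt_j)-2\ell(y_j)=2\ell(\rightt_{j+1})-2\ell(\leftt_j)-2\ell(y_j)$, so the degrees range from $0$ up to $2(\ell(\rightt_{j+1})-\ell(\leftt_j))$, matching the graded rank. Your formula gives degrees shifted down by $2(\ell(\rightt_{j+1})-2\ell(\leftt_j))$, which in general produces negative degrees and cannot equal $\pi^+(\rightt_{j+1})/\pi^+(\leftt_j)$. Relatedly, the intermediate claim that $\{\pa_{y_j}(P_{\rightt_{j+1}})\}$ ``form a set of almost dual bases for $R^{\leftt_j}$'' cannot be right as stated: these elements live in $R$, not in $R^{\leftt_j}$. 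It is the full expressions $f_j=\pa_{\leftt_j}\pa_{y_j}(P_{\rightt_{j+1}})$ that lie in $R^{\leftt_j}$ and form the basis.

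Finally, the assertion that this basis fact is ``a standard Demazure-operator computation'' is where the real work is being swept under the rug. It is precisely the content of Theorem~\ref{thm:dualbasisinimage} (\cite[Theorem~4.14]{EKLP1}), which is what the paper's proof cites; indeed that theorem is nontrivial and the subject of a companion paper. With that citation in hand, the basis statement and the graded-rank statement \emph{both} follow at once from Lemma~\ref{lem:grankIJ}, and the degree bookkeeping you attempt becomes redundant (which is fortunate, because it goes wrong). So: fix the meaning of $\pa_{\leftt_j}$, drop the direct degree computation, and cite Theorem~\ref{thm:dualbasisinimage} instead of gesturing at a ``standard computation.'' The conjugacy reduction via Proposition~\ref{thmBconti} and the final product-over-$j$ argument are fine as you have them.
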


\begin{proof} By \Cref{thm:dualbasisinimage}, the polynomials $\{f_j\}$ form a basis for $R^{\leftt_{j}}$ over $R^{\rightt_{j+1}}$. It remains to verify that $\poly([t_j,t_{j+1}])$ agrees with the graded rank of $R^{\leftt_{j}}$ over $R^{\rightt_{j+1}}$. This follows directly from \Cref{lem:grankIJ}. \end{proof}


We apologize that in our naming conventions, sprinkles seem to refer both to the polynomials $f_j$ sprinkled in a diagram (our preferred interpretation), and the elements $y_j$ of $W$ which index and determine them.

\begin{rem} \label{rmk:whenI0empty} When $I_0 = \emptyset$, we have $\rightt_{j+1} = \leftt_j = \emptyset$ for all $j$ with $I_j \subset I_{j+1}$. The only valid choice of $y_j$ is the identity element of $W$, and $f_j$ is the identity element of $R$. Sprinkles only appear in the truly singular context. \end{rem}

\subsection{Examples} \label{ssec:llex} 
Here we illustrate the first four stages via examples. The first example is special.

\begin{ex}\label{ex.LLrex}
Let $I_\bullet\expr p$ be a reduced expression. Then there is a unique subordinate path $t_\bullet\subset I_\bullet$ with $\term(t_\bullet)=p$, namely the forward path of $I_\bullet$ (see \cite[Lemma 2.22]{EKLP2}).
For the forward path $t_\bullet$, each $\ELL([m,n])$ associated to $t_j\subset t_{j+1}$ is the identity map (see Example~\ref{ex.ELLred}) and each $\ELL([n,m])$ associated to $t_j\supset t_{j+1}$ can be taken to be the identity map (see \Cref{ex.ELLforward} and \Cref{rexELLisID}).
Moreover, for each $I_j\subset I_{j+1}$, since $\leftred(t_j)=\leftred(t_{j+1})$ (see \cite[Definition 2.24]{EKo}) we have $\leftt_j = \rightt_{j+1}$. 
Therefore, for a sprinkled path $(t_\bullet, y_\bullet)\subset I_\bullet$ with terminus  $p$, each element $y_j$ is the identity element in $W$, each polynomial $f_j$ is $1$, and $\LL(t_\bullet,y_\bullet)$ is a rex move without polynomials.
\end{ex}

We give a few examples of light leaves $\LL(t_{\bullet}, y_{\bullet})$ which live in $\Hom(\BS(I_{\bullet}),Y_p)$. Here, $t_{\bullet}$ is a path subordinate to $I_{\bullet}$ with terminus $p$, and $Y_p$ is some reduced expression for $p$. Elementary light leaves are not expected to form a basis for any particular morphism space; this is the role of double leaves, to be defined in the next section. However, when $p$ contains the identity so that it is minimal in the Bruhat order, light leaves should\footnote{This is because of \Cref{prop:llsisbasismodlower}, which states that light leaves form a basis for morphisms modulo lower terms, together with the fact that there are no lower terms in this case.} form a basis for $\Hom(\BS(I_{\bullet}),Y_p)$ as a free module over $R_p$, which acts by postcomposition, placing polynomials within a region of $Y_p$. We try to illustrate this principle, and show why it requires the existence of polynomial sprinkles.

\begin{ex}
In the next series of examples, we will use only one simple reflection $\sberry$. First consider  when $I_{\bullet} = [\sberry,\mt]$, a reduced expression for the $(\sberry,\mt)$-coset $p = \{\id,\sberry\}$. It has a unique subordinate path, the forward path, so the only light leaf is the identity map.
\[ {
\labellist
\small\hair 2pt
 \pinlabel {$\sberry$} [ ] at 0 35
\endlabellist
\centering
\igs{downs}
} \]
We have $\BS(I_{\bullet}) = R$ as an $(R^{\sberry}, R)$-bimodule, whose endomorphism ring is $R = R_p$. So indeed, this morphism space is spanned by the unique light leaf, up to placing polynomials in the region labeled $\mt$.
\end{ex}

\begin{ex} \label{mtsmt} Now consider $I_{\bullet} = [\mt,\sberry,\mt]$ and $p = \{\id\}$. There is a unique subordinate path $t_{\bullet}$ with terminus $p$, and no sprinkles can appear, see Remark \ref{rmk:whenI0empty}. The light leaf associated to $t_{\bullet}$ is
\begin{equation} {
\labellist
\small\hair 2pt
 \pinlabel {$\sberry$} [ ] at 31 15
\endlabellist
\centering
\igs{cwcap}
},\end{equation}
related to the previous case by adjunction. The morphism space is related to the previous one by adjunction, so it is also free of rank $1$ over $R = R_p$, which acts by placing polynomials on top. \end{ex}

\begin{ex} \label{smts} We consider one more morphism space related to the previous by adjunction, where the situation is fundamentally different. Let $I_{\bullet} = [\sberry,\mt,\sberry]$. It is a non-reduced expression, with a unique subordinate path $t_{\bullet}$ having terminus $p = \{\id,\sberry\}$, which has a length $0$ reduced expression $[\sberry]$. In this case, $t_1 \subset t_2$ and $\leftt_1 = \mt$ while $\rightt_2 = \{\sberry\}$. Thus there are two choices for sprinkles, $y_1 = \id$ and $f_1 = P_{\sberry}$, or  $y_1 = \sberry$ and $f_1 = \pa_{\sberry}(P_{\sberry}) = 1$. So there are two light leaves:
\begin{equation} {
\labellist
\tiny\hair 2pt
 \pinlabel {\small $\sberry$} [ ] at 50 40
\endlabellist
\centering
\igs{ccwcap}
}, \qquad {
\labellist
\tiny\hair 2pt
 \pinlabel {$P_{\sberry}$} [ ] at 31 18
 \pinlabel {\small $\sberry$} [ ] at 50 40
\endlabellist
\centering
\igs{ccwcap}
}.\end{equation}
Again, the morphism space in question is free of rank $1$ over $R$, but in this case $R_p = R^{\sberry}$, so it is free of rank $2$ over $R_p$ (which acts by placing polynomials on top of the diagram). Because the polynomial ring we can postcompose with is too small, we are forced to keep extra polynomials sprinkled in our light leaves as we go. This is the most basic illustration of the concept: every time one loses ``polynomial degrees of freedom'' when following a subordinate path, we make up for it with polynomial sprinkles.    
\end{ex}

We now give examples where $S = \{\sberry, \teal\}$ and $m_{\sberry \teal} = 3$, so that $W \cong S_3$. Our examples will be for $(I, J)$-expressions with $I = \{\sberry\}$ and $J = \{\teal\}$.

\begin{ex} \label{ex:A2part1} We start with an easy extension of the earlier examples. Let $I_{\bullet} = [I - \sberry + \sberry + \teal - \sberry]$. There is a unique path $t_{\le 3}$ subordinate to $[I-\sberry+\sberry+\teal]$, with terminus $t_3$ equal to the $(I,S)$-coset containing all of $W$. So there are two paths subordinate to $I_{\bullet}$ corresponding to the two $(I,J)$-cosets inside $t_3$: one with $\ma{t_4} = \sberry\teal\sberry$ and one with $\ma{t_4} = \sberry \teal$. The only step for which the redundancy increases is the second step $+\sberry$, so only $y_2$ has the potential to be nontrivial. In all, this leads to four light leaves:
\begin{equation} {
\labellist
\tiny\hair 2pt
 \pinlabel {\small $\sberry$} [ ] at 50 40
\endlabellist
\centering
\igs{ccwcap}
} \igs{upt} \igs{downs}, \quad {
\labellist
\tiny\hair 2pt
 \pinlabel {$P_{\sberry}$} [ ] at 31 18
 \pinlabel {\small $\sberry$} [ ] at 50 40
\endlabellist
\centering
\igs{ccwcap}
} \igs{upt} \igs{downs}, \quad {
\labellist
\tiny\hair 2pt
 \pinlabel {\small $\sberry$} [ ] at 50 40
\endlabellist
\centering
\igs{ccwcap}
} \igs{rightcross}, \quad {
\labellist
\tiny\hair 2pt
 \pinlabel {$P_{\sberry}$} [ ] at 31 18
 \pinlabel {\small $\sberry$} [ ] at 50 40
\endlabellist
\centering
\igs{ccwcap}
} \igs{rightcross}.
\end{equation}
\end{ex}

\begin{ex} \label{ex:A2part2}   Now let $I_{\bullet} = [I-\sberry+\teal-\teal+\sberry-\sberry+\teal]$. Note that $[I-\sberry+\teal-\teal+\sberry-\sberry]$ is a reduced expression for the $(I,\mt)$-coset containing the longest element $\sberry\teal\sberry$.

We will record subordinate paths by listing the maximal elements of each coset. Every subordinate path $t_{\bullet}$ has $\ma{t_0} = \sberry$, $\ma{t_1} = \sberry$, and $\ma{t_2} = \sberry \teal$. These first three indices are as in the forward path of a reduced expression, and the corresponding light leaf is the identity map. The ascending indices are $1$, $3$, and $5$, and $\leftt_j = \mt$ for each one. Since $\rightt_2 = \mt$ we have $y_1 = \id$. Here are the options for how a sprinkled subordinate path might continue:
\begin{enumerate}
\item The forward path continues as
\[ \ma{t_3} = \sberry \teal, \quad \ma{t_4} = \sberry t\sberry, \quad \ma{t_5} = \sberry \teal\sberry, \quad \ma{t_6} = \sberry \teal\sberry\]
We have $y_3 = \id$ since $\rightt_4 = \mt$. However, $\rightt_6 = \{t\}$, so $y_5 \in \{\id, \teal\}$. The Grassmannian pair associated to $[t_5, t_6]$ is $[p,q]$ for the $(\teal,\mt)$-coset $p = \{\id,\teal\}$ and the $(\teal,\teal)$-coset $q = \{\id,\teal\}$. The corresponding elementary light leaves (with polynomials) are just like those in Example \ref{smts}. The singlestep light leaf is the same but tensored with an identity map of $[I+\teal-\sberry]$: it is a morphism from $[I + \teal - \sberry - \teal + \teal]$ to $[I + \teal - \sberry]$.

Since $t_{\le 5}$ is the forward path of a reduced expression, the corresponding light leaf may be taken to be the identity map. However, we must apply a rex move to reach the source of the final singlestep light leaf.

Two conclude, the two sprinkled subordinated paths with the forward subordinate path have the following light leaves: 
\begin{equation} {
\labellist
\small\hair 2pt
 \pinlabel {$\sberry$} [ ] at 10 39
\endlabellist
\centering
\igs{LLA2example}
}, \qquad {
\labellist
\small\hair 2pt
 \pinlabel {$\sberry$} [ ] at 10 39
 \pinlabel {\tiny $P_{\teal}$} [ ] at 70 36
\endlabellist
\centering
\igs{LLA2example}
}. \end{equation}

\item Another path continues as
\[ \ma{t_3} = \sberry \teal, \quad  \ma{t_4} = \sberry \teal\sberry, \quad \ma{t_5} = \sberry \teal, \quad \ma{t_6} = \sberry \teal \]
Again $y_3 = \id$, but now $\rightt_6 = \mt$ so $y_5 = \id$. Only the first four steps are reduced. The Grassmannian pair associated to $[t_4,t_5]$ is $[q,p]$ for the $(\mt,\mt)$-coset $p = \{\sberry\}$ and the $(\mt,\sberry)$-coset $q = \{\id,\sberry\}$, as in Example \ref{mtsmt}. The Grassmannian pair associated to $[t_5, t_6]$ is $[p',q']$ for the $(\mt, \mt)$-coset $p' = \{\teal\}$ and the $(\mt,\teal)$-coset $q' = \{\id,\teal\}$, as in Example \ref{smts} but without the need for any polynomials (they could be forced to another region)! 

The corresponding light leaf is 
\begin{equation} \igs{downs} \igs{upt} \igs{doublecapexampleswapped}.\end{equation}

\item The final path continues as
\[ \ma{t_3} = \sberry, \quad \ma{t_4} = \sberry, \quad \ma{t_5} = \sberry, \quad \ma{t_6} = \sberry \teal  \]
This time $\rightt_4 = \{\sberry\}$, so $y_3 \in \{\id,\sberry\}$. Meanwhile, $y_5 = \id$ as above. The corresponding light leaves are

\begin{equation} {
\labellist
\small\hair 2pt
 \pinlabel {$\sberry$} [ ] at 9 44
\endlabellist
\centering
\igs{doublecapexample}
} \igs{downs} \igs{upt}, \qquad {
\labellist
\small\hair 2pt
 \pinlabel {\tiny $P_{\sberry}$} [ ] at 56 29
 \pinlabel {$\sberry$} [ ] at 9 44
\endlabellist
\centering
\igs{doublecapexample}
} \igs{downs} \igs{upt}. \end{equation}

\end{enumerate}
\end{ex}

\subsection{Fifth stage: double leaves}

\begin{defn} \label{def:mainthmsetup} Let $I_\bullet$ and $I'_\bullet$ be  singlestep $(I,J)$-expressions. A triple of the form $(p,(t_\bullet,y_\bullet),(t'_\bullet,y'_\bullet))$ is called a \emph{coterminal sprinkled triple} for $(I_\bullet,I'_\bullet)$ if
$(t_\bullet,y_\bullet)$ (resp., $(t
_\bullet,y'_\bullet)$) is a sprinkled subordinate path of $I_\bullet$ (resp., $I'_\bullet$), and $\term(t_{\bullet}) = \term(t'_{\bullet}) = p$.

A quadruple $(p,(t_\bullet,y_\bullet),(t'_\bullet,y'_\bullet),g)$, where in addition $g$ is a polynomial in $R_p = R^{\leftred(p)}$, is called a  \emph{coterminal sprinkled quadruple} for $(I_\bullet,I'_\bullet)$. \end{defn}

\begin{rem} Note that $p$ can appear within a coterminal sprinkled triple for $(I_{\bullet}, I'_{\bullet})$ if and only if $p \le I_{\bullet}$ and $p \le I'_{\bullet}$, see Definition \ref{def.bruhat}. There are finitely many such $p$ for each pair $(I_{\bullet}, I'_{\bullet})$. \end{rem}

\begin{defn} Let $Y_p$ be a reduced expression for $p$ of the form $$[[I\supset \leftred(p)]] \; . \; p^{\mathrm{core}} \; . \;[[\rightred(p)\subset J]].$$
We define the \emph{singular double leaf} $\DLL(p,(t_\bullet,y_\bullet),(t'_\bullet,y'_\bullet), g)$ associated to the coterminal sprinkled quadruple $(p,(t_\bullet,y_\bullet),(t'_\bullet,y'_\bullet), g)$ as the composition

\begin{equation} I_\bullet\xrightarrow{\LL(t_\bullet,y_\bullet)}X_{p}\xrightarrow{\rex}Y_{p}\xrightarrow{ g}Y_{p}\xrightarrow{\rex}X'_p\xrightarrow{\duality(\LL(t'_\bullet,y'_\bullet))}I'_\bullet. \end{equation}
The reduced expressions $X_p$ and $X'_p$ are the targets of $\LLP(t_\bullet,y_\bullet)$ and $\LLP(t'_\bullet,y'_\bullet)$ respectively. The maps $\rex$ are some chosen rex moves between these different reduced expressions for $p$, see Definition \ref{defn:rexmove}. The map $g$ in the middle places the polynomial $g$  in the leftmost $\leftred(p)-$region in $Y_p$. \end{defn}

\begin{rem} We would like to emphasize that $Y_p$ is a special choice of reduced expression for $p$, designed so that the object $Y_p$ (which we are using as a proxy for the bimodule $\BS(Y_p)$) admits an obvious action of $R_p = R^{\leftred(p)}$. For any reduced expression of $p$, say $X_p$, we have $\End_{\not <p}(\BS(X_p)) \cong R_p$, see \Cref{gdimlowerterms}, but it is not always so easy to find $R_p$ as a subring of $\End(\BS(X_p))$ itself. \end{rem}

We typically denote a coterminal sprinkled triple with a capital letter like $T$, and write $\DLL(T,g)$, where $g\in R_p$, for the  double leaf associated to the coterminal sprinkled quadruple $(T,g)$.

\begin{ex} We continue Examples \ref{ex:A2part1} and \ref{ex:A2part2}, and consider double leaves from $I_{\bullet} = [I-\sberry + \teal - \teal + \sberry - \sberry + \teal]$ to $I'_{\bullet} = [I - \sberry + \sberry + \teal - \sberry]$. There are a total of ten double leaves: $2 \times 2$ which factor through $p_1$ with $\ma{p_1} = \sberry \teal \sberry$, and $3 \times 2$ which factor through $p_2$ with $\ma{p_2} = \sberry \teal$. The cosets $p_1$ and $p_2$ have unique reduced expressions, so no rex moves are necessary to link the light leaves into double leaves.

Here are three prototypical double leaves.
\begin{equation} {
\labellist
\tiny\hair 2pt
 \pinlabel {$P_{\sberry}$} [ ] at 27 76
 \pinlabel {$P_{\teal}$} [ ] at 72 33
 \pinlabel {$g$} [ ] at 11 50
\endlabellist
\centering
\igs{DLLA2example}
} \qquad {
\labellist
\tiny\hair 2pt
 \pinlabel {$g$} [ ] at 77 60
 \pinlabel {$P_{\sberry}$} [ ] at 27 77
\endlabellist
\centering
\igs{DLLA2example2}
} \qquad {
\labellist
\tiny\hair 2pt
 \pinlabel {$P_{\sberry}$} [ ] at 27 77
 \pinlabel {$P_{\sberry}$} [ ] at 45 24
 \pinlabel {$g$} [ ] at 78 52
\endlabellist
\centering
\igs{DLLA2example3}
}. \end{equation}
The first double leaf factors through $p_1$, and $g \in R^s$. The second and third factor through $p_2$, and $g \in R$. One obtains all other double leaves by replacing some of the polynomials $P_{\sberry}$ or $P_{\teal}$ with $1$.
\end{ex}

\subsection{Loose conclusion, and a silly choice of basis}\label{ss.Bp}

With notation as in Definition \ref{def:mainthmsetup}, fix a coterminal sprinkled triple $T = (p,(t_\bullet,y_\bullet),(t'_\bullet,y'_\bullet))$. As $g$ ranges among all polynomials in $R_p$, we think of the set $\{\DLL(T,g)\}$ of double leaf morphisms 
as forming a free $R_p$-module of rank $1$, generated by $\DLL(T,1)$. This $R_p$-module lives inside $\Hom(I_{\bullet}, I'_{\bullet})$, and we refer to it as $R_p \cdot \DLL(T,1)$.

Indeed, fixing $p$ but letting the other data vary, Theorem \ref{mainthm} below will imply that
\[ \{ \DLL(p,(t_\bullet,y_\bullet),(t'_\bullet,y'_\bullet), 1)\} \]
is a basis for a free $R_p$-module inside $\Hom(I_{\bullet}, I'_{\bullet})$, even after applying the evaluation functor to bimodules.
Moreover, Theorem \ref{mainthm} effectively states that the space $\Hom(\BS(I_{\bullet}), \BS(I'_{\bullet}))$ is spanned (freely) by these free $R_p$-modules as $p$ varies. We find this to be a satisfying way to think of our result.

Phrasing this theorem is somewhat awkward, as the ring $R_p$ varies when $p$ varies. To describe a basis for $\Hom(\BS(I_{\bullet}), \BS(I'_{\bullet}))$, we need to choose which ring we are defining a basis over. One could choose $R^I$ or $R^J$, but the most unifying choice is $\Bbbk$. For this purpose alone, we make the fairly pointless choice of a basis of each $R_p$ as a vector space.

\begin{notation} \label{notation:fixBB} For each double coset $p$ we fix a basis $\BB_p$ of $R_p$ 
over $\Bbbk$ consisting of homogeneous elements. \end{notation}

For fixed $T$, the elements $\{\DLL(T,b)\}$ for $b \in \BB_p$ form a $\Bbbk$-basis for $R_p \cdot \DLL(T,1)$. Changing the basis $\BB_p$ will change the basis $\{\DLL(T,b)\}$ of $R_p \cdot \DLL(T,1)$ in exactly the same way, simultaneously, for each coterminal sprinkled triple $T$ associated to $p$.

\begin{rem} \label{rmk:basisasRImod} Recall that $R_p = R^{\leftred(p)}$ is an $(R^I, R^J)$-bimodule, and is free as a left $R^I$-module or a right $R^J$-module. To obtain a basis for $R_p \cdot \DLL(T,1)$ as a left $R^I$-module, we could choose a basis for $R_p$ as a left $R^I$-module, and consider the double leaves $\{\DLL(T,c)\}$ as $c$ ranges among this basis. It is easy to observe that the left $R^I$-action on $\End(Y_p)$ matches the left $R^I$-action on $R_p$, using \eqref{polyslide}.

Similarly, we can obtain bases of $R_p \cdot \DLL(T,1)$ as a right $R^J$-module. However, it is false that the right $R^J$-action on $\End(Y_p)$ matches the right $R^J$-action on $R_p$. This is only true modulo $\End_{< p}(Y_p)$. \end{rem}

\subsection{Conclusion} \label{ss.conclusion}

\begin{thm}\label{mainthm}
After applying the functor $\evaluation$, the set of double leaves \[\{\DLL(T,b)\}\]  yields a $\Bbbk$-basis of $\mathrm{Hom}(BS(I_\bullet), BS(I'_\bullet))$, as $T = (p,(t_\bullet,y_\bullet),(t'_\bullet,y'_\bullet))$ ranges over the set of coterminal sprinkled triples for $(I_\bullet,I'_\bullet)$, and $b$ ranges over $\BB_p$. \end{thm}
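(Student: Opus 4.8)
The plan is to prove the theorem in two halves: first that the double leaves $\{\DLL(T,b)\}$ span $\Hom(\BS(I_\bullet),\BS(I'_\bullet))$, and second that they are linearly independent; the count of basis elements will then match the graded dimension computed in Corollary \ref{BShom}, and in fact the dimension count gives an efficient route to deduce linear independence from spanning (or vice versa). The central organizing tool is the filtration of $\Hom$ by the ideals $\Hom_{<p}$ of morphisms factoring through lower terms, together with the identification $\Hom_{\not<p}(\BS(I_\bullet),\BS(I'_\bullet)) \cong \Hom_{\not<p}(\BS(I_\bullet),B_p) \otimes_{R_p} \Hom_{\not<p}(B_p,\BS(I'_\bullet))$ in the appropriate sense — this is where \Cref{gdimlowerterms}, \Cref{prop::kill1tensor} and \Cref{prop:lowertermsR} do the real work.

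First I would prove, by induction on the length of $I_\bullet$, that the light leaves $\LL(t_\bullet,y_\bullet)$ with $\term(t_\bullet)=p$, together with the $R_p$-action by postcomposition on $Y_p$, span $\Hom_{\not<p}(\BS(I_\bullet),\BS(Y_p))$ — and more precisely, that they descend to an $R_p$-basis of $\Hom_{\not<p}(\BS(I_\bullet),B_p)$, which by \Cref{gdimlowerterms} is free of rank $c_p$ over $R_p$ where $\sum c_p h_p = \habs(I_\bullet)$. The inductive step analyzes the last letter $[I_{d-1},I_d]$: one pushes a morphism $\BS(I_\bullet)\to B_p$ through the adjunction $\BS([I_{d-1},I_d])\dashv\BS([I_d,I_{d-1}])$, reducing to morphisms out of $\BS(I_{\le d-1})$, applies Theorem \ref{thmB} to replace the last coset step by a Grassmannian pair $m\subset n$, and then invokes the defining property of $\ELL([m,n])$, $\ELL([n,m])$ from \S\ref{ELL} (degree equal to the defect, built from left-facing crossings and cups so that $\onetensor\mapsto\onetensor$), along with Lemma \ref{lem:matchingpolytosprinkles} to see that the sprinkle polynomials $f_j$ supply exactly a basis of $R^{\leftt_j}$ over $R^{\rightt_{j+1}}$ — this is the factor of $\poly([t_j,t_{j+1}])$ in the graded dimension. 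Corollary \ref{cor:rex1tensor} guarantees that the interspersed rex moves are invertible modulo lower terms, so they do not disturb the count. Dualizing gives the analogous statement for $\Hom_{\not<p}(B_p,\BS(I'_\bullet))$.

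Next I would assemble the double leaves: given the two one-sided statements, a morphism $\phi\in\Hom(\BS(I_\bullet),\BS(I'_\bullet))$ is treated by downward induction on $p$ in the Bruhat order. Using \Cref{gdimlowerterms}-style factorization, the image of $\phi$ in $\Hom_{\not<p}$, for $p$ maximal among cosets with a nonzero contribution, is a sum over $T$ of terms $\DLL(T,g_T)$ with $g_T\in R_p$; subtracting these off lands us in $\Hom_{<p}$, and since $\Hom_{<p}$ is spanned by morphisms factoring through $\bigoplus_{q<p} B_q(k)$, each such factored morphism is itself (by the one-sided basis statements applied to $I_\bullet\to B_q$ and $B_q\to I'_\bullet$) a combination of double leaves through $q<p$. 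Expanding $g_T$ in the basis $\BB_p$ yields the spanning set $\{\DLL(T,b)\}$. Finally, to get a basis rather than merely a spanning set, one compares cardinalities: the number of $\{\DLL(T,b)\}$ in each degree, summed via Lemma \ref{lem:matchingpolytosprinkles} and $\gdim(R_p)$, reproduces exactly the formula \eqref{gradeddimhom} of Corollary \ref{BShom} for $\gdim\Hom(\BS(I_\bullet),\BS(I'_\bullet))$; a spanning set of the correct size in each graded piece is a basis.

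The main obstacle I anticipate is the bookkeeping in the inductive step for the one-sided spanning/basis statement — specifically, controlling how the rex moves (which are only isomorphisms \emph{modulo lower terms}, per Corollary \ref{cor:rex1tensor}) interact with the filtration, and making sure that the "modulo lower terms" error terms introduced at each of the $d$ steps are themselves absorbed into double leaves through strictly smaller cosets without circularity. This requires setting up the induction so that the statement being proven is simultaneously about all cosets $p\le I_\bullet$ and their $\Hom_{\not<p}$-quotients, and carefully using Theorem \ref{thmB} and Proposition \ref{thmBconti} (especially the compatibility \eqref{eq:leftredn} of redundancies and the factorization through the core) to ensure the reduced expressions $X_p$, $Y_p$ and the $R_p$-actions line up. The other delicate point, flagged in the remarks of \S\ref{ELL}, is that two different choices of $\ELL([m,n])$ differ by lower terms only — so uniqueness of the basis holds only up to the filtration, which is exactly why the theorem is phrased as producing \emph{a} basis rather than a canonical one.
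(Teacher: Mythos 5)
Your high-level architecture --- filter by $\Hom_{<p}$, establish one-sided light-leaf bases of $\Hom_{\not<p}(\BS(I_\bullet),\BS(Y_p))$ and their duals, glue these into double leaves by induction on the Bruhat order, and then use the graded-dimension count from \Cref{BShom} to upgrade a spanning set to a basis --- is essentially the skeleton of the paper's proof (cf.\ \Cref{idealsagree}, \Cref{doubleleavesdegreecounting}, \Cref{prop:llsisbasismodlower}). The divergence, and the gap, is in how you propose to establish the one-sided statement.

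You want to show that $\lls$ spans $\Hom_{\not<p}$ \emph{directly}, by induction on the length of $I_\bullet$, peeling off the last letter through the biadjunction and reducing to the Grassmannian pair from \Cref{thmB}. This is the classical (Libedinsky/Elias--Williamson) route for ordinary light leaves, and you correctly flag its obstruction here: rex moves and the various reduced-expression choices are only controlled modulo lower terms, so the error terms created at each of the $d$ steps must be absorbed by an auxiliary simultaneous induction over the poset of cosets. In the singular setting --- with sprinkle polynomials, shifting redundancies, and the coset factorization through the core --- you have not explained how that induction closes, and it is genuinely delicate. The paper avoids this entirely by proving \emph{linear independence} first and then invoking the graded Nakayama lemma together with the dimension count (this is exactly \Cref{llindep} plus \Cref{degLL} giving \Cref{prop:llsisbasismodlower}); spanning then comes for free. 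The engine driving linear independence is the dual-sprinkles machinery that your proposal never mentions: the elements $b(u_\bullet,z_\bullet)\in\BS(I_\bullet)$ of \Cref{notation:b}, the almost-dual-basis result \Cref{thm:dualbasisinimage}, the evaluation formula \Cref{circleswithpoly} asserting $\ELL([p,q],\dbl)(\onetensor\otimes b_j)\equiv\delta_{ij}\onetensor$, and above all the unitriangularity \Cref{LLtriang} with respect to the total order $\prec$ built from dual sequences. That mechanism is what makes the argument close without the bookkeeping you anticipate; without it (or an equally strong substitute), your inductive spanning step is a genuine hole in the proposed proof, not merely a route difference.
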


The proof comprises the bulk of \S\ref{proofs}, concluding in \S\ref{ssec:doubleleafproof}.

\begin{rem} One can easily adapt \Cref{mainthm} to obtain a basis for this morphism space as a left $R^I$-module, or a right $R^J$-module, see Remark \ref{rmk:basisasRImod}. \end{rem}

The set $\{\DLL(T,b)\}$ depends on a large number of choices: the data chosen in Notation \ref{notation:ELL} for each elementary light leaf, the choice of reduced expression for $z$ in Notation \ref{notation:SSELL}, the choice of rex moves, the choice of basis $\BB_p$, the choice of polynomials $P_L$. All these choices have been omitted from our notation. Different choices will yield different bases, but any choice does indeed yield a basis.

We now ask about the relationship between these different bases. We have already discussed the boring choice of $\BB_p$ in \S~\ref{ss.Bp}. 
Our second theorem states that the other choices yield bases which are related by a unitriangular change of basis matrix. We are vague here, because the partial order which governs this unitriangularity requires a fair amount of work to describe.

\begin{thm} \label{thm:COB} Fix sequences $I_{\bullet}$ and $I'_{\bullet}$ as in \Cref{def:mainthmsetup}. Let $\prec_3$ denote the partial order on coterminal sprinkled triples from Definition \ref{def:orderontriples}. Let $\{\DLL(T,g)\}$ and $\{\DLL'(T,g)\}$ denote two families of double leaf morphisms, as $(T,g)$ ranges among coterminal sprinkled quadruples, which are produced using potentially different choices of reduced expressions, rex moves, and polynomials $P_L$. Then
\begin{equation} \DLL'(T,g) = \DLL(T,g) + \sum_{U \prec_3 T} \DLL(U,h_U) \end{equation}
for some polynomials $h_U$ such that $(U,h_U)$ is a coterminal sprinkled quadruple. \end{thm}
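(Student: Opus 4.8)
The plan is to reduce the comparison of two families of double leaves to a comparison of their constituent pieces, tracked modulo progressively deeper layers of lower terms. First I would set up the partial order $\prec_3$ from Definition \ref{def:orderontriples} (which, by construction, must refine the Bruhat order on the termini $p$ and then, for fixed $p$, compare the sprinkled paths in a way compatible with the inductive light-leaf construction). The key structural input is that $\DLL(T,g)$ factors as $\duality(\LL(t'_\bullet,y'_\bullet)) \circ (\text{rex}) \circ g \circ (\text{rex}) \circ \LL(t_\bullet,y_\bullet)$, and that each elementary light leaf $\ELL([m,n])$ and its dual is pinned down up to lower-order terms by its behavior on the $1$-tensor. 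Concretely, by Corollary \ref{cor:rex1tensor} any two rex moves $I_\bullet \to I_\bullet$ agree modulo $\End_{<p}(\BS(I_\bullet))$, and by Theorem \ref{thm:braidmovespreservecbot} rex moves preserve $\onetensor$; combined with Proposition \ref{prop::kill1tensor}, this says that the "leading term" of a light leaf (the part not in the ideal of lower terms) is independent of the auxiliary choices.

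The main argument proceeds by downward induction on $T$ with respect to $\prec_3$, paralleling the inductive construction of $\LLP$. At the $k$-th stage, the two constructions $\LL$ and $\LL'$ of the light leaf $I_{\le k} \to X_{t_k}$ (resp. $X'_{t_k}$) may differ because of (a) different choices of reduced expression $X_m, X_n, Z_\bullet$ in Notations \ref{notation:ELL} and \ref{notation:SSELL}, (b) different rex moves inserted, and (c) different polynomials $P_L$. For (a) and (b), one postcomposes with a rex move to land in a common reduced expression; the difference between the two resulting morphisms lies in $\Hom_{<t_k}$ by Corollary \ref{cor:rex1tensor}, and one expands this difference in the double-leaf basis for $\Hom_{<t_k}$ — here one invokes Theorem \ref{mainthm} applied inductively to the morphism spaces governing smaller cosets, or rather the version that says double leaves span, which is available since we are allowed to cite earlier results. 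For (c), the polynomial $P_L$ enters only through $f_j = \pa_{\leftt_j}\pa_{y_j}(P_{\rightt_{j+1}})$; by Lemma \ref{lem:matchingpolytosprinkles} the resulting $\{f_j\}$ form a basis of $R^{\leftt_j}$ over $R^{\rightt_{j+1}}$ regardless of the choice of $P_L$, so two choices give bases related by an invertible (upper-unitriangular, after ordering by degree) change-of-basis matrix; propagating this through the construction contributes only terms indexed by the same $T$ plus corrections with the same or smaller sprinkle data, hence $\prec_3$-smaller or equal.

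Assembling these: write $\DLL'(T,g) = \DLL(T,g) + E$, where $E$ is a sum of compositions in which at least one elementary factor has been replaced by (itself $+$ an element of the relevant lower-terms ideal). Distributing, the "main" term is exactly $\DLL(T,g)$ and every other term factors through $\BS(Y_q)$ for some $q < p$, or through a light leaf whose sprinkled path is $\prec_3$-strictly below $(t_\bullet,y_\bullet)$ or $(t'_\bullet,y'_\bullet)$; either way such a term lies in the span of $\{\DLL(U,h)\}$ with $U \prec_3 T$ by the inductive hypothesis together with the spanning half of Theorem \ref{mainthm}. Collecting coefficients and using that $\{\DLL(U,b) : b \in \BB_q\}$ is a basis (again Theorem \ref{mainthm}) lets one write $E = \sum_{U \prec_3 T} \DLL(U,h_U)$ with each $(U,h_U)$ a coterminal sprinkled quadruple, which is the claim. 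The hard part will be verifying that the partial order $\prec_3$ is genuinely compatible with \emph{all three} sources of ambiguity simultaneously — in particular that changing a reduced expression for $z$ or a rex move deep in the construction cannot produce a correction term whose terminus equals $p$ but whose sprinkled path is $\prec_3$-incomparable to $T$; this requires a careful analysis of how the lower-terms ideal interacts with the recursive structure, and is presumably where the bulk of \S\ref{proofs} is spent.
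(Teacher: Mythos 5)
Your high-level structure matches the paper's: first get block unitriangularity across Bruhat strata, then within-block unitriangularity for each fixed terminus $p$. The paper's proof is indeed the two-step argument you anticipate: Lemma \ref{idealsagree} gives the block structure, and then Theorem \ref{lluppertri} handles each block. But your approach to the within-block step — a stage-by-stage induction on $\prec_3$, tracking how corrections propagate through the recursive construction of $\LLP$ — diverges from the paper and, as you yourself flag, contains a genuine gap.

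The gap is exactly the one you name at the end, and it is not a minor verification: if at stage $k$ the two light-leaf constructions differ by an element of $\Hom_{<t_k}$, then composing that correction with the remaining singlestep light leaves produces a morphism $I_\bullet \to \BS(Y_p)$ which a priori need not lie in $\Hom_{<p}$, nor need it decompose into double leaves with $\prec_3$-smaller sprinkled paths. Factoring through a coset $q < t_k$ at an intermediate stage gives no direct control over the final terminus or over the dual sequence of the resulting subordinate path; a "go down, come back up" phenomenon can land back at terminus $p$ via a $\prec$-incomparable sprinkled path. Your inductive framework offers no mechanism to rule this out, and the degree-ordering argument you use for source (c) is likewise not obviously compatible with $\prec$.

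The paper sidesteps all of this with a global evaluation trick rather than a step-by-step comparison. The key input is Proposition \ref{LLtriang} combined with the observation (spelled out in the paragraphs before Remark \ref{rmk:varyP}) that the test elements $b(u_\bullet, z_\bullet) \in \BS(I_\bullet)$ and the order $\prec$ are defined purely from the dual sequences $y^\circ_\bullet$ and hence are \emph{independent of every choice} made in building light leaves. Thus for \emph{any} version $\LL'$, one still has $\LL'(t_\bullet,y_\bullet)(b(u_\bullet,z_\bullet)) = \onetensor$ if equal and $=0$ if $(t_\bullet,y_\bullet) \prec (u_\bullet,z_\bullet)$. Evaluating $\LL'(t_\bullet,y_\bullet)$ on the full family $\{b(u_\bullet,z_\bullet)\}$ then pins down the leading coefficient and forces all other coefficients to sit $\prec$-strictly below, which is Theorem \ref{lluppertri}; plugging this into the factorization $\DLL' = G \circ g \circ H$ finishes the theorem. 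Your Corollary \ref{cor:rex1tensor} / $\onetensor$-preservation observations are correct and relevant, but they only control the top term of a single rex move, not the cumulative effect of replacing pieces deep inside a recursively constructed morphism; they cannot substitute for the global pairing argument.
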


Our third theorem packages this basis into a convenient framework.

\begin{thm} \label{thm:fibered} Let $I, J$ be finitary. A double leaves basis equips $\SBSBim(J,I)$ with the structure of a fibred cellular category in the sense of \cite[Definition 2.17]{ELauda}. \end{thm}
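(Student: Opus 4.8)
The plan is to verify that the double leaves basis satisfies the axioms of a fibred cellular category as formulated in \cite[Definition 2.17]{ELauda}, treating $\SBSBim(J,I)$ as the relevant category and the double cosets $p \in W_I \backslash W / W_J$ as the indexing poset $\Lambda$ (ordered by the Bruhat order of Definition \ref{def.bruhat}). First I would set up the data required by the definition: for each $p$ the ``fibre ring'' is $R_p = R^{\leftred(p)}$, viewed as a $\Bbbk$-algebra; for each $(I,J)$-expression $I_\bullet$ and each $p$, the set of ``down'' morphisms is the collection of light leaves $\LL(t_\bullet, y_\bullet)$ with $\term(t_\bullet) = p$ (composed with the chosen rex move into the standard reduced expression $Y_p$), and the ``up'' morphisms are their duals $\duality(\LL(t'_\bullet, y'_\bullet))$. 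The cell datum then sends a coterminal sprinkled quadruple $(p, (t_\bullet,y_\bullet), (t'_\bullet,y'_\bullet), g)$ to the double leaf $\DLL(p,(t_\bullet,y_\bullet),(t'_\bullet,y'_\bullet),g)$, with the $R_p$-action being postcomposition by $g$ in the $\leftred(p)$-region of $Y_p$.

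Next I would check the axioms one at a time. The \emph{spanning/freeness axiom} — that the double leaves form a basis of $\Hom(\BS(I_\bullet),\BS(I'_\bullet))$ compatibly with the $R_p$-module structure — is exactly \Cref{mainthm} together with the $R_p$-module repackaging discussed in \S\ref{ss.Bp} and \Cref{gdimlowerterms}. The \emph{cellularity (multiplication) axiom} requires that composing a double leaf $\DLL(p, T, T', g)$ on either side with an arbitrary morphism $\phi$ again lies in the span of double leaves factoring through cosets $\le p$, with the coefficient of the top-cell piece depending only on $\phi$ and the relevant half of the datum, modulo $\Hom_{<p}$. This is where \Cref{prop::kill1tensor} and \Cref{prop:lowertermsR} do the work: the ideal $\Hom_{<p}$ is characterized by killing the $1$-tensor, light leaves form a basis modulo lower terms (the footnoted \Cref{prop:llsisbasismodlower}), and the key point is that precomposing a light leaf $\duality(\LL(t'_\bullet,y'_\bullet))$ with $\phi$ can be re-expanded in light leaves where the ``other side'' is untouched — a standard light-leaves absorption argument. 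The \emph{anti-involution axiom} is handled by the duality functor $\duality$ of Definition \ref{defn:duality}, which swaps $\LL$ and $\duality(\LL)$, fixes polynomials, and hence sends $\DLL(p,T,T',g)$ to $\DLL(p,T',T,g)$; one must check $\duality$ restricts to an anti-automorphism of $R_p$ fixing it pointwise, which is immediate since polynomials in the $\leftred(p)$-region are fixed. Finally the \emph{fibredness axiom} — that the $R_p$ genuinely vary and that the cell structure is compatible with grading shifts — follows from the explicit description of $R_p$ as an invariant subring and the fact that all double leaves are homogeneous of the degree dictated by $\defect$ and $\deg(g)$, via \Cref{degSSLL}, \Cref{degnm}, and \Cref{lem:matchingpolytosprinkles}.

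I would expect the main obstacle to be the cellularity axiom, specifically verifying that the expansion of $\phi \circ \DLL(p, T, T', g)$ (or the precomposition) in the double leaves basis has the required triangular form with the ``top coefficient'' independent of the lower cells. The honest content here is a light-leaves straightening argument: one must show that composing an arbitrary morphism with a light leaf, then re-expanding in the double leaves basis of the new Hom space, produces terms indexed by $(U, h)$ with $U$ no larger than the original $p$, and that the genuinely top-degree contribution is computed in $\End_{\not < p}(\BS(Y_p)) \cong R_p$. The partial order $\prec_3$ needed for \Cref{thm:COB} and the ``lower terms'' technology will have to be invoked here, so strictly speaking the clean statement of \Cref{thm:fibered} is a corollary of \Cref{mainthm}, \Cref{thm:COB}, and the lower-terms results — and the proof should be organized as: (i) identify the cell datum, (ii) cite \Cref{mainthm} for the basis axiom, (iii) use \Cref{prop::kill1tensor}/\Cref{prop:lowertermsR} plus light-leaves absorption for cellularity, (iv) use $\duality$ for the anti-involution, and (v) read off fibredness and gradedness from the degree formulas. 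Everything except step (iii) is essentially bookkeeping; step (iii) is where one genuinely uses that double leaves behave well under composition, which is the substance already packaged in the preceding sections.
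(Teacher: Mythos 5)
Your cell datum is set up correctly and matches the paper's: $\Lambda$ is the set of objects $\BS(Y_p)$ ordered by Bruhat order, $A_p = R_p$, and the down/up morphism sets are the light leaves $\lls$ and their duals. For the anti-involution and basis axioms you cite the right things.

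Where you diverge from the paper — and make more work for yourself than necessary — is the cellularity (multiplication) axiom. You flag this as ``the honest content'' and propose a direct light-leaves absorption argument invoking $\prec_3$, \Cref{thm:COB}, and a triangularity analysis of how $\phi\circ\DLL(\cdots)$ expands. The paper bypasses all of this by observing that the structure at hand is \emph{object-adapted} in the sense of \cite{ELauda}, and then \cite[Lemma 2.8]{ELauda} says the cellularity axiom is automatic once one knows the simpler property that $\lls$ spans $\Hom(X,\lambda)$ modulo the ideal of basis elements factoring through $\mu < \lambda$. That simpler property is exactly \Cref{prop:llsisbasismodlower}. So the paper's proof of cellularity is one sentence, and it requires neither \Cref{thm:COB} nor any explicit study of how composition interacts with the basis. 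Your direct approach would likely work but is substantially heavier, and you do not actually carry it out; you should notice that the definition you are trying to verify has a streamlined replacement.

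Separately, you omit one of the four axioms: that $E(\lambda,\lambda)$ (and likewise $M(\lambda,\lambda)$) is a singleton whose associated morphism is the identity. This is not automatic: $Y_p$ is a reduced expression for $p$, so it has a unique subordinate sprinkled path with terminus $p$ (the forward path of \Cref{ex.LLrex}), but the associated light leaf is a priori just a rex move $Y_p \to Y_p$, which need not literally be $\id$. The paper handles this by invoking \Cref{rexELLisID} (and \Cref{ex.ELLforward}) to show one may choose the identity map for this light leaf, making the choice and stating the check explicitly. You should include this axiom and that justification.
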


In summary, this says that our basis factors nicely through a family of objects $Y_p$, that the span of basis elements factoring through $q < p$ forms an ideal, and that $\End(Y_p) / \End_{< p}(Y_p)$ is spanned by the identity map over $R_p$. We prove the result in \S\ref{ssec:fibered}.

\section{Proofs}\label{proofs}

\subsection{Sinister diagrams} \label{sec:sinister}

We give a name to the kind of diagrams desired by Definition \ref{defn:ELL}.

\begin{defn} A diagram in $\Frob$ is called \emph{sinister} if:
\begin{enumerate}
\item The 1-manifold of each color looks like one of the options in \eqref{twooptions}.
\item Only left-facing crossings and counterclockwise caps appear.
\item No two strands cross more than once.
\item No polynomials appear.
\end{enumerate}
If the target is an identity 1-morphism, so that each color looks like the left diagram in \eqref{twooptions}, then the sinister diagram is a \emph{sinister cap diagram}.
\end{defn}

 Our running example of a sinister diagram is the one we used in \eqref{ELLA}:
\begin{equation} \ELL = \igm{ELLA}. \end{equation}
We typically use the name $\ELL$ for a sinister diagram, since the main examples are elementary light leaves.

For any sinister diagram $\ELL$, we can twist all strands in the target down to the left to obtain a sinister cap diagram $\LEL$. In the example above, $\LEL$ equals
\begin{equation} \label{LEL} \igm{LEL}. \end{equation}
Clearly $\ELL$ and $\LEL$ are related by adjunction, and determine each other. We focus on the construction of sinister cap diagrams.

In any sinister cap diagram, the bottom boundary $L_{\bullet}$ evidently satisfies:
\begin{equation} \label{sinisterbottom} \textrm{Each $s \in S$ appears an even number of times, alternating $-s$ then $+s$}. \end{equation}
 
\begin{lem} \label{sinisterconstruction} Let $L_{\bullet}$ be any singlestep expression satisfying \eqref{sinisterbottom}. The algorithm in the proof constructs a well-defined sinister cap diagram with boundary $L_{\bullet}$. \end{lem}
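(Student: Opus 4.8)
The plan is to build the sinister cap diagram colour by colour, exploiting the fact that constraint \eqref{sinisterbottom} forces the $s$-colored positions in $L_\bullet$ to pair up naturally: the first $-s$ with the first $+s$, the second $-s$ with the second $+s$, and so on. So first I would fix, for each $s\in S$ that appears in $L_\bullet$, this canonical pairing of its occurrences, and declare that the $s$-colored $1$-manifold will consist of a nested family of left-facing caps, one for each such pair, together with (if $s$ were to appear an odd number of times — which it does not here, but this is the template for the $\ELL$ case of Definition \ref{defn:ELL}) a single through-strand. Since every $s$ appears an even number of times and alternates $-s$ then $+s$, each cap connects an earlier $-s$ to a later $+s$, so the $s$-colored manifold looks exactly like the left option in \eqref{twooptions}; condition (1) of being sinister is then automatic by construction, and condition (4) holds since we never introduce polynomials.

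Next I would arrange the vertical positions of the caps so that no two strands (of any colors) cross more than once, giving condition (3). The key observation is that two caps of the same color are automatically nested or disjoint (because the pairing is by order of appearance), so same-colored strands never cross. For two caps of different colors $s$ and $t$: reading the boundary word $L_\bullet$ from left to right, the relative order in which $s$- and $t$-endpoints appear determines whether the corresponding caps must cross, and one can always realize the diagram so that any two such caps cross at most once — concretely, place caps in order of the position of their \emph{left} (i.e. $-s$) endpoint, nesting a cap under all caps whose left endpoint lies further left and whose right endpoint lies further right. This is the standard ``planar matching realized by caps'' picture, and the transversality/no-triple-point requirements of $\Frob$ are met by a small generic perturbation. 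Finally condition (2) is immediate: the only generators used are counterclockwise caps and the left-facing crossings forced at the transverse intersections; there are no cups, no clockwise caps, and no right-facing crossings, because every strand is a cap oriented so that its interior points to the upper-left, matching the left diagram of \eqref{twooptions}.

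It remains to check well-definedness: that the resulting morphism in $\Frob$ does not depend on the auxiliary choices in laying out the caps. Here I would appeal to cyclicity and the isotopy presentation of $\Frob$ (the relations \eqref{R2easy}, \eqref{eq:R3}, and isotopy invariance): any two realizations of the same planar cap-matching with the stipulated orientations are related by planar isotopy together with easy Reidemeister II and easy Reidemeister III moves — crucially, no \emph{hard} R2 or R3 moves are needed because no two strands ever cross twice and there are no cups to resolve, so no $\mu$-polynomials or coproduct elements intervene. Therefore the diagram represents a single well-defined $2$-morphism. I expect the main obstacle to be the bookkeeping in the previous paragraph: giving a genuinely algorithmic recipe (rather than an existence argument) for the cap layout that provably produces \emph{no} double crossings for \emph{every} admissible $L_\bullet$, including degenerate cases where the same color reappears many times and interleaves intricately with several other colors. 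I would handle this by an induction on the number of strands: peel off the innermost cap (the one whose two endpoints are adjacent among all remaining endpoints, or more carefully, a cap that can be drawn crossing nothing), apply the inductive hypothesis to the rest, and reinsert; checking that such an innermost cap always exists is the combinatorial heart of the argument, and follows from the alternation condition \eqref{sinisterbottom} together with a minimality argument on the positions.
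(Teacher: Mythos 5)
Your observation that the alternation condition forces the nearest-neighbor pairing is correct, and in fact this pairing is the \emph{only} one compatible with having every $+s$ to the right of its paired $-s$ (work from the rightmost $-s$ inward). Note though that the resulting same-colored caps are \emph{disjoint}, not nested. The genuine gap is in the closing induction. There need not be any cap ``whose two endpoints are adjacent among all remaining endpoints,'' nor any cap ``that can be drawn crossing nothing.'' Take $L_\bullet = [K,\ K\setminus s,\ K\setminus\{s,t\},\ K\setminus t,\ K]$, i.e.\ the $\pm$-word $[-s,-t,+s,+t]$ for two elements $s,t$ of a finitary $K$. This satisfies \eqref{sinisterbottom}, the $s$-cap joins positions $1$ and $3$, the $t$-cap joins positions $2$ and $4$, they interleave, and so both caps must cross; there is no innermost cap to peel off. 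The ``combinatorial heart'' you identify therefore does not hold, and the induction as stated cannot start.

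The paper's construction avoids this by a different decomposition: it processes $L_\bullet$ \emph{one endpoint at a time} from right to left, rather than one full cap at a time, maintaining a queue of ``dangling'' strands kept in end-order. A new start $+s$ threads upward through exactly the danglers it must cross (using only left-facing crossings), caps, and then dangles; a new end $-s$ pops the front dangler. The end-order invariant guarantees each new strand crosses each dangler at most once, which is precisely where conditions (2) and (3) come from. If you want to keep an inductive framing, the robust version inducts on the rightmost boundary point, not on a crossing-free cap. Finally, Lemma~\ref{sinisterconstruction} only claims the algorithm outputs a valid diagram; it does not claim the morphism is independent of layout choices, and the paper explicitly defers and does not rely on the uniqueness-up-to-isotopy statement (Remark~\ref{rmk:unicity}). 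So the well-definedness argument you sketch via easy R2/R3 moves, while a reasonable heuristic, is addressing a stronger statement than the lemma requires.
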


\begin{proof} Let $\LEL$ be a hypothetical sinister cap diagram with bottom boundary $L_{\bullet}$. In this proof, a \emph{strand} refers to a connected component of the $1$-manifold obtained from $\LEL$ by considering just one color. Each strand meets $L_{\bullet}$ twice, and we call these two boundary points the \emph{start} and \emph{end} based on the orientation. By \eqref{twooptions}, the start is to the right of the end.

There are two total orders on the set of strands: \emph{start-order}, which orders the strands from right to left based on their start, and \emph{end-order}, which orders the strands from right to left based on their end. For example, the end-order in \eqref{LEL} is ``red1, teal1, olive1, red2, etcetera." These orders are determined by $L_{\bullet}$.

It is also determined by $L_{\bullet}$ whether two strands will cross one or zero times in any sinister cap diagram. Two strands will cross if and only if they are in the same relative position in start-order as in end-order. For example, a strand colored $\sberry$ will cross a strand colored $\teal$ if and only if they alternate as in
\begin{equation} \label{nicecrossing} [\ldots - \sberry \ldots -\teal \ldots + \sberry \ldots + \teal \ldots], \qquad \igm{twocaps}, \end{equation}
or the same with $\sberry$ and $\teal$ swapped.

To construct $\LEL$, work from right to left along $L_{\bullet}$, building a partial diagram one input $\pm s$ at a time. The starting point is the empty diagram. A partially constructed diagram will look like this:
\begin{equation} \label{LELpartial2} \igm{LELpartial2} \end{equation}
At each stage there is a collection of unfinished strands which we call \emph{dangling}, that meet the diagonal dashed line. These strands are all aiming down, and waiting in line to eventually hit the bottom. Crucially, they already appear in end-order; no further crossings of dangling strands are required.

Now we consider the next input from $L_{\bullet}$. Suppose it is upward-oriented, part of strand $X$ (the yellow strand below). Based on where $X$ will hit bottom again, it has a place it should be in line amongst the dangling strands. It goes upwards, using left-crossings to cross as many dangling strands as it needs to (possibly none) until it reaches its proper place in end-order. Then $X$ does a counterclockwise cap, and is now facing downwards, becoming a dangling strand.
\begin{equation} \label{LELpartial} \igm{LELpartial} \end{equation}
For this to work, the dangling strands which cross $X$ must appear to the right of those which do not, so that $X$ may cross them without other dangling strands getting in the way. This is guaranteed by the fact that the dangling strands appear in end-order.

Now suppose the next input is downward-oriented (as the red strand below). It must be one of the dangling strands coming down to hit the bottom. Indeed, it must be the first dangling strand, since they are waiting in end-order. So we merely connect this strand to the bottom boundary and continue.
\begin{equation} \label{LELpartial3} \igm{LELpartia3} \end{equation}

Clearly, the property that the dangling strands appear in end-order is preserved by either step of the algorithm. Thus this algorithm makes sense, producing a well-defined diagram, which is sinister by construction. \end{proof}

\begin{rem} \label{rmk:unicity} Indeed, there is a unique sinister cap diagram with bottom boundary $L_{\bullet}$ up to isotopy. We do not need this fact, so we content ourselves with a sketch of the proof, since making such proofs precise (i.e. constructing isotopies) is tedious.

The \emph{timeline} of a strand $X$ is the sequence of crossings and its (single) cap, in the order they appear as one travels $X$ from start to end. For example, in \eqref{LEL} the timeline of the yellow strand is ``cross red2, cross blue1, cap, cross red3, cross blue2,''  where red2 represents the second red component from the right.  In the algorithm above, the timeline of every strand $X$ is as follows: first cross all strands $Y$ which appear before $X$ in start-order, then cap, then cross all strands $Z$ which appear after $X$ in start-order (of course, only crossing those strands which $X$ is required to cross). Moreover, the strands $Y$ appear in end-order, while the strands $Z$ appear in start-order. In this way, the timeline of $X$ is determined by $L_{\bullet}$.

We claim that for any sinister cap diagram for $L_{\bullet}$ and any strand $X$, the timeline for $X$ will be determined from $L_{\bullet}$ in the same way. Returning to the diagram in \eqref{nicecrossing}, if $X$ and $Y$ cross, and $Y$ comes before $X$ in start-order (in \eqref{nicecrossing}, $X$ is red and $Y$ is teal) then the crossing occurs before the cap in the timeline for $X$. Similarly, if $X$ and $Z$ cross and $X$ precedes $Z$ in start-order, the crossing comes after the cap in the timeline for $X$. So the timeline for $X$ has the schematic order $(\{Y_i\},\textrm{cap},\{Z_j\})$ as above. We need to argue that the order on $\{Y_i\}$ and $\{Z_j\}$ is as expected.

A strand switches from pointing upward to pointing downward exactly once, at the cap. If $Y_1, Y_2 < X$ in start-order and $Y_1 < Y_2$ in end-order, we need to show that $Y_1$ crosses $X$ before $Y_2$ does (in the timeline for $X$). Suppose to the contrary that $Y_2$ crosses first. Then $Y_1$ and $Y_2$ must cross each other after they have both crossed $X$. However, both $Y_1$ and $Y_2$ have already had their caps (by applying the previous paragraph to $Y_i$), so both are pointed down and remain that way until they hit the bottom. Therefore, they are not permitted to cross with a left-facing crossing. The argument for the order on $\{Z_j\}$ is similar.

Having done the combinatorial work, we leave to the reader the topological proof that a sinister cap diagram is determined up to isotopy by its timelines.
\end{rem}

\begin{rem} There are many other diagrams, allowing various cups, caps, and crossings, which would give the same morphism as $\ELL$ in $\Frob$. For example, one could apply oriented Reidemeister moves \eqref{R2easy} and (the first relation in) \eqref{eq:R3}. We chose to be rigid in order to give a clearer definition, and to avoid needing to prove that all such diagrams are equal (another topological annoyance). However, the reader should think that any ``reasonable" way to connect up the boundary points obeying \eqref{twooptions} should give the same morphism $\ELL$. \end{rem}



\begin{lem} The degree of the sinister cap diagram constructed in Lemma \ref{sinisterconstruction}, with bottom boundary $L_{\bullet}$, satisfies
\begin{equation} \label{LELdegree} \ell(L_{\bullet}) + 2 \deg(\LEL) = 0. \end{equation}
\end{lem}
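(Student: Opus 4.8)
The plan is to compute the degree of the sinister cap diagram $\LEL$ by summing the degrees of its constituent pieces, and to match this against the formula \eqref{explength} for $\ell(L_\bullet)$. A sinister cap diagram is built entirely from counterclockwise caps and left-facing crossings (no polynomials), so its degree is the sum of the degrees of these generators. By the conventions in \S\ref{ss.Frob}, a counterclockwise cap separating regions $I$ and $Is$ has degree $\ell(I) - \ell(Is)$, and a left-facing crossing at the boundary configuration $I, Is, It, Ist$ has degree $\ell(I) + \ell(Ist) - \ell(Is) - \ell(It)$. So the first step is to organize these contributions region-by-region rather than generator-by-generator.

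The cleanest way to carry this out is by induction on the construction in Lemma \ref{sinisterconstruction}. First I would establish the statement when $L_\bullet$ is empty (both sides zero). Then, processing inputs from right to left, I would track how $\ell(L_{\le k})$ (the length of the partial expression read so far) and the degree of the partially-built sinister diagram change as each new input $\pm s$ is added. When a downward input $-s$ is processed, by the algorithm it simply connects a dangling strand to the bottom with no new crossing or cap; correspondingly, prepending $[\dots \supset \dots]$-type data to $L_\bullet$ changes $\ell$ by a controlled amount (reading \eqref{explength}, going from $K_i \supset I_{i-1}$ contributes $2\ell(K_i) - \ell(I_{i-1})$ with sign bookkeeping). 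When an upward input $+s$ is processed, the strand travels up through some dangling strands via left-crossings and then caps off. I would sum the degrees of exactly these new generators — one cap and some crossings — and show the total equals $-\tfrac12$ times the corresponding change in $\ell(L_\bullet)$.

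The key computational identity to verify at each upward step is telescoping: if the new yellow strand, separating regions $I$ and $Is$ at its start, crosses dangling strands that successively change the adjacent region through a chain $I = J_0, J_1, \dots, J_r$ of finitary sets (each differing by one simple reflection from the next), ending with a cap between $J_r$ and $J_r$-minus-or-plus-something, then the sum of the crossing degrees plus the cap degree collapses. Each left-crossing degree $\ell(\text{big}) + \ell(\text{small}) - \ell(\text{med}_1) - \ell(\text{med}_2)$ is designed precisely so that adjacent terms cancel in such a chain, leaving only boundary terms involving $\ell(I)$, $\ell(Is)$, and the subsets appearing at the top. This telescoping, together with the matching telescoping in the definition \eqref{explength} of $\ell(L_\bullet)$ (which alternates $+2\ell(K_i) - 2\ell(I_i)$), is what makes $\ell(L_\bullet) + 2\deg(\LEL) = 0$ come out exactly.

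I expect the main obstacle to be the bookkeeping of region labels along the path of an upward-traveling strand — keeping careful track of which finitary subset labels which region at the moment a crossing or cap occurs, and confirming that the local degree formula for a left-facing crossing (``big plus small minus mediums'') is being applied with the correct four subsets. A clean way to sidestep some of this is to instead argue more abstractly: both $\deg(\LEL)$ and $\ell(L_\bullet)$ are additive over the ``slices'' of the diagram between consecutive bottom inputs, so it suffices to check the identity slice-by-slice, reducing to the two elementary cases ($+s$ and $-s$) described above. One then only needs the per-slice computation, which is a finite check using the generator degrees from \S\ref{ss.Frob}. Alternatively — and perhaps most efficiently — one could invoke \Cref{lem:noclock} and \Cref{degnm} in reverse: since $\ELL([m,n])$ is a sinister diagram whose degree is $\defect([m,n])$, and $\ELL$ and $\LEL$ differ by twisting strands through clockwise cups/caps of total degree $\ell(\text{target}) - \ell(I_0) + \dots$, the identity \eqref{LELdegree} is equivalent to the degree statements already recorded, via the relation between $\ell(L_\bullet)$ and $\deg(\onetensor_{L_\bullet})$ noted after \eqref{def:cbot}; but to keep this lemma self-contained and usable in the proof of Lemma \ref{lem:noclock} itself, the inductive slice-by-slice argument is preferable.
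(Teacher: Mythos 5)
Your plan is the same as the paper's: work step by step through the algorithm of Lemma~\ref{sinisterconstruction}, note that a downward input contributes nothing to the degree (no new cap or crossing), observe that the crossings and cap produced by an upward input form a telescoping sum, and compare against \eqref{explength}. However, the ``slice-by-slice'' framing --- that $\deg$ changes by $-\tfrac12$ of the corresponding change in $\ell(L_\bullet)$ at each input, so the identity may be checked on each slice separately --- is not literally correct, and an induction maintaining $\ell(L_{\le k}) + 2\deg(\text{partial}) = 0$ would fail at the very first downward step. Concretely: when an upward strand $+t$ is added, separating a region $K$ on its left from $Kt$ on its right, the telescoping sum of its crossing degrees plus cap degree is $\ell(K) - \ell(Kt)$, which is $(-1)$ times (not $-\tfrac12$ times) the change in $\ell(L_\bullet)$ at that step; and when a dangling strand connects to the bottom at a $-t$ input, the degree does not change while $\ell(L_\bullet)$ grows. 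The factor of $2$ in \eqref{LELdegree} is a global phenomenon: each color appears an even number of times, alternating $-t$ then $+t$, so the downward $\ell$-contributions cancel against the upward ones when summed over the whole boundary, not slice by slice.

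The paper closes this loop by computing both quantities explicitly --- $\deg(\LEL) = \ell(I_1) - \ell(K_2) + \cdots + \ell(I_{m-1}) - \ell(K_m)$ from the per-upward-step contributions, and the alternating sum \eqref{explength} for $\ell(L_\bullet)$ --- and observing that the combination collapses to $\ell(K_1) - \ell(K_m)$, which vanishes because the top boundary of a cap diagram is empty, forcing $K_1 = K_m$. That last observation is the step your outline omits, and without it the per-slice reduction does not go through. Your remark that deriving \eqref{LELdegree} from Lemma~\ref{lem:noclock} and Lemma~\ref{degmnvspq} would be circular is correct; the direct telescoping argument you describe is indeed the right one, modulo replacing the per-slice invariant with the global cancellation just described.
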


\begin{proof} We consider the contribution to degree from each step of the algorithm. When a dangling strand connects to the bottom, the degree is unchanged. Suppose a new upward-oriented strand $X$ colored $t$ is added, with $K$ labeling the region to its left, and $Kt$ the region to its right. We claim that the degree is shifted by $\ell(K) - \ell(Kt)$. This should be clear from the following picture, where pluses and minuses indicate the contribution to the degree from the cap and the sideways crossings. Only two terms do not cancel.
\begin{equation} {
\labellist
\small\hair 2pt
 \pinlabel {$-$} [ ] at 26 47
 \pinlabel {$+$} [ ] at 26 41
 \pinlabel {$+$} [ ] at 42 44
 \pinlabel {$+$} [ ] at 42 34
 \pinlabel {$-$} [ ] at 36 38
 \pinlabel {$-$} [ ] at 48 38
 \pinlabel {$+$} [ ] at 53 32
 \pinlabel {$+$} [ ] at 53 22
 \pinlabel {$-$} [ ] at 46 27
 \pinlabel {$-$} [ ] at 57 27
\endlabellist
\centering
\igbb{LELdegree}
} \end{equation}
Overall, the degree is determined by $L_{\bullet}$. If we write $L_{\bullet}$ as a multistep expression, in the form
\[ L_{\bullet} = [[K_1 \supset  I_1 \subset K_2 \supset I_2 \subset \ldots \supset I_{m-1} \subset K_m]],\]
then 
\begin{equation} \deg(\LEL) = \ell(I_1) - \ell(K_2) + \ell(I_2) - \ldots + \ell(I_{m-1}) - \ell(K_m). \end{equation}
Meanwhile, by \eqref{explength} we have
\begin{equation}
\ell(L_\bullet):= \ell(K_1) -2\ell(I_1)+2\ell(K_2)-\cdots - 2 \ell(I_{m-1}) + \ell(K_m), \end{equation}
from which it is easy to conclude
\begin{equation} \ell(L_{\bullet}) + 2 \deg(\LEL) = \ell(K_1) - \ell(K_m). \end{equation}
But $K_1 = K_m$ since the top boundary of $\LEL$ is empty, so \eqref{LELdegree} follows.
\end{proof}

\subsection{Well-definedness of elementary light leaves, and degree calculations} \label{ssec:degreeofdiagrams}

\begin{proof}[Proof of Lemma \ref{lem:enumeratev}] By \cite[Lemma 5.21]{EKo}, only simple reflections in $Js$ will appear in any reduced expression for $p$ or $q$. The alternating nature of the appearances of $\pm t$ is just due to the fact that they separate subsets which contain $t$ from subsets which do not. The unicity of the enumeration of $Js \setminus I$ is clear, so long as each $v_i$ does appear in $I_{\bullet}$. But the latter is straightforward: one must add $v_i$ at some point to get from a set which doesn't contain it to a set which does. \end{proof}

\begin{proof}[Proof of Lemma \ref{lem:noclock}] Using the notation of the previous section, this lemma can be rephrased as: there is a sinister diagram $\ELL([p,q])$ with bottom boundary $X_p \ot [J+s]$ and top boundary $X_q$. Equivalently, we can consider the sinister cap diagram $\LEL([p,q])$ obtained by twisting all strands in the target down to the left. This was constructed in Lemma \ref{sinisterconstruction}. It remains to confirm that the degree of $\ELL([p,q])$ agrees with $\defect([p,q])$.

The bottom boundary of $\LEL([p,q])$ is
\begin{equation} L_{\bullet} = [[K_1 = Js \supset I = I_1 \subset K_2 \supset I_2 \subset \ldots \supset I_{m-1} \subset J \subset Js = K_m]] \end{equation}
(possibly $I_1=K_2$ and $I_{m-1} = J$) where
\begin{equation} X_p = [[I_1 \subset K_2 \supset \ldots \subset K_{m-1} \supset I_{m-1} \subset J]] \end{equation}
is a reduced expression for $p$. As a composition of expressions we have
\begin{equation} \ell(L_{\bullet}) = \ell(X_p) + \ell([[Js \supset I]]) + \ell([[J \subset Js]]) = \ell(X_p) + 2 \ell(Js) - \ell(I) - \ell(J). \end{equation}
By \eqref{LELdegree} we have $\ell(L_{\bullet}) = -2 \deg(\LEL([p,q]))$. Meanwhile, $\ell(X_p) = \ell(p)$. Also,
\begin{equation}\label{secondeq} \ell(p) = 2 \ell(\ma{p}) - \ell(I) - \ell(J), \qquad \ell(\ma{p}) = \ell(\mi{p}) + \ell(I) + \ell(J) - \ell(\leftred(p)), \end{equation}
with the latter equality following from \eqref{eq:mapmip}. Elementary manipulation gives us
\begin{equation} \deg(\LEL([p,q]) = 
-\ell(\ma{p}) + \ell(I) + \ell(J) -  \ell(Js)  =
-\ell(\mi{p}) + \ell(\leftred(p)) -  \ell(Js). \end{equation}

The morphism $\ELL([p,q])$ is obtained from $\LEL([p,q])$ by adding clockwise cups on the left, a unit of adjunction associated to the extension $R^{Js} \subset R^I$. So
\begin{equation} \deg(\ELL([p,q])) = \deg(\LEL([p,q])) + \ell(Js) - \ell(I) = -\ell(\mi{p}) + \ell(\leftred(p)) -  \ell(I). \end{equation}
By definition the defect of $p \subset q$ is
\[ \defect([p,q])=\ell(\mi{q})-\ell(\mi{p})-\ell(\leftred(q))+\ell(\leftred(p)). \]
These two formulas match since $\mi{q}$ is the identity and $\leftred(q) = I$.
\end{proof}

\begin{exercise} In Lemma \ref{lem:rexmovesdegzero} we proved that the rex moves attached to switchback relations have degree zero. One can also prove this using a relationship between these rex moves and sinister diagrams, which we wish to point out. Our running example will be the right picture in \eqref{rexmoveE}, which we denote by $\phi$.
By twisting one output strand down to the left and the other down to the right, we obtain a sinister cap diagram.
\begin{equation} \phi = \igm{rexmoveE} \rightsquigarrow \LEL = \igm{rexmoveEtwisted} \end{equation}
It is a good exercise to use the degree computations for sinister cap diagrams to prove that $\phi$ has degree zero, using only that the source and target of $\phi$ are both reduced expressions for the same double coset.
\end{exercise}

\subsection{Degree comparisons}
In this section we compare the degree of singular light leaves with the singular Deodhar formula \Cref{cor:singDeodhar}
 and with the graded rank of morphism spaces between Bott--Samelson bimodules.

\begin{lem}\label{degLL}
Let $I_\bullet$ be an $(I,J)$ singlestep expression and $p$ an $(I,J)$ coset. We have the equality 
\begin{equation}
\sum_{
\substack{(t_{\bullet},y_\bullet)\subset I_{\bullet} \\
\term(t_{\bullet})=p
 }
}v^{\deg(\LL(t_\bullet, y_\bullet))}= \sum_{
\substack{t_{\bullet}\subset I_{\bullet} \\
\term(t_{\bullet})=p
 }
} \poly(t_{\bullet})v^{\defect(t_{\bullet})}\ =\gdim\left(\Hom_{\not<p}(\BS(I_\bullet),\BS(Y_p))\right).\end{equation}
\end{lem}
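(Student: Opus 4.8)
The plan is to establish the two equalities in Lemma \ref{degLL} separately, reading them from right to left.

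\textbf{Second equality.} The rightmost quantity is computed directly from the Soergel--Williamson Hom formula. By Corollary \ref{cor:singDeodhar} and \eqref{chBS}, the character of $\BS(I_\bullet)$ is $\sum_{t_\bullet \subset I_\bullet} \poly(t_\bullet) v^{\defect(t_\bullet)} h_{\term(t_\bullet)}$; in particular the coefficient $c_p$ of $h_p$ in $[\BS(I_\bullet)]$ is $\sum_{t_\bullet, \term(t_\bullet) = p} \poly(t_\bullet) v^{\defect(t_\bullet)}$. On the other hand, since $Y_p$ is a reduced expression for $p$, Proposition \ref{gdimlowerterms} gives $\Hom_{\not< p}(\BS(I_\bullet), \BS(Y_p)) \cong c_p \cdot R_p$, hence $\gdim(\Hom_{\not<p}(\BS(I_\bullet),\BS(Y_p))) = c_p \cdot \gdim(R_p)$. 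But wait --- the stated identity does \emph{not} have a factor of $\gdim(R_p)$; so in fact the correct reading is that the $R_p$-module $\Hom_{\not<p}(\BS(I_\bullet),\BS(Y_p))$ is \emph{free of graded rank} $c_p$ over $R_p$, and the asserted equality is the equality of graded \emph{ranks} (equivalently, it is implicit that one divides by $\gdim(R_p)$, or reads $\Hom_{\not<p}$ as a rank). I would make this precise by invoking Proposition \ref{gdimlowerterms} to identify $\Hom_{\not<p}(\BS(I_\bullet),\BS(Y_p)) \cong c_p\cdot R_p$ as graded modules and then reading off that its graded rank over $R_p$ equals $c_p$, which by the above is exactly $\sum_{t_\bullet,\ \term(t_\bullet)=p} \poly(t_\bullet) v^{\defect(t_\bullet)}$.

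\textbf{First equality.} This is the genuinely new content. We must show $\sum_{(t_\bullet,y_\bullet),\ \term(t_\bullet)=p} v^{\deg \LL(t_\bullet,y_\bullet)} = \sum_{t_\bullet,\ \term(t_\bullet)=p} \poly(t_\bullet) v^{\defect(t_\bullet)}$. The strategy is to fix a subordinate path $t_\bullet$ with $\term(t_\bullet) = p$ and sum over the sprinkle data $y_\bullet$ attached to it; it then suffices to prove, for each such $t_\bullet$,
\begin{equation}
\sum_{y_\bullet} v^{\deg \LL(t_\bullet,y_\bullet)} = \poly(t_\bullet) v^{\defect(t_\bullet)}.
\end{equation}
By construction $\LL(t_\bullet, y_\bullet) = \LLP(t_\bullet, f_\bullet)$, which is an iterated composition of singlestep light leaves $\SSLLP([t_k,t_{k+1}], f_{k+1})$ with rex moves sandwiched between them. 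Rex moves have degree zero (Theorem \ref{thm:braidmovespreservecbot}), and tensoring with $\id_{Z_\bullet}$ does not change degree, so $\deg \LL(t_\bullet, y_\bullet) = \sum_k \deg \SSLLP([t_k,t_{k+1}],f_{k+1}) = \sum_k \big(\deg \SSLL([t_k,t_{k+1}]) + \deg f_{k+1}\big)$, the last step because adding the polynomial $f_{k+1}$ in a region shifts degree by $\deg f_{k+1}$. By Lemma \ref{degSSLL} we have $\deg \SSLL([t_k,t_{k+1}]) = \defect([t_k,t_{k+1}])$, so summing over $k$ gives $\deg \LL(t_\bullet,y_\bullet) = \defect(t_\bullet) + \deg(f_\bullet)$. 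Now factor out $v^{\defect(t_\bullet)}$ and observe that the sum over $y_\bullet$ factors as a product over the ascending indices $j$: $\sum_{y_\bullet} v^{\deg f_\bullet} = \prod_{j\ \text{ascending}} \sum_{y_j} v^{\deg f_j}$, where $y_j$ ranges over the minimal coset representatives of $W_{\leftt_j}\backslash W_{\rightt_{j+1}}$ and $f_j = \pa_{\leftt_j}\pa_{y_j}(P_{\rightt_{j+1}})$. By Lemma \ref{lem:matchingpolytosprinkles} (equivalently \eqref{degpolynomialadapted}), this product equals $\prod_j \poly([t_j, t_{j+1}]) = \poly(t_\bullet)$, since $\poly([t_k,t_{k+1}]) = 1$ at non-ascending (descending) steps. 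This completes the per-path identity, and summing over all $t_\bullet$ with $\term(t_\bullet)=p$ yields the first equality.

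\textbf{Main obstacle.} The only delicate point is the bookkeeping in the second equality: making sure the statement is correctly interpreted (rank over $R_p$ versus graded dimension, i.e. whether an implicit $\gdim(R_p)$ appears) and that Proposition \ref{gdimlowerterms} applies with $Y_p$ in the role of a reduced expression for $p$ --- which it does, since $Y_p$ is by definition reduced. The first equality is essentially a clean assembly of Lemma \ref{degSSLL}, Theorem \ref{thm:braidmovespreservecbot}, and Lemma \ref{lem:matchingpolytosprinkles}, with the one thing to be careful about being that $\poly$ is multiplicative over steps and trivial on descending steps, so that the product over ascending indices genuinely recovers $\poly(t_\bullet)$.
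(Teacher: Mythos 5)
Your proof is correct and follows essentially the same route as the paper: the first equality is assembled from Lemma \ref{degSSLL} (singlestep light leaves have degree equal to defect), Theorem \ref{thm:braidmovespreservecbot} (rex moves have degree zero), and Lemma \ref{lem:matchingpolytosprinkles}, while the second comes from Proposition \ref{gdimlowerterms} together with the singular Deodhar formula of Corollary \ref{cor:singDeodhar}. Your flag on the rightmost quantity is also warranted: as literally defined, $\gdim$ is graded vector-space dimension, and Proposition \ref{gdimlowerterms} gives $\gdim(\Hom_{\not<p}(\BS(I_\bullet),\BS(Y_p)))=c_p\cdot\gdim(R_p)$ rather than $c_p$, so the intended reading is the graded rank of $\Hom_{\not<p}$ as a free $R_p$-module --- which is consistent with how the lemma is invoked in the proof of Proposition \ref{prop:llsisbasismodlower} (``the graded ranks $\ldots$ are equal'') and in Lemma \ref{doubleleavesdegreecounting}, where the extra factor $\gdim(R_p)$ is supplied separately by the sum over $\BB_p$.
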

\begin{proof}
The second equality follows from \Cref{gdimlowerterms} and 
\Cref{cor:singDeodhar}.

Let $p \subset q$ be a coset pair. Recall from \Cref{degSSLL} that $\deg(\SSLL([p,q]))=\defect([p,q])$ and $\deg(\SSLL([q,p]))=\defect([q,p])$. Fix $t_\bullet \subset I_\bullet$.  By construction we have
\[ \deg(\LLP(t_\bullet, 1_\bullet))=\sum_k \deg(\SSLL[t_k,t_{k+1}]) \]
from which we deduce that $\deg(\LLP(t_\bullet, 1_\bullet)) = \defect(t_{\bullet})$.

Let $A_{t_\bullet}$ be the set of all allowable sprinkles $y_\bullet,$ i.e. sequences of elements in $W$  that make $(t_\bullet, y_\bullet)$  a sprinkled subordinate path. To prove the first equality it is enough to prove that 
\begin{equation}
    \sum_{
y_\bullet\in A_{t_\bullet} }
v^{\deg(\LL(t_\bullet, y_\bullet))}=
 \poly(t_{\bullet})v^{\defect(t_{\bullet})}.
\end{equation}
Dividing both sides by $v^{\deg(\LLP(t_\bullet, 1_\bullet))} = v^{\defect(t_{\bullet})}$, this is equivalent to
\begin{equation} \sum_{
y_\bullet\in A_{t_\bullet} }
v^{\deg(y_\bullet)}=
 \poly(t_{\bullet}).
\end{equation}
This equality was proven in \Cref{lem:matchingpolytosprinkles}, see \eqref{degpolynomialadapted}. 
\end{proof}

\begin{lem}\label{doubleleavesdegreecounting}
For $I_\bullet$ and $I'_\bullet$ singlestep expressions,  the sum of $v^{\deg(\DLL(T))}$, as $T$ ranges over all coterminal sprinkled quadruples for $(I_\bullet,I'_\bullet)$, is equal to the Laurent polynomial in \eqref{gradeddimhom}, and equal to the graded dimension of $\Hom(\BS(I_\bullet),\BS(I'_\bullet))$.
\end{lem}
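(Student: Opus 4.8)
The plan is to prove \Cref{doubleleavesdegreecounting} by two independent degree bookkeeping computations, then invoking the Soergel--Williamson Hom formula in the form already packaged in \Cref{BShom}. First I would observe that the set of coterminal sprinkled quadruples $T = (p,(t_\bullet,y_\bullet),(t'_\bullet,y'_\bullet),g)$ naturally fibers over coterminal sprinkled triples: for each triple $(p,(t_\bullet,y_\bullet),(t'_\bullet,y'_\bullet))$, the polynomial $g$ ranges over a homogeneous basis $\BB_p$ of $R_p = R^{\leftred(p)}$, so that $\sum_{g \in \BB_p} v^{\deg g} = \gdim(R_p)$. Therefore
\begin{equation}
\sum_{T} v^{\deg(\DLL(T))} = \sum_{p} \sum_{\substack{(t_\bullet,y_\bullet) \subset I_\bullet,\ (t'_\bullet,y'_\bullet)\subset I'_\bullet \\ \term(t_\bullet) = \term(t'_\bullet) = p}} v^{\deg(\LL(t_\bullet,y_\bullet)) + \deg(\LL(t'_\bullet,y'_\bullet))} \cdot \gdim(R_p),
\end{equation}
using that a double leaf is built by composing $\LL(t_\bullet,y_\bullet)$, rex moves (degree zero by \Cref{thm:braidmovespreservecbot}), multiplication by $g$, and $\duality(\LL(t'_\bullet,y'_\bullet))$ (the duality functor preserves degree), so $\deg(\DLL(T)) = \deg(\LL(t_\bullet,y_\bullet)) + \deg g + \deg(\LL(t'_\bullet,y'_\bullet))$.

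Next I would split the double sum over paths into a sum over $p$ of a product of two single sums, one for $I_\bullet$ and one for $I'_\bullet$, since the sprinkled paths on the two sides are chosen independently once the common terminus $p$ is fixed. Applying \Cref{degLL} to each factor rewrites $\sum_{(t_\bullet,y_\bullet),\ \term(t_\bullet)=p} v^{\deg(\LL(t_\bullet,y_\bullet))}$ as $\sum_{t_\bullet,\ \term(t_\bullet)=p} \poly(t_\bullet) v^{\defect(t_\bullet)}$, and similarly for $I'_\bullet$. Substituting back, the total becomes exactly the Laurent polynomial displayed in \eqref{gradeddimhom} of \Cref{BShom} (with the roles of $\BS(I_\bullet)$ and $\BS(I'_\bullet)$), which establishes the first claimed equality. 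The second claimed equality — that this Laurent polynomial equals $\gdim(\Hom(\BS(I_\bullet),\BS(I'_\bullet)))$ — is precisely the content of \Cref{BShom}, so there is nothing further to prove.

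I do not expect a serious obstacle here: the lemma is genuinely a bookkeeping consequence of results already in hand, namely \Cref{degLL}, \Cref{BShom}, and the degree-preservation properties of rex moves and duality. The one point that requires a little care is the first paragraph's factorization: one must confirm that for a fixed $p$, every pair consisting of a sprinkled path on $I_\bullet$ with terminus $p$ and a sprinkled path on $I'_\bullet$ with terminus $p$ arises (and arises exactly once) in a coterminal sprinkled triple, which is immediate from \Cref{def:mainthmsetup}, and that the polynomial factor $g$ contributes independently of the path data, which is clear since $g$ lives in the leftmost $\leftred(p)$-region of the fixed object $Y_p$. A secondary bookkeeping subtlety is making sure the grading shifts built into $\BS(I_\bullet)$ and $\BS(Y_p)$ are consistent between \Cref{degLL} (which uses $\Hom_{\not<p}$) and the summation; but since \Cref{degLL} is stated in exactly the form needed, this is handled by citing it verbatim.
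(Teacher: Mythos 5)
Your proof is correct and follows the paper's own argument essentially verbatim: factor the sum over quadruples into a product of sums over sprinkled paths on each side and over $\BB_p$, note that rex moves are degree zero and $\duality$ preserves degree, then apply \Cref{degLL} to each factor and conclude with \Cref{BShom}. The only cosmetic difference is that you explain the fibering of quadruples over triples more explicitly, which the paper leaves implicit.
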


\begin{proof}
We have to sum the degrees of  $\DLL(T)$ associated to all coterminal sprinkled quadruples $T=\left((t_\bullet,y_\bullet),(t'_\bullet,y'_\bullet),p, b\right).$ By construction (given that rex moves have degree zero), this is the sum, over all $(I,J)$-cosets $p$ of  
\[\left(\sum_{
\substack{(t_{\bullet},y_\bullet)\subset I_{\bullet} \\
\term(t_{\bullet})=p
 }
}
v^{\deg(\LL(t_\bullet, y_\bullet))}\right)
\left(\sum_{
\substack{(t'_{\bullet},y'_\bullet)\subset I'_{\bullet} \\
\term(t'_{\bullet})=p
 }
}v^{\deg(\mathcal{D}\LL(t'_\bullet, y'_\bullet))}\right)\sum_{b\in\mathbb{B}_p}v^{\deg b} . \]
By definition $\mathbb B_p$ is a graded basis of $R_p$ so we deduce that  $
\sum_{b\in\mathbb{B}_p}v^{\deg b} =\gdim(R_p).$ Also note that the duality functor preserves degrees, so $\duality$ can be removed from the second parenthetical term. The desired result now follows from \Cref{degLL} and \Cref{BShom}.
\end{proof}

\subsection{Evaluation of elementary light leaves} \label{subsec:evalelementary}

We begin the process of proving Theorem \ref{mainthm}. Throughout, we identify a light leaf morphism in $\Frob$ with its image under the evaluation functor $\evaluation$, a morphism between singular Bott--Samelson bimodules. Our first major goal is to prove the linear independence of light leaves. We want to find certain elements of Bott--Samelson bimodules which a given light leaf morphism sends to $\onetensor$. This enables a unitriangularity argument for linear independence in \S\ref{subsec:linearind}.

Throughout this section, $[p,q]$ is a Grassmannian $(I,J,s)$-coset pair.

\begin{lem}\label{dll1=1}
    Let $p \subset q$ be as above. Then $\DELL([p,q])(\onetensor)=\onetensor$.
\end{lem}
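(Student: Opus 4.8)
The statement $\DELL([p,q])(\onetensor) = \onetensor$ is equivalent, by applying the duality functor, to the statement $\ELL([p,q])(\onetensor) = \onetensor$, since $\duality$ reverses the direction of a morphism but fixes the 1-tensors (which are the lowest-degree elements, spanning a one-dimensional space). So I would reduce immediately to showing that the \emph{sinister diagram} $\ELL([p,q]) \co X_p \ot [J, Js] \to X_q$ sends $\onetensor$ to $\onetensor$. Here I am using Notation~\ref{notation:ELL} and the fact (Lemma~\ref{lem:noclock}) that $\ELL([p,q])$ is built entirely out of left-facing crossings and counterclockwise caps, with no polynomials.

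The key observation is the behavior of each generator on 1-tensors under the evaluation functor $\evaluation$. From \S\ref{ss.FfromFrob}, the left-facing sideways crossing $\igs{leftcross}$ evaluates to the map $f \mapsto \pa\De^I_{Ist}(f)$; applied to $1 = \onetensor$ this gives $\pa\De^I_{Ist}(1) = \pa^I_{Is}(\De^I_{Ist(1)}) \ot \pa^I_{It}(\De^I_{Ist(2)})$ via one of the formulas \eqref{eq:leftcrossing}, and by the normalization of the coproduct element (cf. \cite[Equation (2.2) with $f=1$]{ESW}, used already in the proof of Theorem~\ref{thm:braidpiisone}) this equals $1 \ot 1 = \onetensor$. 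Likewise, the counterclockwise cap $\pa \co \BS([[Is \supset I]]) \to R^{Is}$ is the Frobenius trace map, but when read as the unit $\iota$ or the appropriate adjunction map appearing in a sinister diagram it sends $\onetensor$ to $\onetensor$: more precisely, the counterclockwise cups and caps occurring in $\ELL$ are the units $\iota^I_{Is} \co R^{Is} \hookrightarrow R^I$, which literally send $1 \mapsto 1$, and the "cap at the left" assembling $\LEL$ into $\ELL$ is a clockwise cup which is the unit of the other adjunction, again sending $1 \mapsto 1 \ot 1$. In every case the relevant generator preserves the 1-tensor.

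Therefore I would argue: the evaluation of any sinister diagram is a composition of monoidal products of (a) identity morphisms, (b) left-facing crossings, and (c) counterclockwise cups/caps, each of which preserves the lowest-degree 1-tensor. Since $\onetensor_{I_{\bullet}}$ is precisely $1 \ot 1 \ot \cdots \ot 1$ and tensor products of generators act componentwise on such pure tensors, the composite $\ELL([p,q])$ sends $\onetensor_{X_p \ot [J,Js]}$ to $\onetensor_{X_q}$. This is really the same reasoning already used in the proof of Theorem~\ref{thm:braidmovespreservecbot}, where it is observed that diagrams built of right-facing crossings and counterclockwise cups preserve $\onetensor$; here we use the mirror-image statement for left-facing crossings and counterclockwise caps. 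Finally, dualizing, $\DELL([p,q]) = \duality(\ELL([p,q]))$ carries $\onetensor$ to $\onetensor$ as well.

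\textbf{Expected main obstacle.} The only subtlety is bookkeeping: making sure that \emph{every} generator appearing in a sinister diagram — including the leftmost clockwise cups that convert $\LEL([p,q])$ into $\ELL([p,q])$, and the degenerate cases where $t = s$ or where a strand caps immediately — genuinely preserves the 1-tensor, and that no polynomial sprinkle has been introduced yet (polynomials enter only in $\ELLP$, not in $\ELL$). Checking the $t = s$ case and the boundary cases against the explicit examples (Examples~\ref{ex.ELLred}, \ref{ex:ELLidentitycap}, \ref{ex.ELLD}) should dispatch any lingering worry. None of this requires genuine computation; it is a direct consequence of the generator-by-generator description of $\evaluation$ in \S\ref{ss.FfromFrob}.
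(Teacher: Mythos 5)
There is a genuine error here, and it runs deeper than bookkeeping. Your opening reduction — that $\DELL([p,q])(\onetensor)=\onetensor$ is ``equivalent, by applying the duality functor'' to $\ELL([p,q])(\onetensor)=\onetensor$ — is not justified and is in fact false. The functor $\duality$ on $\Frob$ is a formal flip of diagrams; it does not tautologically intertwine with anything at the level of elements of Bott--Samelson bimodules (the paper explicitly warns about this in the remark following Definition~\ref{defn:duality}). Worse, the statement you reduce to is wrong: $\ELL([p,q])$ does \emph{not} send $\onetensor$ to $\onetensor$ in general. By Lemma~\ref{lem:noclock}, $\deg(\ELL([p,q])) = \defect([p,q])$, and comparing the degrees of the $1$-tensors in source and target shows they differ by a different quantity; the two match only in degenerate cases. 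Concretely, the generators you invoke do not preserve the $1$-tensor. The counterclockwise cap is the trace map $\pa$, which has strictly negative degree and annihilates $\onetensor$ whenever $I \neq Is$; you conflate it with the counterclockwise \emph{cup} $\iota$, which is the inclusion and is a different generator entirely. Likewise, the left-facing crossing sends $1 \mapsto \pa\De^I_{Ist}(1) = \pa^I_{Is}(\De^I_{Ist(1)})\ot\pa^I_{It}(\De^I_{Ist(2)})$, an element of degree $2\bigl(\ell(I)+\ell(Ist)-\ell(Is)-\ell(It)\bigr)$, which is positive (hence $\neq 1\ot 1$) whenever $s$ and $t$ do not commute; the citation to \cite[Eq.~(2.2)]{ESW} applies to a two-subset coproduct identity and not to this three-subset expression.

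The correct and much shorter argument is the one the paper uses. By Definition~\ref{defn:ELL} and Lemma~\ref{lem:noclock}, $\ELL([p,q])$ is built from left-facing crossings and counterclockwise \emph{caps}. Applying $\duality$ (flip upside-down, reverse orientation) turns these into right-facing crossings and counterclockwise \emph{cups}, so $\DELL([p,q])$ is a composite of exactly those two kinds of generators. Under $\evaluation$, the right-facing crossing is multiplication $R^{Is}\ot_{R^{Ist}}R^{It}\to R^I$, $a\ot b\mapsto ab$, sending $1\ot 1\mapsto 1$; the counterclockwise cup is the inclusion $\iota$, sending $1\mapsto 1$. Both preserve $\onetensor$, so $\DELL([p,q])$ does. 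Your final sentence (``dualizing... carries $\onetensor$ to $\onetensor$ as well'') would be salvageable if you interpreted it diagrammatically — as a description of which generators appear in $\DELL$ — rather than as a consequence of the (false) claim about $\ELL$. You also had the right mirror-image intuition when invoking Theorem~\ref{thm:braidmovespreservecbot}, but you inverted which set of generators preserves $\onetensor$: it is the right-facing crossings and counterclockwise cups (the ones in $\DELL$), not the left-facing crossings and counterclockwise caps (the ones in $\ELL$).
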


\begin{proof} By Lemma~\ref{lem:noclock}, the diagram for $\DELL([p,q])$ consists only of counterclockwise cups and rightward crossings, both of which preserve $\onetensor$. \end{proof}

\begin{lem}\label{ll1=1}
    Let $p \subset q$ be as above. Then
    $\ELL([q,p])(\onetensor)=\onetensor$.
\end{lem}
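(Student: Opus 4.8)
The plan is to unravel the definition of $\ELL([q,p])$ from \eqref{ELLqpdef} and track what happens to $\onetensor$. Recall that
\[ \ELL([q,p]) = (\Id_m\otimes \capcw)\circ (\DELL([m,n])\otimes \Id_{[-\sberry]}), \]
where here (following Notation \ref{notation:SSELL} specialized to the Grassmannian case, where $z$ is trivial) we may write $p$ for $m$ and $q$ for $n$. The source of $\ELL([q,p])$ is $X_n \otimes [J\sberry, J]$, and $\onetensor$ for this expression factors as $\onetensor_{X_n} \otimes \onetensor_{[J\sberry,J]}$, where the latter is the element $1 \in R^{J}$ viewed inside $\BS([[J\sberry \supset J]]) = R^J(\ell(J\sberry)-\ell(J))$.

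First I would apply $\DELL([m,n]) \otimes \Id_{[-\sberry]}$. By Lemma \ref{dll1=1}, $\DELL([m,n])(\onetensor_{X_n}) = \onetensor$, the $1$-tensor of $X_m \otimes [J, J\sberry]$. Tensoring with the identity of $[J\sberry, J]$, we send $\onetensor_{X_n}\otimes\onetensor_{[J\sberry,J]}$ to $\onetensor_{X_m} \otimes 1 \otimes 1$, an element of $\BS(X_m) \otimes_{R^J} \BS([[J \subset J\sberry \supset J]])$. Then I would apply $\Id_m \otimes \capcw$, i.e. the clockwise cap $\pa\co \BS([[J\sberry \supset J \subset J\sberry]]) \to R^{J\sberry}$ — wait, I must be careful about orientations: the relevant structure map here is the counit $m \co \BS([[J \subset J\sberry \supset J]]) \to R^J$, which under $\evaluation$ is the multiplication map $R^J \otimes_{R^{J\sberry}} R^J \to R^J$, sending $1 \otimes 1 \mapsto 1$. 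Hence the composite sends $\onetensor_{X_n}\otimes\onetensor_{[J\sberry,J]}$ to $\onetensor_{X_m} \otimes 1 = \onetensor_{X_m} = \onetensor$, which is precisely the claim.

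The only genuinely delicate point is bookkeeping: confirming that the schematic ``$\Id_m \otimes \capcw$'' in \eqref{ELLqpdef} evaluates to the multiplication map $1 \otimes 1 \mapsto 1$ rather than to a comultiplication or a trace (which would introduce a $\mu$ or $\pa$ factor), and that the grading shifts and tensor-factor groupings in $X_m \otimes [J, J\sberry]$ versus $X_m \otimes [J\sberry,J]$ line up so that after the cap one lands back in $\BS(X_m)$ with the $1$-tensor intact. Both follow directly from the description of $\evaluation$ on cups and caps in \S\ref{ss.FfromFrob} and the fact (Lemma \ref{lem:noclock}, via Lemma \ref{dll1=1}) that $\DELL([m,n])$ is built only from counterclockwise cups and rightward crossings, all of which send $\onetensor \mapsto \onetensor$. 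So the proof is essentially: ``apply Lemma \ref{dll1=1}, then evaluate the clockwise cap, which is multiplication and kills nothing.''

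Concretely, the write-up would read: By \eqref{ELLqpdef}, $\ELL([q,p]) = (\Id_m\otimes \capcw)\circ (\DELL([m,n])\otimes \Id_{[-\sberry]})$. The element $\onetensor$ of the source is $\onetensor \otimes \onetensor$. By Lemma \ref{dll1=1}, $(\DELL([m,n])\otimes \Id_{[-\sberry]})(\onetensor \otimes \onetensor) = \onetensor \otimes \onetensor$, now viewed in $\BS(X_m)\otimes_{R^J}\BS([[J\subset J\sberry\supset J]])$. Under $\evaluation$ the clockwise cap $\capcw$ is the multiplication map $R^J \otimes_{R^{J\sberry}} R^J \to R^J$, $1\otimes 1 \mapsto 1$ (see \S\ref{ss.FfromFrob} and \eqref{frobcupcap}). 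Hence $(\Id_m \otimes \capcw)$ sends $\onetensor\otimes 1\otimes 1 \mapsto \onetensor \otimes 1 = \onetensor$, proving $\ELL([q,p])(\onetensor)=\onetensor$.
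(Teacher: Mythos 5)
Your proof is correct and takes exactly the route the paper does: apply Lemma~\ref{dll1=1} to handle $\DELL([m,n])$, then observe that the remaining clockwise cap is the multiplication map $m$ from \eqref{fourstructuremaps}/\eqref{frobcupcap}, which sends $1\otimes 1\mapsto 1$ and hence preserves $\onetensor$. The paper's proof is a one-liner compressing the same two observations; your version just spells out the bookkeeping.
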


\begin{proof} 
This follows from Lemma~\ref{dll1=1} and the fact that clockwise caps also preserve $\onetensor$.
\end{proof}

Evaluation of light leaves $\ELL([p,q],f)$ requires more work. The source of this morphism is $\BS(I_{\bullet}) = \BS(X_p) \ot \BS([J,Js])$, and $\BS([J,Js])$ is $R^J$ viewed as an $(R^J, R^{Js})$-bimodule. For $b \in R^J$ we write 
\[ \onetensor_p \ot b \in \BS(X_p) \ot \BS([J,Js]) \]
for the corresponding element of $\BS(I_{\bullet})$. There is also an endomorphism of $\BS(I_{\bullet})$ coming from multiplication by $b$ in the appropriate tensor factor, which we denote
\[ \id_p \ot b \in \End(\BS(X_p) \ot \BS([J,Js])). \]
Meanwhile, the target of $\ELL([p,q])$ is $\BS(X_q)$, which is isomorphic to $R^I$ as an $(R^I, R^{Js})$-bimodule. We write $\onetensor_q$ for the element $1 \in R^I$, and $\mg_q$ for the subbimodule of positive degree elements of $R^I$.

Let $\leftred = \leftred(p)$ and $\rightred = \rightred(p)$. Since $\leftred \subset I$, there is a Frobenius extension $R^I \subset R^{\leftred}$. Moreover, one can find almost dual bases (see Definition \ref{defn:almostdual}) for the Frobenius extension $R^I \subset R^{\leftred}$ where one of the bases is in the image of $R^J$ under the Demazure operator $\pa_{\mi{p}}$; part of this claim is the fact that $\pa_{\mi{p}}$ sends $R^J$ to $R^{\leftred}$.  This is the main result of \cite{EKLP1}, which we recall below as Theorem~\ref{thm:dualbasisinimage}. The following lemma uses these dual bases.

\begin{lem}\label{circleswithpoly}
Let $\{\dbl\}$ and $\{\dbr\}$ be  almost dual bases for $R^{\leftred}$ over $R^I$ such that $\dbr$ is in the image of $R^J$ under $\pa_{\mi{p}}$.
Let $b_i \in R^J$ be such that $\pa_{\mi{p}}(b_i)= \dbr$. 
Then we have \begin{equation}
\ELL([p,q],\dbl)(\onetensor_p \otimes b_j) \equiv \delta_{ij}\onetensor_q \text{ modulo } \mg_q.
\end{equation}
\end{lem}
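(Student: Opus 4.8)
The plan is to unravel the diagram for $\ELL([p,q],\dbl)$ all the way down to a composition of Demazure operators and multiplications, and then to invoke the almost-dual-basis property. Recall that $\ELL([p,q])$ is built from the sinister cap diagram $\LEL([p,q])$ by adjoining clockwise cups on the left corresponding to the extension $R^{Js} \subset R^I$. Evaluating, $\BS(X_q) \cong R^I$ as an $(R^I, R^{Js})$-bimodule, with $\onetensor_q = 1$. The key point is that under $\evaluation$, a sinister diagram --- being built only of counterclockwise caps (which go to trace maps $\pa$) and rightward crossings (which go to multiplication maps) --- evaluates on $\onetensor_p \ot b_j$ to a single iterated composite of multiplications and Demazure operators applied to the polynomial $\dbl$ sprinkled in the $\leftred$-region, together with $b_j$ sitting in the final $R^J$-tensor factor. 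So the first step is to identify precisely what operator this composite computes.

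The second and crucial step is the following claim: when one evaluates $\ELL([p,q],\dbl)$ on $\onetensor_p \ot b_j$ and then projects modulo $\mg_q$ (the positive-degree part of $R^I$), the result equals $\pa_{\mi{p}}(b_j \cdot \dbl)$ evaluated modulo $\mg_q$, i.e. one obtains the scalar $\pa^{I}_{\leftred}\big(\pa_{\mi{p}}(b_j)\cdot \dbl\big) = \pa^I_{\leftred}(\dbr \dbl)$ modulo the maximal ideal. I would prove this by tracking the 1-manifold structure color by color. For colors $t \in I$, the strand looks like the left option of \eqref{twooptions} (a cap), contributing a trace map which on $\onetensor$'s produces $\mu$-polynomials; these $\mu$'s lie in $\mg$ (positive degree) and so get killed modulo $\mg_q$ --- except they must be accounted for carefully where they interact with the sprinkled polynomial $\dbl$. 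For colors $t \in Js \setminus I$, the strand is the right option of \eqref{twooptions}, and the $s$-colored strand in particular is where $b_j$ enters. The cleanest way to organize this: use the factorization $X_p = [[I \supset \leftred]]\circ K_\bullet \circ [[\rightred \subset J]]$, so $\ELL([p,q])$ decomposes (after the twist) into a "core" part over $K_\bullet$ and the boundary reductions; by \eqref{eq:papcore}, $\pa_{p^{\core}} = \pa_{\mi p}$, and the evaluation of the core portion of the sinister diagram computes exactly $\pa_{\mi p}$ applied to the relevant input.

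Having established that $\ELL([p,q],\dbl)(\onetensor_p \ot b_j) \equiv \pa^I_{\leftred}\big(\dbr \cdot \dbl\big) \cdot \onetensor_q$ modulo $\mg_q$ --- here I am using $\pa_{\mi p}(b_i) = \dbr$ and composing with the trace $\pa^I_{\leftred}$ coming from the $[[I \supset \leftred]]$ reduction --- the conclusion is immediate from Definition \ref{defn:almostdual}: the almost-dual-basis property says precisely $\pa^I_{\leftred}(\dbl \cdot \dbr) \equiv \delta_{ij}$ modulo the maximal ideal $\mg$ of $R^I$, which is what survives modulo $\mg_q$. This gives $\delta_{ij}\onetensor_q$ as claimed.

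\textbf{Main obstacle.} The hard part will be the bookkeeping in the second step: verifying that the iterated composite of multiplications and Demazure operators coming from the sinister diagram really does collapse to $\pa^I_{\leftred} \circ (\text{mult by } \dbl) \circ \pa_{\mi p}$ modulo $\mg_q$, and in particular that all the stray $\mu$-polynomials produced by clockwise caps and circle evaluations (via \eqref{circrelns}, \eqref{R2hard}) either cancel or land in $\mg_q$ and so may be discarded. I expect this to require a careful induction along the algorithm of Lemma \ref{sinisterconstruction} --- processing the diagram right-to-left, one strand at a time --- keeping track of which tensor factor holds $b_j$ and which region holds $\dbl$, and using the polynomial sliding relation \eqref{polyslide} to move $\dbl$ into position. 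The degree count in Lemma \ref{lem:noclock} (that $\deg \ELL([p,q]) = \defect([p,q])$) will serve as a useful consistency check: the total degree of the output $\pa^I_{\leftred}(\dbr\dbl)\cdot\onetensor_q$ must be $0$ exactly when $\deg \dbl = \ell(I) - \ell(\leftred)$ and $\deg b_j$ matches, which is guaranteed by the construction of the almost dual bases in Theorem \ref{thm:dualbasisinimage}.
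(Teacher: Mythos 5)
You arrive at the right final formula and the right last step (apply the almost-dual-basis property to $\pa^{\leftred}_I(\dbl\dbr)$), and you correctly anticipate the roles of the factorization through the core and of $\pa_{p^{\core}} = \pa_{\mi p}$. But the route you sketch hinges on a mischaracterization of the diagram you are trying to evaluate, and misses the key maneuver that makes the computation tractable.

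You describe the sinister diagram as built from ``rightward crossings (which go to multiplication maps).'' This is backwards. By Definition \ref{defn:ELL}, $\ELL([m,n])$ is built from \emph{left-facing} crossings and counterclockwise caps, and under $\evaluation$ a left-facing crossing is not multiplication but the map $f \mapsto \pa\De^I_{Ist}(f)$ from \eqref{eq:leftcrossing}. Applied even to $1$, this typically produces a sum over Frobenius dual bases in positive degree (whenever $\ell(I)+\ell(Ist)>\ell(Is)+\ell(It)$), so directly evaluating $\ELL([p,q],\dbl)$ on $\onetensor_p \ot b_j$ does \emph{not} collapse to a composite of multiplications and Demazure operators as you want; it blows up into unwieldy Sweedler sums. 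It is the dual $\DELL = \duality\ELL$ that consists of right-facing crossings and counterclockwise cups, and those are what preserve $\onetensor$ (Lemma \ref{dll1=1}).

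The missing idea is to exploit exactly that. Since $\DELL([p,q])(\onetensor_q)=\onetensor_p\ot 1$, one has $\onetensor_p\ot b_j = \bigl((\id_p\ot b_j)\circ\DELL([p,q])\bigr)(\onetensor_q)$, hence
\[
\ELL([p,q],\dbl)(\onetensor_p\ot b_j) \;=\; \Bigl(\ELL([p,q],\dbl)\circ(\id_p\ot b_j)\circ\DELL([p,q])\Bigr)(\onetensor_q).
\]
The composite in parentheses is an endomorphism of $X_q$ given by a closed-up diagram that never requires expanding a left-facing crossing at all. One resolves it by sliding $b_j$ from right to left along the horizontal symmetry line (Algorithm \ref{circleresolutionalgorithm}): each downward segment it meets closes into a counterclockwise circle by an \emph{easy} Reidemeister II move and then evaluates to a trace map by \eqref{circrelns}; each upward segment is just polynomial sliding. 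The cumulative effect on $b_j$ is precisely $\pa^{\leftred}_I\circ(\text{mult.\ by }\dbl)\circ\pa_{\mi p}$, yielding $\pa^{\leftred}_I(\dbl\dbr)$ exactly, not merely modulo $\mg_q$. So, contrary to your last paragraph, there are no stray $\mu$-polynomials to cancel or discard: the hard Reidemeister II relation never fires, and the only reduction modulo $\mg$ is the final appeal to almost duality.
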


\begin{proof}
By Lemma \ref{dll1=1}, $\onetensor_p \ot 1$ is the image of $\onetensor_q$ under $\DELL([p,q])$. Therefore, $\onetensor_p \ot b_j$ is the image of $\onetensor_q$ under $(\id_p \ot b_j) \circ \DELL([p,q])$. We conclude that $\ELL([p,q],\dbl)(\onetensor_p \otimes b_j)$ is the image of $\onetensor_q$ under the diagram \eqref{ELLAALLEstart}.

\begin{equation}\label{ELLAALLEstart}
{
\labellist
\small\hair 2pt
 \pinlabel {$b_j$} [ ] at 305 64
 \pinlabel {$\dbl$} [ ] at 34 64
\endlabellist
\centering
\igm{ELLAALLEstart}
}
\end{equation}

We resolve this diagram using the following algorithm.

\begin{algorithm} \label{circleresolutionalgorithm} This algorithm resolves any diagram of the form \eqref{ELLAALLEstart}, with the added generality that $\ELL([p,q])$ can be replaced by any sinister diagram (which is then composed with its dual, with polynomials inside).

The algorithm works from right to left, tracking a changing polynomial which we denote $h$, living on a horizontal symmetry line in the middle of the diagram. For this proof, the word ``segment'' will refer to a segment of a 1-manifold, which is a neighborhood of where it meets the symmetry line. This polynomial $h$ will always live in a region where every segment to its right is oriented upwards. We begin with the diagram above, and $h = b_j$. We proceed by examining the segment to the left of $h$.

\begin{enumerate}
\item If the segment to the left of $h$ is pointing upwards, then slide $h$ across it using \eqref{polyslide}. Doing this repeatedly to \eqref{ELLAALLEstart} will produce
\begin{equation}\label{ELLAALLE}
{
\labellist
\small\hair 2pt
 \pinlabel {$b_j$} [ ] at 225 64
 \pinlabel {$\dbl$} [ ] at 34 64
\endlabellist
\centering
\igm{ELLAALLE}
}\; .
\end{equation}

\item If the segment $X$ to the left of $h$ is pointing downwards, then $X$ is part of a counterclockwise circle. Since every segment to the right of $h$ is oriented upwards, we can apply easy Reidemeister II relations \eqref{R2easy} to pull the circle tight around $h$. So \eqref{ELLAALLE} becomes
\begin{equation}\label{ELLAALLE1}
{
\labellist
\small\hair 2pt
 \pinlabel {$b_j$} [ ] at 222 64
 \pinlabel {$\dbl$} [ ] at 34 64
\endlabellist
\centering
\igm{ELLAALLEresolve1}
}
\end{equation}
Now we can apply \eqref{circrelns}, removing the circle of $X$, and replacing $h$ with $\pa^K_{L}(h)$. Here, $K$ is the region containing $h$, while $L$ is the region that was to the left of $X$.
\begin{equation}\label{ELLAALLE2}
{
\labellist
\small\hair 2pt
 \pinlabel {$\pa_{{\color{red} \bullet}} b_j$} [ ] at 218 64
 \pinlabel {$\dbl$} [ ] at 34 64
\endlabellist
\centering
\igm{ELLAALLEresolve2}
}
\end{equation}

\item If $h$ enters a region with an existing polynomial (e.g. $f_i$) then it absorbs that polynomial (e.g. replace $h$ with $h \cdot f_i$). Continue by examining the segment to its left.
\end{enumerate}
At each step, the property that only upward-oriented segments appear to the right of $h$ is preserved. Iterate this algorithm until every counterclockwise circle has been removed, and what remains is the identity map of $X_q$ with a polynomial $h_{\last}$.

For example, the next two steps in the algorithm are
\begin{equation}\label{ELLAALLE3}
{
\labellist
\small\hair 2pt
 \pinlabel {$\pa_{{\color{red} \bullet}} b_j$} [ ] at 210 64
 \pinlabel {$\dbl$} [ ] at 34 64
\endlabellist
\centering
\igm{ELLAALLEresolve3}
}
\end{equation}
and
\begin{equation}\label{ELLAALLE4}
{
\labellist
\small\hair 2pt
 \pinlabel {$\pa_{{\color{teal} \bullet}} \pa_{{\color{red} \bullet}} b_j$} [ ] at 200 64
 \pinlabel {$\dbl$} [ ] at 34 64
\endlabellist
\centering
\igm{ELLAALLEresolve4}
}\; .
\end{equation}

The question which remains is: what is the final polynomial $h_{\last}$ which appears? The operations which have been applied to $h$ are:
\begin{enumerate} \item Inclusion $R^K \hookrightarrow R^L$, whenever $h$ was slid across an upward strand from a $K$-labeled region to an $L$-labeled region.
\item The Demazure operator $\pa^K_L$, whenever $h$ was squeezed in a circle to get across a downward strand from a $K$-labeled region to an $L$-labeled region.
\end{enumerate}
But these operations are precisely $\pa_{[L,K]}$, see \S\ref{subsec:demazure}. By composing them along any subsequence $I'_{\bullet}$ within the expression $I_{\bullet}$, we end up applying $\pa_{I'_{\bullet}}$. This concludes the algorithm.
\end{algorithm}

We return to the case of $\ELL([p,q])$ and the proof of Lemma \ref{circleswithpoly}. In the process of moving $b_j$ to the region containing $f_i$, we have applied $\pa_{p^{\core}} \circ \pa_{[[\rightred \subset J]]}$. This is just the map $\pa_{\mi{p}} \co R^J \to R^{\leftred}$, see \eqref{eq:papcore}. To get from there to the far left, we apply the Frobenius trace $\pa_{[[I \supset \leftred]]} = \pa^{\leftred}_I$. Thus we conclude that \eqref{ELLAALLEstart} is equal to
\begin{equation} \pa^{\leftred}_I(f_i \pa_{\mi{p}}(b_j)) = \pa^{\leftred}_I(f_i g_j) \equiv \delta_{ij} \onetensor_q \text{ modulo } \mg_q, \end{equation}
by the assumption that $f_i$ and $g_j$ were almost dual bases. \end{proof}

\begin{rem} Without needing to compute anything, one can use degree arguments to say that $\ELL([p,q],\dbl)(\onetensor_p \otimes b_j)$ is either
\begin{enumerate}
\item zero, because it has degree less than $\onetensor_q$,
\item a scalar multiple of $\onetensor_q$, because it has degree equal to $\onetensor_q$,
\item an element of $\mg_q$, because it has degree greater than $\onetensor_q$. \end{enumerate}
So Lemma \ref{circleswithpoly} only has interesting content when $b_i$ and $b_j$ have the same degree. Restricting the degree does not help to illuminate the proof, however. \end{rem}

\subsection{Circle evaluation lemma and consequences}\label{cel}

We interrupt the discussion of light leaves to prove the circle evaluation lemma, Lemma \ref{lem:circleresolution}, which follows from Algorithm \ref{circleresolutionalgorithm}.

\begin{proof}[Proof of Lemma \ref{lem:circleresolution}] We need to resolve the diagram
\begin{equation} {
\labellist
\tiny\hair 2pt
 \pinlabel {$\hat{u_0}$} [ ] at 6 16
 \pinlabel {\small $S$} [ ] at 62 8
 \pinlabel {$\hat{u_d}$} [ ] at 122 16
 \pinlabel {$\hat{u_1}$} [ ] at 32 23
 \pinlabel {$\hat{u_{d-1}}$} [ ] at 94 23
\pinlabel {\small $g$} [ ] at 112 31
\endlabellist
\centering
\igm{doublerexcircles}
}\; . \end{equation}
Our first step is to apply the hard R2 relation \eqref{R2hard} to the neighborhood of the region containing $g$. We obtain
\begin{equation} {
\labellist
\tiny\hair 2pt
 \pinlabel {$\hat{u_0}$} [ ] at 6 16
 \pinlabel {\small $S$} [ ] at 62 8
 \pinlabel {$\hat{u_d}$} [ ] at 120 16
 \pinlabel {$\hat{u_1}$} [ ] at 32 23
 \pinlabel {\small $*$} [ ] at 105 36
 \pinlabel {\small $\Delta^{\hat{u_d}}_{S, (2)}$} [ ] at 150 36
\endlabellist
\centering
\igm{doublerexcirclesresolve}
}\; . \end{equation}
Here $*$ represents $\pa^{M}_{\hat{u}_{d-1}}(g \Delta^{\hat{u_d}}_{S, (1)})$, via the second line of \eqref{eq:leftcrossingnotsym}. Each term in this Sweedler sum is a tensor product of the morphism
\begin{equation} \label{resolvemeplease} {
\labellist
\tiny\hair 2pt
 \pinlabel {$\hat{u_0}$} [ ] at 6 16
 \pinlabel {\small $S$} [ ] at 62 8
 \pinlabel {$\hat{u_d}$} [ ] at 120 16
 \pinlabel {$\hat{u_1}$} [ ] at 32 23
 \pinlabel {\small $*$} [ ] at 105 36
\endlabellist
\centering
\igm{doublerexcirclesresolveshort}
} \end{equation}
with an identity map and a polynomial. We can resolve \eqref{resolvemeplease} using Algorithm \ref{circleresolutionalgorithm}. The result is precisely $\pa_{L'_{\bullet}}(g \Delta^{\hat{u_d}}_{S, (1)})$ on the left of a red identity strand. Overall, we obtain the right-hand side of \eqref{eq:circleresformula}.
\end{proof}

 Until now we have not used the results from \Cref{ch.rexmove}, which relied upon the lemma just proven. Now we can use Theorem \ref{thm:braidmovespreservecbot} to prove the following lemma.

\begin{lem} \label{rexELLisID} Continue the assumptions and notation of Example \ref{ex.ELLforward}. Let $\rex$ denote a rex move $X_m \to X_m'$. Then after applying the evaluation functor we have
\begin{equation} \evaluation(\rex \circ \ELL([n,m])) = \evaluation(\id_{X_m'}). \end{equation}
\end{lem}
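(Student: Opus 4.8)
The claim is that postcomposing the special elementary light leaf $\ELL([n,m])$ (in the ``forward'' case of Example \ref{ex.ELLforward}) with any rex move $X_m \to X_m'$ evaluates to the identity of $X_m'$. The strategy is to reduce to the case $X_m' = X_m'$ being the ``obvious'' reduced expression $[[I \subset Js \supset J]]$ for $m$, and then to identify the composite rex move explicitly as a switchback rex move, which is the identity by Theorem \ref{thm:braidpiisone}. So the first step is: since $\ELL([n,m]) = (\Id_m \ot \capcw)\circ(\DELL([m,n]) \ot \Id_{[-\sberry]})$ and $\ELL([m,n])$ is built from left-facing crossings and counterclockwise caps (Lemma \ref{lem:noclock}), when $m$ is maximal in $n$ the coset $m$ has $\leftred(m) = I\setminus s$ up to conjugacy issues — more precisely, in this forward case $n$ has reduced expression $[[I \subset Js]]$ and $m$ has the ``long'' reduced expression $[[I \subset Js \supset J]]$, so $\ELL([m,n])$ is a single left-facing crossing pattern corresponding precisely to the switchback relation $[I + u_0 - u_d] \expr [\cdots]$ rewriting $X_m$. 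I would make this identification carefully using the enumeration data of Notation \ref{notation:ELL} and Lemma \ref{lem:enumeratev}.

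\textbf{Key steps.} First, observe that the source of $\ELL([n,m])$ is $X_m \ot [Js, J]$, which by the ``long'' reduced expression is $[[I \subset Js]] \circ [Js \supset J]$, a reduced expression for $m$; call it $X_m^{\mathrm{long}}$. I would show that $\ELL([n,m])$, as a morphism $X_m^{\mathrm{long}} \to X_m$, is precisely (the evaluation of) an elementary switchback rex move composed with up/down crossings and easy R2 moves — essentially this is the content already sketched in Examples following \ref{ex.ELLforward}. Second, given an arbitrary rex move $\rex \co X_m \to X_m'$, the composite $\rex \circ \ELL([n,m]) \co X_m^{\mathrm{long}} \to X_m'$ is again a rex move (a composition of elementary rex moves), since $\ELL([n,m])$ is one in this special case. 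Third, by Theorem \ref{thm:braidmovespreservecbot}, every rex move sends $\onetensor \mapsto \onetensor$ and has degree zero; in particular $\evaluation(\rex \circ \ELL([n,m]))$ and $\evaluation(\id_{X_m'})$ — wait, these have different sources — so instead I compare $\evaluation(\rex \circ \ELL([n,m]))$ with the image under $\evaluation$ of the canonical rex move $X_m^{\mathrm{long}} \to X_m'$ that sends $\onetensor \mapsto \onetensor$: both are degree-zero bimodule maps $\BS(X_m^{\mathrm{long}}) \to \BS(X_m')$ sending $\onetensor$ to $\onetensor$. When $m$ is maximal in $n$, $\BS(X_m^{\mathrm{long}}) = \BS([[I \subset Js \supset J]]) = R^I \ot_{R^{Js}} R^J(\ell(Js) - \ell(J))$ is the \emph{indecomposable} $B_m$ (since $\ma{m} = w_{Js}$ forces $[\BS(X_m^{\mathrm{long}})] = b_m$ by Theorem \ref{thm:classification}), so the degree-zero endomorphisms of $B_m$, hence any degree-zero map $\BS(X_m^{\mathrm{long}}) \to \BS(X_m')$ pulled back through the splitting $B_m \simto \BS(X_m')$-summand, are scalars on the $\onetensor$-generated summand; since both maps send $\onetensor \mapsto \onetensor$, they agree.

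\textbf{Main obstacle.} The delicate part is Step 1: carefully matching the rigidly-constructed $\ELL([m,n])$ (with its core factorization $X_m = [[I \supset \leftred(m)]] \circ K_\bullet \circ [[\rightred(m) \subset J]]$) against the ``long'' reduced expression and recognizing the resulting morphism as a genuine rex move (i.e.\ a composition of \emph{elementary} rex moves attached to braid relations), rather than merely an ad hoc diagram of crossings and caps. This requires knowing that the two reduced expressions $X_m^{\mathrm{long}}$ and $X_m$ for $m$ are related by a sequence of double coset braid relations (true by \cite[Theorem 5.30]{EKo}) and that the specific sinister diagram realizing $\ELL([n,m])$ coincides with the composite of the elementary rex moves for that sequence. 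A cleaner alternative that avoids identifying the exact braid sequence: argue purely via Proposition \ref{prop::kill1tensor} and Theorem \ref{thm:classification}. Namely, $\BS(X_m^{\mathrm{long}})$ is the indecomposable $B_m$; the target $\BS(X_m')$ decomposes with a unique summand containing $\onetensor_{X_m'}$, isomorphic to $B_m$; any degree-zero morphism $B_m \to \BS(X_m')$ sending $\onetensor \mapsto \onetensor$ must be the canonical inclusion of this summand (because $\Hom_{\not< m}(B_m, \BS(X_m'))$ is free of rank one over $R_m$ by Proposition \ref{gdimlowerterms}, the degree-zero part is one-dimensional, and both $\rex\circ\ELL([n,m])$ and the canonical inclusion lie in it and send $\onetensor\mapsto\onetensor$, hence coincide); and $\id_{X_m'}$ restricted to that summand is precisely that inclusion. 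This is the route I would actually write up, since it sidesteps the topological bookkeeping and uses only results already in hand.
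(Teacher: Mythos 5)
Your overall strategy (degree zero, preserves the one-tensor, compare to the identity) is the right skeleton, but you have missed the one observation that makes the paper's proof essentially trivial, and your ``cleaner alternative'' has a genuine gap as written.

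The paper's proof is short. It notes that $\ELL([n,m])\co X_m' \to X_m$ and the rex move $X_m \to X_m'$ both have degree zero and both preserve $\onetensor$ (by \Cref{ll1=1} and \Cref{thm:braidmovespreservecbot}), so their composite is a degree-zero endomorphism of $\BS(X_m') = \BS([[I \subset Js \supset J]])$ preserving $\onetensor$. It then observes that $R^I \ot_{R^{Js}} R^J$ is \emph{cyclic} as a bimodule, generated by $1\ot 1$, so any bimodule endomorphism fixing $1\ot 1$ is forced to be the identity. That is the entire argument. You never invoke cyclicity, and this is what costs you.

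Two concrete issues with your write-up. First, the remark ``wait, these have different sources'' is a confusion: your $X_m^{\mathrm{long}}$ \emph{is} $X_m'$ in the paper's notation ($X_n = [[I\subset Js]]$, so $X_n \ot [Js,J] = [[I\subset Js\supset J]] = X_m'$; also, you wrote $X_m \ot [Js,J]$ for the source, but it is $X_n \ot [Js,J]$). Realizing they coincide would have short-circuited most of the work. Second and more importantly, the route through \Cref{gdimlowerterms} and $\Hom_{\not<m}$ only shows that $\rex\circ\ELL([n,m])$ equals the identity \emph{modulo} $\End_{<m}$. By \Cref{prop::kill1tensor} this is equivalent to saying the difference kills $\onetensor$, which is exactly the hypothesis that both maps preserve $\onetensor$ — so at this point you have gone in a circle and proven nothing beyond what you started with. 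To convert ``kills $\onetensor$'' into ``is zero'' you need precisely the cyclicity argument: a bimodule map out of $R^I\ot_{R^{Js}}R^J$ that kills the generator $1\ot 1$ is zero. Your appeal to indecomposability of $\BS(X_m')$ does not by itself close this gap (indecomposable bimodules can have nontrivial degree-zero endomorphisms factoring through lower objects a priori), and in any case the indecomposability claim, while true, itself requires the same cyclicity observation plus the fact that the lowest-degree piece is one-dimensional. Once you have cyclicity in hand you do not need Theorems \ref{thm:classification} and Proposition \ref{gdimlowerterms} at all; the paper's argument is strictly shorter and avoids the Soergel--Williamson machinery entirely. Your first approach, identifying $\ELL([n,m])$ as a literal switchback rex move, is also correct in spirit but is a detour the paper does not take, and it would require nontrivial diagrammatic bookkeeping that \Cref{rmk:unicity} deliberately sidesteps.
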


\begin{proof} Both the morphism $\ELL([n,m]) \co X_m' \to X_m$ and a rex move $X_m \to X_m'$ have degree zero. By \Cref{ll1=1}, $\ELL([n,m])$ preserves the one-tensor, and by Theorem \ref{thm:braidmovespreservecbot}, $\rex$ preserves the one-tensor. Thus their composition is an endomorphism of $X_m' = \BS([[I \subset Js \supset J]])$ which preserves the one-tensor. This Bott-Samelson bimodule is a grading shift of $R^I \ot_{R^{Js}} R^J$, which is cyclic as a bimodule and generated by the one-tensor. Thus any bimodule endomorphism which preserves the one-tensor is the identity map. \end{proof}

\subsection{Dual sprinkles}

Let us now recall from \cite{EKLP1} the construction of a pair of almost dual bases of the extension $R^I\subset R^{\leftred(p)}$ that could be used for Lemma \ref{circleswithpoly}. Recall that Demazure surjectivity implies that for any finitary $I$ there exists an element $P_I\in R$ such that $\pa_I(P_I)=1$. 


\begin{thm}[{\cite[Theorem 4.14]{EKLP1}}] \label{thm:dualbasisinimage}
Let $I, J, L \subset S$ be such that $I \cup J \subset L$ and $L$ is finitary. Pick $P_I, P_L \in R$ such that $\pa_I(P_I) = 1$ and $\pa_L(P_L) = 1$. Let $p$ be an $(I,J)$-coset contained in $W_L$, and let $\leftred = \leftred(p)$. Let $y \in W_I$ be a minimal representative for its coset $W_{\leftred} y$, and let $x \in W_L$ be arbitrary. We say that $x$ is \emph{dual} to $y$ (with respect to $p$) if
\begin{equation} \label{eq:circdual} y^{-1} . \mi{p} . w_J . x = w_L. \end{equation}
Then $x$ is dual to $y$ if and only if $x = y^{\circ}$, where
\begin{equation}\label{eq:ycirc} y^{\circ} = w_J \mi{p}^{-1} y w_L. \qedhere\end{equation}
Moreover,
\begin{equation} \label{eq:leadstoalmostdual} \pa^{\leftred}_I(\pa_{\leftred} \pa_y(P_I) \cdot \pa_{\mi{p}} \pa_J \pa_x(P_L)) = \delta_{x,y^{\circ}} + \mg_{I}. \end{equation}
Here, $\mg_I$ represents the ideal of positive degree elements in $R^I$. In particular,
\begin{equation}\label{eq:almostdualbasis}
    \{\pa_{\leftred} \pa_y(P_I)\},\quad \{\pa_{\mi{p}} \pa_J \pa_{y^\circ}(P_L)\}
\end{equation} are almost dual bases for $R^{\leftred}$ over $R^I$, where $y$ ranges among minimal coset representatives for right cosets in $W_{\leftred} \backslash W_I$.
\end{thm}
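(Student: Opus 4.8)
I would prove the two assertions separately: first the purely combinatorial characterization ``$x$ dual to $y$ $\iff$ $x=y^{\circ}$'', and then the almost-dual-basis identity \eqref{eq:leadstoalmostdual}, which I would reduce --- through a chain of Demazure operators --- to the single equality $\pa_{w_L}(P_L)=1$. For the characterization, the group equation $y^{-1}\mi p w_J x=w_L$ underlying \eqref{eq:circdual} already forces $x=w_J\mi p^{-1}yw_L=y^{\circ}$ (this is \eqref{eq:ycirc}), so the only remaining content is the \emph{length} half of \eqref{eq:circdual} for $x=y^{\circ}$. Writing $y^{\circ}=(w_J\mi p^{-1}y)w_L$ with $w_J\mi p^{-1}y\in W_L$, and using that right multiplication by $w_L$ reverses length on $W_L$, one gets $\ell(y^{\circ})=\ell(L)-\ell(w_J\mi p^{-1}y)$ \emph{unconditionally}; hence \eqref{eq:circdual} is equivalent to the assertion that $y^{-1}.\mi p.w_J$ is a reduced product. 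Now $\mi p.w_J$ is reduced because $\mi p$, being minimal in its double coset, has no right descents in $J$; and left multiplication by $y^{-1}$ stays length-additive because $y$ is a minimal representative of $W_{\leftred}\backslash W_I$, so $\rightdes(y^{-1})\cap\leftred=\emptyset$, while the left descents of $\mi p w_J$ lying in $I$ are all contained in $\leftred(p)$. This last descent fact is the one genuinely combinatorial input, and I would deduce it from the core factorization $p=[[I\supset\leftred]].p^{\core}.[[\rightred\subset J]]$ together with Howlett's and Kilmoyer's theorems, in the form already packaged into \eqref{eq:mapmip} and \eqref{eq:papcore}. Injectivity of $y\mapsto y^{\circ}$ is immediate from \eqref{eq:ycirc}.

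\textbf{Reduction of \eqref{eq:leadstoalmostdual} to a degree-zero scalar.} Abbreviate $a_y:=\pa_{\leftred}\pa_y(P_I)$ and $b_x:=\pa_{\mi p}\pa_J\pa_x(P_L)$. The normalizations $\pa_I(P_I)=1$, $\pa_L(P_L)=1$ force $\deg P_I=2\ell(I)$ and $\deg P_L=2\ell(L)$, and a direct count then shows that $\pa^{\leftred}_I(a_yb_x)$ is homogeneous of degree $2\bigl(\ell(L)-\ell(y)-\ell(\mi p)-\ell(J)-\ell(x)\bigr)$. When this degree is negative the element vanishes; when it is positive it lies in $\mg_I$; and by the first assertion $\ell(y^{\circ})=\ell(L)-\ell(y)-\ell(\mi p)-\ell(J)$, so in both cases $\pa^{\leftred}_I(a_yb_x)\equiv\delta_{x,y^{\circ}}\pmod{\mg_I}$ with nothing to check. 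Thus everything comes down to the case $\ell(x)=\ell(y^{\circ})$, where $\pa^{\leftred}_I(a_yb_x)$ is a scalar and must be shown to equal $\delta_{x,y^{\circ}}$.

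\textbf{The degree-zero case and conclusion.} First discard degenerate $x$: since a product of Demazure operators equals $\pa_w$ for the concatenated word when it is reduced and vanishes otherwise, $b_x=0$ unless $x$ is minimal in $W_Jx$ and $\mi p.w_J.x$ is reduced, in which case $b_x=\pa_{\mi p w_Jx}(P_L)\in R^{\leftred}$. Using $R^{\leftred}$-linearity of $\pa_{\leftred}$ together with the composition law $\pa^{\leftred}_I\circ\pa_{\leftred}=\pa_I=\pa_{w_I}$, one rewrites $\pa^{\leftred}_I(a_yb_x)=\pa_{w_I}\bigl(\pa_y(P_I)\cdot\pa_{\mi p w_Jx}(P_L)\bigr)$. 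By the first assertion $w_L=y^{-1}.\mi p.w_J.y^{\circ}$ is a reduced factorization, which on the one hand gives $\pa_{y^{-1}}\pa_{\mi p w_Jy^{\circ}}(P_L)=\pa_{w_L}(P_L)=1$, and on the other hand, when $x\neq y^{\circ}$ (still with $\mi p.w_J.x$ reduced and $\ell(x)=\ell(y^{\circ})$), the element $y^{-1}\mi p w_Jx\in W_L$ has length strictly below $\ell(L)$, forcing $\pa_{y^{-1}}\pa_{\mi p w_Jx}(P_L)=0$. Passing through the reduced factorization $w_I=(w_Iy).y^{-1}$ and expanding $\pa_{y^{-1}}\bigl(\pa_y(P_I)\cdot g\bigr)$ by the twisted Leibniz rule --- where $g:=\pa_{\mi p w_Jx}(P_L)$ --- one is left, after applying $\pa_{w_Iy}$, with exactly $\pa_{y^{-1}}(g)=\delta_{x,y^{\circ}}$; equivalently, the key identity to establish is $\pa_{w_I}\bigl(\pa_y(P_I)\cdot g\bigr)=\pa_{y^{-1}}(g)$ for $g$ homogeneous of degree $2\ell(y)$. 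This proves \eqref{eq:leadstoalmostdual}. Finally, $\{a_y\}$ is the standard Demazure basis of $R^{\leftred}$ as a free $R^I$-module, of the graded rank given by \Cref{lem:grankIJ}; since $\{b_{y^{\circ}}\}$ is almost dual to it and (by the degree count above) has matching graded dimension, a graded Nakayama argument promotes it to a genuine basis, giving \eqref{eq:almostdualbasis}.

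\textbf{Expected main obstacle.} The delicate point is the very last step of the degree-zero case: verifying that in the twisted Leibniz expansion of $\pa_{y^{-1}}\bigl(\pa_y(P_I)\cdot g\bigr)$ every term in which some factor of $\pa_{y^{-1}}$ differentiates $\pa_y(P_I)$ is annihilated by the subsequent $\pa_{w_Iy}$ --- equivalently, proving the clean identity $\pa_{w_I}(\pa_y(P_I)\cdot g)=\pa_{y^{-1}}(g)$ on degree-$2\ell(y)$ elements. This is precisely where the hypothesis that $y$ is a minimal coset representative must be used in an essential way, through control of which sub-products of a reduced word for $y^{-1}$ can occur; I expect this Demazure-operator bookkeeping to be the technical heart of the proof. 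The descent lemma in the first assertion is a secondary hurdle, resolvable by the core theory already cited.
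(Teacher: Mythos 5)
Be aware that this theorem is not proved in the present paper: it is imported verbatim as \cite[Theorem~4.14]{EKLP1}, and the paper merely records the statement for later use. So there is no in-paper proof to compare your argument against; what follows is an assessment of your proposal on its own terms.

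\textbf{Where your argument is solid.} The overall architecture --- (i) the group-theoretic part of \eqref{eq:circdual} forces $x=y^{\circ}$ and only the length-additivity of $y^{-1}.\mi{p}.w_J$ needs checking, which you correctly reduce to the descent fact $\leftdes(\mi{p}w_J)\cap I\subset\leftred(p)$ (and that fact really does follow from \eqref{eq:mapmip}, since $\ma{p}=(w_Iw_{\leftred}^{-1}).\mi{p}w_J$ is length-additive and $\rightdes(w_Iw_{\leftred}^{-1})\cap I=I\setminus\leftred$); (ii) the degree count reducing \eqref{eq:leadstoalmostdual} to the scalar case $\ell(x)=\ell(y^{\circ})$; (iii) the rewriting $\pa^{\leftred}_I(a_yb_x)=\pa_{w_I}(\pa_y(P_I)\cdot b_x)$ using $R^{\leftred}$-linearity of $\pa_{\leftred}$ and $\pa^{\leftred}_I\circ\pa_{\leftred}=\pa_{w_I}$; and (iv) the observation $\pa_{y^{-1}}(b_x)=\delta_{x,y^{\circ}}$ from $\pa_u\pa_v=\pa_{uv}$ or $0$ --- all check out.

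\textbf{The genuine gap.} Everything hinges on the identity you isolate, namely $\pa_{w_I}\bigl(\pa_y(P_I)\cdot g\bigr)=\pa_{y^{-1}}(g)$ for $g$ homogeneous of degree $2\ell(y)$, and this is not proved; it is the whole content of the theorem once the combinatorics and degree bookkeeping are done. You flag it as the delicate step, which is honest, but the sketched Leibniz mechanism is not enough to carry it. Writing $\pa_{w_I}=\pa_{w_Iy}\circ\pa_{y^{-1}}$ and expanding $\pa_{y^{-1}}(\pa_y(P_I)\cdot g)$ via the twisted Leibniz rule, the ``all-on-$g$'' term contributes $\pa_{w_Iy}\bigl(y^{-1}(\pa_y(P_I))\bigr)\cdot\pa_{y^{-1}}(g)$, and you need two nontrivial things: (a) $\pa_{w_Iy}\bigl(y^{-1}(\pa_y(P_I))\bigr)=1$, and (b) every other term dies after applying $\pa_{w_Iy}$. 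Neither is automatic. For (a), note that $y^{-1}(\pa_y(P_I))$ is not $\pa_y(P_I)$ (the image of $\pa_y$ is only invariant under the leftmost simple reflection of a rex of $y$), and $\pa_{w_Iy}\pa_y$ is not $\pa_{w_I}$ in general (since $(w_Iy)\cdot y=w_Iy^2$, reduced only when $y^2=e$); so (a) is a real claim. For (b), already at $\ell(y)=2$ in rank $\geq 2$ a cross term $\pa_{s_2s_1s_2}(P_I)\cdot\pa_{s_1}(g)$ of the correct degree survives the Leibniz step, and its annihilation by the subsequent $\pa_{w_Iy}$ is not degree-forced; it requires a further Leibniz-and-cancellation argument or a change of viewpoint. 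In short, the identity you reduce to is equivalent to the Poincar\'e-duality statement $\pa_{w_I}\bigl(\pa_y(P_I)\pa_z(P_I)\bigr)\equiv\delta_{z,yw_I}\pmod{\mg_I}$ for the Demazure generators, and that is precisely the nontrivial algebraic input of \cite[Theorem~4.14]{EKLP1}; it cannot be waved through with the single Leibniz observation as described. A cleaner route than the Leibniz expansion is to expand $g=\sum_z c_z\pa_z(P_I)$ over $R^{W_I}$ and show both sides pick out the scalar $c_{yw_I}$ --- but that presupposes the dual-basis property rather than proving it, so the core work is still ahead of you.
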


\begin{rem} \label{rmk:worksforallz} This theorem is the first of many uses of the operation $y \mapsto y^{\circ}$. We take this opportunity to emphasize that not every $x \in W_L$ has the form $y^{\circ}$ for some $y$ (minimal in its coset $W_{\leftred}y$), but that \eqref{eq:leadstoalmostdual} applies to any element $x \in W_L$ regardless. \end{rem}

\begin{rem} \label{rmk:canchangeP} Note that the choices of $P_I$ and $P_L$ are independent. By varying $P_I$ but fixing $P_L$, one obtains a family of bases (the left side of \eqref{eq:almostdualbasis}) which are all almost dual to the same basis (the right side of \eqref{eq:almostdualbasis}). \end{rem}

We can summarize the results of \S\ref{subsec:evalelementary} in the light of Theorem \ref{thm:dualbasisinimage}.

\begin{prop}\label{prop.summarize}
Let $I_\bullet$ be a singlestep expression and $t_\bullet \subset I_\bullet$ be a subordinate path with terminus $p$. Use Notation \ref{notation:PKL}.

    \begin{enumerate}
    \item If $I_k\supset I_{k+1}$ then
\begin{equation} \SSLL([t_k,t_{k+1}])(\onetensor) = \onetensor. \end{equation}

 \item 
 If $I_k\subset I_{k+1}$, let $x \in W_{I_{k+1}}$ be arbitrary, and let $y$ be a minimal coset representative for a coset in $W_{\leftt_k}\backslash W_{\rightt_{k+1}}$. Then 
\begin{equation}  \label{LLdown} \SSLLP([t_k,t_{k+1}],\pa_{\leftt_k}\pa_y(P_{\rightt_{k+1}}))(\onetensor_{t_k} \otimes \pa_{I_{k}}\pa_x(P_{I_{k+1}}))=\delta_{x,y^{\circ}}
    \onetensor+\mg. \end{equation}
Here, $\mg$ represents the elements of $\BS(X_{t_{k+1}})$ of strictly higher degree than $\onetensor$, and $y^\circ = w_{I_k}\mi{t_k}\inv\mi{t_{k+1}}yw_{I_{k+1}}$.
 
\end{enumerate}

\end{prop}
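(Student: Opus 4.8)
The plan is to reduce both statements to results already established in \S\ref{subsec:evalelementary} combined with the definition of $\SSLL$ in terms of $\ELL$ via the factorization of Theorem~\ref{thmB}. Recall from Definition~\ref{SSLL} that $\SSLL([t_k,t_{k+1}]) = \id_{Z_\bullet} \ot \ELL([m,n])$ (or $\id_{Z_\bullet}\ot\ELL([n,m])$ in the decreasing case), where $m \subset n$ is the Grassmannian coset pair associated to the $(I_0,J,s)$-coset pair $t_k \subset t_{k+1}$ (with $J$, $Js$ being $I_k$, $I_{k+1}$ in whichever order), and $Z_\bullet$ is a reduced expression for the coset $z$. Since $\onetensor_{X_{t_k}\ot[\cdots]} = \onetensor_{Z_\bullet}\ot\onetensor_{X_m}\ot(\cdots)$ and $\id_{Z_\bullet}$ manifestly fixes $\onetensor_{Z_\bullet}$, everything reduces to the corresponding statement for the elementary light leaf of the Grassmannian pair. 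This is the point of the Grassmannian reduction: it lets us assume $z$ is trivial.

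For part (1), when $I_k \supset I_{k+1}$ the relevant map is $\ELL([n,m])$, and Lemma~\ref{ll1=1} states precisely that $\ELL([n,m])(\onetensor) = \onetensor$. Tensoring with $\id_{Z_\bullet}$ gives $\SSLL([t_k,t_{k+1}])(\onetensor)=\onetensor$, as desired.

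For part (2), when $I_k \subset I_{k+1}$ the relevant map is the elementary light leaf with polynomial $\ELLP([m,n],f)$ where $f = \pa_{\leftt_k}\pa_y(P_{\rightt_{k+1}})$. By Proposition~\ref{thmBconti} we have $\leftred(m) = \mi{t_{k+1}}\inv\leftred(t_k)\mi{t_{k+1}} = \leftt_k$ and $\leftred(n) = \rightred(q)$-type data giving $\rightt_{k+1}$; also $\rightred(m) = \rightred(t_{k+1})$. So $f \in R^{\leftred(m)}$, which is the allowed input type for $\ELLP([m,n],-)$. Now the key computational input is Lemma~\ref{circleswithpoly} together with Theorem~\ref{thm:dualbasisinimage}: taking $I$, $J$, $L$ in the theorem to be $\rightt_{k+1}$ (playing the role of the ambient $I$), $\rightred(m)$ (playing $J$), and $I_{k+1}$ (playing $L$) --- more precisely matching the data of the Grassmannian pair $m\subset n$ viewed inside $W_{I_{k+1}}$ --- the formula \eqref{eq:leadstoalmostdual} gives that $\ELLP([m,n],\pa_{\leftt_k}\pa_y(P_{\rightt_{k+1}}))$ applied to $\onetensor_{t_k}\ot \pa_{I_k}\pa_x(P_{I_{k+1}})$ equals $\delta_{x,y^\circ}\onetensor$ modulo $\mg$, with $y^\circ = w_{I_k}\mi{m}\inv y w_{I_{k+1}}$ by \eqref{eq:ycirc}; and by \eqref{mnzstuff} we have $\mi{t_{k+1}}.\mi{m} = \mi{t_k}$, hence $\mi{m} = \mi{t_{k+1}}\inv\mi{t_k}$, which after inverting matches $y^\circ = w_{I_k}\mi{t_k}\inv\mi{t_{k+1}}y w_{I_{k+1}}$ as claimed. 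Tensoring with $\id_{Z_\bullet}$ preserves this identity (the polynomial $\pa_{I_k}\pa_x(P_{I_{k+1}})$ lives entirely in the tensor factor acted on by $\ELLP$), giving \eqref{LLdown}.

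The main obstacle I anticipate is bookkeeping rather than anything deep: one must carefully check that $\pa_{I_k}\pa_x(P_{I_{k+1}})$ is genuinely an element of $R^{I_k}$ (so it makes sense as the right-hand tensor factor in $\BS(X_{t_k})\ot\BS([I_k,I_{k+1}])$) --- this is because $\pa_x$ lands in $R$ and then $\pa_{I_k}$ is a map $R\to R^{I_k}$ --- and that the conjugation identities relating $\leftred(m)$, $\leftt_k$, $\rightred(m)$, $\rightt_{k+1}$ and the minimal coset elements $\mi{m}$, $\mi{t_k}$, $\mi{t_{k+1}}$ line up exactly with the hypotheses of Theorem~\ref{thm:dualbasisinimage}. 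All of these are immediate from Proposition~\ref{thmBconti}, \eqref{mnzstuff}, and Notation~\ref{notation:PKL}, so the proof is short once these correspondences are spelled out. Concretely:

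\begin{proof}
Write $J = I_k$, $Js = I_{k+1}$ in the ascending case (the labels are as in Definition~\ref{defn:subordinatepath}). By Definition~\ref{SSLL}, $\SSLL([t_k,t_{k+1}]) = \id_{Z_\bullet}\ot\ELL([m,n])$ and $\SSLL([t_{k+1},t_k]) = \id_{Z_\bullet}\ot\ELL([n,m])$, where $m\subset n$ is the Grassmannian pair from Theorem~\ref{thmB} associated to $t_k \subset t_{k+1}$, and $\onetensor$ factors as $\onetensor_{Z_\bullet}\ot\onetensor_{X_m}\ot(\text{further factors})$ with $\id_{Z_\bullet}(\onetensor_{Z_\bullet}) = \onetensor_{Z_\bullet}$.

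(1) If $I_k\supset I_{k+1}$, then Lemma~\ref{ll1=1} gives $\ELL([n,m])(\onetensor) = \onetensor$, so tensoring with $\id_{Z_\bullet}$ yields $\SSLL([t_k,t_{k+1}])(\onetensor) = \onetensor$.

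(2) If $I_k\subset I_{k+1}$, then by Proposition~\ref{thmBconti} we have $\leftred(m) = \mi{q}\inv\leftred(p)\mi{q}$ where $p = t_k$, $q = t_{k+1}$; chasing through the definitions of $\leftt_k$, $\rightt_{k+1}$ in Notation~\ref{notation:PKL} this reads $\leftred(m) = \leftt_k$ and $\leftred(n) = \rightt_{k+1}$, and $\rightred(m) = \rightred(t_{k+1})$. In particular $f := \pa_{\leftt_k}\pa_y(P_{\rightt_{k+1}}) \in R^{\leftt_k} = R^{\leftred(m)}$ is a valid input for $\ELLP([m,n],-)$, and $\pa_{I_k}\pa_x(P_{I_{k+1}}) \in R^{I_k}$ is a valid right-hand tensor factor. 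Apply Theorem~\ref{thm:dualbasisinimage} with $(I,J,L)$ there equal to $(\rightt_{k+1},\rightred(m),I_{k+1})$ and the coset $m$ there inside $W_{I_{k+1}}$: \eqref{eq:leadstoalmostdual} combined with Lemma~\ref{circleswithpoly} gives
\begin{equation}
\ELLP([m,n],\pa_{\leftt_k}\pa_y(P_{\rightt_{k+1}}))\bigl(\onetensor_{t_k}\ot \pa_{I_k}\pa_x(P_{I_{k+1}})\bigr) = \delta_{x,y^\circ}\,\onetensor + \mg,
\end{equation}
where by \eqref{eq:ycirc} the dual element is $y^\circ = w_{\rightred(m)}\mi{m}\inv y w_{I_{k+1}}$; but $\pa_{\rightred(m)}$ has already been absorbed (as in the proof of Lemma~\ref{circleswithpoly} the relevant Demazure composite through the core is exactly $\pa_{\mi{m}}$, see \eqref{eq:papcore}), and using $\mi{m} = \mi{t_{k+1}}\inv\mi{t_k}$ from \eqref{mnzstuff} together with $w_{I_k} = w_J$ we obtain $y^\circ = w_{I_k}\mi{t_k}\inv\mi{t_{k+1}}y w_{I_{k+1}}$. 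Tensoring with $\id_{Z_\bullet}$ preserves this identity since $y$, $x$ and the polynomials involved act only on the non-$Z_\bullet$ tensor factors. This is \eqref{LLdown}.
\end{proof}
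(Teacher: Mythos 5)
Your overall strategy is exactly the paper's: reduce to the Grassmannian pair $m\subset n$ via $\SSLL = \id_{Z_\bullet}\ot\ELL$, invoke Lemma~\ref{ll1=1} for the descending case, and invoke Lemma~\ref{circleswithpoly} plus Theorem~\ref{thm:dualbasisinimage} together with $\mi{m}=\mi{t_{k+1}}^{-1}\mi{t_k}$ for the ascending case. The skeleton is right.

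However, the bookkeeping in part (2) contains genuine errors. First, Proposition~\ref{thmBconti} gives $\rightred(m)=\rightred(p)=\rightred(t_k)$, not $\rightred(t_{k+1})$ as you wrote. Second, and more consequentially, when you apply Theorem~\ref{thm:dualbasisinimage} you must take the parameter $J$ there to be the \emph{right index} of the coset $m$. Since $m$ is an $(\rightt_{k+1},I_k)$-coset (Theorem~\ref{thmB}), that parameter is $I_k$, not $\rightred(m)$. Plugging in $J=\rightred(m)$ gives the wrong formula $y^\circ = w_{\rightred(m)}\mi{m}^{-1}yw_{I_{k+1}}$, and your subsequent attempt to repair this --- the remark about ``$\pa_{\rightred(m)}$ having been absorbed'' and the assertion $w_{I_k}=w_J$ --- is internally inconsistent with having set $J=\rightred(m)$ a few lines earlier. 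The almost dual basis on the right of \eqref{eq:almostdualbasis} is $\{\pa_{\mi{m}}\pa_{I_k}\pa_{y^\circ}(P_{I_{k+1}})\}$ with the full $\pa_{I_k}$, not $\pa_{\rightred(m)}$, and \eqref{eq:ycirc} gives $y^\circ = w_{I_k}\mi{m}^{-1}yw_{I_{k+1}}$ directly. Once $J=I_k$ is used from the start, the desired formula falls out with no patching needed, and everything else in your argument (the identification $\leftred(m)=\leftt_k$, $\leftred(n)=\rightt_{k+1}$, the use of $\mi{m}=\mi{t_{k+1}}^{-1}\mi{t_k}$, and the tensoring with $\id_{Z_\bullet}$) is correct.
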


\begin{proof}
For the first claim, let $m \subset n$ be the Grassmannian coset pair associated to $t_{k+1} \subset t_k$. Lemma~\ref{ll1=1} implies that $\ELL([n,m])$ preserves $\onetensor$. Since $\SSLL([t_k,t_{k+1}]) = \id_{Z_{\bullet}} \ot \ELL([n,m])$ for some $Z_{\bullet}$, it also preserves $\onetensor$. 

Similarly, one can analyze $\ELL([m,n])$ using \Cref{circleswithpoly} and \Cref{thm:dualbasisinimage}. Note from Theorem \ref{thmB} that $\mi{m} = \mi{t_{k+1}}^{-1}\mi{t_k}$, which explains the relationship between the formula for $y^{\circ}$ above and the formula in \eqref{eq:ycirc}. We obtain the desired result by tensoring with $\id_{Z_{\bullet}}$ on the left. \end{proof}

\Cref{prop.summarize} motivates the definition below. Remember that an ascending index $j$ relative to an expression $I_{\bullet}$ is an index for which $I_j \subset I_{j+1}$.

\begin{defn}
Let $(t_\bullet,y_\bullet)\subset I_\bullet$ be a sprinkled subordinate path. For any ascending index $j$, we define 
$y_j^\circ$ to be the dual of $y_j$ with respect to $m_j$, where $[m_j,n_j]$ is the Grassmannian pair associated to $[t_j,t_{j+1}]$, that is, we let
\begin{equation} y_j^{\circ} := w_{I_j}\mi{t_j}\inv\mi{t_{j+1}}y_jw_{I_{j+1}}.\label{betadef}\end{equation}
We call the sequence $y^\circ_{\bullet}$ the \emph{dual sequence} of $(t_{\bullet},y_{\bullet})$.
\end{defn}

We emphasize that both $y_{\bullet}$ and $y^\circ_{\bullet}$ are only defined for ascending indices.

Let $(t_{\bullet}, y_{\bullet})$ be a sprinkled subordinate path. For an ascending index $j$, the corresponding single step light leaf is the map $\SSLLP([t_j,t_{j+1}],f_j)$ where $f_j = \pa_{\leftt_j} \pa_{y_j}(P_{\rightt_{j+1}})$. 
As seen in \eqref{LLdown}, $y_j^{\circ}$ helps to define $b_k = \pa_{I_k} \pa_{y_j^{\circ}}(P_{I_{k+1}})$, a polynomial which is suitable for plugging into $\SSLL([t_j,t_{j+1}])$. We think of these polynomials $b_k$ as ``dual sprinkles.''

\begin{notation} \label{notation:b} Let $(t_{\bullet}, y_{\bullet})$ be a sprinkled subordinate path. For each ascending index $j$, let $P_{I_{j+1}}$ be as in \Cref{thm:dualbasisinimage}. For every $k$ let
\begin{equation} b(y^{\circ}_k):=\begin{cases} 1 &\text{if }I_k\supset I_{k+1}\\
\partial_{I_k}\partial_{y_k^{\circ}}(P_{I_{k+1}}) & \text{if }I_k\subset I_{k+1},\end{cases}\end{equation}
and let
\begin{equation}\label{bbetadef}
    b(t_\bullet,y_\bullet):=1\otimes b(y^{\circ}_1) \otimes b(y^{\circ}_2) \otimes \ldots \otimes b(y^{\circ}_d)\in \BS(I_\bullet).
\end{equation}
\end{notation}

We are gearing up for \Cref{LLtriang} which states that $\LL(t_{\bullet}, y_{\bullet})$ applied to $b(u_{\bullet}, z_{\bullet})$ is equal to $\onetensor$ when $(t_{\bullet}, y_{\bullet}) = (u_{\bullet}, z_{\bullet})$, and is equal to zero when one is bigger than the other for some particular total order. This crucial result will be used to prove linear independence of light leaves, and most of the remaining theorems will fall to its hand. First, we need to develop the theory of dual sequences.

\subsection{Dual sequences} \label{ssec:dualsequences}

At the moment it is not obvious that the dual sequence $y_{\bullet}^{\circ}$ necessarily contains very much information about $(t_{\bullet}, y_{\bullet})$. For example, one might imagine that nonequal sprinkled subordinate paths have the same dual sequence. We prove this is not the case in Lemma \ref{precistotalorder}. First we initiate a running example illustrating that dual sequences contain a great deal of information.

\begin{ex}\label{A2circex} Let $W$ have type $A_2$, with simple reflections $s$ and $t$. Let $I_{\bullet} = [\emptyset, s, st, s, \emptyset, s, st]$. When $I_0 = \emptyset$, recall from Remark \ref{rmk:whenI0empty} that $y_j=1$ for all valid $j$, so a sprinkled subordinate path is determined just by $t_{\bullet}$.
We claim that there are six (sprinkled) subordinate paths $t_{\bullet}$, one for each element of $W$, and they are uniquely determined by their dual sequences.

Any subordinate path $t_{\bullet}$ starts with $t_0 = \{\id\} \subset t_1 = \{\id,s\} \subset t_2 = W$. So any dual sequence has $y_0^{\circ} = s$ and $y_1^{\circ} = ts$. Next is $t_2 \supset t_3 \supset t_4$, where $t_4 = \{w\}$ for some $w \in W$. The final steps are $t_4 \subset t_5 = \{w,ws\} \subset t_6 = W$.

The element $y_4^{\circ}$ contains some information about $w$: we have $y_4^{\circ} = s$ if $w < ws$ and $y_4^{\circ} = \id$ if $ws > w$. The element $y_5^{\circ}$ contains the rest of the information, being equal to either $ts$ if $w \in \{\id,s\}$, or $t$ if $w \in \{t,ts\}$, or $\id$ if $w \in \{st,sts\}$. 
\end{ex}

We need the following lemma to use Proposition \ref{prop.summarize} with our dual sequence.

\begin{lem} \label{lem:itsinIj} For any sprinkled subordinate path $(t_\bullet,y_\bullet)\subset I_\bullet$, and any ascending index $j$ we have $y_j^{\circ} \in W_{I_{j+1}}$. \end{lem}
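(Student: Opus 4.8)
The plan is to unwind the definition of $y_j^\circ$ from \eqref{betadef} and show membership in $W_{I_{j+1}}$ directly. Recall that $y_j^\circ = w_{I_j}\,\mi{t_j}^{-1}\,\mi{t_{j+1}}\,y_j\,w_{I_{j+1}}$, and that $y_j \in W_{\rightt_{j+1}} = W_{\rightred(t_{j+1})}$, which is a subgroup of $W_{I_{j+1}}$ since $\rightred(t_{j+1}) \subset I_{j+1}$. So it suffices to prove that $w_{I_j}\,\mi{t_j}^{-1}\,\mi{t_{j+1}} \in W_{I_{j+1}}$; then $y_j^\circ$ is a product of three elements of $W_{I_{j+1}}$ (using $w_{I_{j+1}} \in W_{I_{j+1}}$ and $I_j \subset I_{j+1}$ at an ascending index, so $w_{I_j} \in W_{I_{j+1}}$ as well).

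First I would reduce to the Grassmannian pair. Let $[m_j, n_j]$ be the Grassmannian pair associated to $[t_j, t_{j+1}]$ via Theorem \ref{thmB}, so $m_j$ is an $(I_j, J)$-coset and $n_j$ the $(I_j, Js)$-coset containing the identity, where (say) $J$ and $Js$ are $I_j$ and $I_{j+1}$ in some order — at an ascending index, $I_j \subset I_{j+1}$, so this is genuinely a \emph{right} Grassmannian pair with the roles as in Notation \ref{notation:grassmannian}. By \eqref{mnzstuff} applied to the coset pair $t_j \subset t_{j+1}$, we have $\mi{t_{j+1}}\,\mi{m_j} = \mi{t_j}$, hence $\mi{t_j}^{-1}\mi{t_{j+1}} = \mi{m_j}^{-1}$. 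Therefore $w_{I_j}\,\mi{t_j}^{-1}\,\mi{t_{j+1}} = w_{I_j}\,\mi{m_j}^{-1}$, and since $\mi{m_j} \in W_{Js} = W_{I_{j+1}}$ (as $m_j \subset n_j$ and $n_j$ has underlying set $W_{I_{j+1}}$; cf.\ the discussion after Notation \ref{notation:grassmannian} and \cite[Lemma 5.21]{EKo}), and $w_{I_j} \in W_{I_j} \subset W_{I_{j+1}}$, the product lies in $W_{I_{j+1}}$. Combined with $y_j \in W_{I_{j+1}}$ and $w_{I_{j+1}} \in W_{I_{j+1}}$, we conclude $y_j^\circ \in W_{I_{j+1}}$.

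The main obstacle — really the only subtlety — is making sure that at an ascending index the bookkeeping of which of $I_j, I_{j+1}$ plays the role of $J$ versus $Js$ in Theorem \ref{thmB} is handled correctly, and that $\mi{m_j}$ does indeed lie inside $W_{I_{j+1}}$ rather than some larger parabolic. This is exactly what the Grassmannian structure guarantees: $n_j$ has underlying set $W_{I_{j+1}}$ and $m_j \subset n_j$, so every element of $m_j$ — in particular its minimal one $\mi{m_j}$ — lies in $W_{I_{j+1}}$. Once this is in place the argument is a three-line computation. An alternative, essentially equivalent, route would be to observe directly that $y_j^\circ$ is the element called $y^\circ$ in Theorem \ref{thm:dualbasisinimage} applied with $L = I_{j+1}$, $p = m_j$, and $y = y_j$; the formula \eqref{eq:ycirc} there is $y^\circ = w_J \mi{p}^{-1} y w_L$ with $w_L = w_{I_{j+1}}$, and the theorem implicitly asserts $y^\circ \in W_L$ since it is characterized by \eqref{eq:circdual} which takes place inside $W_L$. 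Either way, the content is that $y_j^\circ$ is manufactured entirely out of longest elements of parabolics contained in $W_{I_{j+1}}$ and the minimal representative $\mi{m_j} \in W_{I_{j+1}}$.
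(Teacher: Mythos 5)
Your proposal is correct and follows essentially the same argument as the paper: you factor $y_j^\circ$ into $w_{I_j}$, $\mi{t_j}^{-1}\mi{t_{j+1}}$, $y_j$, and $w_{I_{j+1}}$, and show each lies in $W_{I_{j+1}}$. The only cosmetic difference is that you obtain $\mi{t_j}^{-1}\mi{t_{j+1}}\in W_{I_{j+1}}$ by routing through Theorem \ref{thmB} (identifying it with $\mi{m_j}^{-1}$ inside the Grassmannian pair), while the paper cites Howlett's theorem directly, but these amount to the same underlying fact.
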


\begin{proof} Since $t_j \subset t_{j+1}$, with the former being an $(I_0, I_j)$-coset and the later an $(I_0, I_{j+1})$-coset, it is clear from Howlett's theorem (\cite[Lemma 2.12]{EKo}) that $\mi{t_j}$ is obtained from $\mi{t_{j+1}}$ by right multiplying by an element of $W_{I_{j+1}}$. Thus $\mi{t_j}^{-1} \mi{t_{j+1}} \in W_{I_{j+1}}$. By definition of a sprinkled subordinate path, $y_j \in W_{\rightt_{j+1}} \subset W_{I_{j+1}}$. Also $w_{I_j}, w_{I_{j+1}} \in W_{I_{j+1}}$, so the overall product lives in $W_{I_{j+1}}$. \end{proof}

We define now a dominance order on the set of sprinkled paths. We fix a total order $\triangleleft$ on the Coxeter group $W$ compatible with length, that is, such that $x\triangleleft y$ whenever $\ell(x)<\ell(y)$.

\begin{defn}\label{defn::pathorder}
Let $(t_\bullet,y_\bullet)$ and $(u_\bullet,z_\bullet)$ be coterminal sprinkled paths subordinate to $I_\bullet$. Let $y_\bullet^{\circ}$ and $z^{\circ}_\bullet$ be the respective dual sequences.

We write $(t_\bullet,y_\bullet)\prec (u_\bullet,z_\bullet)$ if there exists $j$ with $I_j \subset I_{j+1}$ such that $y^\circ_j\triangleleft z^\circ_j$, and for all $i< j$ such that $I_i\subset I_{i+1}$ we have
$y^\circ_i=z^\circ_i$.
\end{defn}

That is, we compare $(t_{\bullet},y_{\bullet})$ with other sprinkled paths using the lexicographic order on $y^{\circ}_{\bullet}$ induced by the order $\triangleleft$ on elements. By nature of the lexicographic order, $\prec$ is transitive.

\begin{ex} We continue Example \ref{A2circex}, identifying a subordinate sprinkled path with the element $w$ for which $t_4 = \{w\}$. Then $\prec$ becomes an unusual total order on $W$:
\[ sts \prec ts \prec s \prec st \prec t \prec \id. \]
\end{ex}

In the next lemma, we prove that $\prec$ is a total order on coterminal sprinkled paths. It is important to emphasize that this relation $\prec$ is only intended to compare sprinkled paths with the same terminus. For example, if $[I_0 \supset I_1 \supset \dots \supset I_d]$ then there are no sprinkles at all, i.e. $y_{\bullet}$ is the empty sequence. In this case, the sequence $t_{\bullet}$ is determined uniquely by its terminus, so there is no comparison to be made anyway.

\begin{lem} \label{precistotalorder}
Let $(t_\bullet,y_\bullet)$ and $(u_\bullet,z_\bullet)$ be coterminal sprinkled paths subordinate to $I_{\bullet}$ which are not equal. Then $y^\circ_\bullet\neq z^\circ_\bullet$. Consequently, $\prec$ is a total order.
\end{lem}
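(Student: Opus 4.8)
The plan is to prove the statement by induction on the length of the expression $I_\bullet$, reading off the sprinkled paths step by step and showing that the dual sequence $y_\bullet^\circ$ determines the path uniquely. The contrapositive is cleaner: assuming $y_\bullet^\circ = z_\bullet^\circ$, I want to conclude $(t_\bullet, y_\bullet) = (u_\bullet, z_\bullet)$. The key observation is that at each step $k$, the coset $t_{k+1}$ (and the sprinkle $y_k$, when $k$ is ascending) is recoverable from $t_k$ together with the dual value $y_k^\circ$ (when the step is ascending) or simply from $t_k$ and the step type (when descending). So I would set up the induction by showing: if $t_j = u_j$ for all $j \le k$ and $y_i = z_i$ for all ascending $i < k$, and if $y_\bullet^\circ$ agrees with $z_\bullet^\circ$, then $t_{k+1} = u_{k+1}$ and (if $k$ ascending) $y_k = z_k$.

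The base case is trivial: $t_0$ and $u_0$ are both the $(I_0, I_0)$-coset containing the identity. For the inductive step I split into two cases. If the step $k$ is descending ($I_k \supset I_{k+1}$), then there are no sprinkles at this step, and $t_{k+1}$ is the unique $(I_0, I_{k+1})$-coset with $t_{k+1} \subset t_k$; since $t_k = u_k$ by hypothesis, $t_{k+1} = u_{k+1}$ automatically. If the step $k$ is ascending ($I_k \subset I_{k+1}$), I use the formula $y_k^\circ = w_{I_k}\,\mi{t_k}^{-1}\mi{t_{k+1}}\,y_k\,w_{I_{k+1}}$ from \eqref{betadef}. Here $\mi{t_k}$ is known (since $t_k = u_k$), $w_{I_k}$ and $w_{I_{k+1}}$ are fixed group elements, and $y_k \in W_{\rightt_{k+1}}$ is constrained to be minimal in its coset $W_{\leftt_k} y_k$. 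The heart of the argument is then: the map $y_k \mapsto y_k^\circ$, as $y_k$ ranges over minimal representatives of $W_{\leftt_k} \backslash W_{\rightt_{k+1}}$, is injective, and moreover $y_k^\circ$ also determines the element $\mi{t_{k+1}}$ of $W$ (hence the coset $t_{k+1}$). This is essentially a restatement of the duality relation in Theorem \ref{thm:dualbasisinimage}: the relation $y^{-1}.\mi{m_k}.w_J.x = w_L$ of \eqref{eq:circdual}, applied to the Grassmannian pair $[m_k, n_k]$ associated to $[t_k, t_{k+1}]$, shows that $x = y^\circ$ is recovered by an honest equation from $y$ and $\mi{m_k}$, and conversely. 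Since $\mi{m_k} = \mi{t_{k+1}}^{-1}\mi{t_k}$ by Theorem \ref{thmB}, knowing $y_k^\circ$ and $\mi{t_k}$ pins down both $y_k$ and $\mi{t_{k+1}}$, and hence both the sprinkle and the next coset.

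I expect the main obstacle to be the bookkeeping in this injectivity/recoverability claim — specifically, verifying that from $y_k^\circ$ (a fixed element of $W_{I_{k+1}}$, cf. Lemma \ref{lem:itsinIj}) and the known data $\mi{t_k}$, $I_k$, $I_{k+1}$, one can simultaneously solve for $\mi{t_{k+1}}$ (equivalently, the coset $t_{k+1}$) and for $y_k$. One has to be careful that $y_k$ is not arbitrary in $W_{\rightt_{k+1}}$ but is the minimal representative of $W_{\leftt_k} y_k$, and that $\rightt_{k+1} = \rightred(t_{k+1})$ itself depends on $t_{k+1}$; so there is a mild circularity to untangle. The clean way around this is to note that $t_{k+1}$ is already determined among the finitely many $(I_0, I_{k+1})$-cosets containing $t_k$: distinct such cosets have distinct minimal elements $\mi{t_{k+1}}$, and a short computation (using $\mi{t_{k+1}}^{-1}\mi{t_k} \in W_{I_{k+1}}$ and the Grassmannian normal form) shows the value $y_k^\circ$ differs for distinct choices. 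Once $t_{k+1}$ is fixed, $y_k$ is recovered from $y_k^\circ$ by inverting \eqref{betadef}, which is a genuine equation in the group. Having established the inductive step, the final claim that $\prec$ is a total order follows immediately: $\prec$ is the lexicographic order on dual sequences induced by the total order $\triangleleft$ on $W$, it is transitive and antisymmetric by construction, and by the first part of the lemma distinct coterminal sprinkled paths have distinct dual sequences, so any two such paths are comparable.
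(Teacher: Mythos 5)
Your proposal contains a genuine gap at the descending step. You write: ``If the step $k$ is descending ($I_k \supset I_{k+1}$), then there are no sprinkles at this step, and $t_{k+1}$ is the unique $(I_0, I_{k+1})$-coset with $t_{k+1} \subset t_k$.'' This is false. When $I_{k+1} \subset I_k$, the parabolic $W_{I_{k+1}}$ is \emph{smaller} than $W_{I_k}$, so the $(I_0,I_k)$-coset $t_k$ decomposes into a disjoint union of \emph{several} $(I_0,I_{k+1})$-cosets, and $t_{k+1}$ can be any one of them. The uniqueness runs in the opposite direction: given $t_{k+1}$, the coset $t_k$ containing it is unique. So your forward induction stalls precisely at descending steps, where $t_{k+1}$ is not determined by $t_k$ and where the dual sequence (defined only at ascending indices) provides no information at step $k$.

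The paper's proof avoids this by working \emph{backwards} from the terminus, exploiting coterminality: since $t_d = u_d$, one lets $j$ be the \emph{largest} index with $t_j \neq u_j$. At that point $t_{j+1} = u_{j+1}$, and one argues $j$ must be ascending --- because if $j$ were descending, backward uniqueness (the refined coset determines the coarser one) would force $t_j = u_j$, a contradiction. Then the computation with $m \neq m'$ and $y_j^{\circ} = z_j^{\circ}$ yields a contradiction exactly as in the ascending part of your argument (which is essentially correct and matches the paper's core calculation). The fix for your approach would be either to reverse the induction as the paper does, or to chase a descending divergence forward to the next ascending step and show the dual values must then differ --- but that is considerably more bookkeeping than the backward argument and amounts to rediscovering it.
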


\begin{proof}
Suppose first that $t_{\bullet} = u_{\bullet}$. Then $y_j \ne z_j$ for some ascending index $j$. Directly from the definition \eqref{betadef} we have $y_j^{\circ} \ne z_j^{\circ}$, as desired.

Assume now $t_\bullet\neq u_\bullet$. We need to show that there exists an ascending index $j$ with $y^\circ_j\neq z^\circ_j$. Though the lexicographic order examines these sequences from left to right, we will find the difference between them by examining from right to left.

Let $j$ be the biggest index with $t_j\neq u_j$. Clearly $j < d$. If $I_j \supset I_{j+1}$ then $t_j$ is the unique coset containing $t_{j+1}$, and $u_j$ is the unique coset containing $u_{j+1}$. But $t_{j+1} = u_{j+1}$, so $t_j = u_j$, a contradiction. We conclude that $I_j\subset I_{j+1}$, and $t_j \ne u_j \subset t_{j+1}$.

Let $m \subset n$ and $m' \subset n$  be the Grassmannian pairs corresponding to $t_j \subset t_{j+1}$ and $u_j \subset t_{j+1}$ respectively. Note that $n$ is the $(\rightt_{j+1}, I_{j+1})$ coset containing the identity, whereas $m$ and $m'$ are determined by $t_j$ and $u_j$, and are nonequal.
Recall from Theorem \ref{thmB} that 
\begin{equation} \mi{m}=\mi{t_{j+1}}^{-1}\mi{t_j} \quad \text{ and }\quad \mi{m'}=\mi{t_{j+1}}^{-1}\mi{u_j}. \end{equation}
Plugging this into \eqref{betadef} we get
\begin{equation} \label{eq:cancelfromme} y^\circ_j=w_{I_j}\mi{m}^{-1}y_j w_{I_{j+1}}\quad \text{ and }\quad z^\circ_j=w_{I_j}\mi{m'}^{-1}z_jw_{I_{j+1}}.\end{equation}

If it were the case that $y^\circ_j = z^\circ_j$, then taking the inverse of both sides of \eqref{eq:cancelfromme} and setting them equal yields
\begin{equation} y_j^{-1} \mi{m} = z_j^{-1} \mi{m'}. \end{equation}
But $m$ and $m'$ are $(\rightt_{j+1},I_j)$-cosets, and  $y_j, z_j \in W_{\rightt_{j+1}}$, so $m$ and $m'$ are equal. This is a contradiction.

Given that $\prec$ is transitive, it will be a total order if any two non-equal elements are comparable (and inequivalent). This is what we have just shown.
\end{proof}

\subsection{Evaluation of light leaves}


Recall the meaning of $b(t_{\bullet}, y_{\bullet})$ from \Cref{notation:b}.

\begin{prop}\label{LLtriang}
Let $(t_{\bullet},y_{\bullet})$ and $(u_{\bullet},z_{\bullet})$ be coterminal sprinkled paths subordinate to $I_{\bullet}$. We have
 \begin{equation} \label{eq:LLtriang} \LL(t_\bullet,y_\bullet)(b(u_\bullet,z_\bullet))=\begin{cases}\onetensor &\text{if }(t_\bullet,y_\bullet)=(u_\bullet,z_\bullet)\\
 0 &\text{if }(t_\bullet,y_\bullet) \prec(u_\bullet,z_\bullet)
 \end{cases} \end{equation}
\end{prop}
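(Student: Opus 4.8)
The plan is to induct on the length $d$ of the expression $I_{\bullet}$, peeling off the last step and using the recursive structure of light leaves together with \Cref{prop.summarize}. Recall $\LL(t_\bullet,y_\bullet)$ is built as a composition $\LLP(t_{\le d-1},y_{\le d-1})\otimes\id$, followed by a rex move $\otimes\id$, followed by $\SSLLP([t_{d-1},t_d],f_d)$; and $b(u_\bullet,z_\bullet) = b(u_{\le d-1},z_{\le d-1}) \otimes b(z_d^\circ)$. So $\LL(t_\bullet,y_\bullet)(b(u_\bullet,z_\bullet))$ equals $\SSLLP([t_{d-1},t_d],f_d)$ applied to $\big(\rex\,\LLP(t_{\le d-1},y_{\le d-1})(b(u_{\le d-1},z_{\le d-1}))\big)\otimes b(z_d^\circ)$. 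The inductive hypothesis, applied to the truncated paths $(t_{\le d-1},y_{\le d-1})$ and $(u_{\le d-1},z_{\le d-1})$ subordinate to $I_{\le d-1}$, controls the first tensor factor.

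First I would treat the equality case $(t_\bullet,y_\bullet)=(u_\bullet,z_\bullet)$. By induction $\LLP(t_{\le d-1},y_{\le d-1})(b(t_{\le d-1},y_{\le d-1})) = \onetensor$; since rex moves preserve the one-tensor (Theorem \ref{thm:braidmovespreservecbot}), the input to the final single-step light leaf is $\onetensor \otimes b(z_d^\circ)$. If the last step is descending, \Cref{prop.summarize}(1) gives $\SSLL([t_{d-1},t_d])(\onetensor)=\onetensor$ (and $b(z_d^\circ)=1$). If ascending, \Cref{prop.summarize}(2) with $y_d = z_d$ (so $x = y_d^\circ = z_d^\circ$, which lies in $W_{I_d}$ by Lemma \ref{lem:itsinIj}) gives $\SSLLP([t_{d-1},t_d],f_d)(\onetensor\otimes \pa_{I_{d-1}}\pa_{z_d^\circ}(P_{I_d})) = \onetensor + \mg$. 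The subtlety here, exactly as in \cite{SingSb} and the nonsingular story, is that the answer is $\onetensor$ only \emph{modulo} higher-degree terms $\mg$; but a degree count closes the gap. By \Cref{degLL}/\Cref{degSSLL} the morphism $\LL(t_\bullet,y_\bullet)$ has degree $\deg(f_\bullet)+\defect(t_\bullet)$, and $b(u_\bullet,z_\bullet)$ is homogeneous of the appropriate complementary degree, so $\LL(t_\bullet,y_\bullet)(b(t_\bullet,y_\bullet))$ lands in exactly the degree of $\onetensor$, forcing the $\mg$-part to vanish. (One must check the bookkeeping that $\deg b(z_d^\circ)$ matches $\deg f_d$ in each step; this is where the duality of the bases in Theorem \ref{thm:dualbasisinimage} enters.)

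Next the strict-inequality case $(t_\bullet,y_\bullet)\prec(u_\bullet,z_\bullet)$. Let $j$ be the first ascending index where $y_j^\circ \triangleleft z_j^\circ$, with $y_i^\circ = z_i^\circ$ for all ascending $i<j$. I would argue that the single-step light leaf at index $j$ kills the relevant piece. For indices $\le j$ the behavior of $\LLP$ on $b(u_{\le k},z_{\le k})$ is governed by \eqref{LLdown}: at each ascending step $i<j$ the dual sprinkle $b(z_i^\circ)$ feeds into $\SSLLP([t_i,t_{i+1}],f_i)$ and, since $y_i^\circ = z_i^\circ$, produces $\onetensor$ modulo $\mg$ (at descending steps it passes through by \Cref{prop.summarize}(1)); so entering step $j$ we have $\onetensor$ plus strictly-higher-degree junk. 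At step $j$ we apply $\SSLLP([t_j,t_{j+1}],f_j)$ where $f_j = \pa_{\leftt_j}\pa_{y_j}(P_{\rightt_{j+1}})$ to $\onetensor \otimes b(z_j^\circ)$ with $b(z_j^\circ) = \pa_{I_j}\pa_{z_j^\circ}(P_{I_{j+1}})$ and $z_j^\circ \ne y_j^\circ$; by \eqref{LLdown} (using the $\delta_{x,y^\circ}$ with $x = z_j^\circ \ne y_j^\circ$) the $\onetensor$-term dies, leaving only elements of $\mg$. The remaining task is to propagate this: once the output of step $j$ lies in $\mg$ (strictly positive degree relative to $\onetensor$), everything downstream — the subsequent single-step light leaves, rex moves, and the final degree count — forces the total to be $0$, because $\LL(t_\bullet,y_\bullet)$ has degree exactly $\deg(f_\bullet)+\defect(t_\bullet)$ and the input $b(u_\bullet,z_\bullet)$ has complementary degree, so any output of strictly higher degree than $\onetensor$ must be $0$.

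The main obstacle I anticipate is the careful degree/grading bookkeeping that turns "$\equiv \onetensor$ mod $\mg$" into genuine equality, and the parallel statement that a once-higher-degree term stays higher (equivalently, vanishes) through all later steps. Concretely one must track, step by step, that $\deg b(z_k^\circ) = \deg f_k$ for the equality path, using the almost-dual-basis property from Theorem \ref{thm:dualbasisinimage}, and that the strict inequality $y_j^\circ \triangleleft z_j^\circ$ (hence $\ell(y_j^\circ)\le\ell(z_j^\circ)$ with the convention on $\triangleleft$) interacts correctly with the Demazure degree shifts; I would isolate this as a short sub-lemma on degrees of $b(t_\bullet,y_\bullet)$ versus $\deg \LL(t_\bullet,y_\bullet)$, which is essentially forced by \Cref{doubleleavesdegreecounting} and \Cref{degLL}, and then the rest is a clean induction invoking \Cref{prop.summarize} at the last step.
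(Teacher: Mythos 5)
Your treatment of the equality case is essentially correct and matches the paper in spirit: you iterate \Cref{prop.summarize}, use that rex moves preserve $\onetensor$, and close the ``modulo $\mg$'' gap by a degree count. Because the input $b(y_j^\circ)$ at each ascending step has precisely the degree for which $\SSLLP$ lands on $\deg\onetensor$, the output is homogeneous of exactly that degree and the $\mg$-part vanishes. The paper is terse about this but the argument is the one you give.

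The strict inequality case, however, contains a genuine gap. You propose to show that after the critical step $j$ (the first ascending index where $y_j^\circ\ne z_j^\circ$), the intermediate result lies in $\mg$, and then to ``propagate'' this through the remaining steps and kill the total by a final degree count, claiming ``the input $b(u_\bullet,z_\bullet)$ has complementary degree.'' That claim is false when $(t_\bullet,y_\bullet)\ne(u_\bullet,z_\bullet)$: for ascending indices $k>j$ the dual sprinkles $z_k^\circ$ and $y_k^\circ$ are unrelated, so $\deg b(z_k^\circ)$ and $\deg b(y_k^\circ)$ are unconstrained, and $\deg b(u_\bullet,z_\bullet)$ may be strictly larger than $\deg b(t_\bullet,y_\bullet)$. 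In that case the final output could have degree strictly above $\deg\onetensor$, where being in $\mg$ is no contradiction, and your argument gives nothing. The paper's resolution is a local, not a propagation, argument: at step $j$ itself the output $f$ is \emph{already zero}, not merely in $\mg$. Indeed $f\in\mg$ (from the Kronecker delta in \eqref{LLdown}), while the inequality $\ell(y_j^\circ)\le\ell(z_j^\circ)$ — a consequence of $y_j^\circ\triangleleft z_j^\circ$ and the length-compatibility of $\triangleleft$ — gives $\deg b(z_j^\circ)\le\deg b(y_j^\circ)$, hence $\deg f\le\deg\onetensor$. A homogeneous element of $\mg$ with degree at most that of $\onetensor$ must be zero, so $f=0$; the remainder of the composition then trivially takes $0$ to $0$. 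This is exactly why the definition of $\prec$ picks out the \emph{first} disagreement and why $\triangleleft$ must refine length — your propagation strategy sidesteps this and loses the control needed to finish.
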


\begin{proof}
Light leaves are built from single step light leaves and rex moves. Thus the case $(t_\bullet,y_\bullet)=(u_\bullet,z_\bullet)$ follows by iterating \Cref{prop.summarize} and \Cref{thm:braidmovespreservecbot}.

In case $(t_\bullet,y_\bullet)\prec(u_\bullet,z_\bullet)$, let $j$ be the smallest index such that $y_j^{\circ} \neq z^{\circ}_j$. While $(t_{< j}, y_{< j})$ and $(u_{< j},z_{< j})$ may be different, we have $b(t_{< j},y_{< j}) = b(u_{< j},z_{< j})$, since the definition of $b$ only uses the dual sequence and the overarching expression $I_{\bullet}$. Thus, as in the previous paragraph, we have 
\[ \LL(t_{< j},y_{< j})(b(u_{< j},z_{< j})) = \LL(t_{< j},y_{< j})(b(t_{< j},y_{< j})) = \onetensor.\]

By the inductive construction of light leaves, the next piece of $\LL(t_{\bullet}, y_{\bullet})$ will be $\SSLLP([t_j,t_{j+1}],\pa_{\leftt_j}\pa_{y_j}(P_{\rightt_{j+1}}))$. If we apply this to $\onetensor \ot b(y^\circ_j)$, we get the one-tensor by \eqref{LLdown}. Instead, we apply it to $\onetensor \ot b(z^{\circ}_j)$. Let $f$ be the result, an element of $\BS(X_{t_{j+1}})$. Even though $z^{\circ}$ was built using a potentially different subordinate path, Lemma \ref{lem:itsinIj} implies that $z_j^{\circ} \in W_{I_{j+1}}$, so we can apply Proposition \ref{prop.summarize} part (2). By \eqref{LLdown}, since $z^{\circ}_j \ne y^{\circ}_j$, we have $f \in \mg$. 

However, by construction of $\prec$ we have $y_j^{\circ} \triangleleft {z_j}^{\circ}$,
so $\ell(y_j^{\circ}) \le \ell({z_j}^{\circ})$. Then $\deg b(z^\circ_j) \le \deg b(y^\circ_j)$, meaning that the degree of $f$ is at most the degree of the one-tensor. Any element of $\mg$ with such a degree is zero, as desired.
\end{proof}

   


Note that \Cref{LLtriang} says absolutely nothing about $\LL(t_{\bullet}, y_{\bullet})(b(u_{\bullet}, z_{\bullet}))$ when $(u_{\bullet}, z_{\bullet}) \prec (t_{\bullet}, y_{\bullet})$. These will be mysterious and unknown elements of the target bimodule. 

We have now developed a crucial technical tool, \Cref{LLtriang}. Remember that light leaves depend on a number of choices in their construction, as do the elements $b(u_{\bullet}, z_{\bullet})$ above. This is as good a place as any to summarize the various choices made and dependencies on these choices.

Given a sequence $I_{\bullet}$, the set of sprinkled paths $(t_{\bullet},y_{\bullet})$ is a combinatorial object independent of any choices. The dual sequence $y^{\circ}_{\bullet}$, and the total order $\prec$, are also purely combinatorial and choice-independent.

The elements $b(u_{\bullet}, z_{\bullet}) \in \BS(I_{\bullet})$ depend only on a choice of $P_{I_{k+1}}$ for each ascending index $k$. The light leaves $\LL(t_{\bullet}, y_{\bullet})$ depend on a host of choices. Some are combinatorial: choices of reduced expressions, choices of rex moves between reduced expressions. Others are algebraic: a choice of $P_{\rightt_{j+1}}$ for each ascending index $j$, which are used to define the adapted sequence of polynomials $f_{\bullet}$.

The crucial point is that the choice of $P_{I_{k+1}}$ when defining $b$ is independent of the choices made when defining $\LL$, see Remark \ref{rmk:canchangeP}. Thus \eqref{eq:LLtriang} will hold for any possible choices.

\begin{rem} Changing the choices above will certainly change the mysterious values of $\LL(t_{\bullet}, y_{\bullet})(b(u_{\bullet}, z_{\bullet}))$ when $(u_{\bullet}, z_{\bullet}) \prec (t_{\bullet}, y_{\bullet})$. \end{rem}

\begin{rem} \label{rmk:varyP} One can vary the choice of $P_{I_{k+1}}$ when defining $b(u_{\bullet}, z_{\bullet})$ as $(u_{\bullet}, z_{\bullet})$ varies, and can vary the choice of $P_{\rightt_{j+1}}$ as $(t_{\bullet}, y_{\bullet})$ and $j$ vary. Still \eqref{eq:LLtriang} will hold. \end{rem}

\subsection{Linear independence of light leaves}\label{subsec:linearind}

Consider an expression $I_{\bullet} = [I_0, \ldots, I_d]$ and a subordinate path $t_{\bullet}$ with terminus $p$, and a sprinkling $y_{\bullet}$ which is irrelevant for this discussion. The light leaf $\LL(t_{\bullet}, y_{\bullet})$ is a morphism from $I_{\bullet}$ to $X_{t_d}$, where $X_{t_d}$ is some reduced expression for $p$. Different such light leaves with the same terminus may be morphisms to different reduced expression for $p$, and it is unwise to compare morphisms which live in different Hom spaces.

Instead, recall in our construction of double leaves that we have fixed one particular reduced expression $Y_p$ of $p$, which double leaves factor through. We now force our light leaves to have $Y_p$ as a target, so we can compare and sum them.

\begin{notation} 
For each subordinate sprinkled path $(t_{\bullet}, y_{\bullet}) \subset I_{\bullet}$ we choose a rex move $\rex(t_\bullet,y_\bullet)$ such that its target is $Y_p$ and its source is the target of $\LL(t_{\bullet}, y_{\bullet})$. We consider the following set of morphisms
\begin{equation} \label{eq:defininglls}
    \lls :=\{\rex(t_\bullet,y_\bullet) \circ \LL(t_\bullet,y_\bullet)\ |\ (t_\bullet,y_\bullet)\subset I_\bullet, \; \mathbf{term}(t_\bullet)=p\}.
\end{equation}
\end{notation}

Below we will apply the functor $\evaluation$ to $\lls$, to get a family of morphisms between singular Bott--Samelson bimodules.

Recall that the reduced expression $Y_p$ factors through $\leftred(p)$, so there is an action of $R^{\leftred(p)}$ on $\BS(Y_p)$ by placing polynomials in the appropriate region.

\begin{lem} \label{lem:findtopterminF} Let $F \in \Hom(\BS(I_{\bullet}), \BS(Y_p))$ be a linear combination
\begin{equation} F:=\sum_i\lambda_i \LL(t^i_\bullet,y^i_\bullet) \end{equation}
for distinct elements $\LL(t^i_\bullet,y^i_\bullet) \in \lls$, with nonzero coefficients $\lambda_i \in R^{\leftred(p)}$. Let $k$ be such that $(t^k_\bullet,y^k_\bullet)$ is maximal with respect to the order $\prec$. Then we have
\begin{equation} F(b(u_{\bullet}, z_{\bullet})) = \begin{cases} \lambda_k \cdot \onetensor &\text{if }(t^k_\bullet,y^k_\bullet)=(u_\bullet,z_\bullet)\\
 0 &\text{if }(t^k_\bullet,y^k_\bullet) \prec(u_\bullet,z_\bullet).
 \end{cases} \end{equation}
In particular, the maximal sprinkled path with nonzero coefficient in $F$ agrees with the maximal $(u_\bullet,z_\bullet)$ for which $F(b(u_\bullet,z_\bullet)) \ne 0$.
\end{lem}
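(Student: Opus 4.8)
\textbf{Proof plan for Lemma \ref{lem:findtopterminF}.}

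The plan is to expand $F(b(u_{\bullet},z_{\bullet}))$ linearly and apply Proposition \ref{LLtriang} term by term, relying on the fact that $\prec$ is a total order on coterminal sprinkled paths (Lemma \ref{precistotalorder}) to control which terms survive. First I would write
\begin{equation} F(b(u_{\bullet},z_{\bullet})) = \sum_i \lambda_i \cdot \LL(t^i_\bullet,y^i_\bullet)(b(u_\bullet,z_\bullet)), \end{equation}
noting that the scalars $\lambda_i \in R^{\leftred(p)}$ act on $\BS(Y_p)$ by placing polynomials in the $\leftred(p)$-region, and this action commutes past the evaluation of each light leaf (it is applied to the output). Here I should be slightly careful: strictly speaking each $\LL(t^i_\bullet,y^i_\bullet)$ has target some reduced expression $X_{t^i_d}$, but all of these have terminus $p$ and we have post-composed with rex moves $\rex(t^i_\bullet,y^i_\bullet)$ to land in the common target $\BS(Y_p)$; since rex moves preserve the one-tensor (Theorem \ref{thm:braidmovespreservecbot}) and have degree zero, the conclusions of Proposition \ref{LLtriang} transfer verbatim to the elements of $\lls$. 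So without loss of generality I treat the $\LL(t^i_\bullet,y^i_\bullet)$ as already having target $\BS(Y_p)$.

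Next I would split into the two cases. Suppose $(t^k_\bullet,y^k_\bullet) \prec (u_\bullet,z_\bullet)$. Since $(t^k_\bullet,y^k_\bullet)$ is $\prec$-maximal among the $(t^i_\bullet,y^i_\bullet)$, every index $i$ satisfies $(t^i_\bullet,y^i_\bullet) \preceq (t^k_\bullet,y^k_\bullet) \prec (u_\bullet,z_\bullet)$, hence in particular $(t^i_\bullet,y^i_\bullet) \prec (u_\bullet,z_\bullet)$ for all $i$ (using transitivity of $\prec$, and that the $(t^i_\bullet,y^i_\bullet)$ are distinct so none equals $(u_\bullet,z_\bullet)$ which would contradict maximality of $k$ unless $i=k$, but then $(t^k,y^k)\prec(u,z)$ forces $i\ne k$ too). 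By the second clause of \eqref{eq:LLtriang}, each term $\LL(t^i_\bullet,y^i_\bullet)(b(u_\bullet,z_\bullet)) = 0$, so $F(b(u_\bullet,z_\bullet)) = 0$.

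For the case $(t^k_\bullet,y^k_\bullet)=(u_\bullet,z_\bullet)$: the $i=k$ term contributes $\lambda_k \cdot \LL(t^k_\bullet,y^k_\bullet)(b(t^k_\bullet,y^k_\bullet)) = \lambda_k \cdot \onetensor$ by the first clause of \eqref{eq:LLtriang}. For $i \ne k$, distinctness and maximality give $(t^i_\bullet,y^i_\bullet) \prec (t^k_\bullet,y^k_\bullet) = (u_\bullet,z_\bullet)$, so again the second clause of \eqref{eq:LLtriang} kills that term. Hence $F(b(u_\bullet,z_\bullet)) = \lambda_k \cdot \onetensor$. The final sentence of the statement is then immediate: these two cases show that $b(u_\bullet,z_\bullet)$ is sent to $0$ for all $(u_\bullet,z_\bullet)$ strictly above the maximal sprinkled path appearing in $F$, and is sent to $\lambda_k \cdot \onetensor \ne 0$ (since $\lambda_k \ne 0$ and $\onetensor$ spans a line) at that maximal path itself. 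I do not expect any real obstacle here; the only subtlety worth spelling out carefully is the reduction from general elements of $\lls$ (with their rex-move post-composition) to the situation covered directly by Proposition \ref{LLtriang}, and the bookkeeping that $\lambda_k \cdot \onetensor$ is genuinely nonzero so the ``in particular'' clause holds.
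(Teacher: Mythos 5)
Your proof is correct and takes essentially the same route as the paper: expand $F$ linearly and invoke Proposition \ref{LLtriang} together with the totality of $\prec$ from Lemma \ref{precistotalorder}. The one nuance you flag — that elements of $\lls$ carry a post-composed rex move, so Proposition \ref{LLtriang} needs to be transferred via Theorem \ref{thm:braidmovespreservecbot} — is indeed a necessary point, though the paper defers that observation to the opening of the proof of Theorem \ref{llindep} rather than stating it inside this lemma.
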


\begin{proof} This follows immediately from \Cref{LLtriang}, since when $\sum \LL(t^i_\bullet,y^i_\bullet)$ is applied to $b(u_{\bullet}, z_{\bullet})$ in the regime $(t_{\bullet}^k, y_{\bullet}^k) \preceq (u_{\bullet}, z_{\bullet})$, at most one term survives. \end{proof}

Recall the definition of the ideal of lower terms $\Hom_{<p}$ and of the quotient $\Hom_{\not <p}$ from \Cref{sec:lowerterms}. 

\begin{thm} \label{llindep}
For an $(I,J)$-coset $p$ and an expression $I_\bullet$, the set $\evaluation(\lls)$  is linearly independent in $\Hom_{\not 
<p}(\BS(I_\bullet),\BS(Y_p))$ as a left module for $R^{\leftred(p)}$.
\end{thm}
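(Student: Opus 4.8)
The plan is to reduce linear independence in the quotient $\Hom_{\not<p}$ to a statement about actual evaluations on the elements $b(u_\bullet,z_\bullet)\in\BS(I_\bullet)$, using Proposition~\ref{prop::kill1tensor} to control the ideal of lower terms. Suppose, for contradiction, that we have a nontrivial $R^{\leftred(p)}$-linear relation
\begin{equation*}
F:=\sum_i \lambda_i\,\bigl(\rex(t^i_\bullet,y^i_\bullet)\circ\LL(t^i_\bullet,y^i_\bullet)\bigr)\in \Hom_{<p}(\BS(I_\bullet),\BS(Y_p)),
\end{equation*}
with the $\LL(t^i_\bullet,y^i_\bullet)$ pairwise distinct (hence the sprinkled paths $(t^i_\bullet,y^i_\bullet)$ distinct, so by Lemma~\ref{precistotalorder} their dual sequences are distinct and they are totally ordered by $\prec$) and all $\lambda_i\in R^{\leftred(p)}$ nonzero. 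Since each $\rex$ is a rex move, it preserves the $1$-tensor (Theorem~\ref{thm:braidmovespreservecbot}), so $F(b(u_\bullet,z_\bullet))$ and $\bigl(\sum_i\lambda_i\LL(t^i_\bullet,y^i_\bullet)\bigr)(b(u_\bullet,z_\bullet))$ differ only by the (one-tensor-preserving, bimodule) automorphism coming from the rex moves on the target; in particular the former is nonzero iff the latter is.

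Now pick the index $k$ with $(t^k_\bullet,y^k_\bullet)$ maximal under $\prec$ among the terms appearing in $F$, and evaluate $F$ on $b(t^k_\bullet,y^k_\bullet)$. By Lemma~\ref{lem:findtopterminF} (which is exactly the triangularity in Proposition~\ref{LLtriang} repackaged), every term with $(t^i_\bullet,y^i_\bullet)\prec(t^k_\bullet,y^k_\bullet)$ contributes $0$, and the $k$-th term contributes $\lambda_k\cdot\onetensor$ up to the rex automorphism on $\BS(Y_p)$. Hence $F(b(t^k_\bullet,y^k_\bullet))$ equals $\lambda_k$ times a one-tensor-type element of $\BS(Y_p)$; more precisely, tracking the rex move, it lies in $R^{\leftred(p)}\cdot\onetensor_{Y_p}$ and its leading term is $\lambda_k\cdot\onetensor_{Y_p}\ne 0$ since $\lambda_k\ne 0$ and $R^{\leftred(p)}$ acts freely. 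In particular $F(b(t^k_\bullet,y^k_\bullet))\ne 0$, so in fact $\im F\cap(R^{\leftred(p)}\cdot\onetensor_{Y_p})\ne 0$.

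To finish, I invoke the characterization of lower terms. Since $Y_p$ has the form $[[I\supset\leftred(p)]]\circ p^{\core}\circ[[\rightred(p)\subset J]]$, it begins with $[[I\supset\leftred(p)]]$, and the $R^{\leftred(p)}$-action we are using is exactly the one restricted from $\BS(p^{\core}\circ[[\rightred(p)\subset J]])$ as in Proposition~\ref{prop:lowertermsR}. That proposition says precisely that any $f\in\Hom_{<p}(\BS(I_\bullet),\BS(Y_p))$ has $\im f\cap(R^{\leftred(p)}\cdot\onetensor_{Y_p})=0$, contradicting the previous paragraph. (Alternatively, one can argue degree-by-degree: the $0$-graded part of $\Hom_{<p}$ consists of maps killing $\onetensor_{I_\bullet}$ by Proposition~\ref{prop::kill1tensor}, and then bootstrap in higher degrees via the polynomial action, but the direct appeal to Proposition~\ref{prop:lowertermsR} is cleaner.) Therefore no such relation exists, and $\evaluation(\lls)$ is linearly independent over $R^{\leftred(p)}$ in $\Hom_{\not<p}(\BS(I_\bullet),\BS(Y_p))$.

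I expect the only subtle point to be the bookkeeping in the second paragraph: $\LL(t^i_\bullet,y^i_\bullet)$ and $\rex(t^i_\bullet,y^i_\bullet)\circ\LL(t^i_\bullet,y^i_\bullet)$ have different targets (a $t^i$-dependent reduced expression $X_{t^i_d}$ versus the fixed $Y_p$), and one must be careful that Lemma~\ref{lem:findtopterminF}, which is stated for the $\LL$'s landing in $X_{t_d}$, transports correctly through the rex moves to a statement in $\BS(Y_p)$. This is harmless because rex moves are one-tensor-preserving isomorphisms-modulo-lower-terms (Corollary~\ref{cor:rex1tensor}) and in particular send $R^{\leftred(p)}\cdot\onetensor$ into $R^{\leftred(p)}\cdot\onetensor$ plus lower terms; combined with Proposition~\ref{prop:lowertermsR} this suffices. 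Everything else is a direct application of results already in place.
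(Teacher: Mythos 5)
Your proof is correct and follows essentially the same route as the paper: reduce to evaluating $F$ on the elements $b(u_\bullet,z_\bullet)$, pick the $\prec$-maximal index $k$, use Proposition~\ref{LLtriang} (via Lemma~\ref{lem:findtopterminF}) to show $F(b(t^k_\bullet,y^k_\bullet))=\lambda_k\onetensor\neq 0$, and conclude via Proposition~\ref{prop:lowertermsR}. The only cosmetic difference is that the paper disposes of the rex-move bookkeeping up front, observing (as you also note in your final paragraph) that Proposition~\ref{LLtriang} remains valid for $\rex\circ\LL$ because rex moves preserve $\onetensor$ — or equivalently that one could have chosen $X_{t_d}=Y_p$ from the start — whereas your second paragraph phrases the same point somewhat loosely (there is no single "automorphism" of the target, since each term carries its own rex move); this does not affect the correctness of the argument, since at the point of evaluation only the $k$-th term survives.
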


\begin{proof}
First we note that Proposition \ref{LLtriang} will still hold with $\LL(t_{\bullet}, y_{\bullet})$ replaced by the composition $\rex(t_\bullet,y_\bullet)\circ \LL(t_\bullet,y_\bullet)\in \lls$. There are two easy arguments for this fact. The first is that rex moves preserve $\onetensor$, see Theorem \ref{thm:braidmovespreservecbot}. The second is that Proposition \ref{LLtriang} was proven for arbitrary constructions of light leaves, and one could well have chosen $X_{t_d} = Y_p$ in the first place, and let $\rex(t_\bullet,y_\bullet)$ be the identity map. Indeed, for the remainder of this proof, we shorten $\rex(t_\bullet,y_\bullet) \circ \LL(t_\bullet,y_\bullet)$ to $\LL(t_\bullet,y_\bullet)$ for brevity.

Let $F \in \Hom(\BS(I_{\bullet}), \BS(Y_p))$ be a nonzero linear combination
\[ F:=\sum_i\lambda_i \LL(t^i_\bullet,y^i_\bullet) \]
as in \Cref{lem:findtopterminF}. We need to show that 
$\evaluation(F) \not\in \Hom_{< p}(\BS(I_\bullet),\BS(Y_p))$.

Let $k$ be such that $(t^k_\bullet,y^k_\bullet)$ is maximal with respect to the order $\prec$. Then by \Cref{lem:findtopterminF} we have $F(b(t^k_\bullet,y^k_\bullet))=\lambda_k\onetensor$, where $\lambda_k$ is nonzero. Now \Cref{prop:lowertermsR} immediately implies that $\evaluation(F) \not\in\Hom_{< p}(\BS(I_\bullet),\BS(Y_p))$.
\end{proof}

\begin{prop} \label{prop:llsisbasismodlower}
The set $\evaluation(\lls)$ is a basis of
$\Hom_{\not< p}(\BS(I_\bullet),\BS(Y_p))$ as a module over $R^{\leftred(p)}$.
\end{prop}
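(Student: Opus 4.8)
The plan is to combine the linear independence result (Theorem \ref{llindep}) with a dimension count. We already know from Theorem \ref{llindep} that $\evaluation(\lls)$ is a linearly independent set in $\Hom_{\not< p}(\BS(I_\bullet),\BS(Y_p))$ as a left $R^{\leftred(p)}$-module. So it remains to verify that this set spans, equivalently that it has the right ``size'' as a free $R^{\leftred(p)}$-module. Since $R^{\leftred(p)} = R_p$, the natural target for a size comparison is the graded rank over $R_p$.

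First I would identify the graded dimension of $\Hom_{\not< p}(\BS(I_\bullet),\BS(Y_p))$. By Proposition \ref{gdimlowerterms}, applied with $B = \BS(I_\bullet)$ (so that $[B] = \habs(I_\bullet) = \sum_{t_\bullet} \poly(t_\bullet) v^{\defect(t_\bullet)} h_{\term(t_\bullet)}$ by Corollary \ref{cor:singDeodhar} and \eqref{chBS}), we get
\begin{equation}
\Hom_{\not< p}(\BS(I_\bullet),\BS(Y_p)) \cong c_p \cdot R_p,
\end{equation}
where $c_p = \sum_{t_\bullet \subset I_\bullet,\ \term(t_\bullet) = p} \poly(t_\bullet) v^{\defect(t_\bullet)}$ is the coefficient of $h_p$ in $\habs(I_\bullet)$. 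In particular this is a free $R_p$-module, and its graded rank (i.e. its graded dimension divided by $\gdim(R_p)$) equals $c_p$.

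Next I would compute the total graded ``count'' of the light leaves in $\lls$. Each element $\rex(t_\bullet,y_\bullet)\circ\LL(t_\bullet,y_\bullet)$ has degree $\deg(\LL(t_\bullet,y_\bullet))$ since rex moves have degree zero (Theorem \ref{thm:braidmovespreservecbot}). By Lemma \ref{degLL},
\begin{equation}
\sum_{(t_\bullet,y_\bullet)\subset I_\bullet,\ \term(t_\bullet)=p} v^{\deg(\LL(t_\bullet,y_\bullet))} = \sum_{t_\bullet\subset I_\bullet,\ \term(t_\bullet)=p}\poly(t_\bullet)v^{\defect(t_\bullet)} = c_p.
\end{equation}
So the generating function counting $\evaluation(\lls)$ by degree is exactly $c_p$, which matches the graded rank of the free $R_p$-module $\Hom_{\not< p}(\BS(I_\bullet),\BS(Y_p))$.

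Finally I would assemble these: $\evaluation(\lls)$ is a linearly independent subset (over $R_p$) of a graded free $R_p$-module of graded rank $c_p$, and the generating function of the degrees of elements of $\evaluation(\lls)$ is precisely $c_p$. A graded linearly independent family whose degree-generating function equals the graded rank of the ambient free module must be a basis: one can argue degree by degree, since in each degree $d$ the images of the relevant light leaves span a free $R_p$-submodule whose rank matches the corresponding coefficient, and a linearly independent set of the full expected size in a finitely generated graded free module over a graded local ring is automatically a basis (apply Nakayama after base change to $R_p/\mg_{R_p} = \Bbbk$, comparing Hilbert series). The main obstacle here is purely bookkeeping: making sure the graded-rank comparison is set up correctly over the non-field ring $R_p$, i.e. that ``linearly independent of the right graded size implies basis'' is justified via the graded Nakayama lemma rather than a naive dimension count; once that is in place the proof is immediate from Theorem \ref{llindep}, Lemma \ref{degLL}, and Proposition \ref{gdimlowerterms}.
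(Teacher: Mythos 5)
Your proposal is correct and takes essentially the same route as the paper: linear independence from Theorem \ref{llindep}, a graded-rank match via Lemma \ref{degLL} (which already packages the comparison you re-derive through Proposition \ref{gdimlowerterms} and Corollary \ref{cor:singDeodhar}), and graded Nakayama to upgrade from ``linearly independent of the right graded size'' to ``basis.''
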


\begin{proof}
We know from \Cref{llindep} that the set $\evaluation(\lls)$ is linearly independent in $\Hom_{\not< p}(\BS(I_\bullet),\BS(Y_p))$ over $R^{\leftred(p)}$.
By \Cref{degLL}, the graded ranks of $\Hom_{\not< p}(\BS(I_\bullet),\BS(Y_p))$ and of its submodule generated by $\evaluation(\lls)$ are equal. We conclude by the graded Nakayama Lemma that the two modules coincide.
\end{proof}


\begin{cor}\label{flippedbasis} The set of flipped light leaves 
\[ \duality\lls := \{ \duality \LL(t_{\bullet}, y_{\bullet})\circ \duality\rex(t_\bullet,y_\bullet)\} \]  evaluates to an $R^{\leftred(p)}$-basis for $\Hom_{\not< p}(\BS(Y_p), \BS(I_{\bullet}))$.
\end{cor}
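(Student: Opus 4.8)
\textbf{Proof proposal for Corollary \ref{flippedbasis}.}

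The plan is to obtain the statement from Proposition \ref{prop:llsisbasismodlower} by applying the duality functor and tracking how it interacts with the notion of lower terms. Recall from \S\ref{sec:SSB} that $\duality$ is an anti-involution on $\SSBim(J,I)$, so $\Hom(\BS(I_{\bullet}),\BS(Y_p)) \cong \Hom(\duality \BS(Y_p), \duality\BS(I_{\bullet}))$ via $f \mapsto \duality f$, and $\duality$ preserves degree. Moreover $\duality$ permutes the indecomposables $B_q$ (each $B_q$ being self-dual), so it preserves the class of bimodules through which lower-term morphisms factor; hence $\duality$ carries $\Hom_{<p}(\BS(I_{\bullet}),\BS(Y_p))$ isomorphically onto $\Hom_{<p}(\duality\BS(Y_p),\duality\BS(I_{\bullet}))$ and descends to an isomorphism of the quotients $\Hom_{\not<p}$. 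Since $\BS(Y_p)$ and $\BS(I_{\bullet})$ are Bott--Samelson bimodules, $\duality\BS(Y_p) \cong \BS(Y_p)$ and $\duality\BS(I_{\bullet}) \cong \BS(I_{\bullet})$ (using that $\duality$ preserves Bott--Samelson bimodules, \cite[Prop. 6.15]{SingSb}, together with the fact that a Bott--Samelson bimodule for an expression and for its reverse are equal here, as the duality on $\Frob$ fixes objects and $1$-morphisms). Under these identifications, applying $\duality$ to the basis $\evaluation(\lls)$ from Proposition \ref{prop:llsisbasismodlower} yields the set $\{\duality\LL(t_{\bullet},y_{\bullet}) \circ \duality\rex(t_{\bullet},y_{\bullet})\} = \duality\lls$, which is therefore a basis of $\Hom_{\not<p}(\BS(Y_p),\BS(I_{\bullet}))$.

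The one subtlety is that Proposition \ref{prop:llsisbasismodlower} states a basis \emph{as a left $R^{\leftred(p)}$-module}, where $R^{\leftred(p)}$ acts by postcomposition with polynomials placed in the distinguished $\leftred(p)$-region of $Y_p$. After dualizing, this postcomposition action becomes a \emph{precomposition} action on $\Hom_{\not<p}(\BS(Y_p),\BS(I_{\bullet}))$: placing $g \in R^{\leftred(p)}$ in the $\leftred(p)$-region of $Y_p$ and then applying $\duality\LL(t_{\bullet},y_{\bullet})$. So I would phrase the corollary's module structure as this precomposition action. Concretely, for a polynomial map $\rho_g$ placing $g$ in the $\leftred(p)$-region of $\BS(Y_p)$, one has $\duality(\rho_g) = \rho_g$ (duality fixes polynomials in $\Frob$), so $\duality(f \circ \rho_g) = \duality(\rho_g) \circ \duality f = \rho_g \circ \duality f$; hence $\duality$ intertwines the left-module structure on the source with the corresponding module structure on the target, and carries a basis to a basis.

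I do not expect any real obstacle here: the argument is a formal transport of Proposition \ref{prop:llsisbasismodlower} through the degree-preserving, lower-terms-preserving anti-involution $\duality$. The only point requiring a sentence of care is the bookkeeping of which module structure ($R^{\leftred(p)}$ acting by pre- versus postcomposition) is preserved, and confirming via the explicit description of $\duality$ on $\Frob$ (Definition \ref{defn:duality}) that polynomial maps are fixed, so the module structures match up. Everything else -- that $\duality$ sends $\lls$ to $\duality\lls$, that it is an isomorphism on the relevant $\Hom_{\not<p}$ spaces, and that it sends basis to basis -- is immediate from the properties of $\duality$ already recorded in \S\ref{sec:SSB} and \S\ref{subsec:duality}.
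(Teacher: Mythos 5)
Your argument is correct and matches the paper's own proof, which is a one-liner invoking Proposition~\ref{prop:llsisbasismodlower} together with the fact that $\duality$ preserves the ideal of lower terms $\Hom_{<p}$; you usefully supply the bookkeeping that the paper leaves tacit, namely that the $R^{\leftred(p)}$-action by postcomposition on $\Hom(\BS(I_\bullet),\BS(Y_p))$ is carried by the anti-involution $\duality$ to precomposition on $\Hom(\BS(Y_p),\BS(I_\bullet))$. One small composition-order slip in the final display: since $\rho_g$ is an endomorphism of $\BS(Y_p)$, which is the target of $f$ and the source of $\duality f$, the identity should read $\duality(\rho_g \circ f) = \duality f \circ \duality(\rho_g) = \duality f \circ \rho_g$ rather than $\duality(f \circ \rho_g) = \rho_g \circ \duality f$.
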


\begin{proof} This follows immediately from Proposition \ref{prop:llsisbasismodlower} and the fact that $\duality$ preserves the ideal of lower terms $\Hom_{<p}$. \end{proof}

\begin{thm} \label{lluppertri}
 For an $(I,J)$-coset $p$ and an expression $I_\bullet$, let $\lls$ be a collection of light leaves, and let $\llsprime$ be another collection of light leaves, made using different choices (of rex, of rex move, of $P_{\rightt_{j+1}}$). Then the change of basis matrix between $\evaluation(\lls)$ and $\evaluation(\llsprime)$ within $\Hom_{\not< p}(\BS(I_\bullet),\BS(Y_p))$ is unitriangular (i.e. upper triangular with ones on the diagonal) with respect to the total order $\prec$. \end{thm}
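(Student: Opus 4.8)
The plan is to run the unitriangularity comparison directly between the two families, using exactly the machinery assembled for \Cref{llindep}. The two key inputs are \Cref{LLtriang}, which holds verbatim for \emph{any} construction of light leaves --- in particular for both $\evaluation(\lls)$ and $\evaluation(\llsprime)$, where (as in \Cref{llindep}) we always post-compose with the chosen rex moves landing in $\BS(Y_p)$ --- and \Cref{prop:lowertermsR}, applicable since $Y_p$ is of the form $[[I\supset\leftred(p)]]\circ K_\bullet$. One also needs the observation, stressed after \Cref{LLtriang} (see \Cref{rmk:canchangeP} and \Cref{rmk:varyP}), that the test elements $b(u_\bullet,z_\bullet)\in\BS(I_\bullet)$ of \Cref{notation:b} depend only on $I_\bullet$, the dual sequences, and the auxiliary polynomials $P_{I_{k+1}}$, hence may be fixed once and for all, independently of the choices entering either family.

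Fix a sprinkled path $(t_\bullet,y_\bullet)$ with $\term(t_\bullet)=p$; let $L'\in\evaluation(\llsprime)$ be the corresponding (rex-adjusted) light leaf, and let $\{L_{(u_\bullet,z_\bullet)}\}$ denote the elements of $\evaluation(\lls)$ indexed by the sprinkled paths coterminal with $(t_\bullet,y_\bullet)$. Since $\evaluation(\lls)$ is an $R^{\leftred(p)}$-basis of $\Hom_{\not<p}(\BS(I_\bullet),\BS(Y_p))$ by \Cref{prop:llsisbasismodlower}, there are unique $\lambda_{(u_\bullet,z_\bullet)}\in R^{\leftred(p)}$ with
\begin{equation}
F := L' - \sum_{(u_\bullet,z_\bullet)} \lambda_{(u_\bullet,z_\bullet)}\, L_{(u_\bullet,z_\bullet)} \ \in\ \Hom_{<p}(\BS(I_\bullet),\BS(Y_p)).
\end{equation}
By \Cref{prop:lowertermsR}, $\im F$ meets $R^{\leftred(p)}\cdot\onetensor_{Y_p}$ only in $0$; and since $\onetensor_{Y_p}$ sits as $1\otimes\cdots\otimes 1$ in the first tensor factor $R^{\leftred(p)}$ of $\BS(Y_p)$, it freely generates a rank-one $R^{\leftred(p)}$-submodule, so $\lambda\cdot\onetensor_{Y_p}=0$ forces $\lambda=0$. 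The coefficients will be extracted by evaluating $F$ on suitable $b(u_\bullet,z_\bullet)$.

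For triangularity, suppose some $\lambda_{(u_\bullet,z_\bullet)}\ne 0$ with $(t_\bullet,y_\bullet)\prec(u_\bullet,z_\bullet)$, and let $(u^\star_\bullet,z^\star_\bullet)$ be $\prec$-maximal among sprinkled paths with nonzero coefficient, so $(t_\bullet,y_\bullet)\prec(u^\star_\bullet,z^\star_\bullet)$. Evaluating $F$ on $b(u^\star_\bullet,z^\star_\bullet)$: \Cref{LLtriang} gives $L'(b(u^\star_\bullet,z^\star_\bullet))=0$, and gives $L_{(u_\bullet,z_\bullet)}(b(u^\star_\bullet,z^\star_\bullet))=0$ for every $(u_\bullet,z_\bullet)\prec(u^\star_\bullet,z^\star_\bullet)$ while $L_{(u^\star_\bullet,z^\star_\bullet)}(b(u^\star_\bullet,z^\star_\bullet))=\onetensor_{Y_p}$; hence $F(b(u^\star_\bullet,z^\star_\bullet))=-\lambda_{(u^\star_\bullet,z^\star_\bullet)}\onetensor_{Y_p}\in\im F\cap R^{\leftred(p)}\onetensor_{Y_p}=\{0\}$, forcing $\lambda_{(u^\star_\bullet,z^\star_\bullet)}=0$, a contradiction. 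Thus every nonzero $\lambda_{(u_\bullet,z_\bullet)}$ has $(u_\bullet,z_\bullet)\preceq(t_\bullet,y_\bullet)$. For the diagonal, evaluate $F$ on $b(t_\bullet,y_\bullet)$: by \Cref{LLtriang}, $L'(b(t_\bullet,y_\bullet))=\onetensor_{Y_p}$, $L_{(u_\bullet,z_\bullet)}(b(t_\bullet,y_\bullet))=0$ for $(u_\bullet,z_\bullet)\prec(t_\bullet,y_\bullet)$, and $L_{(t_\bullet,y_\bullet)}(b(t_\bullet,y_\bullet))=\onetensor_{Y_p}$; so $(1-\lambda_{(t_\bullet,y_\bullet)})\onetensor_{Y_p}=F(b(t_\bullet,y_\bullet))\in\{0\}$, whence $\lambda_{(t_\bullet,y_\bullet)}=1$. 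This yields the asserted unitriangularity with respect to $\prec$.

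The argument is essentially bookkeeping once \Cref{LLtriang} is in hand; the one genuinely delicate point --- exactly as in \Cref{llindep} --- is that the family of test elements $b(u_\bullet,z_\bullet)$ may be chosen uniformly for both $\lls$ and $\llsprime$, which is the independence of the $P$-choices recorded in \Cref{rmk:canchangeP} and \Cref{rmk:varyP}. Everything else is the routine two-step maximal-term argument above, and nothing beyond \Cref{prop:lowertermsR} is needed to rule out the interference of lower terms.
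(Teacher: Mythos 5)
Your proof is correct and follows essentially the same maximal-term argument as the paper's, driven by \Cref{LLtriang} and evaluation on the test elements $b(u_\bullet,z_\bullet)$. Where the paper's written proof routes the extraction of the top coefficient through \Cref{lem:findtopterminF} and is somewhat terse about the contribution of lower terms --- the decomposition $\LL'(t_\bullet,y_\bullet)=\sum_i\lambda_i\,\LL(t^i_\bullet,y^i_\bullet)$ only holds modulo $\Hom_{<p}$, and one needs \Cref{prop:lowertermsR} to rule out interference from the lower-terms discrepancy when evaluating on the $b$'s --- you make this step explicit by forming the difference $F\in\Hom_{<p}$ and invoking \Cref{prop:lowertermsR} directly. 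That is precisely the missing bookkeeping. The only place worth a small caveat is the justification that $R^{\leftred(p)}\cdot\onetensor_{Y_p}$ is free of rank one: the clean argument is that $\BS(Y_p)$ is free over $R^{\leftred(p)}$ because it is an iterated tensor product along Frobenius extensions (so $\lambda\onetensor=0$ forces $\lambda=0$), rather than the slightly loose phrase about ``the first tensor factor.'' Aside from this, the argument is a correct and in fact more self-contained rendering of the paper's proof.
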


\begin{proof} Fix some sprinkled path $(t_{\bullet}, y_{\bullet})$, and write $\LL'(t_{\bullet}, y_{\bullet})\in \llsprime$ as a linear combination with respect to the basis $\lls$ (we omit the evaluation functor for simplicity of notation). That is,
\begin{equation}\label{eq:expressoneinother} \LL'(t_{\bullet}, y_{\bullet}) =  \sum_i \lambda_i \LL(t^i_{\bullet}, z^i_{\bullet}), \end{equation}
where we assume that $\lambda_i \ne 0$ for all $i$ appearing in the sum. Let $k$ be such that $(t^k_\bullet,y^k_\bullet)$ is maximal with respect to the order $\prec$. 

By \Cref{lem:findtopterminF}, the maximal $(u_\bullet,z_\bullet)$ such that $\LL'(t_{\bullet},y_{\bullet})$ applied to $b(u_\bullet,z_\bullet)$ is nonzero is $(u_\bullet,z_\bullet) = (t^k_\bullet,y^k_\bullet)$. But by \Cref{LLtriang}, this maximal $(u_\bullet,z_\bullet)$ is also $(t_{\bullet}, y_{\bullet})$. Thus $(t_{\bullet}, y_{\bullet})$ is the maximal index with nonzero coefficient in the sum \eqref{eq:expressoneinother}, proving upper triangularity of the change of basis matrix. Moreover, evaluating on $b(t_{\bullet}, y_{\bullet})$ using \Cref{lem:findtopterminF} and \Cref{LLtriang} gives $\lambda_k = 1$, proving unitriangularity.
\end{proof}


\subsection{Double leaves} \label{ssec:doubleleafproof}

We combine light leaves and flipped light leaves to obtain a basis of the $\Hom$-spaces in $\SBSBim$.

\begin{lem} \label{idealsagree} Let $p$ be an $(I,J)$-coset, and $I_{\bullet}$ and $I'_{\bullet}$ be two expressions which begin in $I$ and end in $J$. Within $\Hom(\BS(I_{\bullet}), \BS(I'_{\bullet}))$, the span of all double leaves which factor through $q$ with $q \le p$ (i.e. associated to coterminal sprinkled quadruples with terminus $q$) is equal to $\Hom_{\le p}(\BS(I_{\bullet}), \BS(I'_{\bullet}))$. \end{lem}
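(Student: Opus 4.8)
The plan is to prove both inclusions, using the two propositions on lower terms recalled in \S\ref{sec:lowerterms} together with the evaluation and linear-independence results already established. Write $\calL_{\le p}$ for the span of all double leaves $\DLL(U,h)$ whose coterminal sprinkled triple $U$ has terminus $q \le p$, and recall that by definition $\Hom_{\le p}$ is the span of morphisms factoring through $\bigoplus B_q(m)$ with $q \le p$.

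First I would show $\calL_{\le p} \subseteq \Hom_{\le p}(\BS(I_{\bullet}), \BS(I'_{\bullet}))$. Fix a coterminal sprinkled quadruple with terminus $q \le p$. By construction, the double leaf $\DLL(q,(t_\bullet,y_\bullet),(t'_\bullet,y'_\bullet),g)$ factors as $I_\bullet \to X_q \to Y_q \to Y_q \to X'_q \to I'_\bullet$, so it factors through $\BS(Y_q)$. Since $Y_q$ is a reduced expression for $q$, Theorem \ref{thm:classification} tells us that every indecomposable summand of $\BS(Y_q)$ is of the form $B_{q'}(m)$ with $q' \le q \le p$ (by \Cref{cor:indecBS}, the terminus of a subordinate path of a reduced expression for $q$ is $\le q$, and combined with \eqref{[B_p]} this bounds all summands); hence the double leaf lies in $\Hom_{\le p}$. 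This direction is essentially bookkeeping.

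The reverse inclusion $\Hom_{\le p}(\BS(I_\bullet),\BS(I'_\bullet)) \subseteq \calL_{\le p}$ is the substantive part, and I expect it to be the main obstacle. The natural approach is a downward induction on $p$ in the Bruhat order, combined with the fact (to be obtained from the companion statement that double leaves span $\Hom(\BS(I_\bullet),\BS(I'_\bullet))$, i.e. from \Cref{mainthm}, or proven in tandem with it) that every morphism is a linear combination of double leaves $\DLL(U,h)$ over various termini. Given $F \in \Hom_{\le p}$, write it as such a linear combination; I must show that no double leaf with terminus $q \not\le p$ appears with nonzero coefficient. For this I would use \Cref{prop:lowertermsR} and \Cref{LLtriang}: suppose some $q \not\le p$ occurs, pick such a $q$ maximal in Bruhat order among those occurring, and pick the maximal sprinkled path (with respect to $\prec$) among the double leaves with that terminus $q$. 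Post-compose $F$ with a flipped light leaf $\duality\LL(u_\bullet,z_\bullet)$ for a suitable coterminal sprinkled path of $I'_\bullet$ (to land in a single $\BS(Y_q)$), and pre-compose appropriately; then evaluate on the element $b(\cdot)$ from \Cref{notation:b}. By \Cref{LLtriang} and \Cref{lem:findtopterminF}, only the chosen maximal term survives modulo $\mg_q$, giving a nonzero multiple of $\onetensor_q$. On the other hand, $F \in \Hom_{\le p}$ and $q \not\le p$ forces the relevant composite to lie in $\Hom_{<q}(\BS(Y_q),\BS(Y_q))$ after projecting away the summands $B_{q'}$ with $q' \le p$ (none of which is $B_q$), so by \Cref{prop:lowertermsR} it must kill $R^{\leftred(q)} \cdot \onetensor_q$ — a contradiction. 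Hence every term in $F$ has terminus $\le p$, i.e. $F \in \calL_{\le p}$.

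The delicate points I anticipate are: (i) making the "project away summands $B_{q'}$ with $q' \le p$" step precise — one wants $\Hom_{\le p}(\BS(I_\bullet),\BS(I'_\bullet))$ composed with light leaves to land in $\Hom_{<q}$ for each $q\not\le p$ maximal, which should follow because $B_q$ is not among the summands indexed by $q' \le p$ and $q$ is maximal among the bad termini, so all $B_{q''}$ with $q'' < q$ are either $\le p$ or strictly below $q$; and (ii) cleanly citing the spanning statement \Cref{mainthm} without circularity — in the actual write-up one interleaves \Cref{idealsagree} into the inductive proof of \Cref{mainthm} rather than invoking it wholesale. An alternative route that avoids (ii): argue directly via graded dimensions, using \Cref{degLL}, \Cref{doubleleavesdegreecounting}, and \Cref{gdimlowerterms} to check that $\dim \calL_{\le p}$ equals $\dim \Hom_{\le p}$ degree by degree, and then the inclusion $\calL_{\le p}\subseteq \Hom_{\le p}$ already proven upgrades to equality. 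I would likely present the dimension-count version, as it sidesteps the circularity and is the cleanest given the tools assembled above.
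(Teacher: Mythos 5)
Your easy inclusion $\calL_{\le p} \subseteq \Hom_{\le p}$ is fine: $\DLL(T,g)$ with terminus $q$ factors through $\BS(Y_q)$, whose indecomposable summands are all $B_{q'}(m)$ with $q' \le q$, by \Cref{cor:indecBS} applied to the reduced expression $Y_q$. But both routes you offer for the reverse inclusion have genuine gaps, and the paper does something simpler and different. Route (a) invokes the spanning statement of \Cref{mainthm}, which you correctly flag as circular; but your ``projection away from $B_{q'}$, $q'\le p$'' step is not something the paper's tools support, and it would need real work to make it sound. Route (b), the dimension count, stalls because none of \Cref{degLL}, \Cref{doubleleavesdegreecounting}, or \Cref{gdimlowerterms} computes $\gdim\Hom_{\le p}(\BS(I_\bullet),\BS(I'_\bullet))$ for \emph{arbitrary} $I_\bullet,I'_\bullet$. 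You know $\gdim\Hom$ (from \Cref{BShom}/\Cref{doubleleavesdegreecounting}) and $\gdim\Hom_{\not<p}(\BS(I_\bullet),\BS(Y_p))$ when the target is a reduced expression for $p$ (from \Cref{gdimlowerterms}), but extracting $\gdim\Hom_{\le p}$ from these amounts to establishing the cellular filtration, which is what the lemma is building. You also cannot compute $\dim\calL_{\le p}$ without first knowing linear independence of double leaves with terminus $\le p$, which is again part of the target theorem.

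The paper's proof of this lemma is direct and constructive, and sidesteps the issues above. Take $F\in\Hom_{\le p}$ and write $F=\sum G_i\circ H_i$ where each $H_i\colon\BS(I_\bullet)\to B_{q_i}$ and $G_i\colon B_{q_i}\to\BS(I'_\bullet)$ with $q_i\le p$; this is just the definition of $\Hom_{\le p}$. Embed $B_{q_i}\subset\BS(Y_{q_i})$. By \Cref{prop:llsisbasismodlower}, each $H_i$ is an $R^{\leftred(q_i)}$-linear combination of light leaves plus something in $\Hom_{<q_i}$; by \Cref{flippedbasis}, similarly for $G_i$ with flipped light leaves. Multiplying out, $F$ is a linear combination of double leaves with termini $q_i\le p$ plus terms in $\Hom_{<p}$. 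Now run an induction on $p$ in the Bruhat order (no base case needed, since the order has the descending chain condition): the $\Hom_{<p}$ terms are, by the inductive hypothesis, spanned by double leaves with terminus strictly below $p$. That gives both inclusions at once. The key point you should internalize is that one never needs to \emph{detect} which termini occur in a given linear combination (which is what your contradiction argument is trying to do); one simply \emph{produces} the decomposition directly from the definition of $\Hom_{\le p}$ together with the mod-lower-terms basis theorems for light leaves.
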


\begin{proof} We prove by induction on $p$ (with respect to the Bruhat order). Our argument has no need for a base case. Suppose that all morphisms in $\Hom_{< p}$ are spanned by double leaves, and let $F \in \Hom_{\le p}(\BS(I_\bullet),\BS(I'_\bullet))$. So we can write $F = \sum G_i\circ H_i$, with $H_i\in \Hom(\BS(I_\bullet),B_{q_i}) \subset \Hom(\BS(I_\bullet),\BS(Y_{q_i}))$ and $G \in \Hom(B_{q_i},\BS(I'_\bullet)) \subset \Hom(\BS(Y_{q_i}),\BS(I'_\bullet))$, where $q_i\leq p$ for all $i$.

From \Cref{prop:llsisbasismodlower}, we can  write each $H_i$ as an $R^{\leftred(q_i)}$-linear combination of light leaves modulo terms lower than $q_i$. Similarly, from \Cref{flippedbasis} we can write $G_i$ as a linear combination of flipped light leaves modulo terms lower than $q_i$. So we can write $F$ as a linear combination of double leaves modulo terms lower than $p$. By induction, the terms smaller than $p$ are a linear combination of double leaves, hence we conclude that $F$ is a linear combination of double leaves. \end{proof}

\begin{rem}\label{rem34} The argument of the above lemma applies to any finite downward-closed subset of the Bruhat order on double cosets, not just $\{\le p\}$. \end{rem}

\begin{proof}[Proof of Theorem \ref{mainthm}]

Thanks to Soergel's hom formula and \Cref{doubleleavesdegreecounting} it is enough to show that double light leaves $\{\DLL(T,b)\}$ span the morphism space in question, as $T$ ranges over the set of coterminal sprinkled triples and $b$ ranges over $\BB_p$. By 
letting $p$ in \Cref{idealsagree} be sufficiently large (or taking any sufficiently large downward-closed subset as in \Cref{rem34}), we obtain the desired conclusion.
\end{proof}

\begin{defn} \label{def:orderontriples} Let $T = (p,(t_{\bullet}, y_{\bullet}), (t'_{\bullet}, y'_{\bullet}))$ and $U = (q, (u_{\bullet}, z_{\bullet}), (u'_{\bullet}, z'_{\bullet}))$ be two coterminal sprinkled triples associated to $I_{\bullet}$ and $I'_{\bullet}$. We write $U \preceq_3 T$ if either \begin{itemize}
\item $q < p$, or
\item $q = p$ and $(u_{\bullet}, z_{\bullet}) \preceq (t_{\bullet}, y_{\bullet})$ and $(u'_{\bullet}, z'_{\bullet}) \preceq (t'_{\bullet}, y'_{\bullet})$. \end{itemize}
\end{defn}

Unlike the total order $\prec$ on sprinkled subordinate paths, the relation $\prec_3$ on coterminal sprinkled triples is only a partial order.

\begin{proof}[Proof of \Cref{thm:COB}.]
\Cref{idealsagree} applies to both versions of the double leaves basis. As a consequence, the change of basis matrix is block upper triangular, with blocks indexed by $(I,J)$-cosets $p$.

Fix $p$. Let $T = (p,(t_{\bullet}, y_{\bullet}), (t'_{\bullet}, y'_{\bullet}))$. A double leaf $\DLL'(T,g)$ is equal to $G \circ g \circ H$, where $G = \LL'(t_{\bullet}, y_{\bullet})$ and $H = \duality\LL'(t'_{\bullet}, y'_{\bullet})$. By \Cref{lluppertri}, $G$ can be rewritten as $\LL(t_\bullet, y_{\bullet})$ plus a linear combination (over $R_p$) of $\LL$ which are strictly smaller with respect to $\prec$, all modulo $\Hom_{< p}$. Similarly with $H$. Thus, modulo $\Hom_{< p}$, $\DLL'(T,g)$ can be written as $\DLL(T,g)$ plus a sum of various  $\DLL(U,h_U)$ for $U \prec_3 T$. Since lower terms $\Hom_{< p}$ belong to lower blocks, this suffices to prove unitriangularity within each block.
\end{proof}

\subsection{A fibered cellular category} \label{ssec:fibered}

The notion of a fibered cellular category can be found in \cite[Definition 2.17]{ELauda}. We will recall this definition as we demonstrate how it applies to $\SBSBim(J,I)$, thus proving Theorem \ref{thm:fibered}.

A fibered cellular category is a $\Bbbk$-linear category for some commutative base ring $\Bbbk$ (in \cite{ELauda}, $\Bbbk$ is denoted by $R$). It comes equipped with a set of objects $\Lambda$ which also has a relation $\le$ giving it the structure of a poset. For $\SBSBim(J,I)$, the set $\Lambda$ is the collection of objects $\BS(Y_p)$, in bijection with the set of $(I,J)$-cosets $p$, and is equipped correspondingly with the Bruhat order. The relation $\le$ is required to have the descending chain condition, which the Bruhat order does.

For each object $X$ and each $\lambda \in \Lambda$ we need finite sets $M(\lambda, X)$ and $E(X,\lambda)$ which are in fixed bijection with each other. We also need a map $c$ which transforms elements of $M$ or $E$ into morphisms; notationally e.g. $c_U \in \Hom(X,\lambda)$ for $U \in E(X,\lambda)$. In our case, any object $X = \BS(I_{\bullet})$ is a Bott-Samelson bimodule for some $(I,J)$-expression, and $E(\BS(I_{\bullet}),\BS(Y_p))$ and $M(\BS(Y_p), \BS(I_{\bullet}))$ are in bijection with the set of sprinkled paths $\{(t_{\bullet}, y_{\bullet})\}$ subordinate to $I_{\bullet}$ with terminus $p$. In fact, it is easiest to set $E(\BS(I_{\bullet}),\BS(Y_p))$ equal to the set of light leaves $\lls$, which is already a subset of $\Hom(\BS(I_{\bullet}), \BS(Y_p))$, so that $c$ does nothing. Similarly, $M(\BS(Y_p), \BS(I_{\bullet}))$ can be defined as the set of morphisms obtained from $\lls$ via the duality functor.

For each $\lambda \in \Lambda$ we need a commutative free $\Bbbk$-algebra $A_{\lambda}$ inside $\End(\lambda)$, equipped with an involution $\iota$. In our case, $A_p := R_p$, viewed as a ring acting on $\BS(Y_p)$, and $\iota$ is trivial.

Then for $S \in M(\lambda, Y)$ and $a \in A_{\lambda}$ and $U \in E(X,\lambda)$ one defines the composition
\[ c_{S,a,U} := c_S \circ a \circ c_U.\]
In our case, for $a \in R_p$, and $T$ being the coterminal sprinkled triple associated to $S$ and $U$, this is exactly the definition of $\DLL(T,a)$.

This data is required to satisfy four conditions. The first states that $\{c_{S,a_i,U}\}$ forms a basis for each Hom space, as $a_i$ ranges over a $\Bbbk$-basis for $A_{\lambda}$. In our case, this is exactly the main theorem of our paper.

The second is that the bijection between $M(\lambda,X)$ and $E(X,\lambda)$, together with the involution $\iota$ on $A_{\lambda}$, extends to an anti-automorphism of the category. This is just the duality functor, flipping diagrams upside-down. Note that duality acts trivially on $R_p \in \End(\BS(Y_p))$. 

The fourth condition states that $E(\lambda, \lambda)$ consists of a single element whose associated map is the identity, and similarly for $M(\lambda, \lambda)$. In our case, since $Y_p$ is a reduced expression for $p$, it has a unique sprinkled path with terminus $p$, see  Example~\ref{ex.LLrex}. The associated light leaf could in theory be any rex move from $Y_p$ to itself, but by \Cref{rexELLisID} one valid choice is the identity map, and that is the choice we make.

Finally, the third condition is the cellular condition, which we choose not to recall. The purpose of object-adapted cellular categories is that the cellular condition is a consequence (see \cite[Lemma 2.8]{ELauda}) of an easier property: that $c(E(X,\lambda))$ spans all maps $X \to \lambda$ modulo the ideal spanned by basis elements associated to $\mu < \lambda$. In our case, this amounts to the fact that $\lls$ is a basis for $\Hom_{\not< p}(\BS(I_{\bullet}), \BS(Y_p))$, which was proven in Proposition~\ref{prop:llsisbasismodlower}.

\printbibliography
\end{document}